\documentclass [12pt]{amsart}
\usepackage[utf8]{inputenc}
\usepackage{amsmath}
\usepackage{amssymb}
\usepackage{hyperref}
\usepackage{amsthm}
\usepackage[table]{xcolor}    
\usepackage{graphicx,tikz}
\usetikzlibrary{calc}

\usepackage{amsfonts}
\usepackage[utf8]{inputenc} 
\usepackage[english]{babel}
\usepackage{bbm}
\usepackage{mathtools}
\usepackage{enumerate}
\usepackage{tikz-cd}
\usepackage{tikz-3dplot}
\usepackage{mathrsfs}
\usepackage{comment}
\usetikzlibrary{arrows}
\usetikzlibrary{shapes,snakes}

\pdfpageattr{/Group <</S /Transparency /I true /CS /DeviceRGB>>}

\newtheorem{theorem}{Theorem}[section]
\newtheorem{lemma}[theorem]{Lemma}
\newtheorem{proposition}[theorem]{Proposition}
\newtheorem{corollary}[theorem]{Corollary}
\newtheorem{conjecture}[theorem]{Conjecture}
\theoremstyle{definition}
\newtheorem{definition}[theorem]{Definition}
\theoremstyle{definition}
\newtheorem{example}[theorem]{Example}
\newtheorem{remark}[theorem]{Remark}

\newenvironment{claim}[1]{\medskip\par\noindent\emph{Claim.}\space#1}{\medskip}
\newenvironment{claimproof}[1]{\medskip\par\noindent\emph{Proof of the claim.}\space#1}{\leavevmode\unskip\penalty9999 \hbox{}\nobreak\hfill\quad\hbox{$\blacksquare$}\medskip}

\def\C{\mathbb{C}}
\def\R{\mathbb{R}}

\def\Z{\mathbb{Z}}

\def\<{{\langle}}
\def\>{{\rangle}}
\def\l{{\lambda}}
\def\m{{\mu}}

\def\Acal{{\mathcal{A}}}
\def\Bcal{{\mathcal{B}}}
\def\Ccal{{\mathcal{C}}}
\def\Dcal{{\mathcal{D}}}

\def\Fcal{{\mathcal{F}}}
\def\Gcal{{\mathcal{G}}}

\def\Qbf{{\mathbf Q}}

\def\weakMap{\leadsto}
\def\e{{\epsilon}}
\def\Conv{{ \operatorname{Conv}}}

\def\WS{\mathcal{S}}
\def\PC{\mathbf{A}}
\def\VC{\mathbf{V}}
\def\Zon{\Zcal}
\def\Tiling{\mathfrak{T}}
\def\TPTiling{\widetilde{\PTiling}}
\def\TPTFamily{\TPTiling_*}
\def\Face{\tau}
\def\v{\mathbf{v}}
\def\x{\mathbf{x}}
\def\Hyp{H}
\def\rk{ \operatorname{rank}}
\def\cork{ \operatorname{corank}}

\def\conv{ \operatorname{Conv}}

\def\Vert{ \operatorname{Vert}}
\def\ipar{{(i)}}

\def\UP{ \operatorname{UP}}
\def\DOWN{ \operatorname{DOWN}}
\def\proj{ \operatorname{proj}}
\def\maxgap{ \operatorname{maxgap}}
\def\IC{ \operatorname{IC}}

\def\WhCl{{\Wcal}}
\def\BlCl{{\Bcal}}
\def\Bound{{\partial}}

\def\Mcal{{\mathcal{M}}}
\def\Mcalu{{\underline{\mathcal{M}}}}
\def\Mcaltilde{\widetilde{\mathcal{M}}}
\def\Lcal{{\mathcal{L}}}
\def\Tcal{{\mathcal{T}}}
\def\Ind{{\mathrm{Ind}}}
\def\Dcal{{\mathcal{D}}}
\def\Zcal{{\mathcal{Z}}}
\def\Wcal{{\mathcal{W}}}
\def\Bcal{{\mathcal{B}}}

\def\Cu{\underline{C}}
\def\Xu{\underline{X}}
\def\Yu{\underline{Y}}

\def\cube{\,\tikz\draw[scale=.25] (0,0,0) rectangle (1,1,0) (0,1,0) -- (0,1,-1) -- (1,1,-1) --(1,1,0) (1,0,0) -- (1,0,-1) -- (1,1,-1);} 

\def\dom{{\Dcal}}
\def\ind{\i}
\newcommand{\mut}[2]{\mu_{#1}(#2)}
\def\mutgraph{{\Gcal_\mu}}

\def\pad{\mathrm{pad}}

\def\empu{}

\newcommand{\reorient}[2]{{_{-#1}#2}}

\numberwithin{equation}{section}

\newcounter{todocnt} 

\def\qqq{\qq^{\oplus 2}}
\def\qqb{\bar{\qq}}

\def\qb{\bar\q}
\begin{document}

\title[Purity and separation for oriented matroids]{Purity and separation \\ for oriented matroids}


\author{Pavel Galashin}

\author{Alexander Postnikov}

\begin{abstract}
Leclerc and Zelevinsky, motivated by the study of quasi-commuting quantum flag minors,  
introduced the notions of {\it strongly separated\/} and {\it weakly separated\/} collections.  These notions are closely related to the theory of {\it cluster algebras,} to the combinatorics of the {\it double Bruhat cells,} and to the {\it totally positive Grassmannian.} 

A key feature, called the {\it purity phenomenon,} is that every maximal by inclusion strongly (resp., weakly) separated collection of subsets in $[n]$ has the same cardinality.  

In this paper, we extend these notions and define {\it $\mathcal{M}$-separated collections} for any oriented matroid $\mathcal{M}$.  

We show that maximal by size $\mathcal{M}$-separated collections are in bijection with fine zonotopal tilings (if $\mathcal{M}$ is a realizable oriented matroid), or with one-element liftings of $\mathcal{M}$ in general position (for an arbitrary oriented matroid).

We introduce the class of {\it pure oriented matroids\/} for which the purity phenomenon holds: an oriented matroid $\mathcal{M}$ is pure if $\mathcal{M}$-separated collections form a pure simplicial complex, i.e., any maximal by inclusion $\mathcal{M}$-separated collection is also maximal by size.

We pay closer attention to several special classes of oriented matroids: oriented matroids of rank $3$, graphical oriented matroids, and uniform oriented matroids. We classify pure oriented matroids in these cases.  An oriented matroid of rank $3$ is pure if and only if it is a {\it positroid\/}
(up to reorienting and relabeling its ground set). A graphical oriented matroid is pure if and only if its underlying graph is an {\it outerplanar graph,} that is, a subgraph of a triangulation of an $n$-gon.

We give a simple conjectural characterization of pure oriented matroids by forbidden minors and prove it for the above classes of matroids (rank $3$, graphical, uniform).
\end{abstract}

\address{Department of Mathematics, University of California, Los Angeles, CA 90095, USA}
\email{{\href{mailto:galashin@math.ucla.edu}{galashin@math.ucla.edu}}}

\address{Department of Mathematics, Massachusetts Institute of Technology, Cambridge, MA, 02139, USA.}
\email{{\href{mailto:apost@mit.edu}{apost@mit.edu}}}

\keywords{Weak separation, strong separation, purity phenomenon, cluster algebras, positroids, oriented matroids, zonotopal tilings, Bohne-Dress theorem, outerplanar graphs}

\subjclass[2010]{52C40(primary), 05E99(secondary)} 
\date{\today}
\maketitle

\setcounter{tocdepth}{1}
\tableofcontents

\section{Introduction}

In 1998, Leclerc and Zelevinsky~\cite{LZ} defined \emph{strongly separated} and
\emph{weakly separated} collections.  Several variations of these notions were
studied in \cite{DKK10, DKK14,OPS,  FG, Galashin}.  
The main goal of the present paper is to introduce the notion of
\emph{$\Mcal$-separation} 
in the general framework of \emph{oriented matroids}, which extends the
previous cases, and study its properties and 
its relationship with \emph{zonotopal tilings}.

\medskip

The notions of strongly and weakly separated collections originally appeared in
\cite{LZ} motivated 
by the study of the $q$-deformation $\Qbf_q[\Fcal]$ of the coordinate
ring of the flag variety $\Fcal$.  They also appeared in the study
\cite{Scott,Scott2} of the \emph{cluster algebra}~\cite{FZ,FZ2,FZ3,FZ4}
structure on the Grassmannian. 
They are closely related 
to the combinatorics of the \emph{totally positive Grassmannian}
and \emph{plabic graphs}, see
\cite{Postnikov, OPS}.

The study of zonotopal tilings is a popular topic in
combinatorics. The celebrated Bohne-Dress theorem~\cite{Bohne} gives a
correspondence between zonotopal tilings and one-element liftings of oriented
matroids.  
Fine zonotopal tilings of the $2n$-gon (also known as \emph{rhombus tilings}) correspond to commutation classes 
of reduced decompositions of the longest element in the symmetric group $S_n$.
More generally, Ziegler \cite{Ziegler} proved that fine zonotopal 
tilings of cyclic zonotopes correspond to
elements of Manin-Shekhtman's \emph{higher Bruhat orders} \cite{MS,VK}.  
Zonotopal
tilings were studied in the context of the \emph{generalized Baues problem} for
cubes, see \cite{BKS,Reiner,Bohne} and~\cite[Section~7.2]{Book}, which was
recently answered in the negative by Liu~\cite{Liu}.

In this paper, we connect these two areas of research.

\medskip

Let $I$ and $J$ be two subsets of the set $[n]:=\{1,2,\dots,n\}$. 
Leclerc and Zelevinsky~\cite{LZ} proved that
two quantum flag minors $[I]$ and $[J]$ in $\Qbf_q[\Fcal]$ 
quasi-commute if and only if $I$ and $J$ are weakly separated,
and that the product $[I][J]$ is invariant under the
involution on $\Qbf_q[\Fcal]$ that sends $q$ to $q^{-1}$
if and only if $I$ and $J$ are strongly separated.
Scott~\cite{Scott,Scott2} showed that two sets $I$ and $J$ 
of the same cardinality are weakly separated if
and only if the corresponding Pl\"ucker coordinates can appear together in the
same cluster in the cluster algebra of the Grassmannian.

Leclerc and Zelevinsky showed that any maximal \emph{by inclusion} strongly
separated collection of subsets of $[n]$ has size 
$$
{n\choose 0}+{n\choose
1}+{n\choose 2}
$$ 
and conjectured the same for weakly separated collections.
This \emph{purity conjecture} was proved independently 
in \cite{DKK10} and \cite{OPS}.

Another related purity result is that, for fixed $k\leq n$, any
maximal \emph{by inclusion} weakly separated collection 
of $k$-element subsets of $[n]$ has size
$$
k(n-k)+1.
$$
It was shown in \cite{OPS} that maximal by inclusion collections 
of weakly separated $k$-element subsets of $[n]$ 
are in bijection with \emph{plabic graphs} from \cite{Postnikov} associated
with parametrizations of the top cell of the totally positive Grassmannian.

Several other similar purity phenomena have been recently discovered,
see \cite{DKK14, FG, Galashin}. In \cite{Galashin},
the notion of~\emph{chord separation} related to that of weak separation 
was introduced. It was shown in \cite{Galashin} 
that any maximal \emph{by inclusion} chord separated collection has size 
\[{n\choose
0}+{n\choose 1}+{n\choose 2}+{n\choose 3}\] and 
is associated with the set of vertices of 
a fine zonotopal tiling of the 
$3$-dimensional cyclic zonotope $\Zon_{C^{n,3}}$.

\medskip

In this paper, we extend these various versions of separation
and purity to oriented matroids.
For any oriented matroid $\Mcal$, we define the notion of
\emph{$\Mcal$-separation} for collections of subsets of the ground set of
$\Mcal$.  
For an oriented matroid associated with a vector configuration
$\VC=(\v_1,\dots,\v_n)$, where $\v_1,\dots,\v_n\in \R^d$, 
we call this notion \emph{$\VC$-separation}.

For example, for \emph{alternating} oriented matroids of rank $2$ and $3$
(associated with cyclic vector configurations in $\R^2$ and $\R^3$),
the notion of $\Mcal$-separation
is equivalent, respectively, to strong separation from \cite{LZ} 
and chord separation from \cite{Galashin}.

Let $\Zon_\VC$ be the \emph{zonotope} associated with 
a vector configuration $\VC$, defined as the Minkowski 
sum of the line intervals $[0,\v_1],\dots,[0,\v_n]$.
A \emph{fine zonotopal tiling} of $\Zon_\VC$ is a subdivision
of the zonotope into parallelotopes.
Each fine zonotopal tiling $\Tiling$ is naturally equipped with a family
$\Vert(\Tiling)$ of subsets of $[n]$ that label the vertices of the tiling.

We prove that maximal \emph{by size} $\VC$-separated
collections are in bijection with fine zonotopal tilings $\Tiling$ of $\Zon_\VC$;
namely, they are precisely the collections $\Vert(\Tiling)$
of vertex labels of tilings.
The size of such a $\VC$-separated collection equals the number
of \emph{independent sets} of the associated oriented matroid.

If all maximal \emph{by inclusion} $\Mcal$-separated collections have the
same cardinality, we call the oriented matroid $\Mcal$ \emph{pure}.
We give a complete description of pure oriented matroids
in the following cases:
\begin{enumerate} 
\item oriented matroids of rank $3$, 
\item \emph{graphical} oriented matroids, and 
\item \emph{uniform} oriented matroids.  
\end{enumerate} 

For the first class, the pure oriented matroids 
are precisely all oriented matroids obtained from 
\emph{positroids} of rank $3$ 
by relabeling and reorienting the ground set.

For the second class, we show that an undirected graph $G$ gives rise to a
pure oriented matroid if and only if $G$ is \emph{outerplanar}. 

For the third class, we show that all pure uniform vector configurations
either have \emph{rank} at most $3$ or \emph{corank} at most $1$.

For an arbitrary oriented matroid $\Mcal$, we give a conjectural 
criterion for purity in terms of forbidden minors of $\Mcal$.

\medskip

Here is the general outline of the paper. In Section~\ref{sect:separation_def}, we define the notion of \emph{separation} for vector configurations and oriented matroids and discuss its relationship with zonotopal tilings and one-element liftings. In Section~\ref{sect:motivation}, we describe some known motivating examples
of the purity phenomenon. We then recall several simple operations on oriented matroids in Section~\ref{sec:simple-oper-orient}. In Section~\ref{sect:main}, we state our main results
 regarding the purity phenomenon for vector configurations and oriented matroids. Next, we give some technical background on zonotopal tilings and oriented matroids in Section~\ref{sect:background}. The rest of the paper is mainly concerned with proving theorems from Section~\ref{sect:main}. In Section~\ref{sect:max_by_sz}, we prove the results regarding maximal \emph{by size} $\VC$-separated collections,
including Theorem~\ref{thm:max_size_vc} that gives a simple bijection between
maximal \emph{by size} $\VC$-separated collections and vertex label collections
of fine zonotopal tilings of the zonotope corresponding to $\VC$. Next, we
concentrate on pure vector configurations/oriented matroids. In
Section~\ref{sect:pure_OM}, we show that the property of being a pure oriented
matroid is preserved under various oriented matroid operations. We prove the
purity phenomenon for outerplanar graphs in Section~\ref{sect:graph}, where we also give enumerative results on the number of maximal $G$-separated collections. We then proceed to
showing the purity of totally nonnegative rank $3$ vector configurations in
Section~\ref{sect:rk_3}. Finally, we give the remaining proofs of our various classification 
 results for pure oriented matroids in Section~\ref{sect:classif}.

\subsection*{Acknowledgments}
We are indebted to the anonymous referee for their extremely careful reading of the first version of this manuscript and for pointing out several gaps in our arguments. We also thank Steven Karp and Melissa Sherman-Bennett for their comments on the text. This work was supported by an Alfred P. Sloan Research Fellowship and by the National Science Foundation under Grants No.~DMS-1954121 and No.~DMS-2046915.

\section{Separation, purity, and zonotopal tilings}
\label{sect:separation_def}

In this section, we introduce the notion of separation and purity  for vector configurations 
and oriented matroids and formulate some of our results.  


\medskip

Let $[n]:=\{1,\dots,n\}$. Denote by $2^E$ the set of all subsets of a set $E$.

\subsection{Separation for vector configurations}

\begin{definition}
Let $\VC = (\v_1,\dots,\v_n)$ be a finite configuration of vectors $\v_1,\dots,\v_n\in\R^d$.

Two subsets $I,J\subset [n]$ are called \emph{$\VC$-separated} if there exists a linear function 
$h:\R^d\to\R$ such that $h(\v_i)>0$ for $i\in I\setminus J$, and 
$h(\v_j)<0$ for $j\in J\setminus I$. 

A collection $\WS\subset 2^{[n]}$ of subsets of $[n]$ is called
a \emph{$\VC$-separated collection} if any two of its elements are $\VC$-separated.
\end{definition} 

\subsection{Separation for oriented matroids}

The above notion of $\VC$-separation depends only on the oriented matroid associated
with a vector configuration $\VC$.  One can extend this notion to any oriented matroid
as follows.

First, recall the definition of oriented matroids; see 
Section~\ref{sect:OM} for more details. A \emph{signed subset} $X$ of a set $E$ is a pair $X=(X^+,X^-)$ of disjoint
subsets $X^+, X^-$ of $E$.  The \emph{support} $\Xu$ of $X$ is the (usual) set $\Xu:=X^+\sqcup X^-$.
The empty signed subset is $\varnothing=(\emptyset,\emptyset)$.
For a signed subset $X=(X^+,X^-)$, let $-X:=(X^{-},X^{+})$.

\begin{definition}[{\cite[Definition~3.2.1]{Book}}]\label{dfn:OM}
An \emph{oriented matroid} $\Mcal$ is a pair $\Mcal=(E,\Ccal)$, where $E$ is a
set, called the \emph{ground set}, and $\Ccal$ is a collection of signed subsets of $E$,
called \emph{circuits}, that satisfy the following axioms:
 \begin{enumerate}
  \item[(C0)] $\varnothing\not\in\Ccal$.
  \item[(C1)] For all $X\in \Ccal$, we have $-X\in \Ccal$.
  \item[(C2)]\label{item:C2} For all $X,Y\in\Ccal$, if $\Xu\subset\Yu$, then $X=Y$ or $X=-Y$.
  \item[(C3)]\label{item:C3} 
For all $X,Y\in\Ccal$, $X\neq -Y$, and $e\in X^+\cap Y^-$, there is 
             a $Z\in\Ccal$ such that 
  \[Z^+\subset (X^+\cup Y^+)\setminus\{e\}\quad\text{and}\quad 
Z^-\subset (X^-\cup Y^-)\setminus\{e\}.\]
\end{enumerate}

A subset $I\subset E$ is called \emph{independent} if there is no circuit $X\in \Ccal$ such that $\Xu\subset I$. The \emph{rank} of $\Mcal$, denoted $\rk(\Mcal)$, is the maximal size of an independent subset.
\end{definition}

A vector configuration $\VC=(\v_1,\dots,\v_n)$, defines 
the \emph{associated oriented matroid} $\Mcal_\VC$ on the ground set $E=[n]$ such that
a nonempty signed subset $X$ of $[n]$ is a circuit of $\Mcal_\VC$ if and only if there exists
a linear dependence $\sum_{i\in \Xu} c_i \,\v_i=0$ with $c_i>0$, for all $i\in X^+$,
and $c_j<0$, for all $j\in X^-$, and any proper subset of vectors 
$\v_i$, $i\in \Xu$, is linearly independent.

\begin{definition}
For an oriented matroid $\Mcal=(E,\Ccal)$, we say that two sets
$I,J\subset E$ are \emph{$\Mcal$-separated} if there is no circuit
$X\in \Ccal$ such that $X^+\subset I\setminus J$ and $X^-\subset J\setminus I$.

A collection $\WS\subset 2^{E}$ of subsets of $E$ is
called an \emph{$\Mcal$-separated collection} if any two of its elements are $\Mcal$-separated.
\end{definition}

The following lemma is an easy exercise for the reader.

\begin{lemma}
Let $\Mcal=\Mcal_\VC$ be the oriented matroid associated with a vector configuration $\VC$.
Then a collection $\WS\subset 2^E$ is $\VC$-separated if and only if it is $\Mcal$-separated.
\end{lemma}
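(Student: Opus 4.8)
The plan is to unwind both definitions and show that the two conditions coincide term by term. Fix a vector configuration $\VC=(\v_1,\dots,\v_n)$ and two subsets $I,J\subset[n]$. I would prove the contrapositive in both directions: $I$ and $J$ fail to be $\Mcal_\VC$-separated if and only if they fail to be $\VC$-separated. Since the lemma statement is about collections, and a collection is $\VC$-separated (resp.\ $\Mcal$-separated) exactly when every pair of its members is, it suffices to treat a single pair $I,J$.

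First suppose $I,J$ are not $\Mcal_\VC$-separated, so there is a circuit $X=(X^+,X^-)\in\Ccal$ with $X^+\subset I\setminus J$ and $X^-\subset J\setminus I$. By the definition of $\Mcal_\VC$, the support $\Xu$ carries a linear dependence $\sum_{i\in\Xu}c_i\v_i=0$ with $c_i>0$ for $i\in X^+$ and $c_j<0$ for $j\in X^-$. Now if $h:\R^d\to\R$ were a linear function witnessing $\VC$-separation of $I$ and $J$, then applying $h$ to this dependence gives $\sum_{i\in X^+}c_ih(\v_i)+\sum_{j\in X^-}c_jh(\v_j)=0$; but $X^+\subset I\setminus J$ forces $h(\v_i)>0$, and $X^-\subset J\setminus I$ forces $h(\v_j)<0$, so $c_ih(\v_i)>0$ and $c_jh(\v_j)>0$ for all indices, contradicting that the sum vanishes (the support $\Xu$ is nonempty by (C0)). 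Hence no such $h$ exists and $I,J$ are not $\VC$-separated.

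Conversely, suppose $I,J$ are not $\VC$-separated. Consider the linear subspace $W=\{h\in(\R^d)^*: h(\v_i)=0 \text{ for all } i\}$ is irrelevant; instead I would argue directly with the convex-geometry dual. The nonexistence of a linear $h$ with $h(\v_i)>0$ on $A:=I\setminus J$ and $h(\v_j)<0$ on $B:=J\setminus I$ is, by a standard separation/alternative theorem (a variant of Gordan's theorem or Stiemke's lemma), equivalent to the existence of nonnegative coefficients, not all zero, giving a dependence $\sum_{i\in A}a_i\v_i+\sum_{j\in B}(-b_j)\v_j=0$ with $a_i,b_j\ge 0$ and $\operatorname{supp}(a)\cup\operatorname{supp}(b)\neq\emptyset$; more precisely, the absence of a strictly separating functional on a finite set of vectors partitioned into a "positive" part $A$ and "negative" part $B$ means $0$ lies in a certain relatively open cone spanned by $\{\v_i: i\in A\}\cup\{-\v_j: j\in B\}$. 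From any such nontrivial nonnegative dependence one can extract, by repeatedly deleting vectors with minimal support (the usual circuit-extraction argument), a nonnegative dependence whose support is minimal, i.e.\ a circuit $X$ of $\Mcal_\VC$; its signed parts satisfy $X^+\subset A=I\setminus J$ and $X^-\subset B=J\setminus I$, so $I,J$ are not $\Mcal_\VC$-separated.

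The main obstacle, and the only genuinely nontrivial point, is the reverse direction: making precise the Farkas-type alternative that "no strictly separating functional" yields "a nonnegative circuit dependence supported on $A\sqcup B$," and then the bookkeeping of circuit extraction to pass from an arbitrary such dependence to one of minimal support while keeping the sign pattern compatible with the partition into $A$ and $B$. Everything else is a direct substitution. I would phrase the alternative carefully: by the Farkas lemma applied to the vectors $(\v_i)_{i\in A}$ and $(-\v_j)_{j\in B}$, exactly one of the following holds --- either there is $h$ with $\langle h,\v_i\rangle>0$ for all $i\in A$ and $\langle h,-\v_j\rangle<0$ for all $j\in B$... wait, this needs the homogeneous strict version, so one should instead invoke: either such a strictly separating $h$ exists, or $0$ is a nontrivial nonnegative combination of $\{\v_i:i\in A\}\cup\{-\v_j:j\in B\}$; handling the degenerate sub-cases ($A$ or $B$ empty, or some $\v_i=0$) is routine but must be mentioned. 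Once the nontrivial nonnegative dependence is in hand, minimality of support is achieved by a finite descent, and minimal nonnegative dependences are exactly the circuits of $\Mcal_\VC$ by definition, completing the proof.
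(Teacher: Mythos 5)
The paper leaves this lemma as an exercise and gives no proof of its own, so there is nothing to compare against; taken on its own, your argument is correct. The forward direction (a circuit oriented oppositely by $I$ and $J$ obstructs any separating functional, since pairing $h$ with the dependence $\sum c_i\v_i=0$ would produce a sum of strictly positive terms equal to zero) is exactly the easy substitution, and the converse is the real content: Gordan's theorem of the alternative applied to $\{\v_i:i\in I\setminus J\}\cup\{-\v_j:j\in J\setminus I\}$ yields a nontrivial nonnegative dependence supported inside $(I\setminus J)\cup(J\setminus I)$, and the standard support-minimizing descent in $\ker(\VC)$ (subtracting a scalar multiple of a smaller-support kernel vector at the first moment a coordinate vanishes, which preserves signs on the surviving support) produces a minimal-support dependence, i.e.\ a circuit $X$ with $X^+\subset I\setminus J$ and $X^-\subset J\setminus I$. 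You correctly flag the degenerate cases (some $\v_i=0$, or $I\setminus J$ and $J\setminus I$ both empty), all of which are consistent; the only stray slip is the sign in your abandoned first attempt at stating the alternative ($\langle h,-\v_j\rangle<0$ should be $>0$), which you discard before the corrected statement, so the final argument stands.
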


\subsection{Zonotopal tilings}

Recall that the \emph{Minkowski sum} of two (or more) sets $A,B\subset\R^d$
is the set $A+B :=\{a+b\mid a\in A,\, b\in B\}$.

For a vector configuration $\VC=(\v_1,\dots,\v_n)$, the corresponding \emph{zonotope}
$\Zon_\VC$ is defined as the Minkowski sum of the line segments $[0,\v_i]$:
$$
\Zon_\VC:=[0,\v_1]+\cdots+[0,\v_n].
$$ 

Equivalently, the zonotope $\Zon_\VC$ is the image $p(\cube_n)$ of the 
standard $n$-hypercube 
$$
\cube_n :=[0,e_1]+\cdots+[0,e_n] \subset \R^n
$$
under the projection 
$$
p:\R^n\to \R^d\quad
\textrm{such that }
p(e_i)=\v_i, \textrm{ for }i=1,\dots,n, 
$$
where $e_1,\dots,e_n$ are the standard coordinate vectors in $\R^n$.

\begin{definition}\label{dfn:fine_z_tiling_intro}
A \emph{fine zonotopal tiling} of $\Zon_\VC$ is a cubical subcomplex $\Tiling$ 
of the $n$-hypercube $\cube_n$,
i.e., a collection of faces $F$ of $\cube_n$ closed under taking subfaces,
such that the projection $p$ induces a homeomorphism between $\bigcup_{F\in\Tiling}F$ and 
the zonotope $\Zon_\VC$.
\end{definition}

A fine zonotopal tiling $\Tiling$ gives a polyhedral subdivision $p(\Tiling)$
of the zonotope $\Zon_\VC$ with faces $p(F)$, for $F\in\Tiling$.  Each face
$p(F)$ of the subdivision $p(\Tiling)$ is a parallelotope whose edges are
parallel translations of some vectors $\v_i$.

Faces $F=\cube_X$ of the hypercube $\cube_n$ can be labeled by signed subsets 
$X=(X^+,X^-)$ of $[n]$ as follows:
$$
\cube_X:=\sum_{i\in X^+} e_i + \sum_{j\in [n]\setminus \underline X} [0,e_j].
$$

Thus every face $p(\cube_X)$ of the subdivison $p(\Tiling)$ of the zonotope $\Zon_\VC$
is naturally labeled by the signed subset $X$ of $[n]$. In what follows, we identify a face $F=\cube_X$ of the cube with the signed subset $X$ that labels it.

Clearly, $\dim \cube_X =\dim p(\cube_X) = n-|\underline X|$. 

\begin{remark}
According to our definition, a fine zonotopal 
tiling $\Tiling$ contains more information than 
the polyhedral subdivision $p(\Tiling)$ of the zonotope $\Zon_\VC$.
Namely, it also includes the labeling of the faces 
of the subdivision $p(\Tiling)$ by signed sets $X$.
It is possible that two different tilings $\Tiling_1$ and $\Tiling_2$
produce the same subdivision $p(\Tiling_1)=p(\Tiling_2)$ of the 
zonotope.

However, if the vectors $\v_1,\dots,\v_n$ of the configuration $\VC$ are non-zero
and not collinear to each other, it is not hard to show
(an exercise for the reader) that the subdivision $p(\Tiling)$ of $\Zon_\VC$ 
uniquely defines the labeling of its faces by signed sets.  In this case, we can 
identify a fine zonotopal tiling $\Tiling$ with the corresponding polyhedral subdivision
$p(\Tiling)$ of $\Zon_\VC$.
\end{remark}

Clearly, a face $\cube_X$ of the hypercube $\cube_n$ is a vertex 
if and only if $\underline X=[n]$.  So we can label vertices of $\cube_n$ 
by usual subsets $I=X^+\subset [n]$. 

For a fine zonotopal tiling $\Tiling$ of $\Zon_\VC$, let 
$\Vert(\Tiling)\subset 2^{[n]}$ be the collection of labels $I$ of vertices of $\Tiling$,
i.e., 
\begin{equation}\label{eq:tiling_vertices_intro}
\Vert(\Tiling):= \{I\in 2^{[n]}\mid 
\cube_{(I,[n]\setminus I)} \in \Tiling \}.
\end{equation}

We say that a $\VC$-separated collection $\WS$ is \emph{maximal by size} if
its cardinality $|\WS|$ is maximal among all $\VC$-separated collections.

Our first main result on $\VC$-separation identifies maximal by size $\VC$-separated 
collections with fine zonotopal tilings of $\Zon_\VC$.

\begin{theorem}\label{thm:max_size_vc}
Let $\VC\subset\R^d$ be a vector  configuration. Then the map $\Tiling\mapsto
\Vert(\Tiling)$ is a bijection between fine zonotopal tilings of $\Zon_\VC$ and
maximal \emph{by size} $\VC$-separated collections of subsets of $[n]$. 

Any such collection has size $|\Ind(\VC)|$, where $\Ind(\VC)$ denotes the collection of
linearly independent subsets of $\VC$.  
\end{theorem}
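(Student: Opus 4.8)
The plan is to go through the Bohne--Dress correspondence between fine zonotopal tilings of $\Zon_\VC$ and one-element liftings of $\Mcal_\VC$ in general position, and to match both objects with maximal \emph{by size} $\VC$-separated collections. I would split the statement into three parts: (a) for any fine zonotopal tiling $\Tiling$, the collection $\Vert(\Tiling)$ is $\VC$-separated; (b) $|\Vert(\Tiling)| = |\Ind(\VC)|$ for every fine zonotopal tiling; (c) every $\VC$-separated collection $\WS$ has $|\WS| \le |\Ind(\VC)|$, with equality forcing $\WS = \Vert(\Tiling)$ for a unique tiling $\Tiling$.

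For part (a): two vertices $I, J$ of a common tiling lie in a common parallelotope face $p(\cube_X)$ of the subdivision (in fact in a common maximal cell), so $I$ and $J$ agree outside $\Xu$ and differ only among coordinates indexed by $\Xu$, where $\Xu$ is an independent set of $\Mcal_\VC$; since the vectors $\v_i$, $i \in \Xu$, are linearly independent, there is a linear functional $h$ positive on exactly those $\v_i$ with $i \in I \setminus J$ and negative on those with $i \in J \setminus I$, witnessing $\VC$-separation. This uses only that distinct vertices of a fine tiling connected by an edge of the hypercube differ in one coordinate, iterated along a path inside a single maximal parallelotope; the key point is that one can always find a maximal cell of $p(\Tiling)$ containing both $I$ and $J$, which is where I would need to invoke the structure of fine zonotopal tilings from Section~\ref{sect:background} (each such tiling is a union of parallelotopes, and the faces through a given vertex assemble correctly). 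For part (b): the maximal cells of $p(\Tiling)$ are $d$-dimensional parallelotopes, in bijection with bases of $\Mcal_\VC$; a standard deletion/contraction or shelling-type count shows that the total number of faces of a fine zonotopal tiling of $\Zon_\VC$ of each dimension $k$ equals the number of independent sets of size $d-k$, so in particular the number of vertices is the number of independent sets of $\Mcal_\VC$, i.e. $|\Ind(\VC)|$. (This is the ``$f$-vector of a fine zonotopal tiling equals the $f$-vector of the independence complex'' fact, which I would cite or prove by induction on $n$ via deletion of the last vector.)

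The crux is part (c): the upper bound together with the rigidity of the extremal case. Here I would argue as follows. Given a $\VC$-separated collection $\WS$, I want to produce a fine zonotopal tiling $\Tiling$ with $\WS \subseteq \Vert(\Tiling)$; combined with (b) this gives $|\WS| \le |\Ind(\VC)|$, and if equality holds then $\WS = \Vert(\Tiling)$ and uniqueness follows because a tiling is determined by its vertex set (as its maximal cells are recovered as the inclusion-maximal parallelotopes spanned by vertices of $\WS$). To build $\Tiling$ from $\WS$, the natural route is again Bohne--Dress: a one-element lifting of $\Mcal_\VC$ in general position is the same data as a fine zonotopal tiling, and such a lifting corresponds to choosing, coherently, a ``side'' for each cocircuit; the $\VC$-separation condition on $\WS$ says precisely that the sets in $\WS$ impose mutually consistent sign constraints, which can be completed to a full one-element lifting. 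Concretely, embed the problem in the zonotope $\Zon_{\VC'}$ where $\VC' = (\v_1,\dots,\v_n, \v_{n+1})$ with $\v_{n+1}$ a generic small vector: each $I \in \WS$ wants to become a vertex of a tiling, and $\VC$-separation of $I,J$ is exactly the statement that the vertices can be simultaneously realized on the upper (or lower) boundary of some lifting.

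\textbf{The main obstacle.} The serious work is exactly this ``extension'' step in (c): showing every $\VC$-separated collection extends to (equivalently, is contained in) the vertex set of some fine zonotopal tiling. Morally it is a compatibility-implies-global-consistency statement of the same flavor as the purity theorems for weakly/strongly separated collections in \cite{OPS, DKK10}, but in the oriented-matroid generality it requires carefully setting up the translation between ``linear functionals separating pairs'' and ``localizations / one-element liftings,'' and then a compactness or greedy argument to patch local choices into a global tiling — this is presumably the content of Section~\ref{sect:max_by_sz}. Once that is in hand, the upper bound, the cardinality formula, and the bijectivity (injectivity from vertex sets determining tilings, surjectivity onto maximal-by-size collections from (a)+(b)) all fall out formally.
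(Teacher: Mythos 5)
Your part (a) has a genuine flaw: two vertices of a fine zonotopal tiling need not lie in a common maximal cell (already in a rhombus tiling of a hexagon, any one rhombus contains only $4$ of the $7$ vertices), and no ``path inside a single maximal parallelotope'' connects arbitrary pairs. The separation of two vertex labels $I,J$ cannot be read off from one cell. The paper's route around this is to identify $\Vert(\Tiling)$, via Bohne--Dress, with the collection $\WS(\sigma)$ of all subsets that orient every circuit in agreement with the colocalization $\sigma$ of the associated one-element lifting (equivalently, with topes of the lifting on one side of the new element). Two sets that each orient every circuit consistently with the same $\sigma$ necessarily orient each circuit consistently with each other, so $\Mcal$-separation of $\WS(\sigma)$ is immediate from its definition, no shared cell required.

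Your part (c) has a more serious structural problem. You propose to prove the bound $|\WS|\le|\Ind(\VC)|$ by showing that every $\VC$-separated collection $\WS$ extends to the vertex set of some fine zonotopal tiling, and you rightly flag this as ``the main obstacle.'' But that extension statement \emph{is} the purity property for $\Mcal_\VC$, and one of the main points of the paper is that purity \emph{fails} in general (e.g.\ for uniform configurations of rank $4$ and corank $2$, and for $\Mcal_{\vec K_4}$): there exist maximal-by-inclusion $\VC$-separated collections strictly smaller than $|\Ind(\VC)|$, and such a collection cannot be contained in any $\Vert(\Tiling)$, since those all have full cardinality. So the patching/extension step you describe is not merely difficult, it is false as stated. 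The paper instead proves the size bound directly by a deletion--contraction induction: $|\WS|=|\WS-e|+|\WS/e|$ for any $e$ neither a loop nor a coloop, matched against $|\Ind(\Mcal)|=|\Ind(\Mcal-e)|+|\Ind(\Mcal/e)|$, which gives $|\WS|\le|\Ind(\Mcal)|$ without ever needing to complete $\WS$ to a tiling. The extremal case (equality forces $\sigma_\WS$ to be a colocalization in general position, hence a lifting, hence a tiling) is then established by another induction using Las Vergnas' Type III criterion on corank-$2$ restrictions, not a global greedy/compactness argument. Your part (b) is essentially right and matches the paper's count, phrased there as $|\Tcal(\Mcaltilde)|/2=|\Ind(\Mcal)|$ via Tutte-polynomial deletion--contraction.
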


Actually, we will prove a stronger result, Theorem~\ref{thm:max_size_matroid},
concerning an arbitrary oriented matroid $\Mcal$.  According to the Bohne-Dress
theorem (see Theorem~\ref{thm:bohne}), fine zonotopal tilings of 
the zonotope $\Zon_\VC$
are canonically identified with \emph{one-element liftings of 
the oriented matroid $\Mcal_\VC$ in general position}. 
Thus one can view the notion of ``one-element liftings in general position'' 
as an extension of the notion of ``fine zonotopal tilings'' 
to an arbitrary oriented matroid $\Mcal$. 
Theorem~\ref{thm:max_size_matroid} gives
a bijection between these objects and maximal \emph{by size} $\Mcal$-separated
collections.

\subsection{Pure oriented matroids}

We will distinguish between two different notions of maximality of $\Mcal$- or $\VC$-separated 
collections: maximal \emph{by size} (appearing in the previous theorem) and 
maximal \emph{by inclusion}.

\begin{definition}
For an oriented matroid $\Mcal$, an $\Mcal$-separated collection $\WS$ 
is called \emph{maximal by inclusion} if $\WS$ is
not properly contained in any other $\Mcal$-separated collection. Similarly, for a vector configuration $\VC$, 
we define \emph{maximal by inclusion} $\VC$-separated
collections. 
\end{definition}

Clearly, 
all $\Mcal$-separated collections in $2^{E}$ form an abstract simplicial complex, i.e.,
any subset of an $\Mcal$-separated collection is also $\Mcal$-separated.

Recall that a simplicial complex is called \emph{pure} if any simplex 
in it is a face of a top-dimensional simplex in this simplicial complex.

\begin{definition}
We say that an oriented matroid $\Mcal$ is \emph{pure} if any maximal
\emph{by inclusion} $\Mcal$-separated collection is also maximal \emph{by size}.
Equivalently, an oriented matroid $\Mcal$ is pure if all 
$\Mcal$-separated collections form a pure simplicial complex.

We also say that 
a vector configuration $\VC$ is \emph{pure} if the associated oriented
matroid $\Mcal_\VC$ is pure.
\end{definition}

Clearly, if $\VC$ is pure then we can replace the phrase ``maximal \emph{by size}'' 
in Theorem~\ref{thm:max_size_vc} with ``maximal \emph{by inclusion}''.

\subsection{Mutation-closed domains}\label{sec:mutat-clos-doma}

\begin{definition} \label{dfn:mutation_graph_intro}
The \emph{mutation graph}  
of an oriented matroid $\Mcal$ is a simple undirected graph
on the vertex set $2^E$ such that two subsets $I,J \subset E$ are connected by an edge 
if and only if the signed set
$(I\setminus J, J\setminus I)$ is a circuit of $\Mcal$. See Figure~\ref{fig:icos_dodec} for an example of an oriented matroid for which the connected components of the mutation graph are the $1$-skeleta of the icosahedron and the dodecahedron.

We say that a subset $\dom\subset 2^E$ is a 
\emph{mutation-closed domain} for $\Mcal$ if $\dom$ is a union of
connected components of the mutation graph of $\Mcal$.

We say that a mutation-closed domain $\dom\subset 2^E$ is \emph{$\Mcal$-pure}
if all $\Mcal$-separated collections $\WS\subset \dom$ form a pure 
simplicial complex.  
In other words, $\dom$ is $\Mcal$-pure if and only if
any $\Mcal$-separated collection $\WS\subset \dom$,
which is maximal \emph{by inclusion} among all $\Mcal$-separated collections
that belong to $\dom$, is also maximal \emph{by size} among all such collections.
\end{definition}

See Section~\ref{subsection:mutation_closed_big} for more details
related to this definition. Note that a concept equivalent to the mutation graph appeared
in~\cite[Section~4]{Gioan1}. 

\begin{remark}\label{rmk:coll_domain}
Both $\Mcal$-separated collections $\WS$ and mutation-closed domains 
$\dom$ are subsets of $2^E$.  Strictly speaking, both terms
``collection'' and ``domain'' mean a ``set of subsets'' of $E$.
However, we usually use the term \emph{collection} when we talk about
$\Mcal$-separated collections.  On the other hand, \emph{domains} 
 need not be $\Mcal$-separated.
Typically, we will fix a domain $\dom$ and study all $\Mcal$-separated
collections $\WS$ inside $\dom$. The same convention is used in~\cite{DKK14}.
\end{remark}

\begin{conjecture}
\label{conj:mutation_closed_pure}
If $\Mcal$ is a pure oriented matroid then any mutation-closed domain
$\dom$ for $\Mcal$ is an $\Mcal$-pure domain.
\end{conjecture}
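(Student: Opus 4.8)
A natural first hope is that $I\in\dom$ and $J\in 2^E\setminus\dom$ are automatically $\Mcal$-separated: this would exhibit the simplicial complex of $\Mcal$-separated collections as a join over the connected components of the mutation graph, and purity of $\Mcal$ would reduce immediately to purity of each component. This hope already fails for the (pure) rank-$3$ alternating oriented matroid on $[5]$, where $I=\{1,3\}$ and $J=\{2,4,5\}$ are not $\Mcal$-separated (the circuit $(\{1,3\},\{2,4\})$ obstructs) yet lie in different components of the mutation graph --- the component of $\{1,3\}$ consisting only of two-element sets. So a less direct argument is needed.

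The plan is to reduce the conjecture to a flip-invariance statement. Fix a pure oriented matroid $\Mcal=(E,\Ccal)$ and a mutation-closed domain $\dom\subset 2^E$, so that its complement $\dom^c:=2^E\setminus\dom$ is again mutation-closed. Let $\WS\subset\dom$ be $\Mcal$-separated and maximal by inclusion among all $\Mcal$-separated collections contained in $\dom$. First I would enlarge $\WS$ to an $\Mcal$-separated collection $\WS'\subset 2^E$ that is maximal by inclusion among \emph{all} $\Mcal$-separated collections; since $\Mcal$ is pure, $\WS'$ is maximal by size, so by Theorem~\ref{thm:max_size_matroid} its cardinality equals $|\Ind(\Mcal)|$, the number of independent sets of $\Mcal$. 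Next observe that $\WS=\WS'\cap\dom$: indeed $\WS\subset\WS'\cap\dom$, the latter is $\Mcal$-separated and contained in $\dom$, and $\WS$ is maximal by inclusion among such collections. Hence
\[
|\WS|\;=\;|\Ind(\Mcal)|-|\WS'\cap\dom^c|,
\]
and the conjecture will follow once we show that $|\WS'\cap\dom^c|$ does not depend on the maximal by size collection $\WS'$ --- equivalently, that $\WS'\mapsto|\WS'\cap\dom|$ is constant on the set of all maximal by size $\Mcal$-separated collections.

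To prove this constancy I would pass, via Theorem~\ref{thm:max_size_matroid} and the Bohne--Dress theorem (Theorem~\ref{thm:bohne}), to one-element liftings of $\Mcal$ in general position (fine zonotopal tilings of $\Zon_\VC$ when $\Mcal=\Mcal_\VC$ is realizable), and argue that any two of them are connected by a sequence of \emph{flips}, a flip being a local modification supported on a cyclic sub-configuration spanned by $\rk(\Mcal)+1$ elements. The key local computation is that such a flip changes the associated maximal by size collection by deleting a single set $I$ and inserting a single set $J$ with $(I\setminus J,\,J\setminus I)\in\Ccal$; thus $I$ and $J$ are adjacent in the mutation graph, hence lie in the same connected component, hence are either both in $\dom$ or both in $\dom^c$. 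Consequently each flip preserves both $|\WS'\cap\dom|$ and $|\WS'\cap\dom^c|$, and flip-connectivity then yields the desired constancy and the conjecture.

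The main obstacle is this flip-connectivity step: the space of one-element liftings of an oriented matroid in general position is precisely the object of the generalized Baues problem for cubes, and Liu's counterexample~\cite{Liu} shows that this space can be disconnected for high-dimensional cyclic zonotopes, so the argument above does not go through verbatim for arbitrary $\Mcal$. Two routes around this suggest themselves. The first is to replace flip-connectivity by a direct proof that $|\WS'\cap\dom|$ is governed by a combinatorial invariant of $\dom$ alone --- say a count of independent sets, or of sign vectors, supported inside $\dom$ --- so that its value cannot depend on the chosen lifting even when the flip graph is disconnected. The second, which already covers all the classes of pure oriented matroids classified in this paper, is to invoke the strong restrictions on pure $\Mcal$ (rank $\le 3$, corank $\le 1$, graphical with outerplanar underlying graph, and conjecturally a short forbidden-minor list): in each of these regimes flip-connectivity of the relevant spaces of liftings is known or can be checked directly, and the local flip computation above then establishes Conjecture~\ref{conj:mutation_closed_pure} in those cases.
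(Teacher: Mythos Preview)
Your reduction is exactly the one the paper uses: extend a maximal-by-inclusion $\WS\subset\dom$ to a maximal-by-size $\WS'\subset 2^E$, note that $\WS=\WS'\cap\dom$, and reduce to the constancy of $|\WS'\cap\dom|$ over all maximal-by-size $\WS'$ (this is Conjecture~\ref{conj:domains} together with the short Proposition showing it implies Conjecture~\ref{conj:mutation_closed_pure}). The flip-connectivity argument you sketch is the content of Proposition~\ref{prop:flip_conn}, and your assessment of the obstacle --- that flip-connectivity fails in general --- is precisely where the paper leaves things; the statement remains a conjecture.

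One point needs correction. A flip at a circuit $W$ does \emph{not} in general delete a single set and insert a single set: it removes every $S\in\WS(\sigma)$ that orients $W$ positively (there can be several) and inserts $\mut{W}{S}$ for each. The statement you actually need is that $S\mapsto\mut{W}{S}$ is a bijection between the removed sets and the inserted sets, each pair being adjacent in the mutation graph; this is Lemma~\ref{lemma:algorithm}. The conclusion that $|\WS'\cap\dom|$ is flip-invariant survives, but the lemma itself is not obvious: one must show that each $\mut{W}{S}$ actually lies in $\WS(\sigma')$, i.e., that reversing the orientation of $W$ in $S$ does not accidentally mis-orient some other circuit relative to $\sigma'$. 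The paper proves this via an iterative circuit-elimination argument in the lifted matroid, and it is the real content of the flip-connected case.

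For the graphical case the paper follows a different route (Section~\ref{sect:graph}, via Proposition~\ref{prop:graph_not_G_separated_indegree}): any two subsets in the same connected component of the mutation graph are never $\Mcal$-separated, so every $\Mcal$-separated collection has at most one element per component, and purity inside $\dom$ follows by counting components. This bypasses flips entirely and is closer in spirit to your ``first route.''
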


We will prove this conjecture in two important cases.

There are local transformations of fine zonotopal tilings of $\Mcal_\VC$
called \emph{flips}.
Using the Bohne-Dress correspondence between zonotopal tilings and
liftings, one can extend the definition of flips
to one-element liftings in general position of any oriented matroid $\Mcal$.
An oriented matroid $\Mcal$ is called 
\emph{flip-connected} if any two one-element liftings of $\Mcal$
in general position are connected by a sequence of flips.
See Section~\ref{subsection:mutation_closed_big} 
and Definition~\ref{def:flip_oriented_matroid} for more details.

\begin{proposition}
\label{prop:graphicalandflipconnected}
Conjecture~\ref{conj:mutation_closed_pure} is true
for graphical oriented matroids and also for flip-connected 
oriented matroids.
\end{proposition}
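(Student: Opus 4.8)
The plan is to prove the two cases separately, in both cases reducing the purity statement on a single mutation-closed domain $\dom$ to the already-known global purity of $\Mcal$ (i.e.\ the fact that \emph{all} maximal-by-inclusion $\Mcal$-separated collections are maximal by size). The key structural observation I would use is that a mutation-closed domain $\dom$ is, by definition, a union of connected components of the mutation graph; and the flip operation on one-element liftings in general position acts inside the mutation graph on the level of the associated vertex-label collections $\Vert(\Tiling)$. Concretely, if $\Tiling'$ is obtained from $\Tiling$ by a flip, then $\Vert(\Tiling')$ and $\Vert(\Tiling)$ differ by removing one vertex label $I$ and inserting another $J$, where $(I\setminus J,\, J\setminus I)$ is a circuit of $\Mcal$, so $I$ and $J$ lie in the same connected component of the mutation graph, hence in the same mutation-closed domain. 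This is the bridge that lets a global connectivity statement descend to each domain.

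\textbf{Flip-connected case.} Suppose $\Mcal$ is flip-connected, and let $\dom$ be a mutation-closed domain; I must show $\dom$ is $\Mcal$-pure. Let $\WS \subset \dom$ be an $\Mcal$-separated collection that is maximal by inclusion \emph{within} $\dom$. Using Theorem~\ref{thm:max_size_matroid} (the oriented-matroid version of Theorem~\ref{thm:max_size_vc}), extend $\WS$ to a maximal-by-size $\Mcal$-separated collection $\WS_0 = \Vert(\Tiling_0)$ coming from a one-element lifting in general position; here I will need that a maximal-by-inclusion separated collection inside $\dom$ can be completed, \emph{outside} $\dom$, to a globally maximal-by-size one — this follows because the separation complex is a flag-type complex and any separated collection extends to a maximal-by-size one by the theorem. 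Now I want to move $\WS_0$ by flips so that the part of it lying inside $\dom$ grows to contain all of $\WS$; since flips only ever exchange elements within a single mutation component, the intersection $\WS_0 \cap \dom$ is modified by flips that stay inside $\dom$, and $\WS_0 \cap (2^E \setminus \dom)$ is modified by flips outside $\dom$ — the two do not interact. Flip-connectedness says any two liftings in general position are joined by a sequence of flips; I would use this to transport a lifting whose restriction to $\dom$ equals the (completion of) $\WS$ to compare with $\WS_0$, and conclude that $\WS_0 \cap \dom$ itself realizes the maximum possible size among $\Mcal$-separated subcollections of $\dom$. Then $\WS \subseteq \WS_0 \cap \dom$ forces, by maximality by inclusion of $\WS$ in $\dom$, that $\WS = \WS_0 \cap \dom$, which is maximal by size in $\dom$. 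The delicate point — the main obstacle — is making precise that "restricting a flip sequence to $\dom$" is itself a valid flip sequence on the smaller set, i.e.\ that flip-connectedness of $\Mcal$ really implies connectedness under flips of the poset of maximal-by-size $\Mcal$-separated subcollections of $\dom$; this requires checking that a flip supported outside $\dom$ genuinely leaves $\WS_0\cap\dom$ unchanged, which is exactly the mutation-graph observation above, and that no "cross-boundary" flip exists.

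\textbf{Graphical case.} For a graphical oriented matroid $\Mcal = \Mcal(G)$, I expect the cleanest route is to invoke the classification already asserted in the paper: a graphical oriented matroid is pure if and only if $G$ is outerplanar. So if $\Mcal(G)$ is pure then $G$ is outerplanar, and I would argue that graphical oriented matroids of outerplanar graphs are flip-connected — reducing this case to the previous one. Flip-connectedness for outerplanar graphical matroids should follow from the explicit combinatorial description of their one-element liftings / fine zonotopal tilings (via the structure of triangulations of the $n$-gon and the fact that any two triangulations of a polygon are connected by diagonal flips), which is the kind of statement established in Section~\ref{sect:graph} where the enumerative results on $G$-separated collections live. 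Alternatively — and this may be the safer fallback if flip-connectedness of all outerplanar graphical matroids is not literally proved earlier — one proves Conjecture~\ref{conj:mutation_closed_pure} for graphical $\Mcal$ directly: a mutation component of $\Mcal(G)$ has an explicit description in terms of the cycle space of $G$ (two subsets of edges connected by a mutation edge differ by a circuit of $G$, i.e.\ a simple cycle with a choice of orientation), and on each component one can run the same "complete, then flip down into the domain" argument using the planar-triangulation flip structure locally. The main obstacle here is the same as before in disguise: ensuring flips respect the decomposition into mutation components, now combined with verifying flip-connectedness for the outerplanar case, which I would extract from the combinatorial model in Section~\ref{sect:graph} rather than reprove from scratch.
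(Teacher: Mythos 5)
Your flip-connected argument follows the same overall strategy as the paper, but one of your claims is imprecise in a way that matters for the logic. You assert that a flip of a tiling ``differ[s] by removing one vertex label $I$ and inserting another $J$'' with $(I\setminus J, J\setminus I)$ a circuit. That is not what the paper's Lemma~\ref{lemma:algorithm} shows: a flip at the circuit $W$ replaces \emph{every} set $S\in\WS(\sigma)$ that orients $W$ positively with $\mut{W}{S}$, and there can be several such $S$. What actually makes the argument go through is that the map sending $S$ to $\mut{W}{S}$ (when $S$ orients $W$ positively) or to itself (otherwise) is a bijection $\WS(\sigma)\to\WS(\sigma')$ that fixes each mutation component setwise, so $|\WS(\sigma)\cap\dom|=|\WS(\sigma')\cap\dom|$ for every mutation-closed domain $\dom$. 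You identify the essential point that flips respect the decomposition into mutation components, but you should phrase it as a component-preserving bijection rather than a single vertex swap. Also note the paper's cleaner contradiction structure: it does not ``transport a lifting''; instead, given a hypothetically larger $\Mcal$-separated collection $\WS_1\subset\dom$, it completes $\WS_1$ to a maximal-by-size $\WS_2$ (using purity), connects $\sigma_\WS$ to $\sigma_{\WS_2}$ by flips, and uses the invariance of $|\cdot\cap\dom|$ along the flip chain to get $|\WS\cap\dom|=|\WS_2\cap\dom|\geq|\WS_1|$, a contradiction.

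For the graphical case there is a genuine gap. Your primary route is to invoke outerplanarity and then claim that outerplanar graphical oriented matroids are flip-connected, reducing to the flip-connected case; but the paper never establishes flip-connectedness for outerplanar graphical matroids, and you should not assume it. The paper's actual argument is both simpler and stronger: by Gioan's result (Proposition~\ref{prop:graph_not_G_separated_indegree}, via indegree sequences), any two total orientations of $G$ lying in the same connected component of the mutation graph fail to be $G$-separated. Hence every $G$-separated collection has at most one element per mutation component. Since the number of mutation components equals the number of forests $T_G(2,1)$, which is also $|\Ind(\Mcal_G)|$, a maximal-by-size collection hits \emph{every} component exactly once. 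Consequently, if $\dom$ is a mutation-closed domain consisting of $k$ components, any $G$-separated collection in $\dom$ has at most $k$ elements, and the restriction of a global maximal-by-size collection to $\dom$ has exactly $k$. This proves Conjecture~\ref{conj:domains} for \emph{all} graphical matroids (not only the outerplanar/pure ones), and Conjecture~\ref{conj:mutation_closed_pure} then follows by the short implication argument. Your ``safer fallback'' alludes to the cycle-space structure of $G$ but does not reach this counting argument, which is the actual content of the proof; you would need to add the one-element-per-component observation and the Tutte-polynomial count to close the gap.
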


The graphical case is proved in Section~\ref{sect:graph}.
The case of flip-connected matroids is proved in Section~\ref{subsection:mutation_closed_big}.

\section{Motivating examples}\label{sect:motivation}

In this section, we describe several results about strong, weak, and chord
separation that have been proved in~\cite{LZ,OPS,Galashin,DKK10,DKK14}.

\subsection{Alternating oriented matroids}\label{sec:altern-orient-matr}

A \emph{cyclic vector configuration} is a vector
configuration $\VC= (\v_1,\dots,\v_n)$ such that
all maximal $d\times d$ minors of the $d\times n$ matrix with columns 
$\v_1,\dots,\v_n$ are strictly positive.
For example, for the \emph{moment curve}
$$
\v(t) = (1,t,t^2,\dots,t^{d-1})\in \R^d, \quad t\in \R,
$$
the vector configuration 
$(\v(t_1),\dots,\v(t_n))$, for $0<t_1<\dots<t_n$, is cyclic.
(In this case, the maximal minors are given by positive
Vandermonde determinants.)

\begin{remark}
A \emph{cyclic polytope} is the convex hull of the endpoints
of vectors $\v_i$ in a cyclic vector configuration.
According to~\cite{Postnikov}, for fixed $n$ and $d$,
cyclic vector configurations 
represent points of the \emph{totally positive Grassmannian}
$Gr_{d,n}^{>0}$.
\end{remark}

It is not hard to see that all cyclic configurations 
of $n$
vectors in $\R^d$ define the same oriented matroid,
called the \emph{alternating oriented matroid} $C^{n,d}$.

We will call a zonotope $\Zon_{\VC}$ associated
with a cyclic vector configuration $\VC$ of $n$ vectors in $\R^d$ a \emph{cyclic zonotope} and denote it by $\Zon_{C^{n,d}}$. The combinatorial structure of cyclic polytopes and 
cyclic zonotopes depends only on 
$n$ and $d$ and is independent of the choice of vectors in
a cyclic vector configuration.

The following description of circuits of alternating oriented matroids
is well known and not hard to prove.  It explains why these oriented 
matroids are called ``alternating''.

\def\odd{{\operatorname{odd}}}
\def\even{{\operatorname{even}}}

\begin{lemma}
\label{lem:alternating_matroid}
The circuits of the alternating oriented matroid $C^{n,d}$ are exactly
all signed subsets of $[n]$ of the form
$(I^\odd,I^\even)$  
or $(I^\even,I^\odd)$,
where 
$I=\{i_1<i_2<\dots<i_{d+1}\}$ is any $(d+1)$-element subset of $[n]$,
$I^\odd:=\{i_1,i_3,i_5,\dots\}$,
and
$I^\even:=\{i_2,i_4,i_6,\dots\}$.
\end{lemma}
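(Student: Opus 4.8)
The plan is to work directly with the vector configuration realization and the characterization of circuits of $\Mcal_\VC$ given just after Definition~\ref{dfn:OM}. Pick the concrete cyclic configuration $\VC = (\v(t_1),\dots,\v(t_n))$ with $\v(t) = (1,t,\dots,t^{d-1})$ and $0 < t_1 < \dots < t_n$; since all cyclic configurations yield the same oriented matroid $C^{n,d}$, it suffices to compute circuits for this one. A circuit of $\Mcal_\VC$ is a signed set $X$ whose support $\Xu$ is a minimal linearly dependent subset, together with the sign pattern of the (unique up to scalar) linear dependence among $\{\v_i : i \in \Xu\}$. Since any $d$ of the vectors $\v(t_i)$ are linearly independent (Vandermonde), the minimal dependent sets are exactly the $(d+1)$-element subsets $I = \{i_1 < \dots < i_{d+1}\}$. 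So the whole content is: \emph{determine the sign pattern of the linear dependence on a $(d+1)$-subset of the moment curve.}

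The key computational step is to show that if $\sum_{k=1}^{d+1} c_k \v(t_{i_k}) = 0$ is the nontrivial dependence, then the $c_k$ strictly alternate in sign, i.e. $c_k c_{k+1} < 0$ for all $k$. First I would note that all $c_k$ are nonzero (else we'd have a dependence among $\le d$ of the vectors, contradicting Vandermonde). To get the alternation, solve for the coefficients by Cramer's rule: from $\sum_k c_k \v(t_{i_k}) = 0$ one reads off $c_k = (-1)^{k} \cdot (\text{const}) \cdot \prod_{a < b,\, a,b \ne k}(t_{i_b} - t_{i_a})$, i.e. $c_k$ is, up to a global sign and a common positive factor, $(-1)^k$ times the Vandermonde determinant of the $t_{i_j}$ with the $k$-th removed; this latter determinant is positive since $t_{i_1} < \dots < t_{i_{d+1}}$ and we keep the remaining indices in increasing order. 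Hence the sign of $c_k$ is $(-1)^k$ times a fixed global sign, which is exactly the statement that $X^+ = \{i_k : k \text{ odd}\}$ and $X^- = \{i_k : k \text{ even}\}$, or the reverse (corresponding to the two choices of global sign, consistent with axiom (C1)). Alternatively one can argue via the ``alternating'' behavior of the polynomial $\det[\v(t_{i_1})\,|\,\cdots\,|\,\v(x)\,|\,\cdots]$ expanded along the column $\v(x)$, but Cramer's rule is cleanest.

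Finally I would assemble: every $(d+1)$-subset $I$ gives rise to exactly the two signed sets $(I^{\odd}, I^{\even})$ and $(I^{\even}, I^{\odd})$ as circuits (by the sign computation above, together with (C1)), and conversely every circuit of $\Mcal_\VC$ has a $(d+1)$-element support and hence is of this form; minimality of the support is automatic from the Vandermonde independence of any $d$ columns, so axioms (C0)--(C2) are visibly satisfied by this family and nothing extra sneaks in. I do not expect any real obstacle here — the only mildly delicate point is bookkeeping the sign of the reduced Vandermonde determinant (that deleting the $k$-th node and keeping the rest in order contributes the clean factor $(-1)^k$ relative to a fixed orientation), which is a standard minor-expansion computation.
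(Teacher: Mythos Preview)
Your proof is correct and is exactly the standard argument one would give; the paper itself does not prove this lemma at all, merely stating that it is ``well known and not hard to prove,'' so there is nothing to compare against. Your Cramer's-rule computation showing that the kernel of the $d\times(d+1)$ Vandermonde matrix has coefficients $c_k = (-1)^k$ times a positive Vandermonde minor is precisely the computation the paper has in mind.
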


\begin{figure}
  \resizebox{1.0\textwidth}{!}{
\begin{tabular}{cc}
 \begin{tikzpicture}
  \draw[->] (0,0) -- (-2,2);
  \draw[->] (0,0) -- (-1,2);
  \draw[->] (0,0) -- (0,2);
  \draw[->] (0,0) -- (1,2);
  \draw[->] (0,0) -- (2,2);
  \draw (-2,2) node[anchor=south]{$\v_1$};
  \draw (-1,2) node[anchor=south]{$\v_2$};
  \draw (0,2) node[anchor=south]{$\v_3$};
  \draw (1,2) node[anchor=south]{$\v_4$};
  \draw (2,2) node[anchor=south]{$\v_5$};
 \end{tikzpicture}
 &
\tdplotsetmaincoords{80}{10}
\begin{tikzpicture}[scale=2,tdplot_main_coords]
    \foreach\x in {1,2,...,6}{
      \draw[->] (0,0,0) -- ({cos(\x*60)},{sin(\x*60)},1) node[anchor=south]{$\v_\x$};
      \fill[opacity=0.1,blue] (0,0,1) -- ({cos(\x*60)},{sin(\x*60)},1) -- ({cos(\x*60+60)},{sin(\x*60+60)},1)--cycle;
    }
    \def\y{1.3}
    \draw[blue] (\y,\y,1)  -- (\y,-\y,1)node[anchor=west,black]{$z=1$} -- (-\y,-\y,1) -- (-\y,\y,1)  --cycle;
\end{tikzpicture}
\\
  A cyclic vector configuration  & A cyclic vector configuration \\
  representing $C^{5,2}$.  & representing $C^{6,3}$.
\end{tabular}
}
\caption{Cyclic vector configurations in $\R^2$ and $\R^3$.}
\label{fig:cyclic} 
\end{figure}
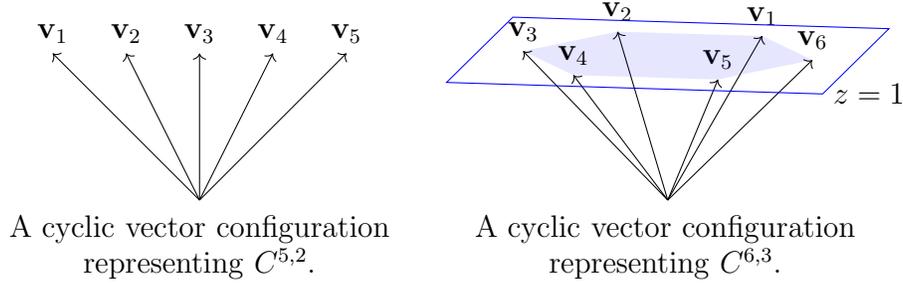


\begin{theorem}[{\cite[Theorem~4.1(G)]{Ziegler}}] 
\label{th:Ziegler_flips}
All alternating oriented matroids $C^{n,d}$ are flip-connected.
\end{theorem}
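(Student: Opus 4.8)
\textbf{Plan of proof for Theorem~\ref{th:Ziegler_flips}.}
The statement attributes flip-connectivity of all alternating oriented matroids $C^{n,d}$ to Ziegler, so the plan is to reduce it to the structure of Manin--Shekhtman's higher Bruhat orders and then invoke Ziegler's theorem directly. First I would recall the Bohne--Dress correspondence (cited later in the paper as Theorem~\ref{thm:bohne}): fine zonotopal tilings of a cyclic zonotope $\Zon_{C^{n,d}}$ are in canonical bijection with one-element liftings of the alternating oriented matroid $C^{n,d}$ in general position, and this bijection intertwines the geometric notion of a flip of tilings with the combinatorial notion of a flip of liftings (Definition~\ref{def:flip_oriented_matroid}). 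Thus it suffices to show that any two fine zonotopal tilings of $\Zon_{C^{n,d}}$ are connected by a sequence of flips.

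Next I would bring in the higher Bruhat order $B(n,d)$: by the work of Manin--Shekhtman, Voevodskii--Kapranov, and Ziegler, the fine zonotopal tilings of the cyclic zonotope $\Zon_{C^{n,d+1}}$ are in bijection with the elements of $B(n,d)$, with single-step flips corresponding to the covering relations (more precisely, to the edges of the ``flip graph'' obtained by allowing moves in both directions). Ziegler's Theorem~4.1 establishes the basic order-theoretic and connectivity properties of $B(n,d)$; part~(G) is precisely the assertion that the flip graph on maximal elements, i.e. on fine zonotopal tilings of the cyclic zonotope, is connected. So the core of the argument is just to cite \cite[Theorem~4.1(G)]{Ziegler} after making the translation above explicit.

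The one point that requires genuine care — and which I expect to be the main obstacle in writing this cleanly — is bookkeeping the shift in the rank parameter: higher Bruhat orders $B(n,d)$ are indexed so that they govern tilings of a zonotope generated by $n$ vectors on the moment curve in dimension $d+1$, whereas our $C^{n,d}$ has rank $d$. Concretely, $B(n,d-1)$ is the relevant poset for $C^{n,d}$, and I would verify this normalization against Lemma~\ref{lem:alternating_matroid} (the alternating description of circuits) so that the ``single-element flip'' on the $B(n,d-1)$ side matches the local move that replaces one fine zonotopal tiling of $\Zon_{C^{n,d}}$ by another inside a subzonotope generated by $d+1$ of the vectors. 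Once the indices are pinned down, flip-connectivity is immediate from Ziegler's result, and no further computation is needed; the statement is essentially a dictionary entry between the oriented-matroid language of this paper and the higher-Bruhat-order language of \cite{MS, VK, Ziegler}.
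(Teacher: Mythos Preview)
Your proposal is correct and matches the paper's treatment: the paper does not give an independent proof but simply cites \cite[Theorem~4.1(G)]{Ziegler} and adds a remark that Ziegler identifies fine zonotopal tilings of cyclic zonotopes with elements of the higher Bruhat orders, so flip-connectedness of $C^{n,d}$ amounts to connectedness of the corresponding higher Bruhat order poset. Your plan is an expanded version of exactly this dictionary argument, with the added (and correct) observation that the rank indexing between $B(n,d)$ and $C^{n,d}$ needs to be checked carefully.
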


\begin{remark}
Ziegler~\cite{Ziegler} identified fine zonotopal 
tilings of cyclic zonotopes with elements 
of the \emph{higher Bruhat orders} \cite{MS, VK}.  
The flip connectedness of $C^{n,d}$ is equivalent to 
the connectedness of the corresponding higher Bruhat order poset.
\end{remark}

\subsection{Strong separation}

Leclerc and Zelevinsky~\cite{LZ} defined \emph{strong separation} 
as follows. 

\begin{definition}
Two sets $I,J\subset[n]$ are called \emph{strongly separated}
if there are no three elements $i<j<k\in [n]$ such that $i,k\in I\setminus J$ 
and $j\in J\setminus I$, or vice versa.
\end{definition}


A collection $\WS\subset 2^{[n]}$ of subsets of $[n]$ is
\emph{strongly separated} if any two of its sets are strongly separated from
each other. Such a collection is called maximal \emph{by inclusion} if it is
not properly contained in any other strongly separated collection. 

\begin{theorem}[{\cite[Theorem 1.6]{LZ}}]\label{thm:purity_ss}
Any maximal \emph{by inclusion} strongly separated collection $\WS\subset 2^{[n]}$ 
is also maximal \emph{by size}:
	\[|\WS|={n\choose 0}+{n\choose 1}+{n\choose 2}.\]
	Such collections are in bijection with rhombus tilings of a convex $2n$-gon, see Figure~\ref{fig:tiling_2d}.
\end{theorem}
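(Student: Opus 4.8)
The plan is to prove Theorem~\ref{thm:purity_ss} by recognizing strong separation as an instance of $\VC$-separation for the cyclic vector configuration in $\R^2$, and then invoking the earlier structural results together with the known correspondence with rhombus tilings. Concretely, let $\VC = (\v_1,\dots,\v_n)$ be a cyclic vector configuration in $\R^2$, so that $\Mcal_\VC$ is the alternating oriented matroid $C^{n,2}$. By Lemma~\ref{lem:alternating_matroid}, the circuits of $C^{n,2}$ are exactly the signed sets $(\{i,k\},\{j\})$ and $(\{j\},\{i,k\})$ for $i<j<k$ in $[n]$. Comparing this with the definition of $\Mcal$-separation, two sets $I,J\subset[n]$ fail to be $C^{n,2}$-separated precisely when there is such a circuit with $X^+\subset I\setminus J$ and $X^-\subset J\setminus I$, i.e.\ when there are $i<j<k$ with $i,k\in I\setminus J$ and $j\in J\setminus I$ or vice versa. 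This is verbatim the Leclerc--Zelevinsky condition for $I$ and $J$ to \emph{not} be strongly separated, so strong separation coincides with $\VC$-separation for any cyclic $\VC$ in $\R^2$, and strongly separated collections are exactly $\VC$-separated collections.

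The first main step is then to apply Theorem~\ref{thm:max_size_vc}: maximal \emph{by size} $\VC$-separated collections are in bijection with fine zonotopal tilings of the cyclic zonotope $\Zon_{C^{n,2}}$ (a convex $2n$-gon), via $\Tiling\mapsto\Vert(\Tiling)$, and every such collection has size $|\Ind(\VC)|$. Since $\VC$ is cyclic in $\R^2$, every subset of size $\le 2$ is linearly independent (the $1\times 1$ and $2\times 2$ minors are positive) and no subset of size $\ge 3$ is, so $|\Ind(\VC)| = \binom{n}{0}+\binom{n}{1}+\binom{n}{2}$, giving the claimed cardinality for maximal by size collections. The identification of fine zonotopal tilings of the $2n$-gon with rhombus tilings is immediate from the definitions: a fine zonotopal tiling subdivides $\Zon_\VC$ into parallelotopes whose edges are translates of the $\v_i$, and in dimension $2$ these parallelotopes are parallelograms (rhombi after normalizing the $\v_i$), and the vertex labels $I\subset[n]$ are exactly the subsets recording which generators $[0,\v_i]$ one has "moved past" to reach that vertex --- this is the classical picture underlying Figure~\ref{fig:tiling_2d}.

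The remaining and genuinely substantive step is to upgrade "maximal by size" to "maximal by inclusion", i.e.\ to show that $C^{n,2}$ is a \emph{pure} oriented matroid, so that every maximal by inclusion strongly separated collection is automatically maximal by size. The plan here is to invoke Proposition~\ref{prop:graphicalandflipconnected} together with Theorem~\ref{th:Ziegler_flips}: alternating oriented matroids $C^{n,d}$ are flip-connected (Ziegler), and Conjecture~\ref{conj:mutation_closed_pure} --- in particular, purity --- holds for flip-connected oriented matroids. Applying this with $d = 2$ shows $C^{n,2}$ is pure, hence any maximal by inclusion $\VC$-separated collection is maximal by size, completing the bijection with rhombus tilings and the cardinality formula. (Alternatively, in rank $2$ one can argue purity directly by a local exchange/mutation argument: given a maximal by inclusion collection that is not maximal by size, the corresponding partial subdivision of the $2n$-gon has an incomplete region that can be refined by adding a rhombus, contradicting maximality by inclusion --- but deferring to the flip-connectedness machinery is cleaner.)

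I expect the main obstacle to be the purity statement, i.e.\ the passage from "by size" to "by inclusion"; the translation of strong separation into oriented-matroid language and the identification of the zonotope with a $2n$-gon are essentially bookkeeping. If one wanted a self-contained proof in this special case, the hard work would be reproving flip-connectedness (Ziegler's theorem) and the fact that flip-connectedness implies purity in rank $2$, which is precisely the content the paper isolates in Sections~\ref{sect:max_by_sz} and~\ref{subsection:mutation_closed_big}.
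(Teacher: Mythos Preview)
Your proposal has a genuine gap in the crucial purity step. You invoke Proposition~\ref{prop:graphicalandflipconnected} together with Ziegler's flip-connectedness (Theorem~\ref{th:Ziegler_flips}) to conclude that $C^{n,2}$ is pure. But Proposition~\ref{prop:graphicalandflipconnected} does \emph{not} assert that flip-connected oriented matroids are pure. It asserts that Conjecture~\ref{conj:mutation_closed_pure} holds for them, and that conjecture has purity as a \emph{hypothesis}, not a conclusion: it says that if $\Mcal$ is pure, then every mutation-closed domain for $\Mcal$ is $\Mcal$-pure. Likewise, Proposition~\ref{prop:flip_conn} begins ``Suppose that an oriented matroid $\Mcal$ is pure and flip-connected.'' So flip-connectedness alone buys you nothing about purity of $2^{[n]}$ itself, and your argument is circular at exactly the point that carries all the weight.

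More broadly, the paper does not give its own proof of Theorem~\ref{thm:purity_ss}: it is quoted as a result of Leclerc--Zelevinsky, and the paper's own machinery runs in the opposite direction. In particular, Corollary~\ref{cor:c23narepure} (purity of $C^{n,2}$) is explicitly \emph{deduced from} Theorem~\ref{thm:purity_ss}, and Proposition~\ref{prop:rank_2} (rank~$2$ matroids are pure) likewise cites Theorem~\ref{thm:purity_ss} as its input. So you cannot bootstrap purity of $C^{n,2}$ from the results of this paper without reproving Leclerc--Zelevinsky's theorem by hand. Your parenthetical ``alternative'' --- a local refinement argument on a partial tiling of the $2n$-gon --- is the right shape for a direct rank~$2$ argument, but as written it is only a sketch; making it precise is essentially what \cite{LZ} does. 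The translation of strong separation into $C^{n,2}$-separation and the identification $|\Ind(C^{n,2})|=\binom{n}{0}+\binom{n}{1}+\binom{n}{2}$ are fine.
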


For example, Figure~\ref{fig:tiling_2d} shows a maximal
by inclusion strongly separated collection
with ${5\choose 0} + {5\choose 1} + {5\choose 2} =16$ elements. 


\begin{figure}
\scalebox{0.6}{
\begin{tikzpicture}
\node[draw,ellipse,black,fill=white] (node0) at (0.00,0.00) {$\emptyset$};
\node[draw,ellipse,black,fill=white] (node1) at (-3.80,1.20) {$1$};
\node[draw,ellipse,black,fill=white] (node5) at (3.80,1.20) {$5$};
\node[draw,ellipse,black,fill=white] (node12) at (-5.70,2.40) {$1\,2$};
\node[draw,ellipse,black,fill=white] (node13) at (-3.80,2.40) {$1\,3$};
\node[draw,ellipse,black,fill=white] (node15) at (0.00,2.40) {$1\,5$};
\node[draw,ellipse,black,fill=white] (node35) at (3.80,2.40) {$3\,5$};
\node[draw,ellipse,black,fill=white] (node45) at (5.70,2.40) {$4\,5$};
\node[draw,ellipse,black,fill=white] (node123) at (-5.70,3.60) {$1\,2\,3$};
\node[draw,ellipse,black,fill=white] (node135) at (0.00,3.60) {$1\,3\,5$};
\node[draw,ellipse,black,fill=white] (node235) at (1.90,3.60) {$2\,3\,5$};
\node[draw,ellipse,black,fill=white] (node345) at (5.70,3.60) {$3\,4\,5$};
\node[draw,ellipse,black,fill=white] (node1234) at (-3.80,4.80) {$1\,2\,3\,4$};
\node[draw,ellipse,black,fill=white] (node1235) at (-1.90,4.80) {$1\,2\,3\,5$};
\node[draw,ellipse,black,fill=white] (node2345) at (3.80,4.80) {$2\,3\,4\,5$};
\node[draw,ellipse,black,fill=white] (node12345) at (0.00,6.00) {$1\,2\,3\,4\,5$};
\node[draw,ellipse,black,fill=white] (node0) at (0.00,0.00) {$\emptyset$};
\node[draw,ellipse,black,fill=white] (node1) at (-3.80,1.20) {$1$};
\node[draw,ellipse,black,fill=white] (node5) at (3.80,1.20) {$5$};
\node[draw,ellipse,black,fill=white] (node12) at (-5.70,2.40) {$1\,2$};
\node[draw,ellipse,black,fill=white] (node13) at (-3.80,2.40) {$1\,3$};
\node[draw,ellipse,black,fill=white] (node15) at (0.00,2.40) {$1\,5$};
\node[draw,ellipse,black,fill=white] (node35) at (3.80,2.40) {$3\,5$};
\node[draw,ellipse,black,fill=white] (node45) at (5.70,2.40) {$4\,5$};
\node[draw,ellipse,black,fill=white] (node123) at (-5.70,3.60) {$1\,2\,3$};
\node[draw,ellipse,black,fill=white] (node135) at (0.00,3.60) {$1\,3\,5$};
\node[draw,ellipse,black,fill=white] (node235) at (1.90,3.60) {$2\,3\,5$};
\node[draw,ellipse,black,fill=white] (node345) at (5.70,3.60) {$3\,4\,5$};
\node[draw,ellipse,black,fill=white] (node1234) at (-3.80,4.80) {$1\,2\,3\,4$};
\node[draw,ellipse,black,fill=white] (node1235) at (-1.90,4.80) {$1\,2\,3\,5$};
\node[draw,ellipse,black,fill=white] (node2345) at (3.80,4.80) {$2\,3\,4\,5$};
\node[draw,ellipse,black,fill=white] (node12345) at (0.00,6.00) {$1\,2\,3\,4\,5$};
\draw[line width=0.04mm,black] (node0) -- (node1);
\draw[line width=0.04mm,black] (node0) -- (node5);
\draw[line width=0.04mm,black] (node1) -- (node12);
\draw[line width=0.04mm,black] (node1) -- (node13);
\draw[line width=0.04mm,black] (node1) -- (node15);
\draw[line width=0.04mm,black] (node5) -- (node15);
\draw[line width=0.04mm,black] (node5) -- (node35);
\draw[line width=0.04mm,black] (node5) -- (node45);
\draw[line width=0.04mm,black] (node12) -- (node123);
\draw[line width=0.04mm,black] (node13) -- (node123);
\draw[line width=0.04mm,black] (node13) -- (node135);
\draw[line width=0.04mm,black] (node15) -- (node135);
\draw[line width=0.04mm,black] (node35) -- (node135);
\draw[line width=0.04mm,black] (node35) -- (node235);
\draw[line width=0.04mm,black] (node35) -- (node345);
\draw[line width=0.04mm,black] (node45) -- (node345);
\draw[line width=0.04mm,black] (node123) -- (node1234);
\draw[line width=0.04mm,black] (node123) -- (node1235);
\draw[line width=0.04mm,black] (node135) -- (node1235);
\draw[line width=0.04mm,black] (node235) -- (node1235);
\draw[line width=0.04mm,black] (node235) -- (node2345);
\draw[line width=0.04mm,black] (node345) -- (node2345);
\draw[line width=0.04mm,black] (node1234) -- (node12345);
\draw[line width=0.04mm,black] (node1235) -- (node12345);
\draw[line width=0.04mm,black] (node2345) -- (node12345);
\node[draw,ellipse,black,fill=white] (node0) at (0.00,0.00) {$\emptyset$};
\node[draw,ellipse,black,fill=white] (node1) at (-3.80,1.20) {$1$};
\node[draw,ellipse,black,fill=white] (node5) at (3.80,1.20) {$5$};
\node[draw,ellipse,black,fill=white] (node12) at (-5.70,2.40) {$1\,2$};
\node[draw,ellipse,black,fill=white] (node13) at (-3.80,2.40) {$1\,3$};
\node[draw,ellipse,black,fill=white] (node15) at (0.00,2.40) {$1\,5$};
\node[draw,ellipse,black,fill=white] (node35) at (3.80,2.40) {$3\,5$};
\node[draw,ellipse,black,fill=white] (node45) at (5.70,2.40) {$4\,5$};
\node[draw,ellipse,black,fill=white] (node123) at (-5.70,3.60) {$1\,2\,3$};
\node[draw,ellipse,black,fill=white] (node135) at (0.00,3.60) {$1\,3\,5$};
\node[draw,ellipse,black,fill=white] (node235) at (1.90,3.60) {$2\,3\,5$};
\node[draw,ellipse,black,fill=white] (node345) at (5.70,3.60) {$3\,4\,5$};
\node[draw,ellipse,black,fill=white] (node1234) at (-3.80,4.80) {$1\,2\,3\,4$};
\node[draw,ellipse,black,fill=white] (node1235) at (-1.90,4.80) {$1\,2\,3\,5$};
\node[draw,ellipse,black,fill=white] (node2345) at (3.80,4.80) {$2\,3\,4\,5$};
\node[draw,ellipse,black,fill=white] (node12345) at (0.00,6.00) {$1\,2\,3\,4\,5$};
\end{tikzpicture}}

\caption{\label{fig:tiling_2d}Vertex labels of a rhombus tiling of a convex
$2\times 5$-gon form a maximal by inclusion (and by size) 
strongly separated collection.}
\end{figure}
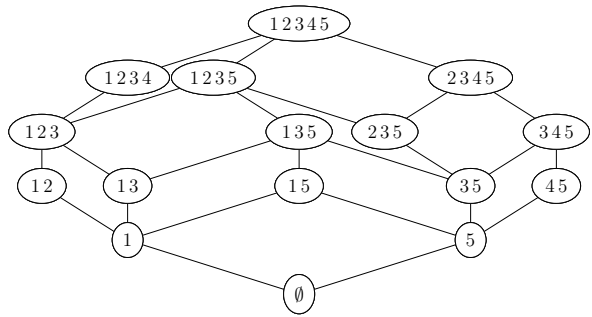

It follows directly from the definitions and Lemma~\ref{lem:alternating_matroid}
that strong separation is equivalent to $\Mcal$-separation
for the rank $2$ alternating oriented matroid $\Mcal=C^{n,2}$.

\begin{lemma}
A collection $\WS\subset 2^{[n]}$ is strongly separated if and only if
$\WS$ is $C^{n,2}$-separated. 
\end{lemma}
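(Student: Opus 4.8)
The plan is to show that the two conditions—strong separation and $C^{n,2}$-separation—unpack to literally the same forbidden configuration, using Lemma~\ref{lem:alternating_matroid}. Recall that $I,J\subset[n]$ are \emph{not} $C^{n,2}$-separated precisely when there is a circuit $X$ of $C^{n,2}$ with $X^+\subset I\setminus J$ and $X^-\subset J\setminus I$. By Lemma~\ref{lem:alternating_matroid}, the circuits of $C^{n,2}$ are exactly the signed sets $(I^{\odd},I^{\even})$ and $(I^{\even},I^{\odd})$ where $I=\{i_1<i_2<i_3\}$ ranges over all $3$-element subsets of $[n]$; explicitly, the supports are triples $\{i<j<k\}$, and the two circuits on that triple are $(\{i,k\},\{j\})$ and $(\{j\},\{i,k\})$.

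First I would spell out the ``not separated'' direction: $I,J$ fail to be $C^{n,2}$-separated iff there exist $i<j<k$ with either $\{i,k\}\subset I\setminus J$ and $j\in J\setminus I$, or $\{i,k\}\subset J\setminus I$ and $j\in I\setminus J$. That is word-for-word the negation of the definition of strong separation: there is no triple $i<j<k$ with $i,k$ in one of $I\setminus J$, $J\setminus I$ and $j$ in the other. Hence $I$ and $J$ are strongly separated if and only if they are $C^{n,2}$-separated. Since both ``strongly separated collection'' and ``$C^{n,2}$-separated collection'' are defined by requiring pairwise separation of all elements, the statement about collections $\WS\subset 2^{[n]}$ follows immediately.

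There is essentially no obstacle here—the only thing requiring a line of care is matching the sign conventions, i.e.\ checking that one does not need \emph{both} a circuit and its negative: since axiom (C1) guarantees $-X\in\Ccal$ whenever $X\in\Ccal$, the pair of circuits $(\{i,k\},\{j\})$ and $(\{j\},\{i,k\})$ on a triple $\{i<j<k\}$ exactly accounts for the ``or vice versa'' in the definition of strong separation. This is why the statement is flagged as an immediate consequence of the definitions together with Lemma~\ref{lem:alternating_matroid}, and the proof is a one-paragraph verification.
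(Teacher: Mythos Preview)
Your proof is correct and matches the paper's approach exactly: the paper simply states that the lemma ``follows directly from the definitions and Lemma~\ref{lem:alternating_matroid},'' and your argument is precisely the unpacking of that remark. There is nothing to add.
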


In this case, the cyclic zonotope $\Zon_{C^{n,2}}$  is a
centrally symmetric $2n$-gon.   Fine zonotopal tilings 
of this zonotope are exactly rhombus tilings of the $2n$-gon.

Notice that the number of independent sets of the rank $2$ alternating
oriented matroid $C^{n,2}$, 
which are all subsets of $[n]$ with at most 2 elements,
is exactly ${n\choose 0}+{n\choose 1}+{n\choose 2}$.

\subsection{Chord separation}

Before we discuss weak separation, let us first talk about a
related notion of \emph{chord separation}, 
which was recently defined in~\cite{Galashin} as follows.

\begin{definition}
Two sets $I,J\subset [n]$ 
are \emph{chord separated} 
if and only if there do not exist numbers 
$1\leq i<j<k<l\leq n$ such that $i,k\in I\setminus J$ and 
$j,l\in J\setminus I$, or vice versa.
\end{definition}

\begin{theorem}[{\cite[Theorem~1.2]{Galashin}}]
\label{thm:purity_chord} 
Any
maximal \emph{by inclusion} chord separated collection $\WS\subset 2^{[n]}$ has
size
\[ {n\choose 0}+ {n\choose 1}+{n\choose 2}+{n\choose 3}.\]
Such collections are in bijection with fine zonotopal tilings of the three-dimensional cyclic zonotope $\Zon_{C^{n,3}}$.
\end{theorem}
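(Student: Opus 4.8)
The plan is to first pass from chord separation to oriented-matroid separation. By Lemma~\ref{lem:alternating_matroid} the circuits of $C^{n,3}$ are precisely the signed sets $(\{i_1,i_3\},\{i_2,i_4\})$ and $(\{i_2,i_4\},\{i_1,i_3\})$ ranging over all $i_1<i_2<i_3<i_4$ in $[n]$. Plugging this into the definition of $\Mcal$-separation, two sets $I,J$ fail to be $C^{n,3}$-separated exactly when there exist $i<j<k<l$ with $\{i,k\}\subset I\setminus J$ and $\{j,l\}\subset J\setminus I$ (or vice versa), i.e.\ exactly when $I,J$ are not chord separated. Hence the chord separated collections are exactly the $C^{n,3}$-separated collections, and the theorem splits into two tasks: (a) identify the maximal \emph{by size} ones with fine zonotopal tilings and compute their common size, and (b) prove that $C^{n,3}$ is a \emph{pure} oriented matroid.

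\noindent\textbf{Size and the tiling bijection.} Task (a) is immediate from Theorem~\ref{thm:max_size_vc}. Fix any cyclic vector configuration $\VC=(\v_1,\dots,\v_n)$ in $\R^3$; then $\Mcal_\VC=C^{n,3}$ and $\Zon_\VC=\Zon_{C^{n,3}}$, so the maximal \emph{by size} chord separated collections are exactly the vertex-label collections $\Vert(\Tiling)$ of the fine zonotopal tilings $\Tiling$ of $\Zon_{C^{n,3}}$, each of size $|\Ind(\VC)|$ (equivalently, via the Bohne--Dress theorem, these are the one-element liftings of $C^{n,3}$ in general position). Positivity of all maximal minors of $\VC$ forces every subset of $[n]$ of size $\le 3$ to be independent, while every $4$-subset is dependent in $\R^3$; therefore $|\Ind(\VC)|=\binom n0+\binom n1+\binom n2+\binom n3$, as claimed.

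\noindent\textbf{Purity.} For task (b) I would use the following short route: by the discussion of Section~\ref{sec:altern-orient-matr}, a cyclic vector configuration represents a point of the totally positive Grassmannian $Gr_{3,n}^{>0}$, so $C^{n,3}$ is a positroid of rank $3$ (it is the uniform oriented matroid of rank $3$ on $[n]$); hence it is pure by the classification of pure rank-$3$ oriented matroids stated in the introduction and proved in Section~\ref{sect:rk_3}. Combined with task (a), this gives that every maximal by inclusion chord separated collection has size $\binom n0+\binom n1+\binom n2+\binom n3$ and is the vertex set of a fine zonotopal tiling of $\Zon_{C^{n,3}}$. A route independent of the rank-$3$ classification would be induction on $n$ through the minor identities $C^{n,3}\setminus n=C^{n-1,3}$ and $C^{n,3}/n=C^{n-1,2}$ (the latter read off from Lemma~\ref{lem:alternating_matroid} with $d=2$, and pure by Theorem~\ref{thm:purity_ss}): given a maximal by inclusion collection $\WS$, split it according to whether $n$ is a member and try to match $|\WS|$ with $|\Ind(C^{n-1,3})|+|\Ind(C^{n-1,2})|=\binom n0+\binom n1+\binom n2+\binom n3$, using the flip-connectedness of $C^{n,3}$ (Theorem~\ref{th:Ziegler_flips}) to navigate between zonotopal tilings along the way.

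\noindent\textbf{Main obstacle.} The hard part is the inductive step just sketched. Unlike the rank-$2$ (strong separation) situation, deleting $n$ from the sets of $\WS$ that contain $n$ does \emph{not} produce a $C^{n-1,2}$-separated collection in general: a circuit of $C^{n,3}$ passing through $n$ imposes a strictly weaker constraint than the corresponding size-$3$ circuit of $C^{n,2}$, so the two halves of $\WS$ are not independent of one another. Making the induction work therefore requires a delicate ``completion lemma'' controlling the interaction between chord separated sets that contain $n$ and those that do not, of the kind found in the exchange arguments of Oh--Postnikov--Speyer and Danilov--Karzanov--Koshevoy; this is where essentially all of the difficulty lies, and it is precisely what the positroid shortcut above avoids.
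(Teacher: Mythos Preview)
Your ``positroid shortcut'' is circular. In this paper, Theorem~\ref{thm:purity_chord} is a \emph{cited} result (from \cite{Galashin}), used as an input; in particular, Corollary~\ref{cor:c23narepure} records that $C^{n,3}$ is pure as an immediate consequence of Theorem~\ref{thm:purity_chord}. The proof in Section~\ref{sect:rk_3} (Theorem~\ref{thm:purity_vc}) then argues by reverse induction on the number of sides of the bounding polygon, with the totally positive (uniform) case $C^{n,3}$ as the base of the induction --- and that base is supplied precisely by Corollary~\ref{cor:c23narepure}, i.e.\ by Theorem~\ref{thm:purity_chord}. So invoking the rank-$3$ classification to establish purity of $C^{n,3}$ assumes what you are trying to prove.

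The paper is explicit about the actual dependency chain: after Corollary~\ref{cor:Cn3isMpure} it notes that the proof of purity of $C^{n,3}$ in \cite{Galashin} rests on the $C^{n,3}$-purity of the domains $\binom{[n]}{k}$ established in \cite{OPS,DKK10} (weak separation). In other words, the genuine work sits in the Oh--Postnikov--Speyer/Danilov--Karzanov--Koshevoy completion arguments, exactly the ``delicate completion lemma'' you flag as the main obstacle. Your alternative inductive route via $C^{n,3}\setminus n$ and $C^{n,3}/n$ is a reasonable heuristic for the size count, but, as you yourself say, it does not control the interaction between the two halves of $\WS$, and you have not supplied the missing argument; the appeal to flip-connectedness (Theorem~\ref{th:Ziegler_flips}) only helps \emph{after} purity is known (cf.\ Proposition~\ref{prop:flip_conn}). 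Task~(a) --- the size and the tiling bijection for maximal \emph{by size} collections via Theorem~\ref{thm:max_size_vc} --- is fine; the gap is entirely in task~(b).
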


Figure~\ref{fig:chord} shows an example of a zonotopal tiling
of $\Zon_{\VC}$ whose vertex labels form a maximal \empu{by inclusion}
chord separated collection $\WS\subset 2^{[n]}$ for $n=5$.  This
collection has size ${5\choose 0}+{5\choose 1}+{5\choose 2} +
{5\choose 3}=26$, as predicted by Theorem~\ref{thm:purity_chord}.

\begin{figure}
\scalebox{0.6}{
\begin{tikzpicture}
\node[draw,ellipse,black,fill=white] (node0) at (0.00,0.00) {$\emptyset$};
\node[draw,ellipse,black,fill=white] (node1) at (5.60,1.75) {$1$};
\node[draw,ellipse,black,fill=white] (node2) at (1.73,2.45) {$2$};
\node[draw,ellipse,black,fill=white] (node3) at (-4.53,2.18) {$3$};
\node[draw,ellipse,black,fill=white] (node4) at (-4.53,1.32) {$4$};
\node[draw,ellipse,black,fill=white] (node5) at (1.73,1.05) {$5$};
\node[draw,ellipse,black,fill=white] (node12) at (7.33,5.33) {$1\,2$};
\node[draw,ellipse,black,fill=white] (node13) at (1.07,5.06) {$1\,3$};
\node[draw,ellipse,black,fill=white] (node15) at (7.33,3.93) {$1\,5$};
\node[draw,ellipse,black,fill=white] (node23) at (-2.80,5.76) {$2\,3$};
\node[draw,ellipse,black,fill=white] (node34) at (-9.06,4.63) {$3\,4$};
\node[draw,ellipse,black,fill=white] (node35) at (-2.80,4.36) {$3\,5$};
\node[draw,ellipse,black,fill=white] (node45) at (-2.80,3.50) {$4\,5$};
\node[draw,ellipse,black,fill=white] (node123) at (2.80,9.07) {$1\,2\,3$};
\node[draw,ellipse,black,fill=white] (node125) at (9.06,7.94) {$1\,2\,5$};
\node[draw,ellipse,black,fill=white] (node135) at (2.80,7.67) {$1\,3\,5$};
\node[draw,ellipse,black,fill=white] (node145) at (2.80,6.81) {$1\,4\,5$};
\node[draw,ellipse,black,fill=white] (node234) at (-7.33,8.64) {$2\,3\,4$};
\node[draw,ellipse,black,fill=white] (node235) at (-1.07,8.37) {$2\,3\,5$};
\node[draw,ellipse,black,fill=white] (node345) at (-7.33,7.24) {$3\,4\,5$};
\node[draw,ellipse,black,fill=white] (node1234) at (-1.73,11.52) {$1\,2\,3\,4$};
\node[draw,ellipse,black,fill=white] (node1235) at (4.53,11.25) {$1\,2\,3\,5$};
\node[draw,ellipse,black,fill=white] (node1245) at (4.53,10.39) {$1\,2\,4\,5$};
\node[draw,ellipse,black,fill=white] (node1345) at (-1.73,10.12) {$1\,3\,4\,5$};
\node[draw,ellipse,black,fill=white] (node2345) at (-5.60,10.82) {$2\,3\,4\,5$};
\node[draw,ellipse,black,fill=white] (node12345) at (-0.00,12.57) {$1\,2\,3\,4\,5$};
\coordinate (wnode0n1) at (0.93,2.13);
\coordinate (wnode0n2) at (0.93,1.66);
\coordinate (wnode0n3) at (-2.44,1.52);
\coordinate (wnode1n1) at (5.24,4.77);
\fill [opacity=0.2,black] (node12.center) -- (node13.center) -- (node23.center) -- cycle;
\coordinate (bnode123n1) at (1.87,5.38);
\fill [opacity=0.2,black] (node13.center) -- (node15.center) -- (node35.center) -- cycle;
\coordinate (bnode135n1) at (1.87,4.45);
\coordinate (wnode3n1) at (-1.51,5.06);
\coordinate (wnode5n1) at (0.58,3.93);
\coordinate (wnode3n2) at (-4.89,4.92);
\fill [opacity=0.2,black] (node34.center) -- (node35.center) -- (node45.center) -- cycle;
\coordinate (bnode345n1) at (-4.89,4.16);
\fill [opacity=0.2,black] (node123.center) -- (node125.center) -- (node135.center) -- cycle;
\coordinate (bnode1235n1) at (4.89,8.23);
\fill [opacity=0.2,black] (node123.center) -- (node135.center) -- (node235.center) -- cycle;
\coordinate (bnode1235n2) at (1.51,8.37);
\coordinate (wnode23n1) at (-1.87,8.70);
\coordinate (wnode15n1) at (4.89,7.48);
\fill [opacity=0.2,black] (node135.center) -- (node145.center) -- (node345.center) -- cycle;
\coordinate (bnode1345n1) at (-0.58,7.24);
\coordinate (wnode35n1) at (-1.87,7.76);
\fill [opacity=0.2,black] (node234.center) -- (node235.center) -- (node345.center) -- cycle;
\coordinate (bnode2345n1) at (-5.24,8.09);
\fill [opacity=0.2,black] (node1234.center) -- (node1235.center) -- (node2345.center) -- cycle;
\coordinate (bnode12345n1) at (-0.93,11.20);
\fill [opacity=0.2,black] (node1235.center) -- (node1245.center) -- (node1345.center) -- cycle;
\coordinate (bnode12345n2) at (2.44,10.59);
\fill [opacity=0.2,black] (node1235.center) -- (node1345.center) -- (node2345.center) -- cycle;
\coordinate (bnode12345n3) at (-0.93,10.73);
\draw[line width=0.03mm,black] (node1) -- (node2);
\draw[line width=0.03mm,black] (node1) -- (node3);
\draw[line width=0.03mm,black] (node1) -- (node5);
\draw[line width=0.03mm,black] (node2) -- (node1);
\draw[line width=0.03mm,black] (node2) -- (node3);
\draw[line width=0.03mm,black] (node3) -- (node1);
\draw[line width=0.03mm,black] (node3) -- (node2);
\draw[line width=0.03mm,black] (node3) -- (node4);
\draw[line width=0.03mm,black] (node3) -- (node5);
\draw[line width=0.03mm,black] (node4) -- (node3);
\draw[line width=0.03mm,black] (node4) -- (node5);
\draw[line width=0.03mm,black] (node5) -- (node1);
\draw[line width=0.03mm,black] (node5) -- (node3);
\draw[line width=0.03mm,black] (node5) -- (node4);
\draw[line width=0.03mm,black] (node12) -- (node13);
\draw[line width=0.03mm,black] (node12) -- (node15);
\draw[line width=0.03mm,black] (node12) -- (node23);
\draw[line width=0.03mm,black] (node13) -- (node12);
\draw[line width=0.03mm,black] (node13) -- (node15);
\draw[line width=0.03mm,black] (node13) -- (node23);
\draw[line width=0.03mm,black] (node13) -- (node35);
\draw[line width=0.03mm,black] (node15) -- (node12);
\draw[line width=0.03mm,black] (node15) -- (node13);
\draw[line width=0.03mm,black] (node15) -- (node35);
\draw[line width=0.03mm,black] (node15) -- (node45);
\draw[line width=0.03mm,black] (node23) -- (node12);
\draw[line width=0.03mm,black] (node23) -- (node13);
\draw[line width=0.03mm,black] (node23) -- (node34);
\draw[line width=0.03mm,black] (node23) -- (node35);
\draw[line width=0.03mm,black] (node34) -- (node23);
\draw[line width=0.03mm,black] (node34) -- (node35);
\draw[line width=0.03mm,black] (node34) -- (node45);
\draw[line width=0.03mm,black] (node35) -- (node13);
\draw[line width=0.03mm,black] (node35) -- (node15);
\draw[line width=0.03mm,black] (node35) -- (node23);
\draw[line width=0.03mm,black] (node35) -- (node34);
\draw[line width=0.03mm,black] (node35) -- (node45);
\draw[line width=0.03mm,black] (node45) -- (node15);
\draw[line width=0.03mm,black] (node45) -- (node34);
\draw[line width=0.03mm,black] (node45) -- (node35);
\draw[line width=0.03mm,black] (node123) -- (node125);
\draw[line width=0.03mm,black] (node123) -- (node135);
\draw[line width=0.03mm,black] (node123) -- (node234);
\draw[line width=0.03mm,black] (node123) -- (node235);
\draw[line width=0.03mm,black] (node125) -- (node123);
\draw[line width=0.03mm,black] (node125) -- (node135);
\draw[line width=0.03mm,black] (node125) -- (node145);
\draw[line width=0.03mm,black] (node135) -- (node123);
\draw[line width=0.03mm,black] (node135) -- (node125);
\draw[line width=0.03mm,black] (node135) -- (node145);
\draw[line width=0.03mm,black] (node135) -- (node235);
\draw[line width=0.03mm,black] (node135) -- (node345);
\draw[line width=0.03mm,black] (node145) -- (node125);
\draw[line width=0.03mm,black] (node145) -- (node135);
\draw[line width=0.03mm,black] (node145) -- (node345);
\draw[line width=0.03mm,black] (node234) -- (node123);
\draw[line width=0.03mm,black] (node234) -- (node235);
\draw[line width=0.03mm,black] (node234) -- (node345);
\draw[line width=0.03mm,black] (node235) -- (node123);
\draw[line width=0.03mm,black] (node235) -- (node135);
\draw[line width=0.03mm,black] (node235) -- (node234);
\draw[line width=0.03mm,black] (node235) -- (node345);
\draw[line width=0.03mm,black] (node345) -- (node135);
\draw[line width=0.03mm,black] (node345) -- (node145);
\draw[line width=0.03mm,black] (node345) -- (node234);
\draw[line width=0.03mm,black] (node345) -- (node235);
\draw[line width=0.03mm,black] (node1234) -- (node1235);
\draw[line width=0.03mm,black] (node1234) -- (node2345);
\draw[line width=0.03mm,black] (node1235) -- (node1234);
\draw[line width=0.03mm,black] (node1235) -- (node1245);
\draw[line width=0.03mm,black] (node1235) -- (node1345);
\draw[line width=0.03mm,black] (node1235) -- (node2345);
\draw[line width=0.03mm,black] (node1245) -- (node1235);
\draw[line width=0.03mm,black] (node1245) -- (node1345);
\draw[line width=0.03mm,black] (node1345) -- (node1235);
\draw[line width=0.03mm,black] (node1345) -- (node1245);
\draw[line width=0.03mm,black] (node1345) -- (node2345);
\draw[line width=0.03mm,black] (node2345) -- (node1234);
\draw[line width=0.03mm,black] (node2345) -- (node1235);
\draw[line width=0.03mm,black] (node2345) -- (node1345);
\node[draw,ellipse,black,fill=white] (node0) at (0.00,0.00) {$\emptyset$};
\node[draw,ellipse,black,fill=white] (node1) at (5.60,1.75) {$1$};
\node[draw,ellipse,black,fill=white] (node2) at (1.73,2.45) {$2$};
\node[draw,ellipse,black,fill=white] (node3) at (-4.53,2.18) {$3$};
\node[draw,ellipse,black,fill=white] (node4) at (-4.53,1.32) {$4$};
\node[draw,ellipse,black,fill=white] (node5) at (1.73,1.05) {$5$};
\node[draw,ellipse,black,fill=white] (node12) at (7.33,5.33) {$1\,2$};
\node[draw,ellipse,black,fill=white] (node13) at (1.07,5.06) {$1\,3$};
\node[draw,ellipse,black,fill=white] (node15) at (7.33,3.93) {$1\,5$};
\node[draw,ellipse,black,fill=white] (node23) at (-2.80,5.76) {$2\,3$};
\node[draw,ellipse,black,fill=white] (node34) at (-9.06,4.63) {$3\,4$};
\node[draw,ellipse,black,fill=white] (node35) at (-2.80,4.36) {$3\,5$};
\node[draw,ellipse,black,fill=white] (node45) at (-2.80,3.50) {$4\,5$};
\node[draw,ellipse,black,fill=white] (node123) at (2.80,9.07) {$1\,2\,3$};
\node[draw,ellipse,black,fill=white] (node125) at (9.06,7.94) {$1\,2\,5$};
\node[draw,ellipse,black,fill=white] (node135) at (2.80,7.67) {$1\,3\,5$};
\node[draw,ellipse,black,fill=white] (node145) at (2.80,6.81) {$1\,4\,5$};
\node[draw,ellipse,black,fill=white] (node234) at (-7.33,8.64) {$2\,3\,4$};
\node[draw,ellipse,black,fill=white] (node235) at (-1.07,8.37) {$2\,3\,5$};
\node[draw,ellipse,black,fill=white] (node345) at (-7.33,7.24) {$3\,4\,5$};
\node[draw,ellipse,black,fill=white] (node1234) at (-1.73,11.52) {$1\,2\,3\,4$};
\node[draw,ellipse,black,fill=white] (node1235) at (4.53,11.25) {$1\,2\,3\,5$};
\node[draw,ellipse,black,fill=white] (node1245) at (4.53,10.39) {$1\,2\,4\,5$};
\node[draw,ellipse,black,fill=white] (node1345) at (-1.73,10.12) {$1\,3\,4\,5$};
\node[draw,ellipse,black,fill=white] (node2345) at (-5.60,10.82) {$2\,3\,4\,5$};
\node[draw,ellipse,black,fill=white] (node12345) at (-0.00,12.57) {$1\,2\,3\,4\,5$};
\node[scale=2,anchor=west] (SIGN_0) at (0.70,-0.12) {$z=0$};
\node[scale=2,anchor=west] (SIGN_1) at (7.00,1.75) {$z=1$};
\node[scale=2,anchor=north west] (SIGN_2) at (8.73,5.33) {$z=2$};
\node[scale=2,anchor=west] (SIGN_3) at (10.46,7.94) {$z=3$};
\node[scale=2,anchor=north west] (SIGN_4) at (5.93,11.25) {$z=4$};
\node[scale=2,anchor=west] (SIGN_5) at (1.40,12.57) {$z=5$};
\end{tikzpicture}}

\caption{\label{fig:chord}The horizontal sections of a zonotopal tiling of $\Zon_{C^{5,3}}$ by planes $z=k$, for $k=0,1,\dots, 5$, are dual to trivalent plabic graphs.}
\end{figure}
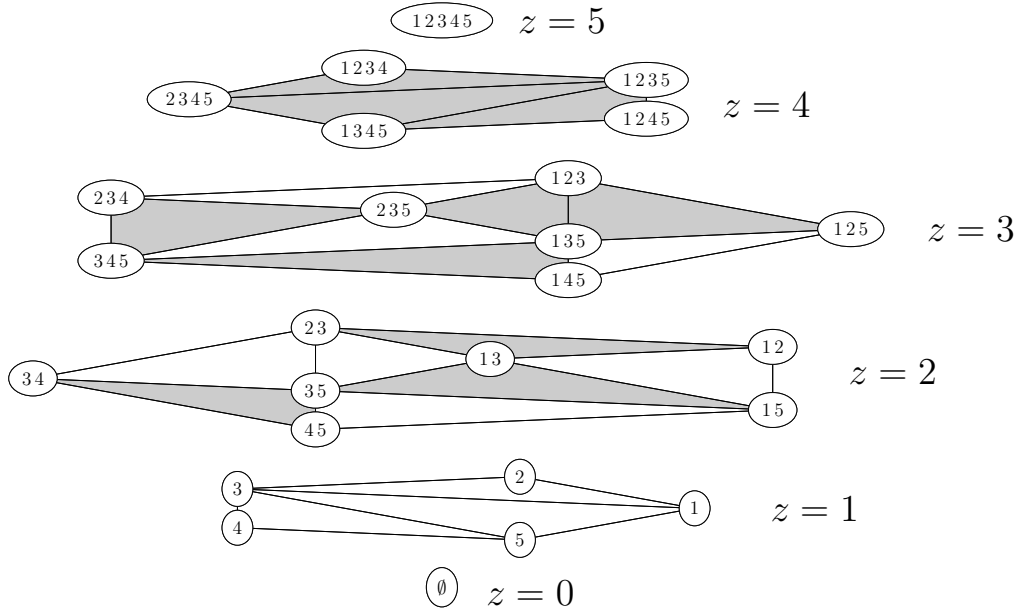

It follows directly from the definitions and Lemma~\ref{lem:alternating_matroid}
that chord separation is equivalent to $\Mcal$-separation
for the rank $3$ alternating oriented matroid $\Mcal=C^{n,3}$:

\begin{lemma}
A collection $\WS\subset 2^{[n]}$ is chord separated if and only if
$\WS$ is $C^{n,3}$-separated. 
\end{lemma}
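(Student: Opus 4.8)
The plan is to unwind both definitions and match them via the explicit list of circuits of $C^{n,3}$ provided by Lemma~\ref{lem:alternating_matroid}. Fix $I,J\subset[n]$ and set $A:=I\setminus J$ and $B:=J\setminus I$; both "chord separated" and "$C^{n,3}$-separated" depend only on the pair $(A,B)$ of disjoint subsets, and both are pairwise conditions, so it suffices to prove the equivalence for a single pair $\{I,J\}$.

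First I would specialize Lemma~\ref{lem:alternating_matroid} to $d=3$ (so $d+1=4$): the circuits of $C^{n,3}$ are exactly the signed sets $(\{i_1,i_3\},\{i_2,i_4\})$ and $(\{i_2,i_4\},\{i_1,i_3\})$, ranging over all $4$-element subsets $\{i_1<i_2<i_3<i_4\}$ of $[n]$. Then $I$ and $J$ fail to be $C^{n,3}$-separated exactly when some circuit $X$ has $X^+\subseteq A$ and $X^-\subseteq B$; by the list of circuits this happens if and only if there exist indices $i_1<i_2<i_3<i_4$ with either $\{i_1,i_3\}\subseteq A$ and $\{i_2,i_4\}\subseteq B$, or $\{i_2,i_4\}\subseteq A$ and $\{i_1,i_3\}\subseteq B$.

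Renaming $(i_1,i_2,i_3,i_4)=(i,j,k,l)$ turns the first alternative into "$i,k\in I\setminus J$ and $j,l\in J\setminus I$" and the second into its "vice versa" form "$i,k\in J\setminus I$ and $j,l\in I\setminus J$" — which is precisely the condition appearing in the definition of chord separation. Hence $\{I,J\}$ is $C^{n,3}$-separated if and only if it is chord separated, and passing to arbitrary collections $\WS\subset 2^{[n]}$ is immediate since both notions are defined by requiring every pair of members to satisfy the respective condition. I do not anticipate any genuine obstacle here; the only point requiring a moment's care is that the two sign choices of a circuit correspond exactly to the "or vice versa" clause, and that the strict inequalities $i_1<i_2<i_3<i_4$ match the strict inequalities $i<j<k<l$ in the chord-separation definition — both are transparent under the bijective renaming. (This parallels the analogous equivalence between strong separation and $C^{n,2}$-separation established earlier.)
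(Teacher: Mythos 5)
Your proof is correct and follows exactly the route the paper indicates (the paper states the lemma follows "directly from the definitions and Lemma~\ref{lem:alternating_matroid}" and leaves the details to the reader). Specializing the circuit description to $d=3$ and matching the two sign choices of each circuit with the "or vice versa" clause of chord separation is precisely the intended argument.
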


Notice that the number of independent sets in $C^{n,3}$, 
which are all subsets of $[n]$ with at most 3 elements,
is exactly ${n\choose 0}+{n\choose 1}+{n\choose 2}+{n\choose 3}$.

Using our matroidal terminology, 
Theorems~\ref{thm:purity_ss} and~\ref{thm:purity_chord} imply
the following result.

\begin{corollary}
\label{cor:c23narepure}
The rank $2$ and $3$ alternating oriented matroids $C^{n,2}$ and $C^{n,3}$ 
are pure.
\end{corollary}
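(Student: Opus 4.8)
The plan is to deduce Corollary~\ref{cor:c23narepure} directly from the two motivating purity theorems together with the dictionary between separation and alternating oriented matroids that has already been set up. Recall that purity of an oriented matroid $\Mcal$ means: every maximal \emph{by inclusion} $\Mcal$-separated collection is also maximal \emph{by size}. So it suffices to show that for $\Mcal = C^{n,2}$ and for $\Mcal = C^{n,3}$, any maximal by inclusion $\Mcal$-separated collection has the correct, fixed cardinality, hence is necessarily maximal by size.

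First I would treat $\Mcal = C^{n,2}$. By the lemma identifying strong separation with $C^{n,2}$-separation, a collection $\WS\subset 2^{[n]}$ is maximal by inclusion as a $C^{n,2}$-separated collection if and only if it is maximal by inclusion as a strongly separated collection. Theorem~\ref{thm:purity_ss} then says every such collection has size exactly ${n\choose 0}+{n\choose 1}+{n\choose 2}$. In particular all maximal by inclusion $C^{n,2}$-separated collections have the same cardinality, so each one is also maximal by size; this is exactly the assertion that $C^{n,2}$ is pure. The analogous argument for $C^{n,3}$ uses the lemma identifying chord separation with $C^{n,3}$-separation together with Theorem~\ref{thm:purity_chord}, which gives the common size ${n\choose 0}+{n\choose 1}+{n\choose 2}+{n\choose 3}$ for every maximal by inclusion chord separated collection. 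Again, a common cardinality among all maximal-by-inclusion collections forces maximal-by-inclusion to coincide with maximal-by-size, so $C^{n,3}$ is pure.

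For completeness I would also note the sanity check already recorded in the text: the number of independent sets of $C^{n,2}$ is $\sum_{k\le 2}{n\choose k}$ and of $C^{n,3}$ is $\sum_{k\le 3}{n\choose k}$, matching the cardinalities above; this is consistent with (and a special case of) Theorem~\ref{thm:max_size_vc}, which says the maximal-by-size value is $|\Ind(\VC)|$. The only thing one has to be slightly careful about is the logical shape of the argument: purity is \emph{not} immediate from a single purity theorem applied naively, but rather from the observation that ``every maximal-by-inclusion collection has the same size $N$'' together with ``the maximum possible size is $N$'' (which is precisely what the bijection with rhombus tilings / fine zonotopal tilings supplies) jointly give ``maximal-by-inclusion $\Rightarrow$ maximal-by-size.'' There is no real obstacle here — the corollary is essentially a reformulation — so the ``hard part'' is only bookkeeping: making sure to invoke the correct identification lemma for each rank and to phrase the conclusion in the matroidal language of Definition of purity rather than the set-combinatorial language of \cite{LZ,Galashin}.
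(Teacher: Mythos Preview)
Your proposal is correct and follows essentially the same approach as the paper: the corollary is stated there as an immediate consequence of Theorems~\ref{thm:purity_ss} and~\ref{thm:purity_chord} via the lemmas identifying strong and chord separation with $C^{n,2}$- and $C^{n,3}$-separation, respectively. Your additional remarks on the logical shape and the independent-set count are accurate elaborations, but the core deduction is the same.
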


\subsection{Weak separation}

Leclerc and Zelevinsky \cite{LZ} also introduced \emph{weak separation}, which
is a more subtle notion than strong separation.

For two sets
$I,J\subset[n]$, we say that $I$ \emph{surrounds} $J$ if their set-theoretic
difference $I\setminus J$ 
can be partitioned as a disjoint union of two sets $I_1$ and
$I_2$ so that $I_1 \prec (J\setminus I) \prec I_2$.
Here, for two sets $A$ and $B$ of integers, the notation $A\prec B$ means
that any element of $A$ is less than any element of $B$.

\begin{definition}[{\cite{LZ}}]
	Two sets $I,J\subset[n]$ 
are \emph{weakly separated} if 
	\begin{enumerate}
		\item $|I|\leq |J|$ and $I$ surrounds $J$, or
		\item $|J|\leq |I|$ and $J$ surrounds $I$.
	\end{enumerate}
\end{definition}

Clearly, if $|I|=|J|$ then $I$ and $J$ are weakly separated if
and only if they are chord separated. 
However, sets $I$ and $J$ of different cardinalities can be chord separated but not weakly separated.

Leclerc-Zelevinsky's purity conjecture \cite[Conjecture~1.5]{LZ}
for weak separation was independently proved in~\cite[Theorem~A]{DKK10}
and~\cite[Theorem~1.3]{OPS} (in a more general version for \emph{positroids}).
Let us formulate two special cases of this result.

Let ${[n]\choose k}$ be the set of all $k$-element subsets of $[n]$.
Thus $2^{[n]}$ is the disjoint union of the sets ${[n]\choose k}$, $k=0,1,\dots,n$.

\begin{theorem}[{\cite[Theorem~1.3]{OPS}, \cite[Theorem~A]{DKK10}}] 
\label{thm:purity_ws}\leavevmode
\begin{enumerate}[\normalfont (1)]
\item\label{item:purity_nchoosek}  Every maximal \emph{by inclusion} weakly separated collection $\WS\subset {[n]\choose k}$ of $k$-element subsets of $[n]$ is also maximal \emph{by size}: 
  \[|\WS|=k(n-k)+1.\]
\item\label{item:purity_2_n} Every maximal \emph{by inclusion} weakly separated collection 
$\WS\subset 2^{[n]}$ is also maximal \emph{by size}: 
	\[|\WS|={n\choose 0}+{n\choose 1}+{n\choose 2}.\]
\end{enumerate}
\end{theorem}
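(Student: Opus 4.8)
The plan is to reduce both parts to one statement: a \emph{maximal by inclusion} weakly separated collection inside $\binom{[n]}{k}$ (resp.\ inside $2^{[n]}$) has \emph{exactly} $k(n-k)+1$ (resp.\ $\binom n0+\binom n1+\binom n2$) elements. Since a maximal-by-size collection is a fortiori maximal by inclusion, this equality immediately promotes ``maximal by inclusion'' to ``maximal by size'' with the asserted cardinality. Thus the content splits into a lower bound (a maximal-by-inclusion collection cannot be too small) and an upper bound (no weakly separated collection is too large). For (2) the lower bound is already free: a maximal strongly separated collection has size $\binom n0+\binom n1+\binom n2$ by Theorem~\ref{thm:purity_ss} and is in particular weakly separated, so a maximal-by-size weakly separated collection has at least this size; everything else still has to be proved.

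For (1) I would use the plabic-tiling technique of \cite{OPS}. Given a maximal by inclusion $\WS\subset\binom{[n]}{k}$, pick generic points $\xi_1,\dots,\xi_n$ on a convex curve in $\R^2$ and draw $\WS$ as a planar complex $\Sigma(\WS)$: place the vertex $I$ at $\sum_{i\in I}\xi_i$, join $I$ and $J$ by an edge when they differ by swapping a single element, and fill in the ``white'' $2$-cells $\{S\cup\{i\}:i\in U\}$ (with $|S|=k-1$ and $U\subseteq[n]\setminus S$ maximal such that all these sets lie in $\WS$) and the ``black'' $2$-cells $\{S\setminus\{i\}:i\in U\}$ (with $|S|=k+1$). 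Two facts have to be established: first, weak separation forces all these cells to have pairwise disjoint interiors, so $\Sigma(\WS)$ is genuinely embedded in the plane; second --- the crux --- maximality by inclusion forces $\Sigma(\WS)$ to be a closed topological disk, with boundary the convex $n$-gon on the cyclic intervals $\{j{+}1,\dots,j{+}k\}\bmod n$ and no interior holes. Granting these, the count is short: Euler's relation $V-E+F=1$ on the disk, together with the incidences ``each interior edge borders exactly one white and one black $2$-cell'', forces $V=|\WS|=k(n-k)+1$. An equivalent endgame identifies the dual of $\Sigma(\WS)$ with a reduced plabic graph for the top cell of the totally nonnegative Grassmannian \cite{Postnikov}; square moves connect all such graphs and preserve the number $k(n-k)+1=\dim Gr_{k,n}+1$ of faces.

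For (2) the plan is to reduce to (1) by a cardinality-homogenizing padding, essentially the device of \cite{LZ}: embed $2^{[n]}\hookrightarrow\binom{[2n]}{n}$ by $I\mapsto\widehat I:=I\cup\{n+j:j\in[n]\setminus I\}$ (after a suitable relabeling of $[2n]$), arranged so that $I,J$ are weakly separated in $2^{[n]}$ exactly when $\widehat I,\widehat J$ are weakly separated in $\binom{[2n]}{n}$, and so that a maximal-by-inclusion collection in $2^{[n]}$ corresponds, up to a fixed family of ``frozen'' padding sets, to a maximal-by-inclusion collection in $\binom{[2n]}{n}$. Part (1) then pins down the size, and the bookkeeping comes out to $\binom n0+\binom n1+\binom n2$; the precise count of frozen sets is routine, and I would double-check it against \cite{LZ}. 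Alternatively, \cite{DKK10} proves (2) directly, extending the ``forbidden cyclic pattern'' analysis that drives the uniform case to subsets of varying cardinality.

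The hard part will be the disk statement in the second paragraph: showing that a weakly separated collection admitting no enlargement must already fill the entire disk rather than stall around a hole. This is the one place where maximality is used in an essential way; the embeddedness of the cells, the Euler-characteristic count, and the reduction of (2) to (1) are all routine by comparison. I would expect to prove it either by the inductive argument of \cite{OPS}, which couples the combinatorics to the cell decomposition of $Gr_{k,n}^{\ge0}$, or by the self-contained exchange-theoretic argument of \cite{DKK10}.
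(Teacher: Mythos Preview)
This theorem is not proved in the paper; it is quoted from \cite{OPS} and \cite{DKK10}. The only argument the paper supplies is the Remark immediately following the statement, which explains how part~(2) is reduced to part~(1) via the padding map $\pad:2^{[n]}\to\binom{[2n]}{n}$, $\pad(I)=I\cup\{2n,2n-1,\dots,n+|I|+1\}$. Your overall plan---plabic tilings for (1), padding for (2)---is exactly the \cite{OPS} route the paper alludes to, so at the level of strategy you are aligned with the paper's treatment.

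One concrete point to flag: your specific padding $I\mapsto I\cup\{n+j:j\in[n]\setminus I\}$ is \emph{not} the map used in \cite{OPS} (and reproduced in the paper's Remark), and as written it does not obviously preserve weak separation; e.g.\ for $I\setminus J=\{a,c\}$, $J\setminus I=\{b,d\}$ with $a<b<c<d$, the padded differences are $\{a,c,n{+}b,n{+}d\}$ versus $\{b,d,n{+}a,n{+}c\}$, and the required chord-separation check is not immediate. You hedge with ``after a suitable relabeling'', but the actual construction pads $I$ with a \emph{suffix} $\{2n,\dots,n+|I|+1\}$ rather than with the shifted complement. The paper also spells out the fixed collections $\WS_0\subset\widehat{\WS}_0$ of cyclic-interval sets that accompany the padding, which make the ``frozen sets'' bookkeeping you mention precise. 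If you correct the padding to the suffix version, your sketch for (2) matches the paper's Remark exactly.
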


This theorem was proved in~\cite{OPS} by constructing a bijection between maximal
\empu{by inclusion} weakly separated collections $\WS\subset {[n]\choose k}$
and \emph{reduced plabic graphs} introduced in~\cite{Postnikov}
in the study of the totally nonnegative Grassmannian.

Figure~\ref{fig:plabic} shows an example
of a weakly separated collection of $k$-element subsets of $[n]$,
for $k=3$ and  $n=6$:
\[\WS=\{123,126,156,236,136,146,346,234,345,456\}\subset {[6]\choose 3}.\]
It consists of  $|\WS|=k(n-k)+1=3\times 3+1=10$ elements. Here
we abbreviate a subset $\{a,b,c\}\subset [n]$ by $abc$.

\begin{figure}
\scalebox{1.0}{
\begin{tikzpicture}[yscale=0.8,every node/.style={scale=0.6}]
\node[draw,circle,black,fill=white] (node123) at (2.80,3.64) {$1\,2\,3$};
\node[draw,circle,black,fill=white] (node126) at (5.60,0.00) {$1\,2\,6$};
\node[draw,circle,black,fill=white] (node136) at (2.80,0.00) {$1\,3\,6$};
\node[draw,circle,black,fill=white] (node146) at (1.40,-1.82) {$1\,4\,6$};
\node[draw,circle,black,fill=white] (node156) at (2.80,-3.64) {$1\,5\,6$};
\node[draw,circle,black,fill=white] (node234) at (-2.80,3.64) {$2\,3\,4$};
\node[draw,circle,black,fill=white] (node236) at (1.40,1.82) {$2\,3\,6$};
\node[draw,circle,black,fill=white] (node345) at (-5.60,0.00) {$3\,4\,5$};
\node[draw,circle,black,fill=white] (node346) at (-2.80,0.00) {$3\,4\,6$};
\node[draw,circle,black,fill=white] (node456) at (-2.80,-3.64) {$4\,5\,6$};
\draw [opacity=0.2,fill=black,black,thick] (node123.center)-- (node126.center)-- (node136.center)-- (node236.center) -- cycle;
\draw [opacity=0.2,fill=black,black,thick] (node136.center)-- (node146.center)-- (node346.center) -- cycle;
\draw [opacity=0.2,fill=black,black,thick] (node146.center)-- (node156.center)-- (node456.center) -- cycle;
\draw [opacity=0.2,fill=black,black,thick] (node234.center)-- (node236.center)-- (node346.center) -- cycle;
\draw [opacity=0.2,fill=black,black,thick] (node345.center)-- (node346.center)-- (node456.center) -- cycle;
\draw [opacity=0.2,black,thick] (node123.center)-- (node126.center)-- (node136.center)-- (node236.center) -- cycle;
\draw [opacity=0.2,black,thick] (node136.center)-- (node146.center)-- (node346.center) -- cycle;
\draw [opacity=0.2,black,thick] (node146.center)-- (node156.center)-- (node456.center) -- cycle;
\draw [opacity=0.2,black,thick] (node234.center)-- (node236.center)-- (node346.center) -- cycle;
\draw [opacity=0.2,black,thick] (node345.center)-- (node346.center)-- (node456.center) -- cycle;
\draw [opacity=0.2,black,thick] (node126.center)-- (node136.center)-- (node146.center)-- (node156.center) -- cycle;
\draw [opacity=0.2,black,thick] (node123.center)-- (node234.center)-- (node236.center) -- cycle;
\draw [opacity=0.2,black,thick] (node234.center)-- (node345.center)-- (node346.center) -- cycle;
\draw [opacity=0.2,black,thick] (node136.center)-- (node236.center)-- (node346.center) -- cycle;
\draw [opacity=0.2,black,thick] (node146.center)-- (node346.center)-- (node456.center) -- cycle;
\coordinate (node12) at (5.04,2.18);
\coordinate (node16) at (3.15,-1.36);
\coordinate (node23) at (0.47,3.03);
\coordinate (node34) at (-3.73,1.21);
\coordinate (node36) at (0.47,0.61);
\coordinate (node45) at (-5.04,-2.18);
\coordinate (node46) at (-1.40,-1.82);
\coordinate (node56) at (-0.00,-4.36);
\coordinate (node1234) at (0.00,4.36);
\coordinate (node1256) at (5.04,-2.18);
\coordinate (node1346) at (0.47,-0.61);
\coordinate (node1456) at (0.47,-3.03);
\coordinate (node2345) at (-5.04,2.18);
\coordinate (node2346) at (-1.40,1.82);
\coordinate (node3456) at (-3.73,-1.21);

\def\dy{-0.5}
\def\dx{0.7}
\draw [opacity=0.4,black,thick] (node123.center)-- (node136.center);
\draw[blue] (5.04,2.18).. controls (4.20,1.82) .. ({3.15+\dx},{1.36+\dy});
\draw[blue] (0.47,3.03).. controls (2.10,2.73) ..  ({3.15-\dx},{1.36-\dy});
\draw[blue] (0.47,0.61).. controls (2.10,0.91) ..  ({3.15-\dx},{1.36-\dy});
\draw[blue] ({3.15+\dx},{1.36+\dy}).. controls ({3.15},{1.66}) ..  ({3.15-\dx},{1.36-\dy});
\draw[blue,fill=blue] ({3.15+\dx},{1.36+\dy}) circle (3pt);
\draw[blue,fill=blue] ({3.15-\dx},{1.36-\dy}) circle (3pt);

\def\ddy{-0.7}
\def\ddx{0.1}
\draw [opacity=0.4,black,thick] (node146.center)-- (node126.center);
\draw[blue] ({3.15-\ddx},{-1.36-\ddy}).. controls (4.20,0.00) ..  ({3.15+\dx},{1.36+\dy});
\draw[blue] ({3.15-\ddx},{-1.36-\ddy}).. controls (2.10,-0.91) .. (0.47,-0.61);

\draw[blue] ({3.15+\ddx},{-1.36+\ddy}).. controls (2.10,-2.73) .. (0.47,-3.03);
\draw[blue] ({3.15+\ddx},{-1.36+\ddy}).. controls (4.20,-1.82) .. (5.04,-2.18);
\draw[blue]  ({3.15+\ddx},{-1.36+\ddy}).. controls ({3.65},{-1.06}) ..   ({3.15-\ddx},{-1.36-\ddy});
\draw[blue,fill=white] ({3.15+\ddx},{-1.36+\ddy}) circle (3pt);
\draw[blue,fill=white] ({3.15-\ddx},{-1.36-\ddy}) circle (3pt);

\draw[blue] (0.47,3.03).. controls (0.00,3.64) .. (0.00,4.36);
\draw[blue] (0.47,3.03).. controls (-0.70,2.73) .. (-1.40,1.82);
\draw[blue] (-3.73,1.21).. controls (-4.20,1.82) .. (-5.04,2.18);
\draw[blue] (-3.73,1.21).. controls (-4.20,0.00) .. (-3.73,-1.21);
\draw[blue] (-3.73,1.21).. controls (-2.80,1.82) .. (-1.40,1.82);
\draw[blue] (0.47,0.61).. controls (-0.70,0.91) .. (-1.40,1.82);
\draw[blue] (0.47,0.61).. controls (0.00,0.00) .. (0.47,-0.61);
\draw[blue] (-5.04,-2.18).. controls (-4.20,-1.82) .. (-3.73,-1.21);
\draw[blue] (-1.40,-1.82).. controls (-0.70,-0.91) .. (0.47,-0.61);
\draw[blue] (-1.40,-1.82).. controls (-2.80,-1.82) .. (-3.73,-1.21);
\draw[blue] (-1.40,-1.82).. controls (-0.70,-2.73) .. (0.47,-3.03);
\draw[blue] (-0.00,-4.36).. controls (-0.00,-3.64) .. (0.47,-3.03);
\draw[blue] (0.00,4.36).. controls (0.00,3.64) .. (0.47,3.03);
\draw[blue] (0.47,-0.61).. controls (-0.70,-0.91) .. (-1.40,-1.82);
\draw[blue] (0.47,-0.61).. controls (0.00,0.00) .. (0.47,0.61);
\draw[blue] (0.47,-3.03).. controls (-0.00,-3.64) .. (-0.00,-4.36);
\draw[blue] (0.47,-3.03).. controls (-0.70,-2.73) .. (-1.40,-1.82);
\draw[blue] (-5.04,2.18).. controls (-4.20,1.82) .. (-3.73,1.21);
\draw[blue] (-1.40,1.82).. controls (-0.70,2.73) .. (0.47,3.03);
\draw[blue] (-1.40,1.82).. controls (-0.70,0.91) .. (0.47,0.61);
\draw[blue] (-1.40,1.82).. controls (-2.80,1.82) .. (-3.73,1.21);
\draw[blue] (-3.73,-1.21).. controls (-4.20,0.00) .. (-3.73,1.21);
\draw[blue] (-3.73,-1.21).. controls (-2.80,-1.82) .. (-1.40,-1.82);
\draw[blue] (-3.73,-1.21).. controls (-4.20,-1.82) .. (-5.04,-2.18);
\draw[blue,fill=white] (node12) circle (3pt);
\draw[blue,fill=white] (node23) circle (3pt);
\draw[blue,fill=white] (node34) circle (3pt);
\draw[blue,fill=white] (node36) circle (3pt);
\draw[blue,fill=white] (node45) circle (3pt);
\draw[blue,fill=white] (node46) circle (3pt);
\draw[blue,fill=white] (node56) circle (3pt);
\draw[blue,fill=white] (node1234) circle (3pt);
\draw[blue,fill=white] (node1256) circle (3pt);
\draw[blue,fill=blue] (node1346) circle (3pt);
\draw[blue,fill=blue] (node1456) circle (3pt);
\draw[blue,fill=white] (node2345) circle (3pt);
\draw[blue,fill=blue] (node2346) circle (3pt);
\draw[blue,fill=blue] (node3456) circle (3pt);
\node[draw,circle,black,fill=white] (node123) at (2.80,3.64) {$1\,2\,3$};
\node[draw,circle,black,fill=white] (node126) at (5.60,0.00) {$1\,2\,6$};
\node[draw,circle,black,fill=white] (node136) at (2.80,0.00) {$1\,3\,6$};
\node[draw,circle,black,fill=white] (node146) at (1.40,-1.82) {$1\,4\,6$};
\node[draw,circle,black,fill=white] (node156) at (2.80,-3.64) {$1\,5\,6$};
\node[draw,circle,black,fill=white] (node234) at (-2.80,3.64) {$2\,3\,4$};
\node[draw,circle,black,fill=white] (node236) at (1.40,1.82) {$2\,3\,6$};
\node[draw,circle,black,fill=white] (node345) at (-5.60,0.00) {$3\,4\,5$};
\node[draw,circle,black,fill=white] (node346) at (-2.80,0.00) {$3\,4\,6$};
\node[draw,circle,black,fill=white] (node456) at (-2.80,-3.64) {$4\,5\,6$};
\end{tikzpicture}}

	\caption{\label{fig:plabic}A (trivalent) \emph{plabic graph} is shown in blue. The face labels form a maximal \empu{by inclusion} weakly separated collection of $k$-element subsets of $[n]$, for $k=3$ and $n=6$.}
\end{figure}
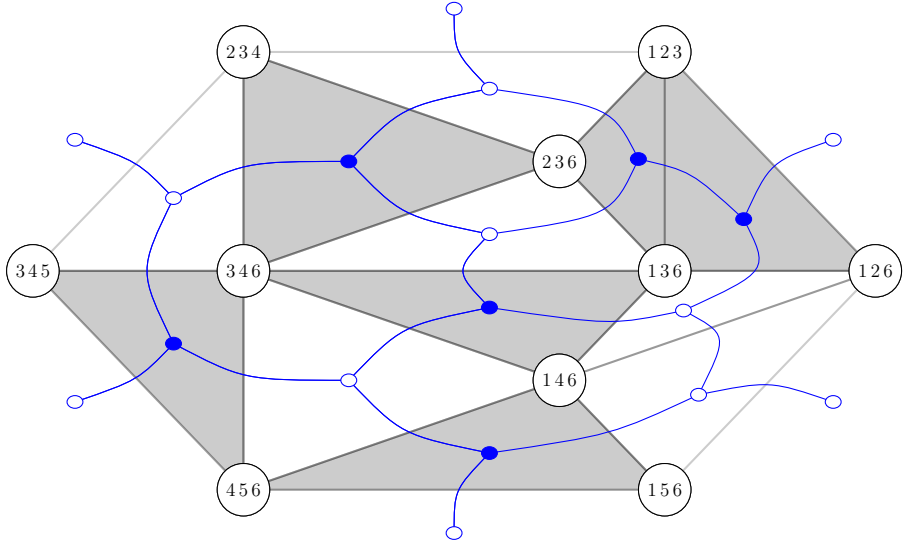


\begin{remark}
Part~\eqref{item:purity_2_n} of Theorem~\ref{thm:purity_ws},
concerning weakly separated 
collections $\WS\subset 2^{[n]}$ of subsets of various
cardinalities, was 
deduced in \cite{OPS} from part~\eqref{item:purity_nchoosek}, concerning collections of
subsets of the same cardinality,
using a simple \emph{padding construction} as follows. 

\def\WShat{\widehat{\WS}_0}
\def\WSnot{\WS_0}
Let $\pad: 2^{[n]} \to {[2n]\choose n}$ be the injective map
given by $\pad(I)= I \cup \{2n,2n-1,\dots,n + |I|+1\}$,
for $I\subset [n]$.
It is easy to see \cite[Lemma~12.7]{OPS} that $\WS$
is a weakly separated collection in $2^{[n]}$ if and only if
its image $\pad(\WS)$ is a weakly separated collection in $[2n]\choose n$.
Moreover, according to \cite{OPS}, maximal by inclusion
weakly separated collections $\WS\subset 2^{[n]}$ correspond
to maximal by inclusion weakly separated collections 
$\tilde\WS \subset {[2n]\choose n}$
of $n$-element subsets in $[2n]$ that contain some fixed collection $\WShat$. The correspondence is given explicitly by $\WS\mapsto \pad(\WS)\sqcup\WSnot$ (disjoint union) for a slightly smaller fixed collection $\WSnot\subset\WShat$.  One can take, for example,
\begin{equation*}
  \begin{split}
    \WSnot&:= \{[a,n]\cup [b,c]\mid 0< a\leq n <b\leq c< 2n,\ |[a,n]\cup[b,c]|=n\};\\
    \WShat&:= \{[a,n]\cup [b,c]\mid 0< a\leq n <b\leq c\leq 2n,\ |[a,n]\cup[b,c]|=n\}.\\
  \end{split}
\end{equation*}
This shows that the original Leclerc-Zelevinsky's notion of weak
separation essentially reduces to the notion of weak separation 
(equivalently, chord separation) for collections of subsets of the same 
cardinality.
\end{remark}

Let us show how weakly separated collections $\WS\subset
{[n]\choose k}$ fit into our general setup of oriented 
matroids.

Observe that ${[n]\choose k}$ is a mutation-closed domain
for the alternating matroid $\Mcal = C^{n,3}$ of rank $3$.
Theorem~\ref{thm:purity_ws} implies that this is an 
$\Mcal$-pure domain.

\begin{corollary}
\label{cor:Cn3isMpure}
Let $\Mcal=C^{n,3}$ be the alternating oriented matroid of rank $3$.
The mutation-closed domain $[n]\choose k$,
for $k=0,1,\dots,n$, 
is an $\Mcal$-pure domain.
\end{corollary}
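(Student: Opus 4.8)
The plan is to observe that on the domain ${[n]\choose k}$ every set has the same cardinality $k$, so that $\Mcal$-separation collapses onto weak separation, after which the statement becomes a direct consequence of Theorem~\ref{thm:purity_ws}\eqref{item:purity_nchoosek}.

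First I would record (as already remarked just before the statement) that ${[n]\choose k}$ is a mutation-closed domain for $\Mcal=C^{n,3}$. Indeed, if $I,J\in 2^{[n]}$ are joined by an edge of the mutation graph of $\Mcal$, then $(I\setminus J,\,J\setminus I)$ is a circuit of $C^{n,3}$, and by Lemma~\ref{lem:alternating_matroid} its support is a $4$-element subset $\{i_1<i_2<i_3<i_4\}$, split into $\{i_1,i_3\}$ and $\{i_2,i_4\}$; hence $|I\setminus J|=|J\setminus I|=2$ and therefore $|I|=|J|$. So each connected component of the mutation graph lies inside a single ${[n]\choose k}$, which makes ${[n]\choose k}$ a union of components.

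Next I would identify the two relevant simplicial complexes. By the lemma identifying chord separation with $C^{n,3}$-separation, two subsets $I,J\subset[n]$ are $\Mcal$-separated if and only if they are chord separated; and for $I,J$ of equal cardinality, chord separation coincides with weak separation (noted right after the definition of weak separation). Consequently the simplicial complex of $\Mcal$-separated collections $\WS\subset{[n]\choose k}$ is exactly the complex of weakly separated collections of $k$-element subsets of $[n]$.

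Finally, Theorem~\ref{thm:purity_ws}\eqref{item:purity_nchoosek} asserts that every maximal \emph{by inclusion} weakly separated collection $\WS\subset{[n]\choose k}$ has size $k(n-k)+1$; since this common value is then the maximum size attained (every collection sits inside a maximal-by-inclusion one), such collections are also maximal \emph{by size}. Translating through the identification of the previous paragraph, every $\Mcal$-separated collection contained in ${[n]\choose k}$ that is maximal by inclusion among such collections is also maximal by size, which is precisely the assertion that ${[n]\choose k}$ is an $\Mcal$-pure domain. There is no real obstacle here: all the substance lies in Theorem~\ref{thm:purity_ws} (quoted from~\cite{OPS,DKK10}), and the proof merely assembles that input with the two routine identifications above, both immediate from Lemma~\ref{lem:alternating_matroid} and the definitions.
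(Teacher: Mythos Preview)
Your argument is correct and is exactly the approach the paper takes: it states that ${[n]\choose k}$ is a mutation-closed domain for $C^{n,3}$ and that Theorem~\ref{thm:purity_ws} then yields $\Mcal$-purity, which is precisely what you have spelled out in detail via the identification of $C^{n,3}$-separation with chord separation and, on equal-size sets, with weak separation. The paper also notes an alternative derivation from Corollary~\ref{cor:c23narepure} via flip-connectedness (Proposition~\ref{prop:graphicalandflipconnected} and Theorem~\ref{th:Ziegler_flips}), but remarks that this route is logically circular given how Corollary~\ref{cor:c23narepure} is established.
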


Note that, in view of Proposition~\ref{prop:graphicalandflipconnected}
and Theorem~\ref{th:Ziegler_flips},
Corollary~\ref{cor:Cn3isMpure} formally 
follows from Corollary~\ref{cor:c23narepure}.
However, the proof of purity of $C^{n,3}$ given in \cite{Galashin}
relies on the $C^{n,3}$-purity of all domains $[n]\choose k$ that was proven in~\cite{OPS,DKK10}.

\section{Simple operations on oriented matroids}\label{sec:simple-oper-orient}

There are several simple operations on oriented matroids
that do not affect purity.

\subsection{Relabeling and adding/removing loops and coloops}
Clearly, an oriented matroid obtained from a pure oriented matroid
by relabeling the elements of the ground set is again pure.

The following lemma is straightforward. See Section~\ref{sect:background} for the definitions of loops and coloops and Section~\ref{sect:pure_OM} for the proof.

\begin{lemma}\label{lemma:adding_removing_zeros}
Let $\Mcal$ be a pure oriented matroid.
Then any oriented matroid obtained from $\Mcal$ by adding (or removing) 
loops and coloops is pure.
\end{lemma}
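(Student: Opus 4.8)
\textbf{Proof plan for Lemma~\ref{lemma:adding_removing_zeros}.}
The plan is to reduce to two independent statements -- one for loops, one for coloops -- and in each case exhibit an explicit bijection between the $\Mcal$-separated collections on the enlarged ground set and the $\Mcal$-separated collections on the original one, which preserves both the inclusion order and the cardinality. Since purity is precisely the statement that ``maximal by inclusion'' coincides with ``maximal by size,'' any such bijection transports purity in both directions, which is exactly what the lemma asserts (adding \emph{and} removing).

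First I would treat a \emph{loop}. Let $\Mcal$ be an oriented matroid on $E$ and let $\Mcal'$ be obtained by adding a loop $\ell\notin E$, so that $\ell$ lies in no independent set and the circuit $(\{\ell\},\emptyset)$ (equivalently $\{\ell\}$ is a circuit in the unsigned sense; more precisely $C'=C\cup\{(\{\ell\},\varnothing)\text{-type singleton circuits}\}$). The key observation is: if $I,J\subset E\cup\{\ell\}$, then the circuit $(\{\ell\},\emptyset)$ (or $(\emptyset,\{\ell\})$, but a loop of a vector configuration is the zero vector, whose only circuits are the signed singletons on $\ell$) obstructs $\Mcal'$-separation of $I,J$ exactly when $\ell\in I\setminus J$ or $\ell\in J\setminus I$; that is, $I$ and $J$ can only be $\Mcal'$-separated if $\ell\in I\cap J$ or $\ell\notin I\cup J$. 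Hence a maximal-by-inclusion $\Mcal'$-separated collection $\WS'$ splits as $\WS'=\WS'_{\mathrm{in}}\sqcup\WS'_{\mathrm{out}}$ where every set in $\WS'_{\mathrm{in}}$ contains $\ell$ and every set in $\WS'_{\mathrm{out}}$ avoids it; deleting $\ell$ from the first part and leaving the second yields two $\Mcal$-separated collections on $E$, and maximality by inclusion of $\WS'$ forces each to be a copy of the \emph{same} maximal-by-inclusion $\Mcal$-separated collection $\WS$ on $E$ (one cannot add a set to $\WS'_{\mathrm{in}}$ without adding the corresponding set to $\WS$, and similarly for $\WS'_{\mathrm{out}}$; conversely $\WS'_{\mathrm{in}}$ and $\WS'_{\mathrm{out}}$ are automatically mutually $\Mcal'$-separated because no obstructing circuit of $\Mcal'$ meets both a $\ell$-containing and a $\ell$-avoiding set in the required way). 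Thus $|\WS'|=2|\WS|$ with $\WS$ ranging over maximal-by-inclusion collections of $\Mcal$; since $\Mcal$ is pure, $|\WS|$ is constant, hence so is $|\WS'|$, and $\Mcal'$ is pure. Running the argument backwards (deletion of a loop) gives the ``removing'' direction.

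Next the \emph{coloop} case, which I expect to be the slightly more delicate one and is the main obstacle. Let $\Mcal''$ be obtained by adding a coloop $c\notin E$: $c$ lies in every maximal independent set and in no circuit of $\Mcal''$; concretely every circuit of $\Mcal''$ is a circuit of $\Mcal$ (with support in $E$). Because $c$ appears in no circuit, it never obstructs separation, so whether $I,J\subset E\cup\{c\}$ are $\Mcal''$-separated depends only on $I\cap E$ and $J\cap E$. Define $\Phi:\WS''\mapsto\{\,S\cap E : S\in\WS''\,\}$; this is $\Mcal\leftrightarrow\Mcal''$-separation preserving, but need \emph{not} be injective, since $S$ and $S\cup\{c\}$ have the same image. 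The fix: I claim that a maximal-by-inclusion $\Mcal''$-separated collection $\WS''$ necessarily contains \emph{both} $S$ and $S\cup\{c\}$ for each set $S$ with $c\notin S$ that it meets, because $S$, $S\cup\{c\}$, and every member of $\WS''$ are pairwise $\Mcal''$-separated (adding $c$ to a set, or removing it, cannot create an obstructing circuit, as $c$ is in no circuit), so by maximality both belong to $\WS''$. Therefore $|\WS''|=2|\Phi(\WS'')|$ and $\Phi(\WS'')$ is a maximal-by-inclusion $\Mcal$-separated collection on $E$; purity of $\Mcal$ gives a constant value for $|\Phi(\WS'')|$, hence for $|\WS''|$, so $\Mcal''$ is pure, and again the reverse direction (deleting a coloop) follows by the same bijection read backwards. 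Finally, an arbitrary instance of the lemma -- adding or removing several loops and coloops -- follows by iterating these two cases, noting that the result of each single operation is again an oriented matroid to which the next step applies. I would double-check the one subtle point, namely that in each case the two ``copies'' inside a maximal-by-inclusion collection are forced to be mutually separated and to be genuinely maximal-by-inclusion on $E$; this is where the explicit structure of the singleton (loop) circuits and the circuit-avoidance of coloops is used, and it is the crux of the whole argument.
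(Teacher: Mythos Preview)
Your overall plan---treat loops and coloops separately and exhibit a cardinality- and inclusion-preserving correspondence---is exactly what the paper does (see the proof of Lemma~\ref{lemma:loop_coloop}), and your coloop case is correct. However, your loop case contains an error.

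You correctly observe that if $\ell$ is a loop then $(\{\ell\},\emptyset)$ and $(\emptyset,\{\ell\})$ are circuits of $\Mcal'$, and hence two sets $I,J$ can be $\Mcal'$-separated only when $\ell\in I\cap J$ or $\ell\notin I\cup J$. But this means precisely that \emph{all} sets in an $\Mcal'$-separated collection must agree on whether they contain $\ell$: if $S\in\WS'_{\mathrm{in}}$ and $T\in\WS'_{\mathrm{out}}$ then $\ell\in S\setminus T$, so the loop circuit $(\{\ell\},\emptyset)$ has $X^+=\{\ell\}\subset S\setminus T$ and $X^-=\emptyset\subset T\setminus S$, contradicting separation. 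Your claim that ``$\WS'_{\mathrm{in}}$ and $\WS'_{\mathrm{out}}$ are automatically mutually $\Mcal'$-separated'' is therefore false, and the conclusion $|\WS'|=2|\WS|$ is wrong. In fact exactly one of $\WS'_{\mathrm{in}},\WS'_{\mathrm{out}}$ is nonempty, so $|\WS'|=|\WS|$; maximal-by-inclusion $\Mcal'$-separated collections correspond bijectively to pairs (maximal-by-inclusion $\Mcal$-separated collection on $E$, choice of whether $\ell$ lies in all sets or none). With this correction the argument goes through. You have essentially swapped the roles of loop and coloop: the doubling of cardinality you describe in the loop case is what actually happens in the coloop case (where you also, correctly, obtain it).
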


\subsection{Adding/removing parallel elements}

There is another simple operation on oriented matroids $\Mcal$,
the operation of adding parallel elements.
Let $e\subset E$ be an element of the ground
set of $\Mcal$.  Let $\Mcal'$ be the oriented matroid on the ground set
$E'=E\cup \{e'\}$ (where $e'\not\in E$) whose set of circuits
contains exactly all circuits of $\Mcal$, all circuits of $\Mcal$ 
with the element $e$ replaced by $e'$, and also the circuits
given by the signed sets $(\{e\},\{e'\})$ 
and  $(\{e'\},\{e\})$.
If $\Mcal$ is an oriented matroid associated with a vector configuration $\VC$,
this operation means that we add an extra copy of some vector $\v_i$ to 
$\VC$.
We say that an oriented matroid is 
\emph{obtained by adding parallel elements} from $\Mcal$ if it is 
obtained by a sequence of such operations.

The following result is easy to formulate but (surprisingly) hard to prove,
see Lemma~\ref{lemma:parallel}.

\begin{lemma}\label{lemma:parallel_intro}
Let $\Mcal$ be an oriented matroid and $\Mcal'$ be any 
oriented matroid obtained from $\Mcal$ by adding parallel elements.
Then $\Mcal$ is pure 
if and only if $\Mcal'$ is pure.
\end{lemma}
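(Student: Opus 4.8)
The plan is to analyze carefully what happens to $\Mcal$-separated collections when we pass from $\Mcal$ on ground set $E$ to $\Mcal'$ on ground set $E' = E \cup \{e'\}$, where $e'$ is a new element parallel to $e$. It suffices to treat a single added parallel element and iterate. First I would record the structure of circuits of $\Mcal'$: besides the ``old'' circuits of $\Mcal$ (not using $e'$) and their images under $e \mapsto e'$, the only new circuits are $(\{e\},\{e'\})$ and $(\{e'\},\{e\})$. From this I want to extract a clean separation criterion: two subsets $I',J' \subset E'$ are $\Mcal'$-separated if and only if (a) the new circuits are not violated, i.e. we do \emph{not} have $e \in I' \setminus J'$ and $e' \in J' \setminus I'$, nor $e' \in I' \setminus J'$ and $e \in J' \setminus I'$; and (b) the ``projected'' sets obtained by identifying $e'$ with $e$ are $\Mcal$-separated. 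Condition (a) says precisely that, relative to the symmetric difference, $e$ and $e'$ are on the ``same side'': either both in $I'$, both in $J'$, both outside, or one of them in $I' \cap J'$ and the other wherever. I expect to phrase this as: an $\Mcal'$-separated collection restricts, after merging $e'$ into $e$, to an $\Mcal$-separated collection on $E$, and conversely any $\Mcal$-separated collection on $E$ can be ``lifted'' to $\Mcal'$.

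For the direction ``$\Mcal$ pure $\implies$ $\Mcal'$ pure'', I would take a maximal-by-inclusion $\Mcal'$-separated collection $\WS'$ and show it is maximal by size, by comparing with $|\Ind(\Mcal')|$ (using Theorem~\ref{thm:max_size_matroid}, or the realizable version Theorem~\ref{thm:max_size_vc}, which gives the target cardinality). The key combinatorial point is that adding a parallel element multiplies $|\Ind|$ in a controlled way: the independent sets of $\Mcal'$ are exactly the independent sets of $\Mcal$ not containing $e$, together with, for each such set $S$ with $S \cup \{e\}$ independent in $\Mcal$, the three sets $S\cup\{e\}$, $S\cup\{e'\}$, $S\cup\{e,e'\}$ minus the last one since $\{e,e'\}$ is dependent — so each independent $S$ with $S\cup\{e\}$ independent contributes one extra independent set. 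I would then argue that from a maximal $\Mcal$-separated collection on $E$ one builds an $\Mcal'$-separated collection of exactly the size $|\Ind(\Mcal')|$ by a ``doubling along $e$'' construction (for each set $I$ in the collection, decide consistently whether to also include the variant with $e$ replaced or augmented by $e'$), and conversely any maximal-by-inclusion $\Mcal'$-separated collection projects onto a maximal-by-inclusion, hence maximal-by-size, $\Mcal$-separated collection, and the fibers of the projection have the forced maximal size dictated by condition (a). Matching these counts forces $|\WS'| = |\Ind(\Mcal')|$.

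For the converse direction ``$\Mcal'$ pure $\implies$ $\Mcal$ pure'', I would embed $\Mcal$-separated collections into $\Mcal'$-separated ones: given a maximal-by-inclusion $\Mcal$-separated $\WS$ on $E$, extend it canonically (e.g. by the rule $I \mapsto I$ if $e \notin I$, $I \mapsto I \cup \{e'\}$ if $e \in I$, which respects condition (a) since $e,e'$ never end up on opposite sides of a symmetric difference) to an $\Mcal'$-separated collection, enlarge it to a maximal-by-inclusion one $\WS'$, invoke purity of $\Mcal'$ to get $|\WS'| = |\Ind(\Mcal')|$, and then project back, checking that the projection's size is $|\WS'|$ divided by the controlled fiber factor, which equals $|\Ind(\Mcal)|$; maximality by inclusion of $\WS$ is used to ensure no collapse in the projection beyond what is forced. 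The main obstacle — and the reason the paper calls this ``surprisingly hard'' — is exactly the bookkeeping in the non-injective projection $E' \to E$: a maximal-by-inclusion $\Mcal'$-separated collection need not be ``saturated'' in the $e$/$e'$ directions in an obvious way, so one must show that maximality by inclusion already forces each fiber of the merging map to attain its maximum possible size (governed by condition (a)), and that this maximum is a fixed multiplicative/additive constant independent of which $\Mcal$-separated collection we project to. Establishing this fiber-uniformity — essentially a local statement about which of the sets $\{I, I\triangle\{e,e'\}, \dots\}$ can and must coexist in a maximal collection — is where the real work lies; everything else is the independent-set count and routine verification of the separation criterion from the circuit list of $\Mcal'$.
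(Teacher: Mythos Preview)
Your outline correctly identifies the easy direction (``$\Mcal'$ pure $\implies$ $\Mcal$ pure'' is just Proposition~\ref{prop:if_pure_then_minors_are_pure}, since $\Mcal = \Mcal'-e'$) and correctly locates the difficulty in the hard direction. But the mechanism you propose for the hard direction is wrong, not merely incomplete. You assert that in a maximal-by-inclusion $\Mcal'$-separated collection $\WS'$, ``the fibers of the projection have the forced maximal size dictated by condition (a)'' --- i.e.\ that for every $I\in\pi(\WS')$ with $e\in I$, both $I$ and $I\cup\{e'\}$ lie in $\WS'$. This is false already for $\Mcal=C^{3,2}$ on $\{1,2,3\}$ with $e=3$: in the maximal-by-size $\Mcal'$-separated collection obtained by excluding $\{2\}$ and $\{1,3,3'\}$ (and all sets of type $(\neg e,e')$), both $\{1,3\}$ and $\{2,3\}$ are present, yet $\{1,3,3'\}$ is not, because $(\{1,3'\},\{2\})$ is violated against $\{2,3\}$. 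So the fiber over $\{1,3\}$ has size~$1$, not~$2$. The deletion--contraction identity $|\WS'|=|\WS'-e'|+|\WS'/e'|$ still holds, but $\WS'/e'$ is not the set of $I\ni e$ in $\pi(\WS')$; which sets land there is a genuinely global question, not the ``local statement about which of the sets $\{I,I\triangle\{e,e'\},\dots\}$ can coexist'' that you describe.

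The paper's route (Section~\ref{subsect:parallel_proof}) is quite different from a fiber count. Given a maximal-by-inclusion $\Mcal'$-separated $\WS$ (paper's $\Mcal$ is your $\Mcal'$), one shows that \emph{both} deletions $\WS-e$ and $\WS-e'$ are maximal by inclusion over the smaller matroid. This is the hard step and is not local: it relies on Lemma~\ref{lemma:gnot}, which in turn uses Proposition~\ref{prop:local} and the isometric embedding of the tile graph $\tileGraph(\WS_0)$ of a maximal-by-size collection into the cube (Theorem~\ref{thm:tiles}\,(\ref{item:tiles_graph_distance})) to control paths between sets orienting a given circuit. Once both deletions are known to be complete, the paper finishes by extending $\sigma_\WS$ to a colocalization in general position (checking Type~III directly on the one new corank-$2$ configuration introduced by the parallel pair), and invoking Lemma~\ref{lemma:extension}. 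None of this is visible from your projection/fiber picture; the ``real work'' you flag requires the structural theory of Section~\ref{subsect:tilings}, not a local coexistence argument.
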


\subsection{Reorientations}

Let us also describe the operation of \emph{reorientation,} 
defined on signed sets and on oriented matroids as follows.

For a signed subset $X=(X^+,X^-)$ of $E$ and an element $f\in E$, we write
\begin{equation}\label{eq:X_f}
X_f=
  \begin{cases}
    1, &\text{if $f\in X^+$,}\\
    -1, &\text{if $f\in X^-$,}\\
    0, &\text{if $f\in E\setminus \Xu$.}\\
  \end{cases}
\end{equation}

  For $e\in E$, let $\reorient{e}{X}=X'$ be the signed subset of $E$ such that
$X'_e = -X_e$, and 
$X'_f = X_f$, for $f\ne e$. 

For an oriented matroid $\Mcal=(E,\Ccal)$, let $\reorient{e}{\Mcal}$
be the oriented matroid on the same ground set $E$ with circuits $\reorient{e}{X}$, for $X\in \Ccal$.

If $\Mcal$ is the oriented matroid associated with a vector configuration 
$\VC=(\v_1,\dots,\v_n)$, then $\reorient{i}{\Mcal}$ is the oriented matroid
associated with the vector configuration 
$(\v_1,\dots,\v_{i-1},-\v_i,\v_{i+1},\dots,\v_n)$.

Let us define related operations on $\Mcal$-separated collections.
For a usual subset $I\subset E$, let 
\begin{equation}\label{eq:reorient_set}
\reorient{e}{I} := 
\begin{cases}
  I\setminus\{e\}, &\text{if $e\in I$,}\\
  I\cup\{e\}, &\text{if $e\notin I$.}\\
\end{cases}
\end{equation}

For a collection $\WS\subset 2^E$, let $\reorient{e}{\WS}$ be the collection 
of subsets $\reorient{e}{I}$, for $I\in \WS$.

The following lemma follows directly from the definitions.

\begin{lemma}
\label{lem:signreversal}  
Let $\Mcal=(E,\Ccal)$  be an oriented matroid, and let $e\in E$.
Then $\WS$ is an $\Mcal$-separated collection if and only if $\reorient{e}{\WS}$ is
an $\reorient{e}{\Mcal}$-separated collection.

Thus $\Mcal$ is pure if and only if $\reorient{e}{\Mcal}$ is pure.
\end{lemma}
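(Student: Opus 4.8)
The plan is to show that reorientation at a single element $e$ gives a bijection between $\Mcal$-separated collections and $\reorient{e}{\Mcal}$-separated collections that preserves the two relevant structures (inclusion and size), from which the purity statement is immediate. Since reorienting twice at $e$ returns the original oriented matroid, and since the statement is symmetric in $\Mcal$ and $\reorient{e}{\Mcal}$, it suffices to prove one direction: if $\WS$ is $\Mcal$-separated, then $\reorient{e}{\WS}$ is $\reorient{e}{\Mcal}$-separated.

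First I would record the key compatibility between the set-level operation $I \mapsto \reorient{e}{I}$ and the signed-set level operation $X \mapsto \reorient{e}{X}$. Given two subsets $I, J \subset E$, write $I' = \reorient{e}{I}$ and $J' = \reorient{e}{J}$. The claim is that the signed set $(I'\setminus J', J'\setminus I')$ equals $\reorient{e}{(I\setminus J, J\setminus I)}$ if $e \in I \triangle J$ (the symmetric difference), and equals $(I\setminus J, J\setminus I)$ if $e \notin I \triangle J$. Indeed, toggling membership of $e$ in both $I$ and $J$ simultaneously: if $e$ was in exactly one of $I, J$ — say $e \in I \setminus J$ — then after toggling $e \in J' \setminus I'$, so $e$ moves from the positive part to the negative part of the difference signed set, which is exactly the effect of $\reorient{e}{}$; symmetrically if $e \in J \setminus I$. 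If $e \in I \cap J$ or $e \notin I \cup J$, then after toggling $e$ still lies in $I' \cap J'$ or outside $I' \cup J'$ respectively, so it contributes nothing to the difference signed set either way. This is the routine case analysis I would not belabor in the final text.

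Given this, the argument is a short chase. Suppose $\WS$ is $\Mcal$-separated but $\reorient{e}{\WS}$ is not $\reorient{e}{\Mcal}$-separated; then there are $I', J' \in \reorient{e}{\WS}$ and a circuit $Y$ of $\reorient{e}{\Mcal}$ with $Y^+ \subset I' \setminus J'$ and $Y^- \subset J' \setminus I'$. Write $I = \reorient{e}{I'}$, $J = \reorient{e}{J'}$, so $I, J \in \WS$, and let $X = \reorient{e}{Y}$, which is a circuit of $\Mcal$ by definition of $\reorient{e}{\Mcal}$. Now split on whether $e$ lies in the support $\underline{Y}$. If $e \notin \underline{Y}$, then $X = Y$ and by the compatibility above $(I\setminus J, J\setminus I)$ and $(I'\setminus J', J'\setminus I')$ agree on $E \setminus \{e\}$, which already contains $\underline X$; so $X^+ \subset I \setminus J$ and $X^- \subset J \setminus I$, contradicting $\Mcal$-separation of $\WS$. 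If $e \in \underline{Y}$, then $e \in I' \triangle J'$, hence $e \in I \triangle J$ (toggling $e$ in both sets preserves whether $e$ is in the symmetric difference), so by the compatibility $(I\setminus J, J\setminus I) = \reorient{e}{(I'\setminus J', J'\setminus I')}$, and applying $\reorient{e}{}$ to the containments $Y^+ \subset I'\setminus J'$, $Y^- \subset J'\setminus I'$ yields $X^+ \subset I \setminus J$ and $X^- \subset J \setminus I$, again contradicting $\Mcal$-separation. This proves $\reorient{e}{\WS}$ is $\reorient{e}{\Mcal}$-separated; the converse follows by applying the same to $\reorient{e}{\Mcal}$ and using $\reorient{e}{(\reorient{e}{\Mcal})} = \Mcal$, so $\WS \mapsto \reorient{e}{\WS}$ is the claimed bijection.

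Finally, since $I \mapsto \reorient{e}{I}$ is an involution on $2^E$, the map $\WS \mapsto \reorient{e}{\WS}$ preserves cardinalities and respects inclusions in both directions. Hence it carries maximal-by-inclusion $\Mcal$-separated collections to maximal-by-inclusion $\reorient{e}{\Mcal}$-separated collections, and maximal-by-size to maximal-by-size. Therefore every maximal-by-inclusion $\Mcal$-separated collection is maximal-by-size if and only if the same holds for $\reorient{e}{\Mcal}$, i.e. $\Mcal$ is pure iff $\reorient{e}{\Mcal}$ is pure. Honestly, there is no real obstacle here — the only thing to get right is the bookkeeping in the compatibility lemma about how $e$ interacts with the difference signed set, and that is a finite case check.
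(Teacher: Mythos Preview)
Your proof is correct and is essentially a careful unpacking of what the paper asserts without proof (the paper simply states that the lemma ``follows directly from the definitions''). One small simplification: since $I'\triangle J' = (I\triangle\{e\})\triangle(J\triangle\{e\}) = I\triangle J$ always, the identity $(I'\setminus J',\, J'\setminus I') = \reorient{e}{(I\setminus J,\, J\setminus I)}$ in fact holds in all cases (when $e\notin I\triangle J$ the operation $\reorient{e}{}$ does nothing), so your case split on $e\in\underline{Y}$ is not strictly necessary.
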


Let us say that two oriented matroids $\Mcal$ and $\Mcal'$ are \emph{isomorphic} if they can be obtained from each other by a sequence of reorientations followed by a relabeling of the ground set. Lemma~\ref{lem:signreversal} implies that, for two isomorphic oriented matroids $\Mcal$ and $\Mcal'$, there is a natural  (inclusion- and cardinality-preserving) one-to-one correspondence between $\Mcal$-separated collections and $\Mcal'$-separated collections. In particular, we get the following result.

\begin{proposition}
For two isomorphic oriented matroids $\Mcal$ and $\Mcal'$, $\Mcal$ is pure if and only if $\Mcal'$ is pure.
\end{proposition}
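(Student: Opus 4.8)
The statement follows almost immediately from Lemma~\ref{lem:signreversal}, so the plan is simply to unwind the definition of \emph{isomorphic} and chain together the invariances already established. First I would recall that, by definition, if $\Mcal$ and $\Mcal'$ are isomorphic then there is a finite sequence of oriented matroids $\Mcal = \Mcal_0, \Mcal_1, \dots, \Mcal_k = \Mcal''$, where each $\Mcal_{i+1} = \reorient{e_i}{\Mcal_i}$ for some $e_i$ in the ground set, and then $\Mcal'$ is obtained from $\Mcal''$ by a relabeling $\sigma$ of the ground set.

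Next I would invoke Lemma~\ref{lem:signreversal}, which asserts precisely that $\Mcal_i$ is pure if and only if $\reorient{e_i}{\Mcal_i} = \Mcal_{i+1}$ is pure. Applying this for $i = 0, 1, \dots, k-1$ and composing the resulting equivalences gives that $\Mcal$ is pure if and only if $\Mcal''$ is pure. Then I would appeal to the observation made at the very beginning of Section~\ref{sec:simple-oper-orient} (``an oriented matroid obtained from a pure oriented matroid by relabeling the elements of the ground set is again pure''): since relabeling induces an inclusion- and cardinality-preserving bijection between $\Mcal''$-separated collections and $\Mcal'$-separated collections, it carries maximal-by-inclusion collections to maximal-by-inclusion collections and maximal-by-size collections to maximal-by-size collections, so $\Mcal''$ is pure if and only if $\Mcal'$ is pure. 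Concatenating the two equivalences yields that $\Mcal$ is pure if and only if $\Mcal'$ is pure.

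There is essentially no obstacle here; the only minor point to be careful about is bookkeeping with the sequence of reorientations (reorientations at different elements commute, and reorienting twice at the same element is the identity, but none of that is needed — an arbitrary finite sequence suffices). If one wants to be even more economical, one can observe that Lemma~\ref{lem:signreversal} together with the relabeling remark already produces, for isomorphic $\Mcal$ and $\Mcal'$, an explicit inclusion- and cardinality-preserving bijection $\WS \mapsto \sigma(\reorient{e_{k-1}}{\cdots \reorient{e_0}{\WS}})$ between $\Mcal$-separated and $\Mcal'$-separated collections, and purity is manifestly preserved by any such bijection; this is in fact exactly the content of the sentence immediately preceding the proposition in the excerpt, so the proof can be as short as ``immediate from Lemma~\ref{lem:signreversal} and the preceding discussion.''
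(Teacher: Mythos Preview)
Your proposal is correct and matches the paper's approach exactly: the paper does not give a separate proof but states the proposition as an immediate consequence of the sentence preceding it, namely that Lemma~\ref{lem:signreversal} (together with the obvious relabeling invariance) yields an inclusion- and cardinality-preserving bijection between $\Mcal$-separated and $\Mcal'$-separated collections. Your final paragraph identifies this precisely.
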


\section{Main results on purity}\label{sect:main}

\subsection{Purity of matroids of rank $2$ or corank $1$}

We prove the following easy claim in Section~\ref{sect:classif}.

\begin{proposition}\label{prop:rk_2_cork_1}
Any oriented matroid $\Mcal$ such that $\rk(\Mcal)\leq 2$ or
$\cork(\Mcal)\leq 1$ is pure.
\end{proposition}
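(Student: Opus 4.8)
The statement splits into two cases, and the strategy is to identify the simplicial complex of $\Mcal$-separated collections explicitly in each. First I would handle $\rk(\Mcal)\le 2$. By Lemma~\ref{lemma:adding_removing_zeros} and Lemma~\ref{lemma:parallel_intro} we may discard loops and coloops and merge parallel elements, so it suffices to treat the simple case; after reorienting (Lemma~\ref{lem:signreversal}) a simple rank-$2$ oriented matroid on a nonempty ground set is exactly an alternating oriented matroid $C^{n,2}$ (the circuits are the signed triples coming from the cyclic order, once all vectors lie in an open halfplane). Purity of $C^{n,2}$ is Corollary~\ref{cor:c23narepure} (equivalently Theorem~\ref{thm:purity_ss}), so this case is done. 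One should also check the degenerate cases $\rk(\Mcal)\le 1$ separately, but these are trivial: if $\rk(\Mcal)=0$ every element is a loop and the only separated collection issue is vacuous, and if $\rk(\Mcal)=1$ after removing loops we have a single point with parallel copies, again covered by the lemmas above.

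The second case, $\cork(\Mcal)\le 1$, is where the real content lies, and I would argue it directly by describing all maximal $\Mcal$-separated collections. If $\cork(\Mcal)=0$ then $\Mcal$ has no circuits at all, so every pair of subsets is $\Mcal$-separated, the complex of $\Mcal$-separated collections is the full simplex $2^{2^E}$, which is trivially pure. If $\cork(\Mcal)=1$, then (after deleting loops and coloops, which are the elements not in, resp. in, every basis) $\Mcal$ has a unique circuit up to sign, say $C=(C^+,C^-)$ with support $\Cu=E$. Then $I$ and $J$ fail to be $\Mcal$-separated precisely when $\{C^+\subset I\setminus J,\ C^-\subset J\setminus I\}$ or the reverse holds; since $C^+\sqcup C^-=E$, this forces $I\setminus J=C^+$ and $J\setminus I=C^-$, i.e.\ $I=C^+$ and $J=C^-$ exactly (or vice versa). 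So the only non-edge in the "compatibility graph" on $2^E$ is the single pair $\{C^+,C^-\}$. Hence the $\Mcal$-separated collections are all subsets of $2^E$ not containing both $C^+$ and $C^-$, and the maximal ones are $2^E\setminus\{C^+\}$ and $2^E\setminus\{C^-\}$, both of size $2^{|E|}-1$. Purity follows. I would then reinstate loops and coloops via Lemma~\ref{lemma:adding_removing_zeros}, noting these operations only rescale everything uniformly.

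The main obstacle — really the only non-formal point — is confirming that a simple oriented matroid of corank $1$ has a unique circuit up to reorientation with full support. This is a standard fact: the circuits of a corank-$1$ oriented matroid are the minimal supports among the cocircuits' orthogonal complement, and with corank $1$ the cocircuit space is a hyperplane, whose (unique up to sign) normal gives the single circuit; if any element lay outside its support it would be a coloop, contradicting simplicity. I would cite the relevant background (Section~\ref{sect:background} / \cite{Book}) rather than reprove it. Everything else is bookkeeping with the reduction lemmas of Section~\ref{sec:simple-oper-orient} and the already-established purity of $C^{n,2}$.
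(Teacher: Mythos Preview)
Your proposal is correct and follows essentially the same approach as the paper's proof (Proposition~\ref{prop:rank_2}): reduce to the simple case via Lemmas~\ref{lemma:loop_coloop} and~\ref{lemma:parallel}, observe that a simple rank~$2$ oriented matroid is (after reorientation) $C^{n,2}$ so purity follows from Theorem~\ref{thm:purity_ss}, and in corank~$\le 1$ note that there is at most one pair of opposite circuits, so the only forbidden pair in $2^E$ is $\{C^+,C^-\}$. If anything, you are slightly more explicit than the paper about removing coloops in the corank~$1$ case to ensure the unique circuit has full support.
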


\subsection{Purity of rank $3$ oriented matroids}


\begin{definition}[cf.~\cite{Postnikov}]
  An oriented matroid $\Mcal$ of rank $d$ on the ground set $[n]$ is a \emph{positroid} if it can be represented by the columns of a $d\times n$ matrix of rank $d$ all of whose $d\times d$ minors are nonnegative.\footnote{In~\cite{Postnikov}, positroids were defined as (unoriented) matroids. But they can be naturally endowed with a structure of an oriented matroid.}
\end{definition}

According to~\cite{Postnikov}, full rank $d\times n$ matrices with nonnegative $d\times d$ minors represent points of the \emph{totally nonnegative Grassmannian} $Gr_{d,n}^{\geq 0}$. It comes equipped with a CW decomposition into cells labeled by positroids.

The following result generalizes Theorem~\ref{thm:purity_chord}.

\begin{theorem}\label{thm:purity_3d}
Let $\Mcal$ be an oriented matroid of rank $3$.
Then the following are equivalent:
 \begin{enumerate}[{\normalfont (1)}]
  \item \label{item:purity_3d_is_pure} $\Mcal$ is pure.
  \item\label{item:3_x_n_matrix} 
    $\Mcal$ is isomorphic to a positroid.
  \item\label{item:purity_3d_polygon} $\Mcal$ is represented by
a vector configuration $\VC$ such that, after a suitable rescaling 
of vectors (by nonzero scalars) and removing zero vectors, 
the endpoints of vectors in $\VC$ lie in the same affine plane and belong to the boundary of a convex $m$-gon for some $3\leq m\leq |\VC|$. 
 \end{enumerate}
\end{theorem}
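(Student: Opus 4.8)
The equivalence $\eqref{item:3_x_n_matrix}\Leftrightarrow\eqref{item:purity_3d_polygon}$ is essentially geometric bookkeeping and I would dispatch it first. A rank $3$ oriented matroid isomorphic to a positroid is, by definition and by Lemma~\ref{lem:signreversal}, representable (after reorienting and relabeling) by a $3\times n$ matrix with nonnegative maximal minors. After rescaling the columns (allowed by \eqref{item:purity_3d_polygon}) we may assume all nonzero columns have positive last coordinate, so their endpoints lie in the affine plane $\{z=1\}$; a $3\times 3$ minor is then, up to a positive factor, the signed area of the triangle formed by three endpoints, and nonnegativity of all such minors for a fixed cyclic order of the columns is exactly the statement that the endpoints lie in convex position along the boundary of a convex $m$-gon (with possible repetitions handled by Lemma~\ref{lemma:parallel_intro} and zero columns by Lemma~\ref{lemma:adding_removing_zeros}). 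Conversely, points in convex position on an $m$-gon, listed in their cyclic boundary order, give a totally nonnegative $3\times n$ matrix. This uses only elementary plane geometry plus the three ``harmless operation'' lemmas.

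The substantive implication is $\eqref{item:purity_3d_polygon}\Rightarrow\eqref{item:purity_3d_is_pure}$, i.e.\ purity of totally nonnegative rank $3$ vector configurations; the excerpt tells us this is proved in Section~\ref{sect:rk_3}, so here I would just indicate the shape of that argument. The key point is that such a configuration, after the reductions above, is (a relabeling/reorientation of) a sub-configuration of a cyclic configuration: points on the boundary of a convex polygon can be perturbed to lie on a convex curve, hence realize a suboriented matroid of $C^{n,3}$ obtained by deletion. One then needs that purity is inherited by the relevant restrictions/specializations of $C^{n,3}$ — concretely, that for an oriented matroid realized by points in convex position, every maximal-by-inclusion $\Mcal$-separated collection has size $|\Ind(\Mcal)|$. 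The cleanest route is to leverage Theorem~\ref{thm:max_size_vc} (maximal-by-size collections $=$ vertex sets of fine zonotopal tilings, of size $|\Ind(\VC)|$) together with flip-connectedness: a maximal-by-inclusion collection that is not maximal-by-size would correspond to a ``stuck'' partial tiling, and one argues — using the planar/convex structure, essentially the combinatorics of plabic graphs and the $2$-dimensional sections of the zonotope as in Theorem~\ref{thm:purity_chord} — that no such obstruction exists. I expect this to be the main obstacle: controlling maximal-by-inclusion collections directly (rather than maximal-by-size ones) requires a local move / exchange argument showing any non-maximal separated collection can be enlarged, and making that work uniformly for all positroids of rank $3$ (not just the alternating one) is the real content.

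The remaining implication $\eqref{item:purity_3d_is_pure}\Rightarrow\eqref{item:purity_3d_polygon}$ (equivalently $\Rightarrow\eqref{item:3_x_n_matrix}$) is the ``only positroids are pure'' direction and should be proved by exhibiting forbidden minors. A rank $3$ oriented matroid that is \emph{not} isomorphic to a positroid must, after deleting loops and parallel elements (harmless by Lemmas~\ref{lemma:adding_removing_zeros} and~\ref{lemma:parallel_intro}), contain as a minor a small non-positroid configuration — concretely one of a short explicit list of rank $3$ oriented matroids on few elements whose points cannot be placed in convex position (for instance a configuration with a point strictly inside a triangle, or a non-realizable obstruction of the right size). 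For each such candidate minor I would directly construct a maximal-by-inclusion $\Mcal$-separated collection that is strictly smaller than $|\Ind(\Mcal)|$, i.e.\ a ``stuck'' collection, and then propagate non-purity from the minor to $\Mcal$ using the general principle (to be established in Section~\ref{sect:pure_OM}, cf.\ the forbidden-minor conjecture) that purity is closed under taking minors. The one technical care needed is that ``minor'' here must be the notion for which purity is monotone; modulo that, the argument reduces to a finite case-check at the level of the forbidden configurations. So overall: the first equivalence is routine linear algebra, the hard analytic/combinatorial core is purity of convex-position configurations via zonotopal tilings and flips, and the converse is a forbidden-minor reduction to a finite list.
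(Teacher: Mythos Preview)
Your treatment of $\eqref{item:3_x_n_matrix}\Leftrightarrow\eqref{item:purity_3d_polygon}$ is fine and matches the paper (which simply calls it a well-known fact, cf.\ Lemma~\ref{lemma:positroids}).

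Your plan for $\eqref{item:purity_3d_polygon}\Rightarrow\eqref{item:purity_3d_is_pure}$ has a genuine gap. You propose to view a totally nonnegative configuration as a ``sub-configuration of a cyclic configuration\ldots obtained by deletion'' and then inherit purity from $C^{n,3}$. This fails: any deletion of the uniform matroid $C^{n,3}$ is again uniform, whereas a positroid with three collinear points (three points on an edge of the polygon) is not. The correct relation is a rank-preserving weak map $C^{n,3}\leadsto\Mcal$, but inheritance of purity under weak maps is precisely the open Conjecture~\ref{conj:weak_minors}, so you cannot invoke it. Your fallback via flip-connectedness is also off target: flips connect different maximal-\emph{by-size} collections to each other and say nothing about whether a maximal-\emph{by-inclusion} collection can be enlarged. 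The paper's actual argument (Theorem~\ref{thm:purity_vc}) is an induction on $|\PC|$ and, for fixed $|\PC|$, reverse induction on the number of sides of the polygon: given a maximal-by-inclusion $\Mcal$-separated collection $\WS$, one perturbs a single point on an edge to a new vertex, obtaining $\Mcal'$ with one more side; extends $\WS$ to a (by induction) maximal-by-size $\Mcal'$-separated collection $\WS'$; and then patches $\sigma_\WS$ and $\sigma_{\WS'}$ into a map $\sigma:\Ccal(\Mcal)\to\{+,-\}$ that one checks is a colocalization via a short case analysis on corank-$2$ subsets (Lemmas~\ref{lemma:circuit_diff} and~\ref{lemma:removing_m}). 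The crucial ingredient you are missing is Proposition~\ref{prop:complete_properties}\eqref{item:complete_colocalization}: in rank $\le 3$, any \emph{complete} $\Mcal$-separated collection already yields a colocalization, so the whole game is to show $\WS$ can be completed.

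For $\eqref{item:purity_3d_is_pure}\Rightarrow\eqref{item:3_x_n_matrix}$ your outline (forbidden minors, then Proposition~\ref{prop:if_pure_then_minors_are_pure} in contrapositive) is the right shape and matches the paper. But ``a short explicit list'' hides substantial work: the paper first classifies all $17$ simple rank-$3$ oriented matroids on six elements, exhibits explicit bad collections for the nine non-positroids (Lemma~\ref{lemma:six_elements}, Figure~\ref{fig:non_postiroids}), and then proves a nontrivial structural result (Theorem~\ref{thm:minors}) that every simple rank-$3$ non-positroid on $\ge 7$ elements contains one of these nine as a restriction. That last step is not a routine finite check --- it goes through ``almost positively oriented'' matroids, a pentagon lemma, realizability for $\le 8$ elements, and a geometric analysis of good pentagons.
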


\begin{remark}
We distinguish between being ``a positroid'' and  ``isomorphic to a positroid''. The property of being a positroid depends on the ordering of the 
elements of the ground set and is not invariant under reorientations. On the other hand, purity is invariant under relabeling and reorienting
the ground set of $\Mcal$.
\end{remark}

The equivalence of conditions~\eqref{item:3_x_n_matrix} and~\eqref{item:purity_3d_polygon} in Theorem~\ref{thm:purity_3d} is a simple well known fact. We prove that~\eqref{item:purity_3d_is_pure} implies~\eqref{item:3_x_n_matrix} in Theorem~\ref{thm:purity_vc}, and the converse is shown in Section~\ref{sect:classif}.

\subsection{Graphical oriented matroids}\label{sec:graph-orient-matr}

Let $\vec G$ be a
directed graph\footnote{We denote directed graphs by $\vec
G$ and undirected graphs by $G$.} with vertex set $[d]$ and with $n$ edges, then the
corresponding vector configuration $\VC_{\vec G}\subset \R^d$ consists of
vectors $e_i-e_j$, where $i\to j$ is an edge of $\vec G$, and the vectors
$e_i$, $1\leq i\leq d$, are the standard coordinate vectors in $\R^d$. 

The \emph{graphical oriented matroid} $\Mcal_{\vec G}$ is 
the oriented matroid associated with the vector configuration 
$\VC_{\vec G}$.

According to Lemma~\ref{lem:signreversal} on reorientations,
the property of $\Mcal_{\vec G}$ being pure does not depend on the orientation
of the edges of $\vec G$. 
We briefly explain how our definitions translate to undirected
graphs. 

Consider an undirected graph $G$ without loops or parallel edges. We say that two total orientations $O_1$ and $O_2$ of $G$ are \emph{$G$-separated} if there does not exist a cycle $C$ of $G$ such that $C$ is directed in both $O_1$ and $O_2$ but in the opposite ways. In particular, acyclic orientations of $G$ are $G$-separated from all other total orientations of $G$. We say that $G$ is \emph{pure} if the size of any maximal \emph{by inclusion} collection of pairwise $G$-separated total orientations of $G$ equals the number of \emph{forests} of $G$.

\begin{definition}[{\cite{CH}}]
An undirected graph $G$ is called \emph{outerplanar} if $G$ can be drawn in the plane without self-intersections and so that every vertex is incident to the exterior face of $G$.
\end{definition}

\begin{theorem}[\cite{CH}]\label{thm:outerplanar_minors}
	Given an undirected graph $G$, the following conditions are equivalent:
\begin{enumerate}[{\normalfont (1)}]
 \item $G$ is outerplanar;
 \item $G$ is a subgraph of the $1$-skeleton of a triangulation of a convex $m$-gon;
 \item $G$ does not contain $K_4$ or $K_{2,3}$ as a minor.
\end{enumerate}
\end{theorem}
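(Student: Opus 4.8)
The plan is to prove $(2)\Rightarrow(1)$, $(1)\Rightarrow(2)$, $(2)\Rightarrow(3)$, and $(3)\Rightarrow(1)$, with $(3)\Rightarrow(1)$ being the one substantial step and the rest being soft facts about maximal outerplanar graphs. For $(2)\Rightarrow(1)$ I would draw a triangulation $T$ of a convex $m$-gon with the $m$ polygon vertices on a circle: every vertex of $T$ lies on the unbounded face, and deleting edges or vertices from an outerplanar drawing only merges faces, so every subgraph of $T$ is again outerplanar. For $(1)\Rightarrow(2)$, start from an outerplanar $G$ with $m\ge 3$ vertices and add edges one at a time, each time preserving outerplanarity, until the graph $G^{+}\supseteq G$ is edge-maximal outerplanar (the cases $m\le 2$ are trivial). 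A routine induction — a maximal outerplanar graph on at least three vertices has a degree-$2$ vertex whose two neighbours are adjacent; delete it and recurse — shows that $G^{+}$ is $2$-connected, its outer boundary is a Hamiltonian cycle $C$, and every bounded face is a triangle; drawing $C$ as a convex $m$-gon exhibits $G^{+}$, hence $G$, as a subgraph of the $1$-skeleton of a triangulation of a convex $m$-gon.

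For $(2)\Rightarrow(3)$ I would first record that the class of outerplanar graphs is closed under minors: an outerplanar embedding survives edge deletion, edge contraction and vertex deletion, all vertices staying on the outer face. Since by the already-proven equivalence $(1)\Leftrightarrow(2)$ any graph satisfying $(2)$ is outerplanar, it then suffices to check that neither $K_4$ nor $K_{2,3}$ is outerplanar. For $K_4$ this follows from the bound $|E(H)|\le 2|V(H)|-3$ valid for every outerplanar graph $H$ (here $6>5$). For $K_{2,3}$ I would invoke the standard reformulation that a graph $H$ is outerplanar if and only if the graph $\widehat H$ obtained from $H$ by adding one new vertex adjacent to all vertices of $H$ is planar: here $\widehat{K_{2,3}}$ contains $K_{3,3}$ as a subgraph, hence is non-planar, so $K_{2,3}$ is not outerplanar.

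The hard part is $(3)\Rightarrow(1)$, and the key is again the criterion that $H$ is outerplanar if and only if $\widehat H$ is planar (forward direction: embed $H$ with all vertices on the outer face and place the new vertex in the outer region; backward direction: in a planar embedding of $\widehat H$ pick an outer face incident to the new vertex $v$, delete $v$, and observe that the faces formerly incident to $v$ merge into the outer face, whose boundary then contains every neighbour of $v$, i.e.\ every vertex of $H$). Assuming $(3)$ but not $(1)$, the graph $\widehat G$ is non-planar, so by the Kuratowski--Wagner theorem it has a $K_5$- or a $K_{3,3}$-minor, witnessed by connected, pairwise-adjacent branch sets, at most one of which contains the apex $v$. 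Discarding that one branch set: from a $K_5$-minor at least four branch sets remain, lying inside $G$ and pairwise joined by edges of $G$, giving a $K_4$-minor of $G$; from a $K_{3,3}$-minor with bipartition $(\{W_1,W_2,W_3\},\{W_1',W_2',W_3'\})$ and, say, $v\in W_1$, the branch sets $W_2,W_3,W_1',W_2',W_3'$ remain inside $G$ with complete bipartite adjacency between $\{W_2,W_3\}$ and $\{W_1',W_2',W_3'\}$, giving a $K_{2,3}$-minor of $G$. In either case $G$ has a $K_4$- or $K_{2,3}$-minor, contradicting $(3)$; hence $G$ is outerplanar.

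I expect the main obstacle to be exactly this last implication: setting up the ``add an apex, then apply Wagner'' dictionary correctly and then carefully checking that deleting the at-most-one apex branch set still leaves a forbidden minor downstairs — the $K_{3,3}$ case in particular requires seeing that two branch sets plus the opposite side still carry a $K_{2,3}$. By contrast, the implications among $(1)$ and $(2)$ are purely structural statements about maximal outerplanar graphs being triangulations of convex polygons, and $(2)\Rightarrow(3)$ only combines minor-closedness of outerplanarity with the two small non-embeddability checks for $K_4$ and $K_{2,3}$.
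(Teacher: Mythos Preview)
The paper does not actually prove this theorem: it is stated with the citation \cite{CH} (Chartrand--Harary) and used as a black box in the proof of Theorem~\ref{thm:outerplanar}. So there is no ``paper's own proof'' to compare against.

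Your argument is the standard one and is correct. The apex construction $G\mapsto \widehat G$ together with Wagner's theorem is exactly the classical route to $(3)\Rightarrow(1)$, and your case analysis---discarding the unique branch set containing the apex from a $K_5$- or $K_{3,3}$-minor of $\widehat G$ to obtain a $K_4$- or $K_{2,3}$-minor of $G$---is handled correctly, including the $K_{3,3}$ case where the apex lies on one side of the bipartition. The structural facts you use for $(1)\Leftrightarrow(2)$ (maximal outerplanar graphs are polygon triangulations; the degree-$2$ vertex induction) are likewise standard and correctly sketched. One small remark: in $(2)\Rightarrow(3)$ you implicitly assume $(1)\Leftrightarrow(2)$ is already established when you invoke minor-closure of outerplanarity, so the logical order should be $(1)\Leftrightarrow(2)$ first, then $(2)\Rightarrow(3)$, then $(3)\Rightarrow(1)$---which is in fact what you do.
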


Here $K_4$ denotes the complete graph with $4$ vertices and $K_{2,3}$ denotes the complete bipartite graph with $2+3$ vertices. The last condition in Theorem~\ref{thm:outerplanar_minors} is analogous to the celebrated theorems of Kuratowski~\cite{Kuratowski} and Wagner~\cite{Wagner} for planar graphs.

Here is our main result on the purity of graphical oriented matroids, which is proved in Section~\ref{sect:graph}.

\begin{theorem}\label{thm:outerplanar}
An undirected graph $G$ is pure
(i.e., the graphical oriented matroid $\Mcal_{\vec G}$ is pure
for any orientation $\vec G$ of $G$)
if and only if $G$ is outerplanar.
\end{theorem}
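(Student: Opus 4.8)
\textbf{Proof strategy for Theorem~\ref{thm:outerplanar}.}
The plan is to prove the two implications separately, using the characterization of outerplanar graphs in Theorem~\ref{thm:outerplanar_minors} (especially the excluded-minor condition (3) and the triangulation condition (2)), together with the minor-related operations on oriented matroids discussed in Section~\ref{sec:simple-oper-orient}.

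\medskip

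\emph{Outerplanar implies pure.}
First I would observe that for an outerplanar graph $G$, by Theorem~\ref{thm:outerplanar_minors}(2), $G$ is a subgraph of the $1$-skeleton of a triangulation $T$ of a convex $m$-gon. The key point is that the vertices of such a triangulation, viewed as points on the boundary of a convex polygon in the plane, give exactly the setup of condition~\eqref{item:purity_3d_polygon} of Theorem~\ref{thm:purity_3d}: the edge vectors $e_i - e_j$ of $\vec G$, together with the coordinate vectors $e_i$, form a vector configuration whose associated matroid is of rank $3$ (the affine span of a planar point configuration lifted to $\R^3$ via the ``$z=1$'' trick). One must check that passing from $G$ to its ambient triangulation $T$, and from $T$ to the polygon-on-a-plane picture, lands us at a pure oriented matroid by Theorem~\ref{thm:purity_3d}, and then that purity descends from $\Mcal_{\vec T}$ to $\Mcal_{\vec G}$. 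The descent is exactly a deletion of elements (edges), which needs a lemma — either proven here or in Section~\ref{sect:graph} — that purity is preserved under deletion within the graphical class; alternatively one works directly with $G$-separated orientations and the ``maximal by size $=$ number of forests'' formulation. I expect the cleanest route is to observe that the forests of $G$ are a mutation-closed subdomain of the forests of $T$ and invoke Proposition~\ref{prop:graphicalandflipconnected} (the graphical case of Conjecture~\ref{conj:mutation_closed_pure}) together with the purity of $\Mcal_{\vec T}$.

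\medskip

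\emph{Not outerplanar implies not pure.}
By Theorem~\ref{thm:outerplanar_minors}(3), a non-outerplanar $G$ contains $K_4$ or $K_{2,3}$ as a minor. I would first reduce to the case $G = K_4$ or $G = K_{2,3}$: by Lemma~\ref{lemma:adding_removing_zeros} and Lemma~\ref{lemma:parallel_intro} (and the conjectural/established behaviour of purity under taking minors in the graphical setting — contraction of an edge corresponds to identifying vertices, deletion to removing an edge), non-purity of the minor should propagate to non-purity of $G$. This step requires knowing that the graphical oriented matroid of a minor of $G$ is a minor of $\Mcal_{\vec G}$, and that purity is inherited by minors — which is presumably established as part of the forbidden-minor framework referenced in the introduction; if it is only conjectural in general, then in the graphical case one must verify it directly, e.g. by lifting a non-pure $\Mcal$-separated collection on the minor to one on $G$ by padding with a fixed acyclic-orientation-type collection. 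Then, for the two base cases $K_4$ and $K_{2,3}$, I would exhibit explicitly a maximal-by-inclusion collection of pairwise $G$-separated total orientations whose size is strictly smaller than the number of forests of $G$ — a finite check, presumably done by computer or by a short hand computation, since $K_4$ has $38$ forests and $K_{2,3}$ has $45$.

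\medskip

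\emph{Main obstacle.}
The hard part will be the behaviour of purity under graph minors in the ``non-outerplanar implies non-pure'' direction: deletion (removing an edge) is relatively benign, but contraction of an edge of $\vec G$ is the subtle operation, since it can create parallel edges and loops and, more seriously, one must argue that a small non-pure gadget sitting inside $G$ as a minor actually \emph{forces} a too-small maximal $G$-separated collection rather than merely permitting one. The natural tool is an explicit ``extension/padding'' construction taking a bad collection on $K_4$ or $K_{2,3}$ and enlarging it to a maximal-by-inclusion, still-too-small, collection on all of $G$ using the orientations of the remaining edges; making sure the enlargement stays maximal by inclusion (nothing outside can be added) and counting that its size is below the forest count is where the real work lies. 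On the outerplanar side, the main thing to get right is the precise dictionary between graphical oriented matroids, their associated planar point configurations, and the rank-$3$ positroid picture of Theorem~\ref{thm:purity_3d}, so that purity can simply be quoted rather than reproved.
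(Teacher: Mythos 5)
Your first direction, \emph{outerplanar implies pure}, does not work as sketched, and the gap is conceptual rather than technical. You propose to reduce a triangulation $T$ of a convex $m$-gon to the rank-$3$ positroid case of Theorem~\ref{thm:purity_3d} by saying the graphical vector configuration ``is of rank $3$ via the $z=1$ trick.'' But the graphical oriented matroid $\Mcal_{\vec T}$ lives in $\R^m$ and has rank $m-1$; only for $m=4$ (that is, $T = K_4$, which is \emph{not} pure) does one get rank $3$. Drawing $G$ in the plane has no bearing on $\rk(\Mcal_{\vec G})$, which is determined by the vertex count of $G$, not the embedding. There is also no sense in which the $e_i$ themselves belong to $\VC_{\vec G}$; the configuration consists only of the differences $e_i - e_j$. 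So Theorem~\ref{thm:purity_3d} is simply not applicable here, and the paper instead proves the triangulation case by a self-contained combinatorial argument: it encodes cycles of $T$ as connected subgraphs of the dual tree $T(G)$, reformulates colocalizations in general position as sign functions on $\Conn(T(G))$ with no ``bad triples,'' and shows directly that $\sigma_\WS$ can always be extended to such a function. Nothing in that argument goes through the rank-$3$ positroid machinery. (The related remark that ``forests of $G$ are a mutation-closed subdomain of forests of $T$'' is also not meaningful: the mutation graph is on $2^E$ with $E$ the edge set, and $G$ and $T$ have different edge sets, so this is not a subdomain in the sense of Definition~\ref{dfn:mutation_graph_intro}.)

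Your second direction, \emph{not outerplanar implies not pure}, is essentially the paper's approach and is fine in outline: show $K_4$ and $K_{2,3}$ are not pure and propagate via minors. Two small corrections. The worry that ``purity is inherited by minors'' might be only conjectural is unfounded: this is Proposition~\ref{prop:if_pure_then_minors_are_pure}, proved unconditionally, so no ad hoc padding construction for graph minors is needed. Also, $K_{2,3}$ has $54$ forests, not $45$. Worth noting that the paper's non-purity argument for $K_4$ is not a forest count at all but a colocalization obstruction: the four orientations of Figure~\ref{fig:K_4} force a single cycle of $K_4$ to receive both signs, so $\sigma_\WS$ cannot extend to a colocalization; a pure ``size'' argument of the type you suggest (exhibit a maximal-by-inclusion collection smaller than $T_G(2,1)$) would also work but is not what the paper does.
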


Note that the zonotope associated with the vector configuration $\VC_{\vec K_4}$ for the directed graph $\vec K_4$ with edge set $\{i\to j\mid 1\leq i<j\leq 4\}$ is the three-dimensional permutohedron. Figures~\ref{fig:K_23} and~\ref{fig:K_4} show that the graphs $K_{2,3}$ and $K_4$ are not pure. This is explained in more detail in Section~\ref{sect:graph}.

\begin{figure}
\centering
\begin{tabular}{ccc}
 \begin{tikzpicture}[scale=1]
  \node[draw,ellipse] (A) at (0.5,1) {$ $};
  \node[draw,ellipse] (B) at (1.5,1) {$ $};
  \node[draw,ellipse] (C) at (0,0) {$ $};
  \node[draw,ellipse] (D) at (1,0) {$ $};
  \node[draw,ellipse] (E) at (2,0) {$ $};
  \draw[->, red, line width=0.3mm] (C) -- (A);
  \draw[->, red, line width=0.3mm] (A) -- (E);
  \draw[->, red, line width=0.3mm] (E) -- (B);
  \draw[->, red, line width=0.3mm] (B) -- (C);
  \draw[->, black, line width=0.1mm] (A)--(D);
  \draw[->, black, line width=0.1mm] (B)--(D);
 \end{tikzpicture}
&
 \begin{tikzpicture}[scale=1]
  \node[draw,ellipse] (A) at (0.5,1) {$ $};
  \node[draw,ellipse] (B) at (1.5,1) {$ $};
  \node[draw,ellipse] (C) at (0,0) {$ $};
  \node[draw,ellipse] (D) at (1,0) {$ $};
  \node[draw,ellipse] (E) at (2,0) {$ $};
  \draw[->, red, line width=0.3mm] (A) -- (C);
  \draw[->, red, line width=0.3mm] (C) -- (B);
  \draw[->, red, line width=0.3mm] (B) -- (D);
  \draw[->, red, line width=0.3mm] (D) -- (A);
  \draw[->, black, line width=0.1mm] (A)--(E);
  \draw[->, black, line width=0.1mm] (B)--(E);
 \end{tikzpicture}
&
 \begin{tikzpicture}[scale=1]
  \node[draw,ellipse] (A) at (0.5,1) {$ $};
  \node[draw,ellipse] (B) at (1.5,1) {$ $};
  \node[draw,ellipse] (C) at (0,0) {$ $};
  \node[draw,ellipse] (D) at (1,0) {$ $};
  \node[draw,ellipse] (E) at (2,0) {$ $};
  \draw[->, red, line width=0.3mm] (A) -- (D);
  \draw[->, red, line width=0.3mm] (D) -- (B);
  \draw[->, red, line width=0.3mm] (B) -- (E);
  \draw[->, red, line width=0.3mm] (E) -- (A);
  \draw[->, black, line width=0.1mm] (A)--(C);
  \draw[->, black, line width=0.1mm] (B)--(C);
 \end{tikzpicture}\\
 
 \begin{tikzpicture}[scale=1]
  \node[draw,ellipse] (A) at (0.5,1) {$ $};
  \node[draw,ellipse] (B) at (1.5,1) {$ $};
  \node[draw,ellipse] (C) at (0,0) {$ $};
  \node[draw,ellipse] (D) at (1,0) {$ $};
  \node[draw,ellipse] (E) at (2,0) {$ $};
  \draw[->, red, line width=0.3mm] (C) -- (A);
  \draw[->, red, line width=0.3mm] (A) -- (E);
  \draw[->, red, line width=0.3mm] (E) -- (B);
  \draw[->, red, line width=0.3mm] (B) -- (C);
  \draw[->, black, line width=0.1mm] (D)--(A);
  \draw[->, black, line width=0.1mm] (D)--(B);
 \end{tikzpicture}
&
 \begin{tikzpicture}[scale=1]
  \node[draw,ellipse] (A) at (0.5,1) {$ $};
  \node[draw,ellipse] (B) at (1.5,1) {$ $};
  \node[draw,ellipse] (C) at (0,0) {$ $};
  \node[draw,ellipse] (D) at (1,0) {$ $};
  \node[draw,ellipse] (E) at (2,0) {$ $};
  \draw[->, red, line width=0.3mm] (A) -- (C);
  \draw[->, red, line width=0.3mm] (C) -- (B);
  \draw[->, red, line width=0.3mm] (B) -- (D);
  \draw[->, red, line width=0.3mm] (D) -- (A);
  \draw[->, black, line width=0.1mm] (E)--(A);
  \draw[->, black, line width=0.1mm] (E)--(B);
 \end{tikzpicture}
&
 \begin{tikzpicture}[scale=1]
  \node[draw,ellipse] (A) at (0.5,1) {$ $};
  \node[draw,ellipse] (B) at (1.5,1) {$ $};
  \node[draw,ellipse] (C) at (0,0) {$ $};
  \node[draw,ellipse] (D) at (1,0) {$ $};
  \node[draw,ellipse] (E) at (2,0) {$ $};
  \draw[->, red, line width=0.3mm] (A) -- (D);
  \draw[->, red, line width=0.3mm] (D) -- (B);
  \draw[->, red, line width=0.3mm] (B) -- (E);
  \draw[->, red, line width=0.3mm] (E) -- (A);
  \draw[->, black, line width=0.1mm] (C)--(A);
  \draw[->, black, line width=0.1mm] (C)--(B);
 \end{tikzpicture}
\end{tabular}

 \caption{\label{fig:K_23} A $K_{2,3}$-separated collection of total orientations that is maximal \emph{by inclusion} but not \emph{by size}. For each total orientation, the unique cycle that it orients is shown in red.}
\end{figure}
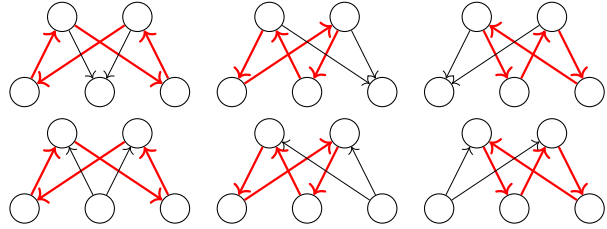

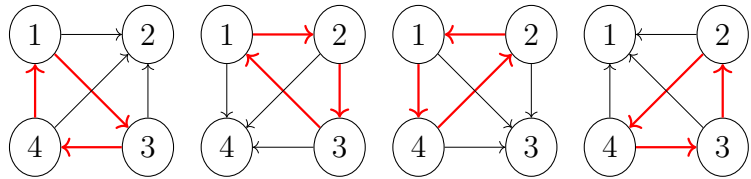
\begin{figure}
\centering
\begin{tabular}{cccc}
 \begin{tikzpicture}[scale=1.5]
  \node[draw,ellipse] (A) at (0,1) {$1$};
  \node[draw,ellipse] (B) at (1,1) {$2$};
  \node[draw,ellipse] (C) at (1,0) {$3$};
  \node[draw,ellipse] (D) at (0,0) {$4$};
  \draw[->, red, line width=0.3mm] (A) -- (C);
  \draw[->, red, line width=0.3mm] (C) -- (D);
  \draw[->, red, line width=0.3mm] (D) -- (A);
  \draw[->, black, line width=0.1mm] (A)--(B);
  \draw[->, black, line width=0.1mm] (C)--(B);
  \draw[->, black, line width=0.1mm] (D)--(B);
 \end{tikzpicture}
&
 \begin{tikzpicture}[scale=1.5]
  \node[draw,ellipse] (A) at (0,1) {$1$};
  \node[draw,ellipse] (B) at (1,1) {$2$};
  \node[draw,ellipse] (C) at (1,0) {$3$};
  \node[draw,ellipse] (D) at (0,0) {$4$};
  \draw[->, red, line width=0.3mm] (A) -- (B);
  \draw[->, red, line width=0.3mm] (B) -- (C);
  \draw[->, red, line width=0.3mm] (C) -- (A);
  \draw[->, black, line width=0.1mm] (A)--(D);
  \draw[->, black, line width=0.1mm] (B)--(D);
  \draw[->, black, line width=0.1mm] (C)--(D);
 \end{tikzpicture}
&
 \begin{tikzpicture}[scale=1.5]
  \node[draw,ellipse] (A) at (0,1) {$1$};
  \node[draw,ellipse] (B) at (1,1) {$2$};
  \node[draw,ellipse] (C) at (1,0) {$3$};
  \node[draw,ellipse] (D) at (0,0) {$4$};
  \draw[->, red, line width=0.3mm] (A) -- (D);
  \draw[->, red, line width=0.3mm] (D) -- (B);
  \draw[->, red, line width=0.3mm] (B) -- (A);
  \draw[->, black, line width=0.1mm] (A)--(C);
  \draw[->, black, line width=0.1mm] (D)--(C);
  \draw[->, black, line width=0.1mm] (B)--(C);
 \end{tikzpicture}
&
 \begin{tikzpicture}[scale=1.5]
  \node[draw,ellipse] (A) at (0,1) {$1$};
  \node[draw,ellipse] (B) at (1,1) {$2$};
  \node[draw,ellipse] (C) at (1,0) {$3$};
  \node[draw,ellipse] (D) at (0,0) {$4$};
  \draw[->, red, line width=0.3mm] (B) -- (D);
  \draw[->, red, line width=0.3mm] (D) -- (C);
  \draw[->, red, line width=0.3mm] (C) -- (B);
  \draw[->, black, line width=0.1mm] (B)--(A);
  \draw[->, black, line width=0.1mm] (D)--(A);
  \draw[->, black, line width=0.1mm] (C)--(A);
 \end{tikzpicture}
\end{tabular}

 \caption{\label{fig:K_4} A $K_4$-separated collection of total orientations that is not contained in any maximal \emph{by size} $K_4$-separated collection.}
\end{figure}


\subsection{Uniform oriented matroids}


An oriented matroid $\Mcal$ of rank $d$ is called \emph{uniform} 
if
$$
\{\Xu \mid X\textrm{ if a circuit of } \Mcal\} = {[n]\choose d+1}.
$$
If $\Mcal$ is associated with a vector configuration $\VC$,
uniformity means that the vectors in $\VC$ are in general position.

The following theorem gives a complete characterization of pure uniform oriented matroids.

\begin{theorem}\label{thm:uniform_classification}
 Suppose $\Mcal$ is a uniform oriented matroid.
Then $\Mcal$ is pure if and only if
 \begin{enumerate}[{\normalfont (1)}]
\item \label{item:uniform_classification:rk_2}
$\rk(\Mcal)\leq 2$, or 
\item \label{item:uniform_classification:cork_1}
$\cork(\Mcal)\leq 1$, or
\item\label{item:pure_3} $\rk(\Mcal)=3$ 
and $\Mcal$ is isomorphic to the alternating matroid $C^{n,3}$. 
 \end{enumerate}
\end{theorem}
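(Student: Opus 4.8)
The plan is to prove the theorem by combining the known results already quoted in this paper with a small number of explicit ``bad minor'' computations for uniform oriented matroids. For the ``if'' direction, cases~\eqref{item:uniform_classification:rk_2} and~\eqref{item:uniform_classification:cork_1} are immediate from Proposition~\ref{prop:rk_2_cork_1}, and case~\eqref{item:pure_3} follows from Corollary~\ref{cor:c23narepure}, which asserts that $C^{n,3}$ is pure (together with Lemma~\ref{lem:signreversal} and the relabeling remark, so that ``isomorphic to $C^{n,3}$'' suffices). So the entire content is the ``only if'' direction: a uniform $\Mcal$ with $\rk(\Mcal)\ge 3$, $\cork(\Mcal)\ge 2$, and (when $\rk(\Mcal)=3$) not isomorphic to $C^{n,3}$, must fail to be pure.

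First I would dispose of the rank-$3$ subcase. Here $\Mcal$ is a uniform rank-$3$ oriented matroid on $n\ge 5$ elements that is not isomorphic to $C^{n,3}$. By Theorem~\ref{thm:purity_3d}, purity of a rank-$3$ oriented matroid is equivalent to being isomorphic to a positroid, equivalently (condition~\eqref{item:purity_3d_polygon}) being representable by vectors whose rescaled endpoints lie on the boundary of a convex polygon in an affine plane. For a \emph{uniform} rank-$3$ oriented matroid, no three of those points are collinear, so such a configuration is exactly $n$ points in convex position, which is the alternating/cyclic configuration $C^{n,3}$ up to the allowed operations. Hence a uniform pure rank-$3$ oriented matroid is isomorphic to $C^{n,3}$, giving the claim in this subcase; this step is essentially a corollary of Theorem~\ref{thm:purity_3d} plus the observation that uniformity forbids all the other positroid configurations.

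The main work is the case $\rk(\Mcal)=d\ge 4$ and $\cork(\Mcal)\ge 2$, i.e.\ $n\ge d+2$. The strategy is to exhibit a forbidden configuration and invoke minor-closedness of purity. Concretely, I expect that the uniform oriented matroid $C^{d+2,d}$ (alternating, corank $2$) is \emph{not} pure for every $d\ge 4$: one constructs, as in Figures~\ref{fig:K_23}--\ref{fig:K_4} for the graphical case, an explicit $\Mcal$-separated collection in $2^{[d+2]}$ that is maximal by inclusion but whose size is strictly smaller than $|\Ind(C^{d+2,d})| = \sum_{j=0}^{d}\binom{d+2}{j} = 2^{d+2} - (d+2) - 1$ (here I am using Theorem~\ref{thm:max_size_matroid}/\ref{thm:max_size_vc} to identify the maximal-by-size count with the number of independent sets). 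Given that $C^{d+2,d}$ is impure, I would then argue that every uniform $\Mcal$ with $d\ge 4$, $\cork\ge 2$ contains an impure minor: restricting to a corank-$2$ contraction/deletion, a uniform oriented matroid of rank $d\ge 4$ and corank exactly $2$ on $d+2$ elements must, by the uniform rank-$3$-or-higher rigidity, contain $C^{d+2,d}$ (or another impure uniform minor) — more robustly, one contracts down to rank $4$ corank $\ge 2$ or deletes down to rank $\ge 4$ corank $2$ and uses the base case. Purity is preserved under the minor operations of Section~\ref{sec:simple-oper-orient} and the deletion/contraction machinery referenced there (and proved in Section~\ref{sect:pure_OM}), so an impure minor forces $\Mcal$ impure.

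The hard part will be the base case: verifying that $C^{d+2,d}$ (and, in the minor-reduction, the relevant small uniform oriented matroids of rank $4$) are genuinely impure, i.e.\ producing a maximal-by-inclusion $\Mcal$-separated collection that is too small. For corank $2$ this should be tractable by hand or by a short dimension count — a fine zonotopal tiling of a corank-$2$ cyclic zonotope corresponds to a path in the higher Bruhat order, and one looks for a ``locally maximal'' separated collection not coming from any full such tiling — but making the inductive reduction from arbitrary $(d,n)$ with $d\ge 4$, $\cork\ge 2$ down to this base case, using only the minor operations that are known to preserve purity, is where one must be careful. I would organize this reduction as: (a) if $\cork(\Mcal)\ge 3$ or $\rk(\Mcal)\ge 5$, contract or delete one element (staying uniform) to reduce $d+n$ while keeping $d\ge 4$, $\cork\ge 2$, until reaching $\rk=4$, $\cork=2$, $n=6$; (b) handle the finitely many uniform oriented matroids on $6$ elements of rank $4$ directly, checking each is impure. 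Step~(b) is the single genuine computation, and I expect it to be the crux of the argument.
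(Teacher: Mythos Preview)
Your approach is essentially the paper's. The ``if'' direction and the rank-$3$ subcase are handled exactly as you describe, and for $\rk\ge 4$, $\cork\ge 2$ the paper likewise reduces by minors (Proposition~\ref{prop:if_pure_then_minors_are_pure}) to the base case $C^{6,4}$ (Lemma~\ref{lemma:minors_6_4}) and shows that base case is impure (Lemma~\ref{lemma:six_elements}(1)).

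Two simplifications you are missing. First, your step~(b) is a \emph{single} check, not ``finitely many'': a uniform oriented matroid of rank~$4$ and corank~$2$ has a uniform rank-$2$ dual on six elements, and every uniform rank-$2$ oriented matroid is isomorphic to $C^{m,2}$, so the matroid itself is isomorphic to $C^{6,4}$. More generally every uniform corank-$2$ oriented matroid is already isomorphic to some $C^{d+2,d}$; this dissolves the muddle in your middle paragraph about ``containing $C^{d+2,d}$''. Second, the impurity of $C^{6,4}\cong (C^{6,2})^*$ does not need a tiling or dimension argument: the three-element collection $\WS=\{\emptyset,\,[1,4],\,[3,6]\}$ is $(C^{6,2})^*$-separated and complete, but the induced $\sigma_\WS$ fails the Type~III condition on the whole ground set (it is exactly the $n=6$ pattern $(+,0,-,0,+,0)$ singled out in the proof of Proposition~\ref{prop:complete_properties}), so no colocalization in general position dominates $\sigma_\WS$ and $\WS$ cannot lie in any maximal-by-size collection. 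Equivalently, $C^{6,4}\cong\Mcal_{\vec K_{2,3}}$, so the six orientations of Figure~\ref{fig:K_23} already witness impurity.
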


This result is proved in Section~\ref{sect:classif}. It implies in particular that there are no
pure uniform oriented matroids $\Mcal$ with 
   $\rk(\Mcal)\geq 4$ and $\cork(\Mcal)\geq 2$.

\subsection{Arbitrary oriented matroids}

Let us give a general conjecture that, according to our 
computer experiments, characterizes the class of pure oriented matroids.

\begin{conjecture}\label{conj:weak_minors}
 An oriented matroid $\Mcal$ is pure if and only if all of its six-element
minors are pure. Explicitly, it is pure if and only if one cannot obtain 
the graphical oriented matroids $\Mcal_{\vec K_4}$ and $\Mcal_{\vec
  K_{2,3}}$ from $\Mcal$ by taking minors and rank-preserving weak maps.

In particular, if $\Mcal_1\weakMap\Mcal_2$ is a rank-preserving weak map and $\Mcal_1$ is a pure oriented matroid then $\Mcal_2$ is a pure oriented matroid as well.
\end{conjecture}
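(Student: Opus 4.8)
The statement is a conjecture, so I will aim to establish it for the three families in which purity has been classified --- rank $3$ (Theorem~\ref{thm:purity_3d}), graphical (Theorem~\ref{thm:outerplanar}), and uniform (Theorem~\ref{thm:uniform_classification}) --- as announced in the introduction. It has an ``only if'' direction, namely that purity is inherited under deletion, contraction, and rank-preserving weak maps, so that possessing $\Mcal_{\vec K_4}$ or $\Mcal_{\vec K_{2,3}}$ as such a minor forces impurity (these two matroids being impure by Figures~\ref{fig:K_4} and~\ref{fig:K_23}); and an ``if'' direction, that these two are the only minimal obstructions. The plan is to first build a toolkit describing how $\Mcal$-separation behaves under the three operations, and then to run the two directions separately in each of the three families, using the respective classification.

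For the toolkit: deletion is transparent, since for $I,J\subseteq E\setminus\{e\}$ no circuit of $\Mcal$ through $e$ can witness a failure of $\Mcal$-separation, so $\Mcal$-separation and $(\Mcal\setminus e)$-separation agree on $2^{E\setminus\{e\}}$. Contraction is subtler, because a circuit of $\Mcal/e$ may be strictly shorter than any circuit of $\Mcal\setminus e$; here I would lift an $(\Mcal/e)$-separated collection $\WS$ on $E\setminus\{e\}$ to an $\Mcal$-separated collection on $E$ by uniformly adjoining $e$ to its members --- in the spirit of the padding and parallel-element bookkeeping behind Lemmas~\ref{lemma:adding_removing_zeros} and~\ref{lemma:parallel_intro} --- and check that maximal-by-inclusion-but-not-by-size collections are sent to witnesses of impurity of $\Mcal$. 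Rank-preserving weak maps I would try to factor, using lexicographic extensions, into a single-element principal extension followed by a deletion, reducing to operations already controlled. This last point is where I expect the real difficulty: neither the separation complex nor the count $|\Ind(\VC)|$ behaves monotonically along a weak map in any obvious way, and a genuinely general proof of this step is, I suspect, exactly what is missing for the conjecture beyond the three classified families.

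In each family the two directions are asymmetric. The ``only if'' direction follows once one knows that the pure matroids inside the family are closed under the operations: rank-$3$ positroids are closed under deletion and contraction up to relabeling, and rank-preserving weak-map images of rank-$3$ positroids are again positroids (equivalently, convex position survives such maps); outerplanar graphs are minor-closed by Theorem~\ref{thm:outerplanar_minors}, and are too sparse to yield $\Mcal_{\vec K_4}$ or $\Mcal_{\vec K_{2,3}}$ even as a rank-preserving weak-image of one of their minors; and the list ``$\rk\leq 2$, or $\cork\leq 1$, or isomorphic to $C^{n,3}$'' of Theorem~\ref{thm:uniform_classification} is likewise stable --- in each case one then observes that $\Mcal_{\vec K_4}$ and $\Mcal_{\vec K_{2,3}}$ belong to none of these lists. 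The ``if'' direction is the constructive one: given an impure $\Mcal$ in the family, I must exhibit $\Mcal_{\vec K_4}$ or $\Mcal_{\vec K_{2,3}}$ as a minor or rank-preserving weak-image. In rank $3$, by Theorem~\ref{thm:purity_3d} such an $\Mcal$ is not a positroid, and I would use the convex-position criterion to pick out six elements of a representing configuration whose sub-configuration weak-maps onto $\Mcal_{\vec K_4}$. For graphical matroids this is immediate from Theorem~\ref{thm:outerplanar_minors}: a non-outerplanar $G$ has $K_4$ or $K_{2,3}$ as a graph minor, and edge deletions and contractions in $G$ translate into deletions and contractions of $\Mcal_{\vec G}$. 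For uniform matroids not on the list --- so $\rk\geq 4$ and $\cork\geq 2$, or rank $3$ and not alternating --- every such matroid has at least six elements, so I would pass to a six-element uniform minor and produce a rank-preserving weak map from it onto $\Mcal_{\vec K_{2,3}}$ or $\Mcal_{\vec K_4}$ respectively. The bookkeeping in these three cases is comparatively routine once the classifications and the weak-map toolkit are in hand; the single genuine obstacle is the weak-map step of the toolkit.
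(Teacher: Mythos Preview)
Your overall architecture matches the paper's: establish that purity is inherited by minors (Proposition~\ref{prop:if_pure_then_minors_are_pure}), then feed in the three classification theorems. On the toolkit: for contraction you should double the lift, taking $\WS'=\{S:S\in\WS\}\cup\{Se:S\in\WS\}$ rather than only $\{Se:S\in\WS\}$; after extending $\WS'$ to a maximal-by-size $\Mcal$-separated $\WS''$, it is $\WS''/e\supseteq\WS$ that has the right size $|\Ind(\Mcal/e)|$, and the doubling is exactly what forces $\WS\subseteq\WS''/e$. The weak-map step is not attacked head-on in the paper either; in rank~$3$ it is bypassed by observing that rank-$3$ positroids are closed under rank-preserving weak maps (see the remark after Conjecture~\ref{conj:weak_minors} and Lemma~\ref{lemma:positroids}), so your lexicographic-extension factorization is unnecessary there.

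The genuine gap is the rank-$3$ ``if'' direction. Your plan to ``pick out six elements of a representing configuration'' presupposes realizability, which fails for rank-$3$ oriented matroids on nine or more elements, and even in the realizable case you give no mechanism for locating six bad elements. The paper's route is intrinsic: Theorem~\ref{thm:minors} shows that a rank-$3$ oriented matroid all of whose six-element restrictions are positroids is itself a positroid. The proof is a several-page case analysis (Lemmas~\ref{lemma:any_element_belongs_to_2_NT_planes}--\ref{lemma:base_8}) that reduces from $|E|\geq 7$ down to $|E|\leq 8$, where realizability can finally be invoked via Proposition~\ref{prop:realizable_8}, together with a direct check (Lemma~\ref{lemma:six_elements}) that each of the nine non-positroid six-element rank-$3$ matroids is impure and in fact weak-maps to $\IC(6,3,13)$ (which is $\Mcal_{\vec K_4}$). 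Your uniform rank-$3$ subcase inherits the same gap: producing a six-element restriction not isomorphic to $C^{6,3}$ is precisely the contrapositive of Theorem~\ref{thm:minors}. The graphical case and the uniform case with $\rk\geq 4$, $\cork\geq 2$ are as you describe; the latter is Lemma~\ref{lemma:minors_6_4}.
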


We refer the reader to  Section~\ref{sect:background} for the definition of a rank-preserving weak map.

A simple corollary to Theorem~\ref{thm:purity_3d} and Proposition~\ref{prop:rk_2_cork_1} is that Conjecture~\ref{conj:weak_minors} holds when $\rk(\Mcal_1)\leq 3$, since a weak map image of a positroid of rank $3$ is again a positroid (see Lemma~\ref{lemma:positroids}). As an illustration to Conjecture~\ref{conj:weak_minors}, we list all pure and non-pure oriented matroids on $6$ elements of rank $4$ and corank $2$ in Figure~\ref{fig:rank_4_corank_2}. Using the \emph{oriented matroid database}~\cite{Finschi}, we have also computationally verified Conjecture~\ref{conj:weak_minors} for all oriented matroids with at most $8$ elements.

In Proposition~\ref{prop:if_pure_then_minors_are_pure}, we prove one part of Conjecture~\ref{conj:weak_minors}, namely, that a minor of a pure oriented matroid is again pure. 

In the three classes of oriented matroids that we discussed above
(rank $3$, graphical, uniform),
Conjecture~\ref{conj:weak_minors} agrees
with Theorems~\ref{thm:purity_3d}, \ref{thm:outerplanar},
and~\ref{thm:uniform_classification}.


%
%
%

\begin{remark}
 We have already mentioned that maximal \emph{by size} strongly and weakly separated collections correspond to clusters in certain cluster algebras. When $\Mcal$ is a uniform oriented matroid, one can define \emph{mutations} on maximal by size $\Mcal$-separated collections in a way similar to how they are defined in the case of strong and weak separation. A natural question arises: for which uniform oriented matroids $\Mcal$ do maximal \emph{by size} $\Mcal$-separated collections form clusters in a cluster algebra? We do not know the answer to this question, but Theorem~\ref{thm:uniform_classification} implies that strong and weak separation are essentially the only two cases where the corresponding oriented matroid is both uniform and pure. However, mutation-closed domains for uniform oriented matroids provide more possibilities for purity, see, e.g., Example~\ref{ex:icos_dodec}.
\end{remark}

\section{Background on zonotopal tilings and oriented matroids}
\label{sect:background}

In this section, we fix notation and recall some notions from~\cite{Book}.

\subsection{Sets and signed vectors}\label{subsect:notation}

From now on, we denote the set-theoretic difference of two sets $S,T$ by $S-T$ rather than $S\setminus T$, following the conventions of~\cite{LZ,DKK10}. For a set $S$ and an element $e\not\in S$, we denote $Se=S\cup\{e\}$. In particular, the use of $Se$ indicates that $e\not\in S$. On the other hand, we denote $S\cup e:=S\cup\{e\}$ and  $S-e:=S-\{e\}$ regardless of whether $e$ belongs to $S$ or not.

Generalizing~\eqref{eq:reorient_set}, for two sets $S,T\subset E$, denote by $\reorient{T}{S}$ their \emph{symmetric difference}:
\[\reorient{T}{S}:=(S-T)\cup (T-S).\]


We abbreviate $+1$ and $-1$ by $+$ and $-$ respectively. Let $X=(X^+,X^-)$ be a signed subset of the \emph{ground set} $E$. The \emph{zero set} $X^0$ of $X$ is the complement of its support, $X^0=E-\Xu$. We denote the collection of all signed subsets of $E$ by $\{+,-,0\}^E$ and recall that for each element $f\in E$, $X_f\in\{+,-,0\}$ is defined by~\eqref{eq:X_f}.

We say that two signed sets $X,Y\in\{+,-,0\}^E$ are \emph{orthogonal} if either of the following holds:
\begin{itemize}
 \item there exist two elements $e,f\in E$ such that $X_e=Y_f=+$ and $X_f=Y_e=-$, or
 \item for every $e\in E$, either $X_e=0$ or $Y_e=0$.
\end{itemize}

In this case we write $X\perp Y$. 

We introduce a partial order $\leq$ on $\{+,-,0\}$ by $0<+$ and $0<-$, while leaving $+$ and $-$ incomparable. This induces an order on $\{+,-,0\}^E$: for $X,Y\in\{+,-,0\}^E$, we write $X\leq Y$ if for all $e\in E$, $X_e\leq Y_e$. 

For two signed sets $X,Y\in\{+,-,0\}^E$, their \emph{composition} $X\circ Y\in\{+,-,0\}^E$ is defined by
\[(X\circ Y)_e=\begin{cases}
       X_e, &\text{ if $X_e\neq 0$};\\
       Y_e, &\text{ otherwise}.
               \end{cases}\]

\subsection{Zonotopal tilings}\label{sect:z_tilings}


A \emph{vector configuration} $\VC=(\v_e)_{e\in E}$ is a finite subset of $\R^d$ indexed by the elements of some \emph{ground set} $E$.


For a signed set $X\in\{+,-,0\}^E$, we denote by $\Face_X=p(\cube_X)$ the following zonotope:
\[\Face_X:=\sum_{e\in E}\begin{cases}
                           \v_e, &\text{ if $e\in X^+$};\\
                           0, &\text{ if $e\in X^-$};\\
                           [0,\v_e], &\text{ otherwise}.                           
                          \end{cases}
\]

Let us now give an alternative definition of a zonotopal tiling in a slightly different language.
\begin{definition}\label{dfn:tilings_realizable}
A collection $\Tiling$ of signed subsets of $E$ is called a \emph{zonotopal tiling} of $\Zon_\VC$ if and only if the following conditions hold:
\begin{itemize}
\item ${\displaystyle\Zon_\VC=\bigcup_{X\in\Tiling} \Face_X}$;
  \item for any $X\in\Tiling$ and any $Z\geq X$, we have $Z\in\Tiling$;
 \item\label{item:proper_subd} for any two $X,Y\in\Tiling$, either the intersection $\Face_X\cap\Face_Y$ is empty or there exists $Z\in\Tiling$ such that $Z\leq X,Y$ (i.e., $\Face_Z$ is a proper face of $\Face_X$ and $\Face_Y$) and 
 \[\Face_X\cap\Face_Y=\Face_Z.\]
\end{itemize}
\end{definition}

A zonotopal tiling $\Tiling$ is called \emph{fine} if for every $X\in\Tiling$, the vectors $\v_e$, $e\in X^0$, are linearly independent. In particular, all the top-dimensional tiles of $\Tiling$ must be parallelotopes. It is easy to see that this definition is equivalent to Definition~\ref{dfn:fine_z_tiling_intro}. Indeed, every cubical subcomplex of $\cube_{|E|}$ from Definition~\ref{dfn:fine_z_tiling_intro} satisfies the above properties. Conversely, given a collection $\Tiling$ of signed subsets of $E$ satisfying the three properties above, it is easy to show that it also defines a cubical subcomplex satisfying Definition~\ref{dfn:fine_z_tiling_intro}. To see that, note that we get a continuous bijection from the subcomplex $\bigcup_X \cube_X$ of $\cube_{|E|}$ to $\Zon_\VC$, and any such bijection is a homeomorphism since it maps a compact space to a Hausdorff space.

For a fine zonotopal tiling $\Tiling$, its \emph{set of vertices} is defined as
\[\Vert(\Tiling):=\{X^+\mid X\in\Tiling\text{ such that } \Xu=E\}\subset 2^E.\]
This is a slight modification of~\eqref{eq:tiling_vertices_intro}.

\subsection{Oriented matroids}\label{sect:OM}

An oriented matroid is a notion that has several cryptomorphic descriptions; for example, see Definition~\ref{dfn:OM}. 
The axiom~\hyperref[item:C3]{(C3)} is called the \emph{weak elimination axiom}. The set $E$ is called the \emph{ground set} of $\Mcal$ and throughout the text we denote the ground set of $\Mcal$ by $E$ unless told otherwise. Given an oriented matroid $\Mcal$ with circuits $\Ccal(\Mcal)$, define its collection $\Lcal(\Mcal)$ of \emph{covectors} by 
\[\Lcal(\Mcal)=\{X\in\{+,-,0\}^E\mid X\perp Y\quad \forall\, Y\in\Ccal\}.\]
The collection $\Ccal^*(\Mcal)$ of \emph{cocircuits} of $\Mcal$ is the set of minimal non-zero elements of $\Lcal(\Mcal)$ with respect to the $\leq$ order from Section~\ref{subsect:notation}. Next, $\Tcal(\Mcal)$ denotes the collection of all maximal elements of $\Lcal(\Mcal)$. Such elements are called \emph{maximal covectors} or \emph{topes}. The \emph{dual matroid} $\Mcal^*$ of $\Mcal$ is the oriented matroid whose set of circuits equals $\Ccal^*(\Mcal)$. 

We denote by $\Ind(\Mcal)$ the collection of \emph{independent sets of $\Mcal$}, where a set is \emph{independent} if it does not contain the support of any circuit of $\Mcal$. The maximal by inclusion independent sets are called \emph{bases} of $\Mcal$ and the collection of all bases of $\Mcal$ is denoted $\Bcal(\Mcal)$. They all have the same size, which we call the \emph{rank} of $\Mcal$ and denote $\rk(\Mcal)$. If every $\rk(\Mcal)$-element subset of $E$ is a basis then $\Mcal$ is called \emph{uniform}. The \emph{corank} of $\Mcal$ is $\cork(\Mcal):=|E|-\rk(\Mcal)$. 

Every vector configuration $\VC\subset \R^r$ determines an oriented matroid $\Mcal_\VC$ whose circuits are the sign vectors of minimal linear dependencies of $\VC$. We call $\Mcal_\VC$ \emph{the oriented matroid of linear dependencies of $\VC$} (also called \emph{the oriented matroid associated with $\VC$} in the earlier sections). 

Recall that, for a directed graph $\vec G$, the \emph{graphical} oriented matroid $\Mcal_{\vec G}$ is the oriented matroid of linear dependencies of the vector configuration $\VC_{\vec G}$ defined in Section~\ref{sect:separation_def}. 

Another structure that defines an oriented matroid is a \emph{chirotope}. Given an oriented matroid $\Mcal$ of rank $r$, its \emph{chirotope} is a certain mapping $\chi:E^r\to\{+,-,0\}$ which can be obtained from $\Ccal(\Mcal)$ and vice versa using~\cite[Theorem~3.5.5]{Book}. If $\Mcal$ is an oriented matroid associated to a vector configuration $\VC$ then $\chi(i_1,\dots,i_r)$ is equal to $0$ unless the vectors $\v_{i_1},\dots,\v_{i_r}$ form a basis of $\R^r$, in which case the sign of $\chi(i_1,\dots,i_r)$ equals the sign of the determinant of the matrix with rows $\v_{i_1},\dots, \v_{i_r}$. An oriented matroid is called \emph{positively oriented} if there is a total order $\prec$ on $E$ such that for any $i_1\prec i_2\prec \dots\prec i_r\in E$, $\chi(i_1,\dots,i_r)\in\{0,+\}$. One example of a positively oriented matroid is the \emph{alternating matroid} $C^{n,r}$, see Section~\ref{sec:altern-orient-matr}.
In this case, we have $\chi(i_1,\dots,i_r)=+$ for any $i_1<i_2<\dots<i_r\in [n]$.

\begin{remark}\label{rmk:positroids}
	A closely related notion is that of a \emph{positroid} which is a matroid coming from a totally nonnegative matrix of~\cite{Postnikov}. Positroids have been introduced in~\cite{Postnikov}, and it was shown in~\cite{ARW} that every positively oriented matroid is realizable. Thus these objects are essentially the same.
\end{remark}

Given an oriented matroid $\Mcal$ and a set $A\subset E$, the \emph{reorientation} $\reorient{A}{\Mcal}$ of $\Mcal$ on $A$ is another oriented matroid whose set of circuits is given by
\[\Ccal(\reorient{A}{\Mcal})=\{\left(\reorient{(A\cap \Xu)}{(X^+)},\reorient{(A\cap \Xu)}{(X^-)}\right)\mid X\in\Ccal(\Mcal)\}.\]
Two oriented matroids that differ by a reorientation are called \emph{reorientation equivalent}. Two oriented matroids $\Mcal_1$ and $\Mcal_2$ on ground sets $E_1$ and $E_2$ are called \emph{isomorphic} if there is a bijection $\phi:E_1\to E_2$ and a subset $A\subset E_2$ such that the oriented matroids $\phi(\Mcal_1)$ and $\reorient{A}{\Mcal_2}$ are equal (i.e., have the same collections of circuits). In this case, we write $\Mcal_1\cong\Mcal_2$.

An element $e\in E$ is called a \emph{loop} of $\Mcal$ if $\{e\}\in\Ccal(\Mcal)$. It is called a \emph{coloop} of $\Mcal$ if $\{e\}\in\Ccal^*(\Mcal)$. 

An oriented matroid is called \emph{acyclic} if $(E,\emptyset)\in\Tcal(\Mcal)$, that is, if it has a positive covector. Clearly, every loopless oriented matroid is isomorphic to an acyclic oriented matroid.

Two elements $e,f\in E$ are called \emph{parallel} (resp., antiparallel) if $(\{e\},\{f\})\in\Ccal(\Mcal)$ (resp., $(\{e,f\},\emptyset)\in\Ccal(\Mcal)$). An oriented matroid is called \emph{simple} if it has no loops and parallel or antiparallel elements. 

If an element $e\in E$ is not a coloop of $\Mcal$ then the oriented matroid $\Mcal-e$ is defined by 
\begin{equation}\label{eq:matroid_deletion}
\Ccal(\Mcal-e)=\{(X^+,X^-)\mid X\in\Ccal(\Mcal):\ e\not\in\Xu\}. 
\end{equation}
Similarly, if $e\in E$ is not a loop of $\Mcal$ then the oriented matroid $\Mcal/e$ is defined by 
\begin{equation}\label{eq:matroid_contraction}
\Ccal(\Mcal/e)=\min\nolimits_<\{(X^+-e,X^--e)\mid X\in\Ccal(\Mcal)\},
\end{equation}
where $\min_<$ denotes the collection of all minimal signed sets with respect to the order $<$ from Section~\ref{subsect:notation}. The \emph{restriction} $\Mcal\mid_A$ of $\Mcal$ to $A\subset E$ is defined as $\Mcal-(E-A)$. The \emph{rank} and \emph{nullity} of $A\subset E$ are defined as the rank and corank of $\Mcal\mid_A$.

For two oriented matroids $\Mcal_1$ and $\Mcal_2$ of the same rank, we say that there is a \emph{rank-preserving weak map} $\Mcal_1\weakMap\Mcal_2$ if for every signed circuit $X$ of $\Mcal_1$, there exists a signed circuit $Y$ of $\Mcal_2$ such that $Y\leq X$ (see~\cite[Proposition~7.7.5]{Book} for other equivalent formulations).

\begin{definition}\label{dfn:lifting}
 Given an oriented matroid $\Mcal$, its \emph{one-element lifting} $\Mcaltilde$ is another oriented matroid on the ground set $Eg$ such that $\Mcaltilde/g=\Mcal$. A result due to Las Vergnas (see \cite[Proposition~7.1.4]{Book} or \cite{LasVergnas}) shows that for each one-element lifting of $\Mcal$ there is a unique function 
\[\sigma:\Ccal(\Mcal)\to\{+,-,0\}\]
such that for every circuit $Y$ of $\Mcal$, $(Y,\sigma(Y))\in\Ccal(\Mcaltilde)$. 
Here $(Y,\sigma(Y))$ denotes the signed set $Y=(Y^+,Y^-)$ with $g$ added to $Y^+$ if $\sigma(Y)=+$ and to $Y^-$ if $\sigma(Y)=-$. We call such a function $\sigma$ a \emph{colocalization}, and if the image of $\sigma$ lies in $\{+,-\}$ then we say that $\sigma$ is a \emph{colocalization in general position}. In this case, we also say that $\Mcaltilde$ is a \emph{one-element lifting of $\Mcal$ in general position}. 
\end{definition}

We next review a theorem of Las Vergnas~\cite{LasVergnas} that gives a characterization of one-element liftings. 
\begin{definition}
	Consider an oriented matroid $\Mcal$ and a map $\sigma:\Ccal(\Mcal)\to\{+,-,0\}$. For any subset $A\subset E$ of nullity $2$, the restriction of $\Mcal$ to $A$ is isomorphic (up to removing parallel elements of the dual matroid) to the alternating matroid $C^{m,m-2}$. The $2m$ circuits of $C^{m,m-2}$ have a natural cyclic order 
	on them (see Lemma~\ref{lemma:altn_description}). Using~\cite[Figure~7.1.6]{Book}, we say that the restriction of $\sigma$ to the circuits of $\Mcal\mid_A$ is 
	\begin{itemize}
		\item \emph{of Type I} if its values on the circuits of $C^{m,m-2}$ are all zeroes;
		\item \emph{of Type II} if its values on the circuits of $C^{m,m-2}$, up to a cyclic shift, are $(+,\dots,+,0,-,\dots,-,0)$, where the number of plus signs equals the number of minus signs and equals $m-1$;
		\item \emph{of Type III} if its values on the circuits of $C^{m,m-2}$, up to a cyclic shift, are $m$ plus signs followed by $m$ minus signs.
	\end{itemize}
%
%
\end{definition}
See Lemma~\ref{lemma:altn_description} for a detailed description of the circuits of $C^{m,m-2}$. 

\begin{theorem}[{\cite{LasVergnas},\cite[Theorem~7.1.8]{Book}}]\label{thm:LasVergnas}\leavevmode
	\begin{itemize}
		\item Given a map $\sigma:\Ccal(\Mcal)\to\{+,-\}$, its restriction to every nullity $2$ subset of $E$ is of Type III if and only if $\sigma$ is a colocalization in general position.
		\item Given a map $\sigma:\Ccal(\Mcal)\to\{+,-,0\}$, its restriction to every nullity $2$ subset of $E$ is of Type I, II, or III if and only if $\sigma$ is a colocalization, not necessarily in general position.
	\end{itemize}

\end{theorem}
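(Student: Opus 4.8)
\medskip

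\noindent The plan is to deduce this classical result (due to Las Vergnas) from the single-element extension theory together with a reduction of the oriented-matroid axioms to minors of corank at most two. First I would recall from \cite[Proposition~7.1.4]{Book} (or \cite{LasVergnas}) that one-element liftings $\Mcaltilde$ of $\Mcal$ with $\Mcaltilde/g=\Mcal$ correspond bijectively to colocalizations $\sigma:\Ccal(\Mcal)\to\{+,-,0\}$, the circuits of $\Mcaltilde$ being the signed sets $(Y,\sigma(Y))$ for $Y\in\Ccal(\Mcal)$ together with the $g$-free circuits obtained from them by eliminating $g$. Dualizing, a one-element lifting of $\Mcal$ is a single-element extension of $\Mcal^*$, a colocalization of $\Mcal$ is a \emph{localization} of $\Mcal^*$, and a corank-$2$ restriction $\Mcal\mid_A$ becomes a rank-$2$ contraction of $\Mcal^*$; thus the theorem is dual to the classical statement that a sign map on the cocircuits of $\Mcal^*$ is a localization exactly when it restricts to a localization on every rank-$2$ contraction minor, but I would carry out the argument directly in the lifting picture.

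\medskip

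\noindent For necessity, I would use that for any $A\subset E$ the restriction $\Mcaltilde\mid_{Ag}$ is a one-element lifting of $\Mcal\mid_A$ (deletion and contraction of disjoint elements commute) whose colocalization is $\sigma$ restricted to $\Ccal(\Mcal\mid_A)$ --- a routine compatibility of minors with the Las Vergnas parametrization. When $\cork(\Mcal\mid_A)=2$, the minor $\Mcal\mid_A$ is, after removing parallel elements of its dual, the alternating matroid $C^{m,m-2}$, and I would classify its one-element liftings by hand: dualizing inside the minor one adjoins a single point to $m$ points in convex position on a line, and the new point is either a loop (colocalization $\equiv 0$ on the $2m$ cyclically ordered circuits of $C^{m,m-2}$: Type~I), or coincides with an existing point (Type~II), or lies in one of the $m$ open intervals between consecutive points (Type~III); cf.\ \cite[Figure~7.1.6]{Book}. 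Hence the restriction of $\sigma$ to every corank-$2$ subset is of Type~I, II, or III, and it is of Type~III precisely when $\sigma$ takes no value $0$ there; in particular a colocalization in general position restricts to Type~III on every corank-$2$ subset.

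\medskip

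\noindent For sufficiency, assume every corank-$2$ restriction of $\sigma$ is of Type~I, II, or III and form the candidate circuit collection for $\Mcaltilde$ as above. Axioms (C0)--(C2) are immediate, so the task is the weak elimination axiom~\hyperref[item:C3]{(C3)}. The key point --- and the step I expect to be the main obstacle --- is to show that any instance of~\hyperref[item:C3]{(C3)}, for candidate circuits $X,Y$ and an element $e\in X^+\cap Y^-$, can be tested inside the restriction $\Mcaltilde\mid_{\Xu\cup\Yu}$, and that a submodularity estimate (using that $\Xu$ and $\Yu$ are supports of circuits) forces this restriction to have corank at most two --- so it is a one-element lifting of a corank-$\le 2$ minor of $\Mcal$ (or a degenerate, hence trivially valid, lifting of such a minor). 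On a corank-$\le 1$ minor a lifting is essentially unique, while on a corank-$2$ minor the hypothesis identifies the candidate with one of the three liftings of $C^{m,m-2}$ above; in either case it is a genuine oriented matroid, so all instances of~\hyperref[item:C3]{(C3)} hold and the candidate is an oriented matroid --- necessarily a one-element lifting of $\Mcal$ with colocalization $\sigma$. If moreover $\sigma$ avoids $0$, then Types~I and~II never occur, so the adjoined element is neither a loop nor parallel to an old one on any corank-$2$ flat, which is the definition of a lifting in general position; combining the two directions yields both bullets. I expect that the real work lies in setting up the candidate circuit family --- especially the $g$-free circuits --- correctly and making the ``localization to corank $\le 2$'' reduction precise together with its degenerate cases; the duality reduction, the rank-$2$ classification, and the necessity direction should all be routine.
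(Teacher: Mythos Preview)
The paper does not prove this theorem; it is quoted as a classical result of Las Vergnas with a pointer to \cite[Theorem~7.1.8]{Book}, so there is no in-paper argument to compare against.

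Your necessity direction and the classification of liftings of $C^{m,m-2}$ are fine. The gap is in the sufficiency direction, specifically the claim that ``a submodularity estimate \dots\ forces this restriction to have corank at most two.'' This is false. Take $\Mcal = C^{6,3}$ and candidate circuits $X = (C_1,\sigma(C_1))$, $Y = (C_2,\sigma(C_2))$ with $\underline{C_1} = \{1,2,3,4\}$, $\underline{C_2} = \{3,4,5,6\}$ and $\sigma(C_i)\neq 0$. Then $\underline X \cup \underline Y$ is all of $Eg$, which has corank $3$ in $\Mcaltilde$. No submodularity bound limits the corank of a union of two circuit supports to $2$, so your proposed verification of~(C3) cannot be confined to a corank-$2$ minor in the way you describe.

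The reason a rank-$2$/corank-$2$ reduction works in the standard proof is different from what you wrote. In the extension picture (the dual you already set up), the zero sets of two distinct cocircuits $Y_1^*, Y_2^*$ of $\Mcal^*$ are hyperplanes $H_1 \neq H_2$, whose intersection $H_1\cap H_2$ is a coline; every cocircuit relevant to an elimination between $Y_1^*$ and $Y_2^*$ has zero set containing this coline and hence lives in the rank-$2$ contraction $\Mcal^*/(H_1\cap H_2)$. That lattice fact --- two hyperplanes meet in a coline --- is the genuine source of the reduction, and it has no direct analogue for unions of circuit supports. The fix is to stay on the extension side: verify the cocircuit (or covector) axioms for the candidate single-element extension of $\Mcal^*$, where the coline reduction is natural, and only then dualize the conclusion. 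Since you already have the duality in hand, the change is simply to drop the attempted direct circuit-side verification.
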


\begin{proposition}[{\cite[Proposition~7.1.4]{Book}}]\label{prop:circuits_coloc}
Given a colocalization $\sigma$ of $\Mcal$, the collection of circuits of the corresponding one-element lifting $\Mcaltilde$ is described by
\[\Ccal(\Mcaltilde)=\{(Y,\sigma(Y)): Y\in\Ccal(\Mcal)\}\cup \{Y^1\circ Y^2\},\]
where the second set runs over all pairs $Y^1,Y^2\in\Ccal(\Mcal)$ such that 
\begin{itemize}
 \item $\sigma(Y^1)=-\sigma(Y^2)\neq 0$;
 \item $Y^1,Y^2<Y^1\circ Y^2$;
 \item $\rk^\ast(\underline{Y^1\circ Y^2})=2$, where $\rk^\ast(A)=|A|-\rk(\Mcal)+\rk(A)$ denotes the rank of $A\subset E$ in $\Mcal^\ast$.
\end{itemize} 
\end{proposition}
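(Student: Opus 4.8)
The first family, $\{(Y,\sigma(Y)):Y\in\Ccal(\Mcal)\}\subseteq\Ccal(\Mcaltilde)$, is immediate from the defining property of the colocalization $\sigma$ in Definition~\ref{dfn:lifting}. So the plan is to establish two things: (a) every $Y^1\circ Y^2$ satisfying the three displayed conditions is a circuit of $\Mcaltilde$; and (b) $\Mcaltilde$ has no circuits other than those in the two listed families. A slicker but less self-contained route would be to dualize: a one-element lifting of $\Mcal$ is the dual of a one-element extension of $\Mcal^*$, under which $\sigma$ becomes the localization of the extension, so the statement is the dual of Las Vergnas's description of the cocircuits of a single-element extension \cite{LasVergnas}. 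I will instead sketch the direct argument, which only uses tools from this section.

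\textbf{Step 1: reducing an arbitrary circuit to a corank-$2$ minor.} Let $W\in\Ccal(\Mcaltilde)$; recall $g$ is not a loop of $\Mcaltilde$ and $\Mcaltilde/g=\Mcal$. If $g\in\underline W$, then $\underline W-g$ is dependent in $\Mcal$ and, because no element of the circuit $\underline W$ of $\Mcaltilde$ can be a coloop, a rank computation ($\rk_\Mcal(\underline W-g)=\rk_\Mcaltilde(\underline W)-1=|\underline W|-2$) shows it is actually a circuit support of $\Mcal$, say $\underline W-g=\underline Y$; restricting $W$ to $E$ gives $\pm Y$, and the uniqueness statement in Definition~\ref{dfn:lifting} forces $W=(Z,\sigma(Z))$ for $Z=\pm Y$, with $\sigma(Z)\neq0$. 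If $g\notin\underline W$, then $\underline W$ is dependent in $\Mcal=\Mcaltilde/g$. Either $\underline W$ is itself a circuit support of $\Mcal$, in which case $W=(Z,0)$ with $\sigma(Z)=0$; or it is not, and then a short rank computation -- using $\rk_\Mcal(\underline W)=\rk_\Mcaltilde(\underline W\cup g)-1$, $\rk_\Mcaltilde(\underline W)=|\underline W|-1$, and again that no element of $\underline W$ is a coloop of $\Mcaltilde\mid_{\underline W\cup g}$ -- rules out $\rk_\Mcaltilde(\underline W\cup g)=|\underline W|$ and yields $\cork_\Mcal(\underline W)=2$. Set $A:=\underline W$.

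\textbf{Step 2: the corank-$2$ computation (this proves (a) and the remaining part of (b)).} For $A$ of corank $2$ the restriction $\Mcal\mid_A$ is, up to reorientation and removing parallel elements of the dual, the alternating matroid $C^{m,m-2}$ with $m=|A|$, whose $2m$ signed circuits carry the cyclic order of Lemma~\ref{lemma:altn_description}. Since deletion and contraction of disjoint elements commute, $\Mcaltilde\mid_{A\cup g}$ is the one-element lifting of $\Mcal\mid_A$ with colocalization $\sigma$ restricted to $\Ccal(\Mcal\mid_A)$, and its circuits are circuits of $\Mcaltilde$. By Theorem~\ref{thm:LasVergnas} this restricted colocalization is of Type I, II, or III, and using \cite[Figure~7.1.6]{Book} one reads off that the circuits of $\Mcaltilde\mid_{A\cup g}$ avoiding $g$ are exactly the old circuits $Y$ with $\sigma(Y)=0$, together with -- only in Type III -- the single new circuit $\pm(Y^1\circ Y^2)$, where $Y^1,Y^2$ are the two circuits of $C^{m,m-2}$ straddling a sign change of $\sigma$. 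One checks that for circuits of $C^{m,m-2}$ the inequalities $Y^1,Y^2<Y^1\circ Y^2$ hold precisely when $Y^1,Y^2$ are distinct and conformal, i.e.\ neighbours in the cyclic order, and that among neighbouring pairs the condition $\sigma(Y^1)=-\sigma(Y^2)\neq0$ selects exactly the Type III straddling pairs. This simultaneously gives: in the ``not a circuit support'' case of Step~1 (where $A=\underline W$ is not a circuit support of $\Mcal$), $W$ must be the new circuit, so $W=\pm(Y^1\circ Y^2)$ with $Y^1,Y^2$ as in the statement, finishing (b); and conversely, any $Y^1,Y^2$ meeting the three displayed conditions is, inside the corank-$2$ minor on $\underline{Y^1\circ Y^2}$, such a straddling pair, so $Y^1\circ Y^2$ is a circuit of $\Mcaltilde\mid_{\underline{Y^1\circ Y^2}\cup g}\subseteq\Ccal(\Mcaltilde)$, finishing (a).

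\textbf{Main obstacle.} The crux is Step~2: extracting from \cite[Figure~7.1.6]{Book} the precise list of circuits of a one-element lifting of $C^{m,m-2}$ and matching the sign vector of the new circuit with the conformal composition $Y^1\circ Y^2$, together with the elementary but delicate combinatorics of the cyclic order on the circuits of $C^{m,m-2}$ (from Lemma~\ref{lemma:altn_description}) needed to see that ``conformal'' means ``cyclically adjacent'' and that the $\sigma$-sign-change condition isolates the Type III case. The two-way case split and the rank arithmetic in Step~1 are routine.
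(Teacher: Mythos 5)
The paper does not prove this proposition; it is cited verbatim from \cite[Proposition~7.1.4]{Book}, so there is no in-paper proof against which to compare your attempt. Your sketch --- reducing to the corank-$2$ Type~I/II/III analysis of Theorem~\ref{thm:LasVergnas} rather than dualizing to single-element extensions, which is how \cite{Book} treats it --- is a reasonable direct argument, and the overall structure is sound.

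Two spots in Step~1 are stated imprecisely. When $g\in\underline{W}$, corank~$1$ plus dependence does not by itself make $\underline{W}-g$ a circuit support of $\Mcal$; the clean justification is that a proper circuit $Y\subsetneq\underline{W}-g$ of $\Mcal$ would lift to a circuit $(Y,\sigma(Y))$ of $\Mcaltilde$ whose support is strictly contained in $\underline{W}$, contradicting axiom~(C2). When $g\notin\underline{W}$, the dichotomy ``circuit support vs.\ $\cork_\Mcal(\underline{W})=2$'' needs a further observation in the $\cork_\Mcal(\underline{W})=1$ case: since $\underline{W}-e$ is independent in $\Mcaltilde$ for every $e\in\underline{W}$, and since $g$ raises the $\Mcaltilde$-rank of every subset of $\underline{W}$ (as $\rk_{\Mcaltilde}(\underline{W}\cup g)=|\underline{W}|>\rk_{\Mcaltilde}(\underline{W})$), one gets that $\underline{W}-e$ is independent in $\Mcal$ as well, so no $e$ is a coloop of $\Mcal|_{\underline{W}}$ and hence $\underline{W}$ itself is the circuit support. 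Your phrase ``no element of $\underline{W}$ is a coloop of $\Mcaltilde|_{\underline{W}\cup g}$'' invokes the wrong matroid. Both of these are routine to patch. Step~2 is where the substantive bookkeeping lies, as you yourself note, and the outline there is correct.
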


For the rest of this paper, \emph{we assume that all colocalizations and one-element liftings are in general position unless told otherwise}.

We now review the Bohne-Dress theorem that gives a connection between zonotopal
tilings and oriented matroid liftings.

\begin{theorem}[{\cite{Bohne},\cite[Theorem~2.2.13]{Book}}]
\label{thm:bohne}
	Let $\VC$ be a vector configuration and let $\Zon_\VC$ be the associated zonotope. Let $\Mcal_\VC$ be the oriented matroid of linear dependencies of $\VC$. Then there is a canonical bijection between fine zonotopal tilings of $\Zon_\VC$ and one-element liftings of $\Mcal_\VC$ in general position.
\end{theorem}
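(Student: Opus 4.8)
The plan is to write down the bijection explicitly in terms of covectors, check it by hand in the realizable case, and treat the general case by combining Las Vergnas's local characterization of liftings (Theorem~\ref{thm:LasVergnas}) with a matching local characterization of fine zonotopal tilings. Write $\Mcal:=\Mcal_\VC$, on ground set $E$. To a one-element lifting $\Mcaltilde$ of $\Mcal$ in general position, living on $Eg$, associate the family
\[
\Tiling_{\Mcaltilde}:=\{X\in\{+,-,0\}^E\ :\ (X,-)\in\Lcal(\Mcaltilde)\},
\]
where $(X,-)$ is the signed subset of $Eg$ equal to $X$ on $E$ with $g$ adjoined to its negative part, in the notation of Definition~\ref{dfn:lifting}; at the level of vertices this reads $\Vert(\Tiling_{\Mcaltilde})=\{I\subseteq E:(\mathbf 1_I,-)\in\Tcal(\Mcaltilde)\}$, where $\mathbf 1_I$ denotes the full-support sign vector that is $+$ on $I$ and $-$ on $E-I$. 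When $\Mcal$ is realized by $\VC$ and $\Mcaltilde$ by a generic lift $e\mapsto(\v_e,h_e)$, $g\mapsto e_{d+1}$, the family $\Tiling_{\Mcaltilde}$ is precisely the regular zonotopal subdivision of $\Zon_\VC$ obtained by projecting the lower faces of $\sum_{e\in E}[0,(\v_e,h_e)]\subseteq\R^{d+1}$, and it is fine exactly because the heights $h_e$ are generic, i.e.\ because $\Mcaltilde$ is in general position; this case is classical. Since an oriented matroid is determined by its set of topes, $\Mcaltilde\mapsto\Tiling_{\Mcaltilde}$ is injective, so the content is to show that $\Tiling_{\Mcaltilde}$ always satisfies the axioms of Definition~\ref{dfn:tilings_realizable} and is fine, and that every fine zonotopal tiling of $\Zon_\VC$ arises in this way.

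Both points are reduced to the following local principle, dual in spirit to Theorem~\ref{thm:LasVergnas}: a family $\Tiling$ of signed subsets of $E$ that is closed under taking $\leq$-smaller elements and all of whose $\leq$-maximal members $X$ satisfy $X^0\in\Bcal(\Mcal)$ is a fine zonotopal tiling of $\Zon_\VC$ if and only if, for every $A\subseteq E$ with $\cork(\Mcal\mid_A)\leq 2$, the induced family on $A$ (the deletion of $E-A$ from $\Tiling$) is a fine zonotopal tiling of $\Zon_{\VC\mid_A}$. The case $\cork(\Mcal\mid_A)\leq 1$ is vacuous; when $\cork(\Mcal\mid_A)=2$ the restriction $\Mcal\mid_A$ is, up to parallel duals, the alternating matroid $C^{m,m-2}$ with $m=|A|$, so by Theorem~\ref{thm:LasVergnas} its one-element liftings in general position correspond to the Type III sign patterns on the $2m$ cyclically ordered circuits, and these match fine zonotopal tilings of $\Zon_{\VC\mid_A}$ by a direct check — essentially the classical bijection for rhombus tilings, read off from the rank-$2$ dual $(\Mcal\mid_A)^{*}\cong C^{m,2}$. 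Granting the local principle: that $\Tiling_{\Mcaltilde}$ satisfies Definition~\ref{dfn:tilings_realizable} and is fine follows by restricting the circuit description of $\Mcaltilde$ in Proposition~\ref{prop:circuits_coloc} to corank-$2$ subsets; and given any fine zonotopal tiling $\Tiling$, each corank-$2$ restriction produces a Type III pattern, hence by Theorem~\ref{thm:LasVergnas} a colocalization $\sigma$, whose lifting $\Mcaltilde$ satisfies $\Tiling_{\Mcaltilde}=\Tiling$. Canonicity is then clear, since the covector formula for $\Tiling_{\Mcaltilde}$ is compatible with relabeling, reorientation, deletion and contraction of $\Mcal$.

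The main obstacle is the local principle itself: showing that a candidate family of parallelepipeds which glues correctly along every corank-$\leq 2$ restriction actually assembles into a polyhedral complex mapped homeomorphically onto $\Zon_\VC$ by the projection $p$. In the realizable case this is a consequence of the theory of regular subdivisions and the secondary fan; in general one must instead represent $\Mcaltilde$ by a pseudo-hyperplane arrangement via the Topological Representation Theorem and read off the subdivision geometrically, or else argue directly that the maximal tiles cover $\Zon_\VC$ without overlapping by a connectivity or shelling argument. Everything else — the explicit covector formula, the reduction to corank $2$, and the two-dimensional bookkeeping — is routine once this principle is in place.
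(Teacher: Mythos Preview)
The paper does not prove this statement at all: Theorem~\ref{thm:bohne} is quoted from \cite{Bohne} and \cite[Theorem~2.2.13]{Book}, and the paper simply remarks that the bijection ``can be described explicitly in terms of covectors of the lifting of $\Mcal_\VC$, we refer the reader to \cite{Book} for the details.'' So there is no proof in the paper to compare against. Your covector formula $\Tiling_{\Mcaltilde}=\{X:(X,-)\in\Lcal(\Mcaltilde)\}$ is exactly the set the paper later calls $\Lcal_g^-(\Mcaltilde)$ in Proposition~\ref{prop:tilings_covectors}, so you have identified the right map.

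That said, your proposal is not a proof but an outline with an explicitly acknowledged hole. You reduce everything to a ``local principle'' asserting that a face-closed family of parallelepipeds whose maximal members are indexed by bases is a fine tiling of $\Zon_\VC$ if and only if every corank-$\leq 2$ restriction is one, and you then say that this principle is ``the main obstacle'' and gesture at the Topological Representation Theorem or a shelling argument without supplying either. This is precisely the nontrivial content of the Bohne--Dress theorem: the Las Vergnas criterion (Theorem~\ref{thm:LasVergnas}) tells you when a sign map on circuits assembles into a lifting, but it does not by itself tell you that the corresponding covector family covers $\Zon_\VC$ injectively under $p$. The standard proofs do use the topological representation of $\Mcaltilde$ (or, in the realizable case, the theory of regular subdivisions), and carrying this out is real work --- it is not ``routine once this principle is in place,'' it \emph{is} the principle. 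As written, you have correctly set up both sides of the bijection and the injectivity, but the surjectivity and the verification that $\Tiling_{\Mcaltilde}$ satisfies Definition~\ref{dfn:tilings_realizable} remain unproven.
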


This bijection can be described explicitly in terms of covectors of the lifting
of $\Mcal_\VC$, we refer the reader to \cite{Book} for the details.

\section{Maximal by size $\Mcal$-separated collections}\label{sect:max_by_sz}
In this section, we develop some initial properties of maximal \emph{by size} $\Mcal$-separated collections and use them to prove a strengthening of Theorem~\ref{thm:max_size_vc}, see Theorem~\ref{thm:max_size_matroid} below.

Recall that a collection $\WS\subset 2^E$ is called \emph{$\Mcal$-separated} if any two subsets $S,T\in\WS$ are $\Mcal$-separated, i.e., there is no circuit $C=(C^+,C^-)\in\Ccal(\Mcal)$ such that $C^+\subset S-T$ and $C^-\subset T-S$. For an $\Mcal$-separated collection $\WS$, we define a map $\sigma_\WS:\Ccal\to\{+,-,0\}$ as follows: for every circuit $Y\in\Ccal$, we set $\sigma_\WS(Y):=0$ unless there is a set $S\in\WS$ satisfying 
\[Y^+\subset S;\quad Y^-\subset (E- S)\]
or the other way around:
\[Y^-\subset S;\quad Y^+\subset (E- S).\]
In the first case (resp., in the second case) we set $\sigma_\WS(Y):=+$ (resp., $\sigma_\WS(Y):=-$) and we say that $S$ \emph{orients $Y$ positively} (resp., \emph{negatively}). Thus if a collection $\WS\subset 2^E$ is $\Mcal$-separated then $\sigma_\WS(Y)$ is well defined for all $Y\in\Ccal$.

\begin{definition}
Given a colocalization $\sigma:\Ccal\to\{+,-\}$, we define $\WS(\sigma)\subset 2^E$ to be the collection of all subsets $S\subset E$ such that for every circuit $Y\in\Ccal$ that $S$ orients positively (resp., negatively), we have $\sigma(Y)=+$ (resp., $\sigma(Y)=-$).
\end{definition}



We now restate Theorem~\ref{thm:max_size_vc} in the oriented matroid language:

\begin{theorem}
\label{thm:max_size_matroid}
 Let $\Mcal$ be any oriented matroid. Then the map $\WS\mapsto\sigma_\WS$ is a bijection (with inverse $\sigma\mapsto\WS(\sigma)$) between maximal \emph{by size} $\Mcal$-separated collections of subsets of $E$ and one-element liftings of $\Mcal$ in general position. Every such collection has size $|\Ind(\Mcal)|$.
\end{theorem}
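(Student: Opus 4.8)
The plan is to establish three things: (i) for any one-element lifting $\Mcaltilde$ in general position with colocalization $\sigma$, the collection $\WS(\sigma)$ is $\Mcal$-separated and has size exactly $|\Ind(\Mcal)|$; (ii) the maps $\WS\mapsto\sigma_\WS$ and $\sigma\mapsto\WS(\sigma)$ are mutually inverse on the relevant classes; and (iii) every maximal \emph{by size} $\Mcal$-separated collection arises this way, which combined with the uniform size $|\Ind(\Mcal)|$ forces the size of \emph{every} $\Mcal$-separated collection to be at most $|\Ind(\Mcal)|$, with equality precisely for the $\WS(\sigma)$'s. The natural route is via the Bohne–Dress theorem (Theorem~\ref{thm:bohne}) together with the explicit description of the circuits of $\Mcaltilde$ from Proposition~\ref{prop:circuits_coloc} and the Las Vergnas characterization (Theorem~\ref{thm:LasVergnas}).

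First I would clarify the meaning of a set $S$ ``belonging to'' $\WS(\sigma)$ in covector terms. A subset $S\subset E$ orients a circuit $Y$ positively iff $Y^+\subset S$ and $Y^-\subset E-S$, i.e.\ iff the tope-like sign vector $\chi_S$ (with $(\chi_S)_e=+$ for $e\in S$, $-$ otherwise) is orthogonal-compatible with $Y$ in the ``$+$'' direction. One checks: $S\in\WS(\sigma)$ iff the sign vector obtained by appending $\sigma(Y)$-consistent data to $\chi_S$ extends to a covector of $\Mcaltilde$ on the ground set $Eg$; concretely, $S\in\WS(\sigma)$ iff $(\chi_S,+)$ (meaning $g$ gets sign $+$, say) lies below some tope of $\Mcaltilde$, equivalently the point $S$ is a vertex of the fine zonotopal tiling $\Tiling$ corresponding to $\Mcaltilde$ under Bohne–Dress. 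Thus $\WS(\sigma)=\Vert(\Tiling)$. This immediately gives $\Mcal$-separation: if $S,T\in\WS(\sigma)$ and some circuit $Y$ had $Y^+\subset S-T$, $Y^-\subset T-S$, then $S$ orients $Y$ positively and $T$ orients it negatively, contradicting well-definedness of $\sigma$. For the size count, I would use that the vertices of a fine zonotopal tiling of $\Zon_\VC$ (realizable case) are in bijection with the independent sets of $\Mcal_\VC$ — a standard fact, provable e.g.\ by the deletion–contraction recursion $|\Ind(\Mcal)|=|\Ind(\Mcal-e)|+|\Ind(\Mcal/e)|$ matched against the corresponding recursion for tiling vertices (slicing the zonotope by the hyperplane through the tiling ``in direction $\v_e$''), or abstractly for general $\Mcal$ by an induction on $|E|$ using one-element liftings and contraction. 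This is the step I expect to be the main technical obstacle: making the vertex/independent-set count and the covector reformulation of $\WS(\sigma)$ fully rigorous for an \emph{arbitrary} (non-realizable) oriented matroid, where one cannot draw pictures and must argue purely with colocalizations, Proposition~\ref{prop:circuits_coloc}, and Theorem~\ref{thm:LasVergnas}.

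Next I would show the two maps are inverse. Given a colocalization $\sigma$ in general position, one must check $\sigma_{\WS(\sigma)}=\sigma$: for each circuit $Y$, exhibit a set $S\in\WS(\sigma)$ orienting $Y$ with sign $\sigma(Y)$ — such $S$ exists because the corresponding lower tiles of $\Tiling$ are nonempty (every circuit of $\Mcal$ supports a cell of the tiling on which $g$ is ``used''), which is again Bohne–Dress / the completeness of the tiling. Conversely, given any $\Mcal$-separated collection $\WS$, the map $\sigma_\WS$ is by construction a well-defined $\{+,-,0\}$-valued function on $\Ccal$; to see it is a colocalization I would verify the Las Vergnas Type I/II/III condition on every corank-$2$ restriction $\Mcal|_A\cong C^{m,m-2}$, using that on such an $A$ the $\Mcal$-separation condition among the sets of $\WS$ restricted to $A$ forces the pattern of signs $\sigma_\WS$ takes on the $2m$ cyclically-ordered circuits to avoid a ``$+\dots-\dots+$ with a gap'' obstruction — i.e.\ it must be Type I, II, or III. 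Then $\WS\subset\WS(\sigma_\WS)$ always, so $|\WS|\le|\WS(\sigma_\WS)|=|\Ind(\Mcal)|$ once $\sigma_\WS$ can be completed to a colocalization in general position (handle the not-in-general-position case by perturbing the $0$-values, or by noting a maximal-by-size $\WS$ cannot have any completable-to-non-general-position $\sigma_\WS$). Finally, for $\WS$ maximal by size we get $|\WS|=|\Ind(\Mcal)|$ and $\WS=\WS(\sigma_\WS)$ by maximality, completing the bijection and the size formula.

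A clean way to package the last paragraph: prove the inequality $|\WS|\le|\Ind(\Mcal)|$ for every $\Mcal$-separated $\WS$ first (via $\WS\subset\WS(\sigma_\WS)$ and the size count), prove $\WS(\sigma)$ achieves it for every general-position $\sigma$, and prove that the only $\Mcal$-separated collections of size $|\Ind(\Mcal)|$ are the $\WS(\sigma)$'s; then ``maximal by size'' is literally ``of size $|\Ind(\Mcal)|$'', the bijection is immediate, and the existence of at least one general-position colocalization (so the maximum is actually attained) follows from the existence of one-element liftings in general position, which is standard (e.g.\ a generic lexicographic lifting). The only genuinely delicate points are the vertex-count lemma and the verification that $\sigma_\WS$ satisfies the corank-$2$ Las Vergnas condition; everything else is bookkeeping with the definitions.
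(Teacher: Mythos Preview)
There is a genuine gap in your proof of the upper bound $|\WS|\le|\Ind(\Mcal)|$. Your argument goes through $\WS\subset\WS(\sigma_\WS)$, which requires that $\sigma_\WS$ be a colocalization (so that you can perturb it to general position and apply Lemma~\ref{lemma:extension}). But the claim that $\sigma_\WS$ satisfies the Las~Vergnas Type~I/II/III condition on every corank-$2$ restriction, for an \emph{arbitrary} $\Mcal$-separated $\WS$, is false once the corank-$2$ subset has $\ge 6$ elements. Take $\Mcal=\altsixfour\cong C^{6,4}$ and $\WS=\{\emptyset,[1,4],[3,6]\}$. By Lemma~\ref{lemma:altn_description} this $\WS$ is $\Mcal$-separated, yet on the circuits $C_1,\dots,C_6$ one computes $\sigma_\WS=(+,-,-,+,+,-)$, which has no zeros and is not a cyclic shift of $(+,+,+,-,-,-)$. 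So $\sigma_\WS$ is not a colocalization, and since it already has full support there is no colocalization $\sigma\ge\sigma_\WS$ at all; in particular $\WS$ is not contained in any $\WS(\sigma)$. (This is exactly why Proposition~\ref{prop:complete_properties}\eqref{item:complete_colocalization} is stated only for $\rk(\Mcal)\le 3$, and why $C^{6,4}$ is non-pure.) Pairwise $\Mcal$-separation simply does not impose the global cyclic pattern you are hoping for.

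The paper avoids this by decoupling the upper bound from colocalizations entirely: it proves $|\WS|\le|\Ind(\Mcal)|$ by the elementary deletion--contraction identity $|\WS|=|\WS-e|+|\WS/e|$ (Proposition~\ref{prop:ws_tutte_recurrence}) matched against $|\Ind(\Mcal)|=|\Ind(\Mcal-e)|+|\Ind(\Mcal/e)|$, with induction on $|E|$. Only \emph{after} the bound is in hand does the paper show that $\sigma_\WS$ is a colocalization in general position, and only under the hypothesis that $\WS$ is maximal \emph{by size}; the point is that equality in the deletion--contraction inequalities forces $\WS-e$ and $\WS/e$ to be maximal by size as well, so one can induct, reduce any corank-$2$ check to a proper minor when $A\subsetneq E$, and handle the base case $A=E$ (i.e.\ $\cork(\Mcal)=2$) directly via Theorem~\ref{thm:altn}. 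Your size count for $\WS(\sigma)$ and the ``maps are inverse'' step are essentially fine (the paper does part~(a) via $|\WS(\sigma)|=|\Tcal(\Mcaltilde)|/2$ and a tope deletion--contraction, which also handles the non-realizable case without Bohne--Dress), but the inequality step needs to be redone along these lines.
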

The fact that Theorem~\ref{thm:max_size_matroid} generalizes Theorem~\ref{thm:max_size_vc} follows from the Bohne-Dress theorem (Theorem~\ref{thm:bohne}).

\begin{figure}
\begin{tabular}{|c|c|}\hline

\begin{tabular}{c}
\scalebox{0.6}{
\begin{tikzpicture}
\node[draw,ellipse,black,fill=white] (node0) at (0.00,0.00) {$\emptyset$};
\node[draw,ellipse,black,fill=white] (node1) at (-3.78,1.60) {$1$};
\node[draw,ellipse,black,fill=white] (node3) at (0.00,1.60) {$3$};
\node[draw,ellipse,black,fill=white] (node5) at (1.80,1.60) {$5$};
\node[draw,ellipse,black,fill=white] (node12) at (-5.40,3.20) {$1\,2$};
\node[draw,ellipse,black,fill=white] (node13) at (-3.78,3.20) {$1\,3$};
\node[draw,ellipse,black,fill=white] (node35) at (1.80,3.20) {$3\,5$};
\node[draw,ellipse,black,fill=white] (node123) at (-5.40,4.80) {$1\,2\,3$};
\node[draw,ellipse,black,fill=white] (node134) at (-3.78,4.80) {$1\,3\,4$};
\node[draw,ellipse,black,fill=white] (node135) at (-1.98,4.80) {$1\,3\,5$};
\node[draw,ellipse,black,fill=white] (node345) at (1.80,4.80) {$3\,4\,5$};
\node[draw,ellipse,black,fill=white] (node1234) at (-5.40,6.40) {$1\,2\,3\,4$};
\node[draw,ellipse,black,fill=white] (node1345) at (-1.98,6.40) {$1\,3\,4\,5$};
\node[draw,ellipse,black,fill=white] (node2345) at (0.18,6.40) {$2\,3\,4\,5$};
\node[draw,ellipse,black,fill=white] (node12345) at (-3.60,8.00) {$1\,2\,3\,4\,5$};
\node[draw,ellipse,black,fill=white] (node0) at (0.00,0.00) {$\emptyset$};
\node[draw,ellipse,black,fill=white] (node1) at (-3.78,1.60) {$1$};
\node[draw,ellipse,black,fill=white] (node3) at (0.00,1.60) {$3$};
\node[draw,ellipse,black,fill=white] (node5) at (1.80,1.60) {$5$};
\node[draw,ellipse,black,fill=white] (node12) at (-5.40,3.20) {$1\,2$};
\node[draw,ellipse,black,fill=white] (node13) at (-3.78,3.20) {$1\,3$};
\node[draw,ellipse,black,fill=white] (node35) at (1.80,3.20) {$3\,5$};
\node[draw,ellipse,black,fill=white] (node123) at (-5.40,4.80) {$1\,2\,3$};
\node[draw,ellipse,black,fill=white] (node134) at (-3.78,4.80) {$1\,3\,4$};
\node[draw,ellipse,black,fill=white] (node135) at (-1.98,4.80) {$1\,3\,5$};
\node[draw,ellipse,black,fill=white] (node345) at (1.80,4.80) {$3\,4\,5$};
\node[draw,ellipse,black,fill=white] (node1234) at (-5.40,6.40) {$1\,2\,3\,4$};
\node[draw,ellipse,black,fill=white] (node1345) at (-1.98,6.40) {$1\,3\,4\,5$};
\node[draw,ellipse,black,fill=white] (node2345) at (0.18,6.40) {$2\,3\,4\,5$};
\node[draw,ellipse,black,fill=white] (node12345) at (-3.60,8.00) {$1\,2\,3\,4\,5$};
\draw[line width=0.04mm,black] (node0) -- (node1);
\draw[line width=0.04mm,black] (node0) -- (node3);
\draw[line width=0.04mm,black] (node0) -- (node5);
\draw[line width=0.04mm,black] (node1) -- (node12);
\draw[line width=0.04mm,black] (node1) -- (node13);
\draw[line width=0.04mm,black] (node3) -- (node13);
\draw[line width=0.04mm,black] (node3) -- (node35);
\draw[line width=0.04mm,black] (node5) -- (node35);
\draw[line width=0.04mm,black] (node12) -- (node123);
\draw[line width=0.04mm,black] (node13) -- (node123);
\draw[line width=0.04mm,black] (node13) -- (node134);
\draw[line width=0.04mm,black] (node13) -- (node135);
\draw[line width=0.04mm,black] (node13) -- (node123);
\fill [opacity=0.2,blue] (node13.center)-- (node123.center)-- (node1234.center)-- (node134.center) -- cycle;
\draw[line width=0.04mm,black] (node13) -- (node134);
\fill [opacity=0.2,blue] (node13.center)-- (node134.center)-- (node1345.center)-- (node135.center) -- cycle;
\draw[line width=0.04mm,black] (node35) -- (node135);
\draw[line width=0.04mm,black] (node35) -- (node345);
\draw[line width=0.04mm,black] (node35) -- (node135);
\fill [opacity=0.2,blue] (node35.center)-- (node135.center)-- (node1345.center)-- (node345.center) -- cycle;
\draw[line width=0.04mm,black] (node123) -- (node1234);
\draw[line width=0.04mm,black] (node134) -- (node1234);
\draw[line width=0.04mm,black] (node134) -- (node1345);
\draw[line width=0.04mm,black] (node135) -- (node1345);
\draw[line width=0.04mm,black] (node345) -- (node1345);
\draw[line width=0.04mm,black] (node345) -- (node2345);
\draw[line width=0.04mm,black] (node1234) -- (node12345);
\draw[line width=0.04mm,black] (node1345) -- (node12345);
\draw[line width=0.04mm,black] (node2345) -- (node12345);
\node[draw,ellipse,black,fill=white] (node0) at (0.00,0.00) {$\emptyset$};
\node[draw,ellipse,black,fill=white] (node1) at (-3.78,1.60) {$1$};
\node[draw,ellipse,black,fill=white] (node3) at (0.00,1.60) {$3$};
\node[draw,ellipse,black,fill=white] (node5) at (1.80,1.60) {$5$};
\node[draw,ellipse,black,fill=white] (node12) at (-5.40,3.20) {$1\,2$};
\node[draw,ellipse,black,fill=white] (node13) at (-3.78,3.20) {$1\,3$};
\node[draw,ellipse,black,fill=white] (node35) at (1.80,3.20) {$3\,5$};
\node[draw,ellipse,black,fill=white] (node123) at (-5.40,4.80) {$1\,2\,3$};
\node[draw,ellipse,black,fill=white] (node134) at (-3.78,4.80) {$1\,3\,4$};
\node[draw,ellipse,black,fill=white] (node135) at (-1.98,4.80) {$1\,3\,5$};
\node[draw,ellipse,black,fill=white] (node345) at (1.80,4.80) {$3\,4\,5$};
\node[draw,ellipse,black,fill=white] (node1234) at (-5.40,6.40) {$1\,2\,3\,4$};
\node[draw,ellipse,black,fill=white] (node1345) at (-1.98,6.40) {$1\,3\,4\,5$};
\node[draw,ellipse,black,fill=white] (node2345) at (0.18,6.40) {$2\,3\,4\,5$};
\node[draw,ellipse,black,fill=white] (node12345) at (-3.60,8.00) {$1\,2\,3\,4\,5$};
\end{tikzpicture}}
\\

(a) The collection $\WS$
\\

\end{tabular}

&

\begin{tabular}{c}

\begin{tabular}{c}
\scalebox{0.6}{
\begin{tikzpicture}
\node[draw,ellipse,black,fill=white] (node0) at (0.00,0.00) {$\emptyset$};
\node[draw,ellipse,black,fill=white] (node1) at (-3.78,1.60) {$1$};
\node[draw,ellipse,black,fill=white] (node3) at (0.00,1.60) {$3$};
\node[draw,ellipse,black,fill=white] (node5) at (1.80,1.60) {$5$};
\node[draw,ellipse,black,fill=white] (node12) at (-5.40,3.20) {$1\,2$};
\node[draw,ellipse,black,fill=white] (node13) at (-3.78,3.20) {$1\,3$};
\node[draw,ellipse,black,fill=white] (node35) at (1.80,3.20) {$3\,5$};
\node[draw,ellipse,black,fill=white] (node123) at (-5.40,4.80) {$1\,2\,3$};
\node[draw,ellipse,black,fill=white] (node135) at (-1.98,4.80) {$1\,3\,5$};
\node[draw,ellipse,black,fill=white] (node235) at (0.18,4.80) {$2\,3\,5$};
\node[draw,ellipse,black,fill=white] (node1235) at (-3.60,6.40) {$1\,2\,3\,5$};
\node[draw,ellipse,black,fill=white] (node0) at (0.00,0.00) {$\emptyset$};
\node[draw,ellipse,black,fill=white] (node1) at (-3.78,1.60) {$1$};
\node[draw,ellipse,black,fill=white] (node3) at (0.00,1.60) {$3$};
\node[draw,ellipse,black,fill=white] (node5) at (1.80,1.60) {$5$};
\node[draw,ellipse,black,fill=white] (node12) at (-5.40,3.20) {$1\,2$};
\node[draw,ellipse,black,fill=white] (node13) at (-3.78,3.20) {$1\,3$};
\node[draw,ellipse,black,fill=white] (node35) at (1.80,3.20) {$3\,5$};
\node[draw,ellipse,black,fill=white] (node123) at (-5.40,4.80) {$1\,2\,3$};
\node[draw,ellipse,black,fill=white] (node135) at (-1.98,4.80) {$1\,3\,5$};
\node[draw,ellipse,black,fill=white] (node235) at (0.18,4.80) {$2\,3\,5$};
\node[draw,ellipse,black,fill=white] (node1235) at (-3.60,6.40) {$1\,2\,3\,5$};
\draw[line width=0.04mm,black] (node0) -- (node1);
\draw[line width=0.04mm,black] (node0) -- (node3);
\draw[line width=0.04mm,black] (node0) -- (node5);
\draw[line width=0.04mm,black] (node1) -- (node12);
\draw[line width=0.04mm,black] (node1) -- (node13);
\draw[line width=0.04mm,black] (node3) -- (node13);
\draw[line width=0.04mm,black] (node3) -- (node35);
\draw[line width=0.04mm,black] (node5) -- (node35);
\draw[line width=0.04mm,black] (node12) -- (node123);
\draw[line width=0.04mm,black] (node13) -- (node123);
\draw[line width=0.04mm,black] (node13) -- (node135);
\draw[line width=0.04mm,black] (node35) -- (node135);
\draw[line width=0.04mm,black] (node35) -- (node235);
\draw[line width=0.04mm,black] (node123) -- (node1235);
\draw[line width=0.04mm,black] (node135) -- (node1235);
\draw[line width=0.04mm,black] (node235) -- (node1235);
\node[draw,ellipse,black,fill=white] (node0) at (0.00,0.00) {$\emptyset$};
\node[draw,ellipse,black,fill=white] (node1) at (-3.78,1.60) {$1$};
\node[draw,ellipse,black,fill=white] (node3) at (0.00,1.60) {$3$};
\node[draw,ellipse,black,fill=white] (node5) at (1.80,1.60) {$5$};
\node[draw,ellipse,black,fill=white] (node12) at (-5.40,3.20) {$1\,2$};
\node[draw,ellipse,black,fill=white] (node13) at (-3.78,3.20) {$1\,3$};
\node[draw,ellipse,black,fill=white] (node35) at (1.80,3.20) {$3\,5$};
\node[draw,ellipse,black,fill=white] (node123) at (-5.40,4.80) {$1\,2\,3$};
\node[draw,ellipse,black,fill=white] (node135) at (-1.98,4.80) {$1\,3\,5$};
\node[draw,ellipse,black,fill=white] (node235) at (0.18,4.80) {$2\,3\,5$};
\node[draw,ellipse,black,fill=white] (node1235) at (-3.60,6.40) {$1\,2\,3\,5$};
\end{tikzpicture}}
\\

(b) The collection $\WS-4$
\\

\end{tabular}

\\\hline

\begin{tabular}{c}

\\

\scalebox{0.6}{
\begin{tikzpicture}
\node[draw,ellipse,black,fill=white] (node13) at (-3.60,0.00) {$1\,3$};
\node[draw,ellipse,black,fill=white] (node35) at (1.80,0.00) {$3\,5$};
\node[draw,ellipse,black,fill=white] (node123) at (-5.22,0.00) {$1\,2\,3$};
\node[draw,ellipse,black,fill=white] (node135) at (-1.80,0.00) {$1\,3\,5$};
\node[draw,ellipse,black,fill=white] (node13) at (-3.60,0.00) {$1\,3$};
\node[draw,ellipse,black,fill=white] (node35) at (1.80,0.00) {$3\,5$};
\node[draw,ellipse,black,fill=white] (node123) at (-5.22,0.00) {$1\,2\,3$};
\node[draw,ellipse,black,fill=white] (node135) at (-1.80,0.00) {$1\,3\,5$};
\draw[line width=0.04mm,black] (node13) -- (node123);
\draw[line width=0.04mm,black] (node13) -- (node135);
\draw[line width=0.04mm,black] (node35) -- (node135);
\node[draw,ellipse,black,fill=white] (node13) at (-3.60,0.00) {$1\,3$};
\node[draw,ellipse,black,fill=white] (node35) at (1.80,0.00) {$3\,5$};
\node[draw,ellipse,black,fill=white] (node123) at (-5.22,0.00) {$1\,2\,3$};
\node[draw,ellipse,black,fill=white] (node135) at (-1.80,0.00) {$1\,3\,5$};
\end{tikzpicture}}
\\

(c) The collection $\WS/4$
\\

\end{tabular}

\\

\end{tabular}

\\\hline

\end{tabular}

 \caption{\label{fig:deletion_contraction} Deletion-contraction recurrence for maximal \emph{by size} $\Mcal$-separated collections.}
\end{figure}
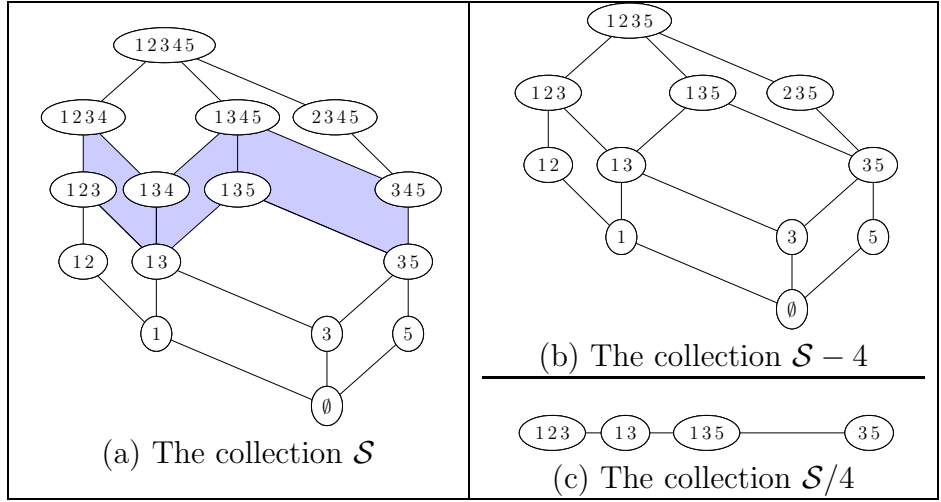

\begin{definition}\label{dfn:ws_delete_contract}
Suppose we are given an oriented matroid $\Mcal$, a collection $\WS\subset 2^E$, and an element $e\in E$. Define collections $\WS-e$  and $\WS/e$  of subsets of $E-e$ as follows:
\begin{equation}\label{eq:ws_delete_contract}
\begin{split}
\WS-e&=\{S\subset (E-e)\mid S\in\WS\ \textrm{ or }\  (S\cup e)\in\WS\}=\{S-e\mid S\in\WS\};\\
\WS/e&=\{S\subset (E-e)\mid S\in\WS\textrm{ and } (S\cup e)\in\WS\}.
\end{split}
\end{equation}
\end{definition}

\begin{remark}
After reading the first version of this manuscript, Steven Karp pointed out to us that some of our constructions are very similar to those related to the notion of \emph{VC-dimension} studied in the early seventies in the extremal combinatorics literature, see~\cite{Shelah, Sauer, VCold}. Later, a bijective correspondence between uniform oriented matroids and collections of sets of maximal possible size with a fixed VC-dimension was given by G\"artner and Welzl in~\cite{VCmatroid}. In particular, our Definition~\ref{dfn:ws_delete_contract} can be found in~\cite[Definition~5]{VCmatroid} and the idea of the proof of our Theorem~\ref{thm:max_size_matroid} is quite similar to the one used in the proof of~\cite[Theorem~18]{VCmatroid}. The authors of~\cite{VCmatroid} are giving a characterization to possible collections of topes of a uniform oriented matroid in this language. However, we are giving a characterization to one-element liftings of a \emph{fixed} oriented matroid $\Mcal$. Thus we do not see a way to deduce any of our results from the results of G\"artner-Welzl or vice versa. We are extremely grateful to Steven for bringing these papers to our attention.
\end{remark}

\begin{proposition}\label{prop:recursion}
 Let $\Mcal$ be an oriented matroid and suppose that a collection $\WS\subset 2^E$ is $\Mcal$-separated. Then for any $e\in\ E$, the collection $\WS-e$ is an $(\Mcal-e)$-separated collection (if $e$ is not a coloop) and the collection $\WS/e$ is an $(\Mcal/e)$-separated collection (if $e$ is not a loop).
\end{proposition}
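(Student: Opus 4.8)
The plan is to prove both statements by contradiction, lifting a putative violation of separation in the minor to a violation of separation in $\Mcal$ itself. The underlying mechanism is the same in both cases: by~\eqref{eq:matroid_deletion} every circuit of $\Mcal-e$ is a circuit of $\Mcal$ that avoids $e$, and by~\eqref{eq:matroid_contraction} every circuit of $\Mcal/e$ has the form $(X^+-e,\,X^--e)$ for some circuit $X$ of $\Mcal$. So a ``bad'' circuit in the minor always descends from an honest circuit of $\Mcal$, and it suffices to exhibit two sets of $\WS$ that this circuit separates incorrectly, contradicting the hypothesis that $\WS$ is $\Mcal$-separated.

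\emph{Deletion.} Suppose $S',T'\in\WS-e$ are not $(\Mcal-e)$-separated, witnessed by a circuit $X\in\Ccal(\Mcal-e)$ with $X^+\subset S'-T'$ and $X^-\subset T'-S'$. Using $\WS-e=\{S-e\mid S\in\WS\}$, pick $S,T\in\WS$ with $S'=S-e$ and $T'=T-e$. Since $e\notin\underline{X}$, every $f\in X^+$ satisfies $f\neq e$, so $f\in S-e$ forces $f\in S$ and $f\notin T-e$ forces $f\notin T$; hence $X^+\subset S-T$, and symmetrically $X^-\subset T-S$. As $X\in\Ccal(\Mcal)$, this contradicts the $\Mcal$-separation of $S$ and $T$. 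I expect this half to be entirely routine.

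\emph{Contraction.} Suppose $S,T\in\WS/e$ are not $(\Mcal/e)$-separated, witnessed by $\bar X\in\Ccal(\Mcal/e)$ with $\bar X^+\subset S-T$ and $\bar X^-\subset T-S$; note $S,T\subset E-e$ and, by definition of $\WS/e$ in~\eqref{eq:ws_delete_contract}, all four sets $S,T,S\cup e,T\cup e$ lie in $\WS$. Write $\bar X=(X^+-e,\,X^--e)$ with $X\in\Ccal(\Mcal)$. If $e\notin\underline{X}$ then $X=\bar X$, and $S,T\in\WS$ are already not $\Mcal$-separated. If $e\in X^+$, I replace $S$ by $S\cup e$: since $e\notin T$ we get $e\in(S\cup e)-T$, while $X^+-e=\bar X^+\subset S-T\subset(S\cup e)-T$, so $X^+\subset(S\cup e)-T$; and $X^-=\bar X^-\subset T-S$ together with $e\notin X^-$ gives $X^-\subset T-(S\cup e)$. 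Thus $X$ shows that $S\cup e$ and $T$ are not $\Mcal$-separated, a contradiction since both lie in $\WS$. The case $e\in X^-$ is symmetric, replacing $T$ by $T\cup e$.

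The only point requiring care — and what I would flag as the main (though modest) obstacle — is this case split in the contraction argument: one must track exactly where $e$ sits in $\bar X$, choose which of $S,T$ to augment by $e$ accordingly, and verify that the augmentation does not enlarge the relevant set-differences in a way that breaks the required containments (this uses $S,T\subset E-e$ and the disjointness $X^+\cap X^-=\varnothing$). Everything else is a direct unpacking of the definitions~\eqref{eq:ws_delete_contract}, \eqref{eq:matroid_deletion}, and~\eqref{eq:matroid_contraction}.
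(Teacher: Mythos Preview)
Your proof is correct and follows essentially the same approach as the paper's: both argue by contradiction, lifting a bad circuit in the minor to a circuit of $\Mcal$, and in the contraction case both use the fact that $S,Se,T,Te\in\WS$ to choose the right pair of sets depending on which side of the lifted circuit contains $e$. The paper's version is slightly terser (it uses the ``orients positively/negatively'' language and handles one case by WLOG), but the content is identical.
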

\begin{proof}
 We start with $\WS-e$. We need to check that every circuit $C=(C^+,C^-)$ of $\Mcal-e$ is not oriented in the opposite directions by $\WS-e$. Recall from~\eqref{eq:matroid_deletion} that $C\in\Ccal(\Mcal)$ is a circuit of $\Mcal-e$ if and only if $e\not\in\Cu$. Thus if $S\in \WS$ is such that $S-e$ orients $C$ in $\Mcal-e$ then $S$ orients $C$ in $\Mcal$ as well, so it is impossible to have another set $T\in\WS$ orienting $C$ the opposite way.
 
 The proof for $\WS/e$ will be a bit harder. Recall from~\eqref{eq:matroid_contraction} that if $C$ is a circuit of $\Mcal/e$ then there is a circuit $C_1$ of $\Mcal$ such that $C=C_1-e$ (i.e., $C$ is the restriction of $C_1$ to $E-e$). Assume moreover that there are sets $S,T\in\WS/e$ orienting $C$ in the opposite directions:
 \[C^+\subset S,\ C^-\cap S=\emptyset,\quad C^-\subset T,\ C^+\cap T=\emptyset.\] 
 Note that by the definition of $\WS/e$, we have $S,S\cup e, T, T\cup e\in \WS$. 

 \def\Cuone{\underline{C_1}}
 If $e\not\in\Cuone$, that is, if $C_1=C$, then $S$ and $T$ are not $\Mcal$-separated, so let us assume without loss of generality that $C_1^-=C^-\cup e, C_1^+=C^+$. But then it is clear that the sets $S$ and $T\cup e$ are not $\Mcal$-separated. We get a contradiction again and thus we have shown that $\WS-e$ (resp., $\WS/e$) is an $(\Mcal-e)$-separated (resp., $(\Mcal/e)$-separated) collection of sets. 
\end{proof}

\begin{proposition}\label{prop:ws_tutte_recurrence}
	Let $\Mcal$ be an oriented matroid and suppose that $e\in E$ is neither a loop nor a coloop in $\Mcal$. Then for any $\Mcal$-separated collection $\WS$, we have
	\begin{equation}\label{eq:ws_tutte_recurrence}
	|\WS|=|\WS-e|+|\WS/e|.
	\end{equation}
\end{proposition}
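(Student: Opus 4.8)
The plan is to observe that the identity~\eqref{eq:ws_tutte_recurrence} is a purely set-theoretic fact about the ``forget $e$'' map, and that it does not actually use that $\WS$ is $\Mcal$-separated; the hypothesis that $e$ is neither a loop nor a coloop is only there so that, by Proposition~\ref{prop:recursion}, the two collections $\WS-e$ and $\WS/e$ are genuine $(\Mcal-e)$- and $(\Mcal/e)$-separated collections, which is what makes the recurrence useful later.

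Concretely, I would consider the map $\phi\colon\WS\to 2^{E-e}$ given by $\phi(S)=S-e$. By the second description of $\WS-e$ in~\eqref{eq:ws_delete_contract}, the image of $\phi$ is exactly $\WS-e$. Next I would analyze the fibers of $\phi$. For $T\in\WS-e$, the preimage $\phi^{-1}(T)$ is contained in $\{T,\ T\cup e\}$, since these are the only two subsets of $E$ whose intersection with $E-e$ equals $T$; hence $|\phi^{-1}(T)|\in\{1,2\}$, and $|\phi^{-1}(T)|=2$ precisely when both $T\in\WS$ and $(T\cup e)\in\WS$, i.e.\ precisely when $T\in\WS/e$ (and in that case $T\in\WS-e$ is automatic). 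Summing the fiber sizes over the image gives
\[
|\WS|=\sum_{T\in\WS-e}|\phi^{-1}(T)|=|\WS-e|+\bigl|\{T\in\WS-e\mid |\phi^{-1}(T)|=2\}\bigr|=|\WS-e|+|\WS/e|,
\]
which is~\eqref{eq:ws_tutte_recurrence}.

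There is no genuine obstacle here: this is an elementary double count, and the only points to be careful about are matching the two descriptions of $\WS-e$ in~\eqref{eq:ws_delete_contract} and checking that the set of $T$ with a two-element fiber is exactly $\WS/e$. It is worth remarking that this recurrence is the $\Mcal$-separated analogue of the deletion--contraction recurrence $|\Ind(\Mcal)|=|\Ind(\Mcal-e)|+|\Ind(\Mcal/e)|$ for independent sets, and together with Proposition~\ref{prop:recursion} it is the engine behind an induction on $|E|$ for results such as Theorem~\ref{thm:max_size_matroid}.
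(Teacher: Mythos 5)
Your proof is correct, and it fills in exactly the elementary double count that the paper dismisses with ``This is obvious from Definition~\ref{dfn:ws_delete_contract}.'' Your side remark — that the hypothesis on $e$ being neither a loop nor a coloop plays no role in the identity itself, only in making $\WS-e$ and $\WS/e$ meaningful as separated collections for $\Mcal-e$ and $\Mcal/e$ — is also accurate and a fair reading of why the hypothesis appears in the statement.
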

\begin{proof}
	This is obvious from Definition~\ref{dfn:ws_delete_contract}.
\end{proof}

\subsection{Mutation-closed domains}
\label{subsection:mutation_closed_big}

Given a set $S\subset E$ and a circuit $C\in\Ccal(\Mcal)$ of an oriented matroid $\Mcal$ such that $S$ orients $C$, we define a new set 
\[\mut{C}{S}:=\reorient{C}{S}\]
to be the symmetric difference of $S$ and $\Cu$. We call $\mut{C}{S}$ the \emph{mutation} of $S$ along $C$. Thus for example if $S$ orients $C$ positively then 
\[\mut{C}{S}=(S\setminus C^+)\cup C^-.\]

Recall that we introduced the \emph{mutation graph} of $\Mcal$ in Definition~\ref{dfn:mutation_graph_intro} with vertex set $2^E$ and an edge from $S$ to $\mut{C}{S}$ for any circuit $C$ oriented by $S$. We denote this undirected graph by $\mutgraph(\Mcal)$. 




Recall the definition of a \emph{mutation-closed domain} from Section~\ref{sec:mutat-clos-doma}. For example,
$2^E$ and $\emptyset$ are always mutation-closed domains. If $\Mcal$ is
\emph{balanced}, i.e., for every circuit $C=(C^+,C^-)$ of $\Mcal$ we have $|C^+|=|C^-|$, then clearly $E\choose k$ is a mutation-closed domain for any $k\leq |E|$. 



\begin{example}\label{ex:icos_dodec}
  Suppose that $\Mcal$ is the oriented matroid associated with the vector configuration shown in Figure~\ref{fig:icos_dodec} (top). In terms of \emph{affine point configurations}, we have $\Mcal=\IC(6,3,1)$ in Figure~\ref{fig:non_postiroids}. The ground set for $\Mcal$ is $E=\{1,2,3,4,5,6\}$. Thus $\mutgraph(\Mcal)$ has $64$ vertices. It turns out that $32$ of them are isolated, and the other $32$ of them form two connected components $\dom_i$ and $\dom_d$ that are $1$-skeleta of the icosahedron and the dodecahedron respectively, see Figure~\ref{fig:icos_dodec} (bottom). The mutation-closed domain $\dom_i$ consisting of the vertex labels of the icosahedron is $\Mcal$-pure: $\Mcal$-separated collections inside $\dom_i$ form a pure $2$-dimensional simplicial complex which is, coincidentally, again the boundary of an icosahedron (this complex is dual to the \emph{cluster complex} of~\cite{FZy}).  However, the mutation-closed domain $\dom_d$ corresponding to the dodecahedron connected component is not $\Mcal$-pure. 

\end{example}

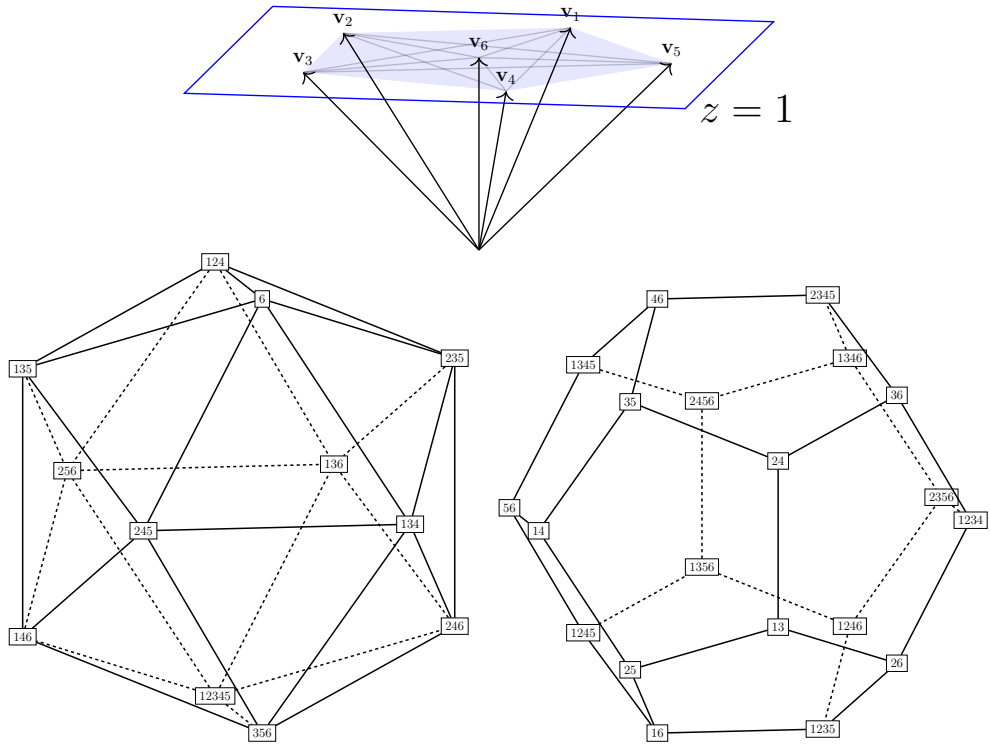
\begin{figure}
\tdplotsetmaincoords{80}{10}
\scalebox{1.3}{
\begin{tikzpicture}[scale=2,tdplot_main_coords]
    \foreach\x in {1,2,...,5}{
      \fill[opacity=0.1,blue] (0,0,1) -- ({cos(\x*72)},{sin(\x*72)},1) -- ({cos(\x*72+72)},{sin(\x*72+72)},1)--cycle;
    }
    \foreach\x in {1,2,...,5}{
      \draw[->] (0,0,0) -- ({cos(\x*72)},{sin(\x*72)},1) node[anchor=south,scale=0.5]{$\v_\x$};
      \draw[opacity=0.2] ({cos(\x*72)},{sin(\x*72)},1) -- (0,0,1);
      \draw[opacity=0.2] ({cos(\x*72)},{sin(\x*72)},1) -- ({cos(\x*72+144)},{sin(\x*72+144)},1);
    }
    
    \draw[->] (0,0,0) -- (0,0,1) node[anchor=south,scale=0.5]{$\v_6$};
    
    \def\y{1.3}
    \draw[blue] (\y,\y,1)  -- (\y,-\y,1)node[anchor=west,black]{$z=1$} -- (-\y,-\y,1) -- (-\y,\y,1)  --cycle;
\end{tikzpicture}
}
\tdplotsetmaincoords{98}{10}
\makebox[1.0\textwidth]{
\begin{tabular}{cc}
\scalebox{0.4}{
\begin{tikzpicture}[scale=4.5,tdplot_main_coords]
\node[draw,black,fill=white] (N0) at (0.00,-1.00,-1.62) {$12345$};
\node[draw,black,fill=white] (N1) at (0.00,-1.00,1.62) {$124$};
\node[draw,black,fill=white] (N2) at (0.00,1.00,-1.62) {$356$};
\node[draw,black,fill=white] (N3) at (0.00,1.00,1.62) {$6$};
\node[draw,black,fill=white] (N4) at (-1.62,0.00,-1.00) {$146$};
\node[draw,black,fill=white] (N5) at (1.62,0.00,-1.00) {$246$};
\node[draw,black,fill=white] (N6) at (-1.62,0.00,1.00) {$135$};
\node[draw,black,fill=white] (N7) at (1.62,0.00,1.00) {$235$};
\node[draw,black,fill=white] (N8) at (-1.00,-1.62,0.00) {$256$};
\node[draw,black,fill=white] (N9) at (-1.00,1.62,0.00) {$245$};
\node[draw,black,fill=white] (N10) at (1.00,-1.62,0.00) {$136$};
\node[draw,black,fill=white] (N11) at (1.00,1.62,0.00) {$134$};
\draw[line width=0.5mm,dashed] (N0) -- (N2);
\draw[line width=0.5mm,dashed] (N0) -- (N4);
\draw[line width=0.5mm,dashed] (N0) -- (N5);
\draw[line width=0.5mm,dashed] (N0) -- (N8);
\draw[line width=0.5mm,dashed] (N0) -- (N10);
\draw[line width=0.5mm] (N1) -- (N3);
\draw[line width=0.5mm] (N1) -- (N6);
\draw[line width=0.5mm] (N1) -- (N7);
\draw[line width=0.5mm,dashed] (N1) -- (N8);
\draw[line width=0.5mm,dashed] (N1) -- (N10);
\draw[line width=0.5mm] (N2) -- (N4);
\draw[line width=0.5mm] (N2) -- (N5);
\draw[line width=0.5mm] (N2) -- (N9);
\draw[line width=0.5mm] (N2) -- (N11);
\draw[line width=0.5mm] (N3) -- (N6);
\draw[line width=0.5mm] (N3) -- (N7);
\draw[line width=0.5mm] (N3) -- (N9);
\draw[line width=0.5mm] (N3) -- (N11);
\draw[line width=0.5mm] (N4) -- (N6);
\draw[line width=0.5mm,dashed] (N4) -- (N8);
\draw[line width=0.5mm] (N4) -- (N9);
\draw[line width=0.5mm] (N5) -- (N7);
\draw[line width=0.5mm,dashed] (N5) -- (N10);
\draw[line width=0.5mm] (N5) -- (N11);
\draw[line width=0.5mm,dashed] (N6) -- (N8);
\draw[line width=0.5mm] (N6) -- (N9);
\draw[line width=0.5mm,dashed] (N7) -- (N10);
\draw[line width=0.5mm] (N7) -- (N11);
\draw[line width=0.5mm,dashed] (N8) -- (N10);
\draw[line width=0.5mm] (N9) -- (N11);
\end{tikzpicture}}
&
\scalebox{0.4}{
\begin{tikzpicture}[scale=4.5,tdplot_main_coords]
\node[draw,black,fill=white] (N0) at (-1.00,-1.00,-1.00) {$1245$};
\node[draw,black,fill=white] (N1) at (-1.00,-1.00,1.00) {$1345$};
\node[draw,black,fill=white] (N2) at (-1.00,1.00,-1.00) {$25$};
\node[draw,black,fill=white] (N3) at (-1.00,1.00,1.00) {$35$};
\node[draw,black,fill=white] (N4) at (1.00,-1.00,-1.00) {$1246$};
\node[draw,black,fill=white] (N5) at (1.00,-1.00,1.00) {$1346$};
\node[draw,black,fill=white] (N6) at (1.00,1.00,-1.00) {$26$};
\node[draw,black,fill=white] (N7) at (1.00,1.00,1.00) {$36$};
\node[draw,black,fill=white] (N8) at (0.00,-1.62,-0.62) {$1356$};
\node[draw,black,fill=white] (N9) at (0.00,-1.62,0.62) {$2456$};
\node[draw,black,fill=white] (N10) at (0.00,1.62,-0.62) {$13$};
\node[draw,black,fill=white] (N11) at (0.00,1.62,0.62) {$24$};
\node[draw,black,fill=white] (N12) at (-0.62,0.00,-1.62) {$16$};
\node[draw,black,fill=white] (N13) at (0.62,0.00,-1.62) {$1235$};
\node[draw,black,fill=white] (N14) at (-0.62,0.00,1.62) {$46$};
\node[draw,black,fill=white] (N15) at (0.62,0.00,1.62) {$2345$};
\node[draw,black,fill=white] (N16) at (-1.62,-0.62,0.00) {$56$};
\node[draw,black,fill=white] (N17) at (-1.62,0.62,0.00) {$14$};
\node[draw,black,fill=white] (N18) at (1.62,-0.62,0.00) {$2356$};
\node[draw,black,fill=white] (N19) at (1.62,0.62,0.00) {$1234$};
\draw[line width=0.5mm,dashed] (N0) -- (N8);
\draw[line width=0.5mm] (N0) -- (N12);
\draw[line width=0.5mm] (N0) -- (N16);
\draw[line width=0.5mm,dashed] (N1) -- (N9);
\draw[line width=0.5mm] (N1) -- (N14);
\draw[line width=0.5mm] (N1) -- (N16);
\draw[line width=0.5mm] (N2) -- (N10);
\draw[line width=0.5mm] (N2) -- (N12);
\draw[line width=0.5mm] (N2) -- (N17);
\draw[line width=0.5mm] (N3) -- (N11);
\draw[line width=0.5mm] (N3) -- (N14);
\draw[line width=0.5mm] (N3) -- (N17);
\draw[line width=0.5mm,dashed] (N4) -- (N8);
\draw[line width=0.5mm,dashed] (N4) -- (N13);
\draw[line width=0.5mm,dashed] (N4) -- (N18);
\draw[line width=0.5mm,dashed] (N5) -- (N9);
\draw[line width=0.5mm,dashed] (N5) -- (N15);
\draw[line width=0.5mm,dashed] (N5) -- (N18);
\draw[line width=0.5mm] (N6) -- (N10);
\draw[line width=0.5mm] (N6) -- (N13);
\draw[line width=0.5mm] (N6) -- (N19);
\draw[line width=0.5mm] (N7) -- (N11);
\draw[line width=0.5mm] (N7) -- (N15);
\draw[line width=0.5mm] (N7) -- (N19);
\draw[line width=0.5mm,dashed] (N8) -- (N9);
\draw[line width=0.5mm] (N10) -- (N11);
\draw[line width=0.5mm] (N12) -- (N13);
\draw[line width=0.5mm] (N14) -- (N15);
\draw[line width=0.5mm] (N16) -- (N17);
\draw[line width=0.5mm,dashed] (N18) -- (N19);
\end{tikzpicture}}
\\

\end{tabular}
}
\caption{\label{fig:icos_dodec} A vector configuration realizing the oriented matroid $\Mcal$ from Example~\ref{ex:icos_dodec} (top). Two non-trivial connected components of $\mutgraph(\Mcal)$ (bottom). Here we abbreviate the set $\{1,2,3,5\}$ as $1235$, etc.}
\end{figure}

\begin{conjecture}\label{conj:domains} 
	Let $\WS$ be a maximal \emph{by size} $\Mcal$-separated collection inside $2^E$ and let $\dom$ be a mutation-closed domain. Then $\WS\cap \dom$ is a maximal \emph{by size} $\Mcal$-separated collection inside $\dom$.
\end{conjecture}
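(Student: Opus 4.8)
The natural plan is to reduce the statement to an \emph{additivity} property. For a mutation-closed domain $\dom\subset 2^E$ write $M(\dom)$ for the largest cardinality of an $\Mcal$-separated collection contained in $\dom$, so that $M(2^E)=|\Ind(\Mcal)|$ by Theorem~\ref{thm:max_size_matroid}. Since the complement $\dom':=2^E\setminus\dom$ is a union of connected components of $\mutgraph(\Mcal)$, it is again mutation-closed, and every $\Mcal$-separated collection $\WS''\subset 2^E$ splits as the disjoint union $(\WS''\cap\dom)\sqcup(\WS''\cap\dom')$ of two $\Mcal$-separated subcollections; applying this to a maximal by size $\WS''$ gives the easy inequality $|\Ind(\Mcal)|\le M(\dom)+M(\dom')$. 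The reverse inequality $|\Ind(\Mcal)|\ge M(\dom)+M(\dom')$ implies the conjecture at once: for a maximal by size $\WS$ we would then have $|\WS\cap\dom|+|\WS\cap\dom'|=|\WS|=|\Ind(\Mcal)|=M(\dom)+M(\dom')$ with $|\WS\cap\dom|\le M(\dom)$ and $|\WS\cap\dom'|\le M(\dom')$, forcing $|\WS\cap\dom|=M(\dom)$, i.e.\ $\WS\cap\dom$ is maximal by size inside $\dom$. So everything comes down to proving this additivity. One should be warned that it cannot be obtained by simply gluing a maximal collection of $\dom$ to a maximal collection of $\dom'$: two sets lying in \emph{different} connected components of $\mutgraph(\Mcal)$ can fail to be $\Mcal$-separated (e.g.\ when a circuit $C$ has $C^+\subset S-T$ and $C^-\subset T-S$ but $\underline C\subsetneq S\triangle T$), so such a union need not be $\Mcal$-separated; a more global argument is required.

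For the flip-connected case I would prove that $|\WS\cap\dom|$ is a \emph{flip invariant} on the set of maximal by size collections. By Theorem~\ref{thm:max_size_matroid} and the Bohne--Dress theorem (Theorem~\ref{thm:bohne}) these collections are exactly the $\WS(\sigma)$ attached to colocalizations $\sigma$ in general position, and the standard description of flips shows that a single flip changes the colocalization only on the two circuits $\pm C_A$ supported on a suitable corank-$1$ uniform restriction $\Mcal\mid_A$, with $\sigma'(C_A)=-\sigma(C_A)$ (there are no circuits with support properly inside $A$). One then checks that the sets in $\WS(\sigma)\triangle\WS(\sigma')$ come in pairs $\{C_A^+\cup W,\;C_A^-\cup W\}$ indexed by $W\subset E-A$, each pair being a single edge of $\mutgraph(\Mcal)$ (its two members differ by $\underline{C_A}$, the support of a circuit), hence lying entirely inside $\dom$ or entirely inside $\dom'$; moreover exactly one member of each pair lies in $\WS(\sigma)$ and the other in $\WS(\sigma')$. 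Summing over the pairs gives $|\WS(\sigma)\cap\dom|=|\WS(\sigma')\cap\dom|$, so by flip-connectedness this number is a constant $c$. It then remains to identify $c$ with $M(\dom)$, for which it suffices to show that a collection of maximum cardinality inside $\dom$ extends to a maximal by size collection of $2^E$ (then $c\ge M(\dom)$, while $c\le M(\dom)$ is automatic).

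The main obstacle — and the reason this stays a conjecture — is twofold. First, the extension claim is genuinely delicate: an $\Mcal$-separated collection need not extend to any maximal by size collection (see Figure~\ref{fig:K_4}), so one must use maximality \emph{within} $\dom$ in an essential way, presumably by completing the partial colocalization $\sigma_{\WS^*}$ of a maximum collection $\WS^*\subset\dom$ to a colocalization in general position via the Las Vergnas characterization (Theorem~\ref{thm:LasVergnas}), refining its Type~I/II restrictions to Type~III one corank-$2$ subset at a time while checking global consistency. Second, the flip argument only delivers constancy of $|\WS\cap\dom|$ when $\Mcal$ is flip-connected; for a general oriented matroid one needs a direct matroidal proof of the additivity $|\Ind(\Mcal)|=M(\dom)+M(\dom')$, and the obvious inductive route via the deletion--contraction recurrences $|\WS|=|\WS-e|+|\WS/e|$ (Proposition~\ref{prop:ws_tutte_recurrence}) and $|\Ind(\Mcal)|=|\Ind(\Mcal-e)|+|\Ind(\Mcal/e)|$ does not close up, because although $\dom-e=\{S-e:S\in\dom\}$ is mutation-closed for $\Mcal-e$, the contracted domain $\{S:S,Se\in\dom\}$ need not be mutation-closed for $\Mcal/e$. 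Proving the additivity despite this mismatch — equivalently, controlling $|\WS\cap\dom|$ uniformly over all one-element liftings without invoking flip-connectedness — is where the real work lies.
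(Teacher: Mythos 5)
You have correctly identified this statement as a conjecture: the paper does not prove it in general but only in two special cases, namely for graphical oriented matroids (Section~\ref{sect:graph}, via Gioan's indegree-sequence theory) and for oriented matroids that are \emph{both pure and flip-connected} (Proposition~\ref{prop:flip_conn}). Your reformulation as the additivity $|\Ind(\Mcal)|=M(\dom)+M(\dom')$ is an exact equivalent of the conjecture, and your flip-invariance sketch for the flip-connected case is, in outline, precisely the route Proposition~\ref{prop:flip_conn} takes through Lemma~\ref{lemma:algorithm}: a flip at a circuit $W$ replaces each $S\in\WS(\sigma)$ orienting $W$ by $\mut{W}{S}$, a mutation-graph edge, so $|\WS(\sigma)\cap\dom|$ is unchanged. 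That part of your reasoning agrees with the paper.

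There are two places, however, where your argument is incomplete relative to what it aims at. First, the assertion that $\WS(\sigma)\triangle\WS(\sigma')$ pairs up as $\{C_A^+\cup W,\,C_A^-\cup W\}$ is not the routine verification your ``one then checks'' suggests; it is Lemma~\ref{lemma:algorithm}, whose proof needs a genuine descending-circuit argument (using weak elimination and Proposition~\ref{prop:circuits_coloc}) to confirm that $\mut{W}{S}$ does not violate some \emph{other} circuit under $\sigma'$. Second, and decisively, identifying the flip-constant $c$ with $M(\dom)$ requires extending a cardinality-maximum collection in $\dom$ to a maximal-by-size collection of $2^E$; Proposition~\ref{prop:flip_conn} assumes \emph{purity} precisely so that this extension is automatic, whereas you explicitly decline that hypothesis, and your proposed substitute---refining a partial colocalization to Type III one corank-$2$ restriction at a time---is a plan rather than a proof, and it is exactly the sort of global consistency that can fail when $\Mcal$ is not pure (Figure~\ref{fig:K_4}). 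As written your reasoning therefore establishes no more than Proposition~\ref{prop:flip_conn} already does, and the stronger assertion that flip-connectedness alone suffices is left unjustified. Your diagnosis of why deletion--contraction stalls ($\dom/e$ need not be mutation-closed for $\Mcal/e$) is correct and matches the authors' implicit abandonment of that route; you do not, however, engage with the paper's genuinely different argument in the graphical case, which counts connected components of $\mutgraph(G)$ against $T_G(2,1)$ via Gioan's theorem and is the one setting in which the conjecture is proved without any purity or flip-connectedness hypothesis.
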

For the case when $\Mcal$ is a graphical oriented matroid, we prove Conjecture~\ref{conj:domains} in Section~\ref{sect:graph}. We also show in Proposition~\ref{prop:polytopal} that the connected components of $\mutgraph(\Mcal)$ are $1$-skeleta of polytopes just as in Example~\ref{ex:icos_dodec}. Note however that for $\Mcal=\IC(6,3,15)$ in Figure~\ref{fig:positroids}, one of the components of $\mutgraph(\Mcal)$ is not a $1$-skeleton of a polytope.

\begin{proposition}
Conjecture~\ref{conj:domains} implies Conjecture~\ref{conj:mutation_closed_pure}.
\end{proposition}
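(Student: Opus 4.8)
The plan is a direct formal deduction, with all the substantive content delegated to Conjecture~\ref{conj:domains} and to the definition of purity. So assume Conjecture~\ref{conj:domains}, let $\Mcal$ be a pure oriented matroid, and let $\dom\subset 2^E$ be an arbitrary mutation-closed domain for $\Mcal$. We must show that $\dom$ is $\Mcal$-pure, i.e., that every $\Mcal$-separated collection $\WS'\subseteq\dom$ which is maximal \emph{by inclusion} among all $\Mcal$-separated collections contained in $\dom$ is also maximal \emph{by size} among such collections.

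First I would fix such a $\WS'$ and extend it inside the whole power set. Since the $\Mcal$-separated collections in $2^E$ form a finite abstract simplicial complex, $\WS'$ is contained in some collection $\WS\subseteq 2^E$ that is maximal \emph{by inclusion} among all $\Mcal$-separated collections in $2^E$; by the purity of $\Mcal$, this $\WS$ is then also maximal \emph{by size}. Next I would restrict back to $\dom$: the collection $\WS\cap\dom$ is $\Mcal$-separated (being a subcollection of $\WS$), it is contained in $\dom$, and it contains $\WS'$. Because $\WS'$ was chosen maximal by inclusion among $\Mcal$-separated collections inside $\dom$, this forces $\WS'=\WS\cap\dom$.

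Finally, I would invoke Conjecture~\ref{conj:domains} with the maximal-by-size collection $\WS$ and the mutation-closed domain $\dom$: it asserts precisely that $\WS\cap\dom$ is maximal \emph{by size} among $\Mcal$-separated collections inside $\dom$. Hence $\WS'=\WS\cap\dom$ is maximal by size inside $\dom$, which is exactly the statement that $\dom$ is $\Mcal$-pure. Since $\dom$ was an arbitrary mutation-closed domain, this establishes Conjecture~\ref{conj:mutation_closed_pure}.

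As for the main obstacle: once Conjecture~\ref{conj:domains} is granted there is essentially no difficulty, and the only step requiring even minor care is the identity $\WS'=\WS\cap\dom$, which rests on the elementary observation that $\WS\cap\dom$ is itself an $\Mcal$-separated collection living inside $\dom$ and therefore cannot properly contain the inclusion-maximal $\WS'$. Everything genuinely nontrivial is carried by Conjecture~\ref{conj:domains} (which controls the interaction of a global maximal-by-size collection with a mutation-closed domain) together with the defining property of purity (which upgrades ``maximal by inclusion in $2^E$'' to ``maximal by size in $2^E$'').
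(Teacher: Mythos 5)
Your proof is correct and takes essentially the same approach as the paper's: extend the inclusion-maximal $\WS'\subseteq\dom$ to a maximal-by-inclusion (hence maximal-by-size) collection $\WS$ in $2^E$, observe that maximality forces $\WS'=\WS\cap\dom$, and then invoke Conjecture~\ref{conj:domains}. The paper compresses these same steps into two sentences and leaves the identity $\WS'=\WS\cap\dom$ and the application of Conjecture~\ref{conj:domains} implicit; you have simply spelled them out.
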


\begin{proof}
  Let $\Mcal$ be a pure oriented matroid, $\dom$ be a mutation-closed domain, and $\WS$ be a maximal \emph{by inclusion} $\Mcal$-separated collection inside $\dom$. Since $\Mcal$ is pure, $\WS$ is contained in some maximal \emph{by inclusion} (and thus, \emph{by size}) $\Mcal$-separated collection $\WS'$ inside $2^E$ so the result follows.
\end{proof}

Note that Conjecture~\ref{conj:domains} is much more general than Conjecture~\ref{conj:mutation_closed_pure} as it applies to \emph{all}, not necessarily pure, oriented matroids. We also include a proof of Conjecture~\ref{conj:domains} in an important special case.

\begin{definition}
\label{def:flip_oriented_matroid}
	   Consider an oriented matroid $\Mcal$. We say that two colocalizations $\sigma,\sigma':\Ccal(\Mcal)\to\{+,-\}$ of $\Mcal$ in general position \emph{differ by a flip} if there exists $W\in\Ccal(\Mcal)$ such that  
	   \[\sigma(W)=-\sigma'(W),\quad\text{and}\quad \sigma(T)=\sigma'(T)\quad \text{for all $T\neq\pm W$}.\]
	   We say that $\Mcal$ is \emph{flip-connected} if any two colocalizations of $\Mcal$ in general position can be connected to each other by a sequence of flips.\footnote{This is equivalent to the \emph{extension space} of $\Mcal^\ast$ being connected.}
\end{definition}


We now give a proof of the ``flip-connected'' part of Proposition~\ref{prop:graphicalandflipconnected}. In the proof of Proposition~\ref{prop:flip_conn} and Lemma~\ref{lemma:algorithm} below, we rely on the result of Theorem~\ref{thm:max_size_matroid} which will be proven later in Section~\ref{sec:proof_thm_max_sz_matroid}. (Proposition~\ref{prop:flip_conn} and Lemma~\ref{lemma:algorithm} are not used in the rest of the paper.)

\begin{proposition}\label{prop:flip_conn}
	Suppose that an oriented matroid $\Mcal$ is pure and flip-connected. Then Conjecture~\ref{conj:domains} is valid for $\Mcal$. In particular, any mutation-closed domain $\dom\subset 2^E$ is $\Mcal$-pure.
\end{proposition}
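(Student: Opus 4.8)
The plan is to use Theorem~\ref{thm:max_size_matroid} to translate the statement into the language of colocalizations, then propagate an invariance statement along flips, and only at the very end invoke purity. Recall that by Theorem~\ref{thm:max_size_matroid} the maximal \emph{by size} $\Mcal$-separated collections $\WS\subset 2^E$ are exactly the collections $\WS(\sigma)$ attached to colocalizations $\sigma\colon\Ccal\to\{+,-\}$ in general position, and all of them have size $|\Ind(\Mcal)|$. Fix a mutation-closed domain $\dom\subset 2^E$. The key intermediate claim I would prove is: if $\sigma,\sigma'$ differ by a flip along a circuit $W$ (say $\sigma(W)=+$, $\sigma'(W)=-$, and $\sigma,\sigma'$ agree on all circuits $\neq\pm W$), then $|\WS(\sigma)\cap\dom|=|\WS(\sigma')\cap\dom|$. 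Granting this, flip-connectedness of $\Mcal$ immediately gives a well-defined number $N(\dom):=|\WS\cap\dom|$ that is the same for \emph{every} maximal by size $\Mcal$-separated collection $\WS\subset 2^E$.

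\textbf{The flip-invariance lemma, which I expect to be the main obstacle.} A short computation with the coordinatewise order on $\{+,-,0\}^E$ shows
\[
\WS(\sigma)-\WS(\sigma')=\{\,S\in\WS(\sigma)\ :\ W^+\subset S\subset E-W^-\,\},
\]
that is, it consists precisely of the sets in $\WS(\sigma)$ that orient $W$ positively; symmetrically, $\WS(\sigma')-\WS(\sigma)$ consists of the sets in $\WS(\sigma')$ that orient $W$ negatively. Consider the involution $\Phi\colon S\mapsto\mut{W}{S}$ defined on all subsets that orient $W$; it interchanges ``orients $W$ positively'' with ``orients $W$ negatively'', and since $S$ and $\mut{W}{S}$ are joined by an edge of $\mutgraph(\Mcal)$, the map $\Phi$ preserves connected components of $\mutgraph(\Mcal)$, hence carries $\dom$ into $\dom$. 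The heart of the matter is to show that $\Phi$ restricts to a bijection $\WS(\sigma)-\WS(\sigma')\to\WS(\sigma')-\WS(\sigma)$. By the symmetry $\sigma\leftrightarrow\sigma'$, $W\leftrightarrow-W$ it suffices to prove one inclusion: if $S\in\WS(\sigma)$ orients $W$ positively, then $S':=\mut{W}{S}$ orients no circuit $Y\neq\pm W$ against $\sigma$. To prove this I would argue by contradiction: a such $Y$ would have to be ``in opposite position'' to $W$ on $\underline Y\cap\underline W$ (so that $Y$ and $-W$ conform there), and then weak elimination inside the lifting $\Mcaltilde$ attached to $\sigma$ — using the explicit description of $\Ccal(\Mcaltilde)$ from Proposition~\ref{prop:circuits_coloc}, or equivalently the fact from Theorem~\ref{thm:LasVergnas} that the restriction of $\sigma$ to any corank-$2$ subset containing $\underline W$ is of Type~III — produces a circuit oriented against $\sigma$ by $S$ itself, contradicting $S\in\WS(\sigma)$. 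Once $\Phi$ is a bijection between the two symmetric-difference sets, component-preservation yields $|(\WS(\sigma)-\WS(\sigma'))\cap\dom|=|(\WS(\sigma')-\WS(\sigma))\cap\dom|$, and since $\WS(\sigma)$ and $\WS(\sigma')$ agree off these two sets, $|\WS(\sigma)\cap\dom|=|\WS(\sigma')\cap\dom|$. This bijectivity is the general-position analogue of the familiar fact that a flip of a fine zonotopal tiling alters its vertex set only ``locally'', and I expect the bookkeeping with corank-$2$ subsets and the colocalization axioms to be the one genuinely technical point.

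\textbf{Bringing in purity and concluding.} With $N(\dom)$ well defined, it remains to see that each $\WS\cap\dom$ (for $\WS\subset 2^E$ maximal by size) is maximal \emph{by size} inside $\dom$: it is certainly $\Mcal$-separated and contained in $\dom$, of size $N(\dom)$, so I only need that no $\Mcal$-separated collection inside $\dom$ is larger. Let $\WS''\subset\dom$ be maximal \emph{by inclusion} among $\Mcal$-separated collections inside $\dom$. Since $\Mcal$ is pure, $\WS''$ extends to a collection $\WS'\subset 2^E$ that is maximal by inclusion, hence maximal by size. Then $\WS''\subset\WS'\cap\dom$, and $\WS'\cap\dom$ is $\Mcal$-separated and contained in $\dom$, so by maximality by inclusion $\WS''=\WS'\cap\dom$, whence $|\WS''|=N(\dom)$. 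Thus every maximal by inclusion $\Mcal$-separated collection inside $\dom$ has size exactly $N(\dom)$, so every $\Mcal$-separated collection inside $\dom$ has size at most $N(\dom)=|\WS\cap\dom|$. This proves Conjecture~\ref{conj:domains} for $\Mcal$, and the statement that every mutation-closed domain $\dom$ is $\Mcal$-pure is precisely the assertion just obtained, namely that all maximal by inclusion $\Mcal$-separated collections inside $\dom$ share the cardinality $N(\dom)$.
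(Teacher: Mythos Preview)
Your overall strategy is exactly the paper's: translate to colocalizations via Theorem~\ref{thm:max_size_matroid}, prove that $|\WS(\sigma)\cap\dom|$ is invariant under flips by exhibiting the mutation $\Phi\colon S\mapsto\mut{W}{S}$ as a bijection between $\WS(\sigma)-\WS(\sigma')$ and $\WS(\sigma')-\WS(\sigma)$, and then use purity at the end. Your identification of the symmetric-difference sets and of the component-preserving nature of $\Phi$ is correct, and the final paragraph is fine.

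The gap is in the sketch of the key lemma, and it is not just bookkeeping. First, there is a sign slip: if $S\in\WS(\sigma)$ orients $W$ positively and $S':=\mut{W}{S}$ orients some $Y\neq\pm W$ against $\sigma(Y)=+$, then a direct check gives $Y^+\cap W^-=\emptyset$ and $Y^-\cap W^+=\emptyset$, so $Y$ and $W$ \emph{conform} on $\underline Y\cap\underline W$, not $Y$ and $-W$. Second, and consequently, working in the lifting $\Mcaltilde$ attached to $\sigma$ does not help: there $(Y,+)$ and $(W,+)$ conform everywhere (including at $g$), so weak elimination gives nothing. The paper instead works in $\Mcaltilde'$ (the lifting for $\sigma'$), where $(-W,+)$ is a circuit; after reorienting so that $S=E$ and $W^-=\emptyset$, one has $Y^+\subset W^+$, and eliminating $(Y,+)$ against $(-W,+)$ at some $s\in Y^+$ produces $(Z,\epsilon)$ with $Z^+\subsetneq Y^+$, $\epsilon\in\{+,0\}$, and $\sigma(Z)=\sigma'(Z)=+$ (the case $\epsilon=0$ requires one more step via Proposition~\ref{prop:circuits_coloc}). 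This is not a single elimination but an iterative descent on $|Y^+|$; it terminates only when $Y^+=\emptyset$, at which point $S=E$ orients the resulting circuit negatively, contradicting $S\in\WS(\sigma)$. The Type~III alternative you mention does not apply directly either, since $\underline Y\cup\underline W$ need not have corank~$2$. So the plan is right, but the heart of the lemma needs the correct lifting $\Mcaltilde'$ and an inductive argument rather than a one-shot elimination.
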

\begin{proof}
	Let $\WS$ be a maximal \emph{by size} $\Mcal$-separated collection inside $2^E$ and let $\dom$ be a mutation-closed domain. We need to show that $\WS_0:=\WS\cap \dom$ is a maximal \emph{by size} $\Mcal$-separated collection inside $\dom$. Suppose that this is not the case and thus there exists an $\Mcal$-separated collection $\WS_1$ inside $\dom$ satisfying $|\WS_1|>|\WS_0|$. Since $\Mcal$ is pure, $\WS_1$ is contained in a maximal \emph{by size} $\Mcal$-separated collection $\WS_2$ inside $2^E$. Let $\sigma$ and $\sigma_2$ be the colocalizations in general positions that correspond (via Theorem~\ref{thm:max_size_matroid}) to $\WS$ and $\WS_2$ respectively. We know that $\WS\neq\WS_2$ and thus $\sigma\neq\sigma_2$. Using the flip-connectedness of $\Mcal$, we get that $\sigma$ can be connected to $\sigma_2$ by a sequence of flips. It remains to show that if two colocalizations $\sigma$ and $\sigma'$ differ by a flip then $|\WS(\sigma)\cap\dom|=|\WS({\sigma'})\cap\dom|$ for any mutation-closed domain $\dom$. The result will follow almost immediately from the following lemma.
	\begin{lemma}\label{lemma:algorithm}
		Suppose that $\sigma$ and $\sigma'$ differ by a flip and let $W\in\Ccal(\Mcal)$ be such that $\sigma(W)=+$ and $\sigma'(W)=-$. Then the collections $\WS(\sigma)$ and $\WS(\sigma')$ are related as follows:
		\begin{equation}\label{eq:mutations}
			\begin{split}
		\WS(\sigma)\cap\WS(\sigma')&=\{S\in\WS(\sigma)\mid S\text{ does not orient $W$}\};\\
		\WS(\sigma')-\WS(\sigma)&=\{\mut{W}{S}\mid S\in\WS(\sigma)\text{ orients $W$ positively}\}.
		\end{split}
		\end{equation}
	\end{lemma}
	\begin{proof}[Proof of the lemma]
	Let us define a collection $\WS'$ by~\eqref{eq:mutations}, i.e.,
		\begin{equation*}
		\begin{split}
		\WS':=&\{S\in\WS(\sigma)\mid S\text{ does not orient $W$}\}\bigsqcup \\
	    &\{\mut{W}{S}\mid S\in\WS(\sigma)\text{ orients $W$ positively}\}.
		\end{split}
		\end{equation*}
	We claim that $\WS'=\WS(\sigma')$, and it suffices to show that any element of $\WS'$ orients every circuit of $\Mcal$ in accordance with $\sigma'$, because the size of $\WS'$ is already equal to the size of $\WS(\sigma)$ (by Theorem~\ref{thm:max_size_matroid}). So suppose that there exists a set $T'\in\WS'$ and a circuit $C\in\Ccal(\Mcal)$ such that $\sigma'(C)=\sigma(C)=+$ but $T'$ orients $C$ negatively (we are using here that clearly $C\neq \pm W$). It must be the case that $T'=\mut{W}{T}$ for some $T\in\WS(\sigma)$. In particular, we may assume that $W^+\subset T$ and $W^-\cap T=\emptyset$. After reorienting all the elements of $E-T$ in $\Mcal$, we may assume that $T=E$ and $W\in \{0,+\}^E$. 
	
	Let $\Mcaltilde$ and $\Mcaltilde'$ be the one-element liftings of $\Mcal$ corresponding to $\sigma$ and $\sigma'$ respectively. Thus $(W,-)$ and $(C,+)$ are circuits of $\Mcaltilde'$. As usual, we will denote the ground sets of $\Mcaltilde$ and $\Mcaltilde'$ by $Eg$.
	
	We denote $R:=W^0$ and $S:=E-R$. Thus $W(R)=0^R$ and $W(S)=+^S$. Here $W(R)$ is the restriction of $W$ to $R$, $+^S$ is the signed vector $(S,\emptyset)$, etc. Since $C$ is oriented negatively by $T'=T-S$, we have 
	\[C(S)\in\{+,0\}^S;\quad C(R)\in\{-,0\}^R.\]
	We know that $\sigma'(C)=\sigma(C)=+$, so in particular $C\neq \pm W$. We are going to describe a certain algorithm. As an input, it takes a circuit $X\in\Ccal(\Mcal)$ such that
	\begin{itemize}
		\item $X^+\neq \emptyset$,
		\item $X\neq \pm W$,
		\item $\sigma(X)=\sigma'(X)=+$, and
		\item $X(R)\in\{-,0\}^R$.
	\end{itemize}
	As an output, it produces a circuit $Y\in\Ccal(\Mcal)$ such that
	\begin{itemize}
		\item $Y\neq \pm W$,
		\item $\sigma(Y)=\sigma'(Y)=+$,
		\item $Y(R)\in\{-,0\}^R$, and
		\item $Y^+\subsetneq X^+$.
	\end{itemize}
	Note that one can iteratively apply this algorithm starting with $X=C$ until eventually we have $Y^+=\emptyset$. This leads to a contradiction since $Y$ is then oriented negatively by $T=E\in \WS$ even though $\sigma(Y)=+$.
	
	Let us describe the steps of the algorithm.
	\begin{enumerate}
		\item Choose an element $s\in E$ such that $X_s=+$. Since $X(R)\in\{-,0\}^R$, we have $s\in S$ and thus $W_s=+$.
		\item Since $\sigma'(W)=-$, we have that both $(X,+)$ and $(-W,+)$ are circuits of $\Mcaltilde'$. In particular, $X_s=+$ and $-W_s=-$, so apply Axiom~\hyperref[item:C3]{(C3)} to produce a circuit $(Z,\epsilon)\in\Ccal(\Mcaltilde')$ for some $\epsilon\in\{+,0\}$ such that 
		\[Z_s=0;\quad Z^+\subset X^+;\quad Z(R)\in\{-,0\}^R.\]
		\item If $\epsilon=+$ then output $Y:=Z$. Because $W_s=+$ and $Z_s=0$, we have $Z\neq \pm W$, thus in particular $\sigma(Z)=\sigma'(Z)=+$. 
		\item If $\epsilon=0$ then by Proposition~\ref{prop:circuits_coloc}, there exists a pair of circuits $Y,Y'$ of $\Mcal$ such that $\sigma'(Y)=+$, $\sigma'(Y')=-$, and $Y,Y'\leq Y\circ Y'=Z$. The latter implies that $Y_s=0$ so $Y^+\subsetneq X^+$. Since $Y_s= 0$, we get $Y\neq \pm W$ and so $\sigma(Y)=\sigma'(Y)=+$. Finally, since $Y\leq Z$, we get $Y(R)\in\{-,0\}^R$.
	\end{enumerate}
	We have constructed the desired algorithm which, as we explained earlier, contradicts the existence of $C$. This finishes the proof of the lemma.
	\end{proof}
	
	Using Lemma~\ref{lemma:algorithm}, it is now easy to deduce Proposition~\ref{prop:flip_conn}. Recall that the only thing left to show was that if $\sigma$ and $\sigma'$ differ by a flip then $|\WS(\sigma)\cap\dom|=|\WS({\sigma'})\cap\dom|$ for any mutation-closed domain $\dom$. Indeed, the collections $\WS(\sigma)$ and $\WS(\sigma')$ are related by~\eqref{eq:mutations} which gives an obvious bijection 
	\[T\mapsto \begin{cases}
		      \mut{W}{T},&\text{if $T$ orients $W$ positively};\\
		      T,&\text{otherwise}
	           \end{cases}
	\] 
	between the sets $\WS(\sigma)\cap\dom$ and $\WS({\sigma'})\cap\dom$. We are done with the proof of Proposition~\ref{prop:flip_conn}.
\end{proof}

\def\altsixfour{(C^{6,2})^*}
\def\altn{(C^{n,2})^*}
\subsection{The structure of the alternating matroid of corank $2$}\label{sect:altn}
In this section, we describe explicitly the circuits, colocalizations, and the mutation graph of the alternating matroid $C^{n,n-2}$ of corank $2$. It is more convenient to describe another oriented matroid $\altn$.
It is easy to see that $\altn$ is isomorphic to the alternating matroid $C^{n,n-2}$. We leave the following lemma as an exercise for the reader.

\begin{lemma}
The oriented matroid $(C^{n,d})^\ast$ is isomorphic to $C^{n,n-d}$. They are obtained from each other by reorienting the set $\{2,4,\dots\}\subset [n]$.
\end{lemma}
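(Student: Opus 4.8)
The plan is to compute with \emph{chirotopes}. Recall that a chirotope of an oriented matroid is well defined only up to a global sign, and that two chirotopes on a common ground set $[n]$ determine the same oriented matroid if and only if they differ by such a global sign. Recall also from Section~\ref{sect:OM} that the rank-$d$ alternating matroid $C^{n,d}$ has the constant chirotope $\chi(i_1,\dots,i_d)=+$ for all $i_1<\dots<i_d$ in $[n]$, and likewise $C^{n,n-d}$ has constant chirotope $+$ on increasing $(n-d)$-tuples. So it suffices to compute the chirotope $\chi^*$ of $(C^{n,d})^*$ and compare it, up to a global sign, with the chirotope of $\reorient{A}{C^{n,n-d}}$, where $A=\{2,4,6,\dots\}\cap[n]$.

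First I would invoke the standard formula relating the chirotope of an oriented matroid to that of its dual (see, e.g.,~\cite[Chapter~3]{Book}): up to a global sign,
\[\chi^*(i_1,\dots,i_{n-d})=\operatorname{sign}(w)\cdot\chi(j_1,\dots,j_d),\]
where $i_1<\dots<i_{n-d}$, $\{j_1<\dots<j_d\}=[n]\setminus\{i_1,\dots,i_{n-d}\}$, and $w$ is the permutation of $[n]$ sending $(1,\dots,n)$ to $(i_1,\dots,i_{n-d},j_1,\dots,j_d)$. Since $\chi\equiv+$, this reads $\chi^*(i_1,\dots,i_{n-d})=\operatorname{sign}(w)$ up to a global sign. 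The next step is a routine count: the inversions of $w$ are exactly the pairs $(a,b)$ with $i_a>j_b$, and for each fixed $a$ there are $(i_a-1)-(a-1)=i_a-a$ indices $b$ with $j_b<i_a$; hence
\[\operatorname{sign}(w)=(-1)^{\sum_{a=1}^{n-d}(i_a-a)}=(-1)^{\sum_a i_a-\binom{n-d+1}{2}}.\]
On the other side, reorienting the (constant-$+$) chirotope of $C^{n,n-d}$ on a set $A$ yields the chirotope $(i_1,\dots,i_{n-d})\mapsto(-1)^{|\{i_1,\dots,i_{n-d}\}\cap A|}$. Taking $A$ to be the set of even elements and writing $k:=|\{a:i_a\text{ even}\}|$, this value is $(-1)^k$, while $(-1)^{\sum_a i_a}=(-1)^{(n-d)-k}=(-1)^{n-d}(-1)^k$. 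Combining with the previous display, $\chi^*(i_1,\dots,i_{n-d})$ and the chirotope of $\reorient{A}{C^{n,n-d}}$ differ by the factor $(-1)^{n-d+\binom{n-d+1}{2}}$, which does not depend on the tuple $(i_1,\dots,i_{n-d})$ — that is, by a global sign. Hence $(C^{n,d})^*=\reorient{A}{C^{n,n-d}}$ as oriented matroids, which is the claim.

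The only thing requiring care — and the one place it is easy to slip — is the sign bookkeeping: one must observe that whichever normalization of the dual chirotope one adopts they all agree up to a fixed global sign, and that a constant discrepancy between two chirotopes is invisible at the level of oriented matroids; modulo this, each step is a short computation. (Alternatively, one may realize $C^{n,d}$ by the Vandermonde vectors $(1,t_i,\dots,t_i^{d-1})$ with $t_1<\dots<t_n$ and use the partial-fraction identity $\sum_i t_i^{\,m}\big/\prod_{j\ne i}(t_i-t_j)=0$ for $0\le m\le n-2$ to exhibit $\big(\prod_{j\ne i}(t_i-t_j)\big)^{-1}(1,t_i,\dots,t_i^{\,n-d-1})$ as a Gale-dual configuration, then read off the reorientation from $\operatorname{sign}\prod_{j\ne i}(t_i-t_j)=(-1)^{n-i}$; this works but needs the same parity bookkeeping together with the interpolation identity, so the chirotope route seems shorter.)
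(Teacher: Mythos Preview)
Your proof is correct. The paper does not prove this lemma at all---it is left as an exercise for the reader---so there is nothing to compare your argument against; your chirotope computation (including the inversion count and the parity bookkeeping) is a clean and complete solution.
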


Recall that $[n]$ denotes the set $\{1,2,\dots,n\}$. For two integers $i$ and $j$, we define $[i,j]\subset \Z$ to be the set of all $k\in\Z$ satisfying $i\leq k\leq j$. In particular, $[i,j]=\emptyset$ if $i>j$.

For $1\leq k\leq n$, let $C_k$ be the signed set given by
\begin{equation}\label{eq:circuits_altn}
C_k:=([1,k-1],[k+1,n]).
\end{equation}

\begin{lemma}\label{lemma:altn_description}
The circuits of $\altn$ are $C_1$, $C_2$, $\dots$, $C_n$, $-C_1$, $-C_2$, $\dots$, $-C_n$. The only pairs $(I,J)$ of non $\altn$-separated sets are $([1,l],[m+1,n])$ for $l,m=0,\dots,n$ with $|l-m|\leq 1$.
\end{lemma}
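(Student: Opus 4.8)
The plan is to identify $\altn$ concretely as the oriented matroid of linear dependencies of an explicit configuration, and then read off circuits and non-separated pairs by hand. First I would recall that $C^{n,n-2}$ is the alternating matroid of rank $n-2$, so by the preceding lemma $\altn \cong C^{n,n-2}$, obtained by reorienting $\{2,4,\dots\}$; thus $\altn$ has corank $2$ and hence exactly $n$ pairs $\{C_k,-C_k\}$ of circuits, one for each $(n-1)$-element subset of $[n]$ (the complement of each singleton $\{k\}$, since a circuit of a corank-$2$ matroid has support of size $\operatorname{rank}+1 = n-1$). So the circuits are supported on $[n]\setminus\{k\}$ for $k=1,\dots,n$. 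The remaining task is to pin down the signs: I claim the circuit supported on $[n]\setminus\{k\}$ is $\pm C_k = \pm([1,k-1],[k+1,n])$. The cleanest way is to exhibit a cocircuit-style vector or a direct linear dependence. Concretely, $\altn$ is realized by the rows of a $2\times n$ matrix whose columns, after the reorientation of even-indexed elements, come from the alternating/cyclic pattern; the kernel of the $(n-2)\times n$ matrix for $C^{n,n-2}$ is spanned by rows that alternate in sign, and undoing the reorientation of $\{2,4,\dots\}$ converts the alternating sign pattern into the "all $+$ then all $-$" pattern of $C_k$. I would verify the sign convention on one small case (say $n=3$ or $n=4$) to fix the global sign and confirm it is $C_k$ rather than some other split point.

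Next I would determine the non-$\altn$-separated pairs $(I,J)$. By definition $(I,J)$ is non-separated iff some circuit $X$ has $X^+\subset I\setminus J$ and $X^-\subset J\setminus I$. Using the list of circuits, $(I,J)$ is non-separated iff for some $k$ either $[1,k-1]\subset I\setminus J$ and $[k+1,n]\subset J\setminus I$, or the same with $I,J$ swapped. The containment $[1,k-1]\subset I\setminus J$ and $[k+1,n]\subset J\setminus I$ forces $I\setminus J \supseteq [1,k-1]$, $J\setminus I\supseteq[k+1,n]$, and since $I\setminus J$ and $J\setminus I$ are disjoint and element $k$ is unconstrained, the "extremal" such pair (the one from which all others are obtained by enlarging, but note separation is about containment of the circuit, so actually it characterizes $I,J$ exactly up to the freedom in placing $k$ and in the common intersection $I\cap J$ — here one must be careful). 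The point is that $I\setminus J$ must be a prefix $[1,l]$ and $J\setminus I$ a suffix $[m+1,n]$ with the two disjoint, i.e. $l\le m$ or $l=m+1$ is impossible unless... — let me restate: writing $l = |I\setminus J \cap$ the relevant prefix$|$ I would argue $I\setminus J$ must contain $[1,k-1]$ and be disjoint from $[k+1,n]$, hence $I\setminus J\subseteq[1,k]$, so $I\setminus J = [1,l]$ for some $l\in\{k-1,k\}$, and symmetrically $J\setminus I=[m+1,n]$ for $m\in\{k-1,k\}$; disjointness of $I\setminus J$ and $J\setminus I$ then gives $l\le m$, and combined with $l,m\in\{k-1,k\}$ this yields $|l-m|\le 1$ with $m\ge l$. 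Conversely any such pair $([1,l],[m+1,n])$ with $|l-m|\le 1$, $l\le m$ is non-separated via $C_l$ or $C_{l+1}$. Accounting for the "or vice versa" gives the symmetric statement, and relabeling $l\leftrightarrow$ either endpoint produces exactly the stated family $([1,l],[m+1,n])$, $l,m=0,\dots,n$, $|l-m|\le 1$ (the cases $l=m$, $l=m+1$, $l=m-1$).

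**Main obstacle.** The genuinely fiddly part is getting the sign conventions exactly right: confirming that the circuit on support $[n]\setminus\{k\}$ is precisely $([1,k-1],[k+1,n])$ and not, say, $([1,k-1],[k+1,n])$ with some other alternation, and correctly tracking how the reorientation of $\{2,4,6,\dots\}$ interacts with the alternating sign pattern of the kernel of the cyclic matrix. I expect this to require one careful explicit small computation ($n=3$ and $n=4$) plus an induction or direct Vandermonde-kernel argument; once the sign pattern is nailed down, the deduction of the non-separated pairs is an elementary combinatorial case analysis on where the prefix $I\setminus J$ and suffix $J\setminus I$ can sit relative to the gap $\{k\}$.
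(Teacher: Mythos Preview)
Your overall plan is sound, but both halves can be tightened, and the second half has a small but genuine tangle.

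For the circuits: you detour through the isomorphism $(C^{n,2})^*\cong C^{n,n-2}$ via reorientation of the even positions, then chase sign patterns through matrix kernels and small-case checks. The paper is much more direct: by definition $\altn=(C^{n,2})^*$, so its circuits are literally the cocircuits of $C^{n,2}$. For a rank-$2$ cyclic configuration the cocircuit with zero set $\{k\}$ simply records which side of the line $\R\v_k$ each other $\v_i$ lies on, and by convexity of the configuration this is exactly $([1,k-1],[k+1,n])$. No reorientation bookkeeping, no kernel computation, no small cases needed. The ``main obstacle'' you identify evaporates once you use duality directly instead of passing through $C^{n,n-2}$.

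For the non-separated pairs: your analysis of $I\setminus J$ and $J\setminus I$ is fine, but you then silently identify $I$ with $I\setminus J$ and $J$ with $J\setminus I$, and this is where the argument tangles (your own ``here one must be careful'' flags it). The clean fix is to note that the conditions $[1,k-1]\subset I\setminus J$ and $[k+1,n]\subset J\setminus I$ already pin down $I$ and $J$ themselves: the first gives $[1,k-1]\subset I$ and $[1,k-1]\cap J=\emptyset$, while the second gives $[k+1,n]\subset J$ and $[k+1,n]\cap I=\emptyset$; hence $I\in\{[1,k-1],[1,k]\}$ and $J\in\{[k,n],[k+1,n]\}$, the only freedom being whether $k$ lies in $I$, in $J$, in both, or in neither. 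This yields all four parameter pairs $(l,m)\in\{(k-1,k-1),(k-1,k),(k,k-1),(k,k)\}$, so $|l-m|\le 1$ directly, including the case $l=m+1$ (namely $I=[1,k]$, $J=[k,n]$, where $k\in I\cap J$). Your disjointness-of-differences step led you to impose $l\le m$ and then invoke an ad hoc ``relabeling'' to recover the missing case; but swapping $I\leftrightarrow J$ turns a (prefix, suffix) pair into a (suffix, prefix) pair rather than a (prefix, suffix) pair with shifted parameters, so that relabeling does not actually do what you need. Arguing about $I$ and $J$ directly, as above, avoids the whole issue.
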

\begin{proof}
  The circuits of $\altn$ are the cocircuits of $C^{n,2}$ which are clearly given by~\eqref{eq:circuits_altn}
 (see Figure~\ref{fig:cyclic}). 
 
 Thus two sets $I,J\subset [n]$ are \emph{not} $\altn$-separated if and only if there exists $k\in [n]$ such that they orient $C_k$ in the opposite ways. If, say, $I$ orients $C_k$ positively and $J$ orients $C_k$ negatively then we have
\[I-J\preceq \{k\}\preceq J-I,\quad\textrm{and}\quad (I-J)\cup \{k\}\cup (J-I)=[n].\]
 Here $S\preceq T$ means that every element of $S$ is less than or equal to every element of $T$. Thus a subset $I$ of $[n]$ is $\altn$-separated from all other subsets of $[n]$ unless it has the form $[k]$ or $[k+1,n]$ for some $k\in [n]$. The empty set induces a positive orientation on $C_1$ and a negative orientation on $C_n$, the set $[n]$ induces a negative orientation of $C_1$ and a positive orientation of $C_n$. For $k\in [n-1]$, the set $[k]$ (resp., $[k+1,n]$) induces a positive (resp., negative) orientation on $C_k$ and $C_{k+1}$. Thus all pairs $(I,J)$ of non-$\altn$-separated subsets of $[n]$ are exactly the pairs listed in the statement.
\end{proof}

See Figure~\ref{fig:mut_graph_5} for an example for $n=5$.


\begin{figure}
\scalebox{0.7}{
	\begin{tikzpicture}[scale=4]
		\node[draw,ellipse] (10) at (0,1) {$[1,0]$};
		\node[draw,ellipse] (11) at (1,1) {$[1,1]$};
		\node[draw,ellipse] (12) at (2,1) {$[1,2]$};
		\node[draw,ellipse] (13) at (3,1) {$[1,3]$};
		\node[draw,ellipse] (14) at (4,1) {$[1,4]$};
		\node[draw,ellipse] (15) at (0,0) {$[1,5]$};
		\node[draw,ellipse] (25) at (1,0) {$[2,5]$};
		\node[draw,ellipse] (35) at (2,0) {$[3,5]$};
		\node[draw,ellipse] (45) at (3,0) {$[4,5]$};
		\node[draw,ellipse] (55) at (4,0) {$[5,5]$};

		\draw[color=black] (14) to [in=15, out=165] node[pos=0.2,above] {$C_5$} node[pos=0.8,above] {$-C_5$} (10);
		\draw[color=black] (55) to [in=-15, out=-165] node[pos=0.2,below] {$-C_5$} node[pos=0.8,below] {$C_5$} (15);
		
		\draw[color=black] (10) to node[pos=0.2,above] {$C_1$} node[pos=0.9,above,inner sep=8pt] {$-C_1$} (25);
		\draw[color=black] (11) to node[pos=0.2,above] {$C_2$} node[pos=0.9,above,inner sep=8pt] {$-C_2$} (35);
		\draw[color=black] (12) to node[pos=0.2,above] {$C_3$} node[pos=0.9,above,inner sep=8pt] {$-C_3$} (45);
		\draw[color=black] (13) to node[pos=0.2,above] {$C_4$} node[pos=0.9,above,inner sep=8pt] {$-C_4$} (55);
		
		\draw[color=black] (11) to node[pos=0.2,above,inner sep=6pt] {$C_1$} node[pos=0.9,above,inner sep=10pt] {$-C_1$} (15);
		\draw[color=black] (12) to node[pos=0.2,above,inner sep=6pt] {$C_2$} node[pos=0.9,above,inner sep=10pt] {$-C_2$} (25);
		\draw[color=black] (13) to node[pos=0.2,above,inner sep=6pt] {$C_3$} node[pos=0.9,above,inner sep=10pt] {$-C_3$} (35);
		\draw[color=black] (14) to node[pos=0.2,above,inner sep=6pt] {$C_4$} node[pos=0.9,above,inner sep=10pt] {$-C_4$} (45);
		
%
%
%
		\draw[color=red] (10) to (15);
		\draw[color=red] (11) to (25);
		\draw[color=red] (12) to (35);
		\draw[color=red] (13) to (45);
		\draw[color=red] (14) to (55);
	\end{tikzpicture}}
\caption{\label{fig:mut_graph_5} Two subsets $S$ and $T$ are connected by an edge in this graph if and only if they are not $\altn$-separated for $n=5$. They are connected by a black edge labeled by $\pm C_k$ if and only if $T=\mut{C_k}{S}$. The red edges do not belong to $\mutgraph(\altn)$.}
\end{figure}
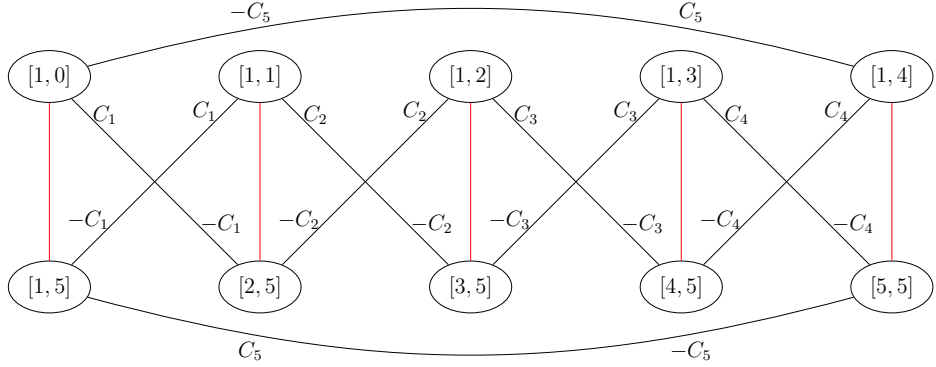

\def\q{\epsilon}
\def\qq{\epsilon}
\begin{definition}
	Let $\WS\subset 2^{[n]}$ be an $\altn$-separated collection. Define $\qq(\WS)=(\q_1,\q_2,\dots,\q_n)\in \{+,-,0\}^{[n]}$ by 
	\[\q_k= \begin{cases}
	        	+,&\text{if $[1,k-1]\in\WS$};\\
	        	-,&\text{if $[k,n]\in\WS$};\\
	        	0,&\text{otherwise}.
	        \end{cases}\]
\end{definition}
Since $\WS$ is $\altn$-separated, we have $\q_k=0$ if and only if $\WS$ contains neither $[1,k-1]$ nor $[k,n]$. 

\begin{lemma}\label{lemma:qq}
	Take any $\altn$-separated collection $\WS$ and consider the signed vector $\qq(\WS)=(\q_1,\q_2,\dots,\q_n)$. For any $k=1,2,\dots,n-1$, if $\q_k=-\q_{k+1}$ then $\q_k=\q_{k+1}=0$. The same holds for $\q_1$ and $-\q_n$, that is, if $\q_1=\q_n$ then $\q_1=\q_n=0$.
\end{lemma}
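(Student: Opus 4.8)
The plan is a short proof by contradiction built directly on the classification of non-$\altn$-separated pairs in Lemma~\ref{lemma:altn_description}: the only such pairs are $([1,l],[m+1,n])$ with $l,m\in\{0,\dots,n\}$ and $|l-m|\le 1$. Recall that $\q_j$ records whether $[1,j-1]$ or $[j,n]$ — and these are the only sets that can fail to be $\altn$-separated from some other set — lies in $\WS$, and that $\WS$ being $\altn$-separated already prevents it from containing both, so $\qq(\WS)$ is well defined. We may assume $n\ge 2$, since $\altn$ has corank $2$.

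For the consecutive case, fix $k\in[n-1]$ and suppose $\q_k=-\q_{k+1}$ while not both are $0$; then $\{\q_k,\q_{k+1}\}=\{+,-\}$. If $\q_k=+$ and $\q_{k+1}=-$, then $[1,k-1]\in\WS$ and $[k+1,n]\in\WS$, and taking $l=k-1$, $m=k$ in Lemma~\ref{lemma:altn_description} (so $|l-m|=1$) shows these two sets are not $\altn$-separated, a contradiction. If $\q_k=-$ and $\q_{k+1}=+$, then $[k,n]\in\WS$ and $[1,k]\in\WS$, and now $l=k$, $m=k-1$ gives the same contradiction. Hence $\q_k=-\q_{k+1}$ forces $\q_k=\q_{k+1}=0$.

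For the boundary statement, suppose $\q_1=\q_n\ne 0$. If $\q_1=\q_n=+$, then $\emptyset=[1,0]\in\WS$ and $[1,n-1]\in\WS$; writing $\emptyset=[n+1,n]$, i.e. taking $m=n$ and $l=n-1$ in Lemma~\ref{lemma:altn_description}, these two (distinct, as $n\ge 2$) sets are not $\altn$-separated, a contradiction. If $\q_1=\q_n=-$, then $[n]=[1,n]\in\WS$ and $\{n\}=[n,n]\in\WS$; taking $l=n$, $m=n-1$ gives the same contradiction. So $\q_1=\q_n$ forces $\q_1=\q_n=0$.

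Everything here is routine index bookkeeping, so I do not anticipate a genuine obstacle. The one point worth a moment's attention is that $\emptyset$ and $[n]$ each admit two normal forms ($\emptyset=[1,0]=[n+1,n]$, and $[n]=[1,n]=[m+1,n]$ with $m=0$), which is exactly what lets the ``wrap-around'' between positions $n$ and $1$ fall under the same forbidden-pair criterion; and, because those two normal forms carry opposite signs there, this is also why the boundary hypothesis reads $\q_1=\q_n$ rather than $\q_1=-\q_n$.
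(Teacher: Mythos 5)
Your proof is correct and takes essentially the same route the paper does: the paper's entire proof of this lemma is the one-liner ``This also follows by inspection from Lemma~\ref{lemma:altn_description},'' and your write-up simply carries out that inspection, matching each nonzero sign pattern to a forbidden pair $([1,l],[m+1,n])$ with $|l-m|\le 1$ (including the wrap-around cases via the two normal forms of $\emptyset$ and $[n]$) and noting the needed distinctness for $n\ge 2$.
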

\begin{proof}
	This also follows by inspection from Lemma~\ref{lemma:altn_description}. 
\end{proof}
Lemma~\ref{lemma:qq} implies that $\qq(\WS)$ has at least one zero. We say that $\qq(\WS)$ is \emph{maximal} if it has exactly one zero.

\begin{theorem}\label{thm:altn}
	For any maximal \emph{by size} $\altn$-separated collection $\WS$, $\sigma_\WS$ is a colocalization of $\altn$ in general position. 
\end{theorem}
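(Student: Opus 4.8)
The plan is to reduce the statement to an explicit computation with the signed vector $\qq(\WS)$ and then invoke Theorem~\ref{thm:LasVergnas}. The key intermediate step is to show that a maximal \emph{by size} $\altn$-separated collection $\WS$ has $\qq(\WS)$ \emph{maximal}, i.e.\ with exactly one zero entry. By Lemma~\ref{lemma:altn_description}, the non-$\altn$-separated pairs of subsets of $[n]$ only involve the ``special'' subsets $[1,k]$ and $[k,n]$, so every non-special subset is $\altn$-separated from all subsets of $[n]$ and hence must lie in $\WS$; this accounts for $2^n-2n$ of the sets in $\WS$. The remaining $2n$ special subsets split into the $n$ pairs $\{[1,k-1],[k,n]\}$ ($k=1,\dots,n$), whose two members are not $\altn$-separated, and by the definition of $\qq(\WS)=(\epsilon_1,\dots,\epsilon_n)$ the number of special subsets in $\WS$ equals $|\{k:\epsilon_k\neq 0\}|$; since at least one $\epsilon_k$ is zero by Lemma~\ref{lemma:qq}, this is at most $n-1$. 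Thus $|\WS|\le (2^n-2n)+(n-1)$, with equality precisely when $\WS$ contains all non-special subsets and $\qq(\WS)$ is maximal; maximality by size forces equality.

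Next I would read $\sigma_\WS$ off $\qq(\WS)$. From $C_k=([1,k-1],[k+1,n])$ one checks that the only subsets of $[n]$ orienting $C_k$ are $[1,k-1]$, $[1,k]$, $[k,n]$, $[k+1,n]$; translating their membership in $\WS$ into the entries of $\qq(\WS)$, under the convention $\epsilon_{n+1}:=-\epsilon_1$ that encodes the identities $[1,n]=[n]$ and $\emptyset=[n+1,n]$, and using Lemma~\ref{lemma:qq} to exclude $\epsilon_k=-\epsilon_{k+1}$, one gets $\sigma_\WS(C_k)=\epsilon_k$ when $\epsilon_k\neq0$ and $\sigma_\WS(C_k)=\epsilon_{k+1}$ when $\epsilon_k=0$. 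Now suppose $\qq(\WS)$ is maximal with unique zero at position $k_0$. By Lemma~\ref{lemma:qq}, $\epsilon_1,\dots,\epsilon_{k_0-1}$ are equal, $\epsilon_{k_0+1},\dots,\epsilon_n$ are equal, and these two values are opposite whenever both blocks are nonempty. A short case check on whether $k_0$ is $1$, $n$, or strictly in between then shows $(\sigma_\WS(C_1),\dots,\sigma_\WS(C_n))$ has no zeros and is monotone, of the form $(+^a,-^b)$ or $(-^a,+^b)$ with $a+b=n$. Since $\sigma_\WS(-C_k)=-\sigma_\WS(C_k)$, the values of $\sigma_\WS$ on the $2n$ circuits $C_1,\dots,C_n,-C_1,\dots,-C_n$, in their natural cyclic order, form a cyclic rotation of $(+^n,-^n)$, i.e.\ $\sigma_\WS$ is of Type~III on $\altn$. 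As $\altn\cong C^{n,n-2}$ is uniform of corank $2$, its entire ground set is the only corank-$2$ subset of it, so this single Type~III condition is the full hypothesis of Theorem~\ref{thm:LasVergnas}, which yields that $\sigma_\WS$ is a colocalization of $\altn$ in general position.

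I expect the only delicate part to be the bookkeeping at the ``seam'' of this cyclic picture: correctly handling the boundary sets $[1,n]=[n]$ and $\emptyset=[n+1,n]$ (equivalently, justifying the convention $\epsilon_{n+1}:=-\epsilon_1$), and identifying the natural cyclic order on the $2n$ circuits so that a monotone sequence $(\sigma_\WS(C_k))_k$ really does become a cyclic rotation of $(+^n,-^n)$. Everything else is a routine verification from Lemmas~\ref{lemma:altn_description} and~\ref{lemma:qq} together with the definitions of $\qq(\WS)$ and $\sigma_\WS$.
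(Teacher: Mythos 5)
Your proposal is correct and follows essentially the same approach as the paper: the paper's own proof is a two-sentence assertion that maximality by size of $\WS$ is equivalent to $\qq(\WS)$ being maximal, and that Type~III (hence, by Theorem~\ref{thm:LasVergnas}, being a colocalization in general position) is equivalent to $\qq(\WS)$ being maximal. You have simply supplied the details behind both ``one easily observes'' claims, and the details check out: the counting argument giving $|\WS|\le 2^n-n-1$ with equality iff $\WS$ contains all non-special subsets and $\qq(\WS)$ has exactly one zero, the read-off $\sigma_\WS(C_k)=\q_k$ (or $\q_{k+1}$ when $\q_k=0$, with the seam convention $\q_{n+1}:=-\q_1$), and the case check yielding a cyclic rotation of $(+^n,-^n)$. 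The one small point left implicit is that the upper bound $2^n-n-1$ is actually attained by some $\altn$-separated collection (so that maximality by size forces equality rather than just $\le$); this is immediate, e.g.\ by taking all non-special subsets together with $[1,0],\dots,[1,n-2]$, but it is needed to avoid a circular appeal to Theorem~\ref{thm:max_size_matroid}.
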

\begin{proof}
	One easily observes that if $\WS$ is maximal \emph{by size} then $\qq(\WS)$ is maximal, which, in turn, implies that $\sigma_\WS$ has Type III (cf. Theorem~\ref{thm:LasVergnas}).
\end{proof}

%
%

\subsection{Colocalizations and complete collections}

An $\Mcal$-separated collection $\WS$ is called \emph{complete} if the image of $\sigma_\WS$ lies in $\{+,-\}$ (that is, for every circuit, there is a set in $\WS$ that orients it). We collect the properties of complete collections in the following proposition:

\begin{proposition}\label{prop:complete_properties}
 Let $\Mcal$ be an oriented matroid on the ground set $E$ and suppose that $\WS$ is an $\Mcal$-separated collection. Then:
 \begin{enumerate}[\normalfont (1)]
  \item\label{item:complete_deletion} if $\WS$ is complete then for every $e\in E$, $\WS-e$ is a complete $(\Mcal-e)$-separated collection;\footnote{In contrast, $\WS/e$ may even be empty for a complete $\Mcal$-separated collection $\WS$.}
  \item\label{item:complete_colocalization} if $\WS$ is complete and $\rk(\Mcal)\leq 3$ then $\sigma_\WS$ is a colocalization in general position.
 \end{enumerate}
\end{proposition}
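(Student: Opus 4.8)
The plan is to prove the two parts separately, reducing part~\eqref{item:complete_colocalization} to the corank~$2$ situation via part~\eqref{item:complete_deletion} and the structure theory of Section~\ref{sect:altn}. For part~\eqref{item:complete_deletion}, the $(\Mcal-e)$-separatedness of $\WS-e$ is already given by Proposition~\ref{prop:recursion} (and is immediate when $e$ is a coloop, as then no circuit meets $e$), so only completeness needs checking: any circuit $C\in\Ccal(\Mcal-e)$ is, by~\eqref{eq:matroid_deletion}, a circuit of $\Mcal$ with $e\notin\Cu$, so if $S\in\WS$ orients $C$ (say $C^+\subset S$, $C^-\cap S=\emptyset$) then, since $e\notin\Cu$, the set $S-e\in\WS-e$ also orients $C$; hence $\sigma_{\WS-e}$ is nowhere zero.

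For part~\eqref{item:complete_colocalization}, completeness of $\WS$ gives $\sigma_\WS\colon\Ccal(\Mcal)\to\{+,-\}$, so by the first part of Theorem~\ref{thm:LasVergnas} it suffices to show that the restriction of $\sigma_\WS$ to $\Ccal(\Mcal\mid_A)$ is of Type~III for every corank~$2$ subset $A\subset E$. First I would delete the elements of $E-A$ one by one, using part~\eqref{item:complete_deletion} repeatedly, so that $\WS_A:=\WS-(E-A)$ is a complete $(\Mcal\mid_A)$-separated collection; since a circuit $C$ with $\Cu\subset A$ satisfies $C^+\subset S\iff C^+\subset S\cap A$ and likewise for $C^-$, the map $\sigma_{\WS_A}$ is precisely the restriction of $\sigma_\WS$ to $\Ccal(\Mcal\mid_A)$. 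This reduces the statement to the claim that a complete $\Mcal'$-separated collection $\WS'$ has $\sigma_{\WS'}$ of Type~III whenever $\Mcal'$ has corank~$2$ and rank at most~$3$.

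To settle the claim I would pass, after reorienting, relabeling, and removing parallel elements of the dual, to $\Mcal'=\altn$ with $n=\rk(\Mcal')+2\le 5$, and use Section~\ref{sect:altn}. By Lemma~\ref{lemma:altn_description}, completeness of $\WS'$ amounts to saying that no two cyclically consecutive coordinates of the signed vector $\qq(\WS')$ vanish simultaneously; together with Lemma~\ref{lemma:qq} (the nonzero coordinates of $\qq(\WS')$ form constant-sign arcs separated by its zeros, each arc now nonempty) this forces $\qq(\WS')$ to have at most $\lfloor n/2\rfloor\le 2$ zeros. A direct inspection of the finitely many resulting patterns for $n\le 5$ then shows that $\sigma_{\WS'}$ always has the form ``$m$ pluses followed by $m$ minuses'' along the cyclic order of the $2m$ circuits, i.e., Type~III, and Theorem~\ref{thm:LasVergnas} completes the argument.

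I expect this last inspection to be the only genuine obstacle, and it is exactly where $\rk(\Mcal)\le 3$ is essential and cannot be relaxed: for $m\ge 6$ the vector $\qq(\WS')$ can have three or more zeros, and one can then exhibit a complete $C^{6,4}$-separated collection whose $\sigma$ is not of Type~III, hence not a colocalization in general position. Everything else is routine bookkeeping with the deletion operation and with the characterization of one-element liftings.
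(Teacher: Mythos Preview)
Your proposal is correct and follows essentially the same route as the paper's proof: part~(1) is immediate from the definitions, and for part~(2) you reduce via part~(1) and Theorem~\ref{thm:LasVergnas} to the corank~$2$ restriction $\Mcal\mid_A$ with at most $5$ elements, then invoke the $\qq$-vector analysis of Section~\ref{sect:altn} to verify Type~III. The paper carries out this last inspection in slightly more detail, pinning down the possible shapes of $\qq(\WS(A))$ and observing that the first obstruction arises only at $n=6$ with $\qq=(+,0,-,0,+,0)$.
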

\begin{proof}

 The claim~(\ref{item:complete_deletion}) is obvious from the definitions of complete, $\WS-e$, and $\Mcal-e$. To show~(\ref{item:complete_colocalization}), note that any nullity $2$ subset $A\subset E$ contains at most $n\leq 5$ elements. To check that $\sigma_\WS$ is a colocalization, we need to consider all possible nullity $2$ subsets and restrict $\sigma_\WS$ to the circuits of the corresponding alternating matroids isomorphic to $\altn$ (where $n\leq 5$). 
 By Lemmas~\ref{lemma:altn_description} and~\ref{lemma:qq}, we can describe the restriction $\WS(A)$ of $\WS$ to $A$ by a sequence $\qq(\WS(A))=(\q_1,\dots,\q_n)\in\{+,-,0\}^n$. Let  $\qqq(\WS(A)):=(\q_1,\dots,\q_n,-\q_1,\dots,-\q_n)\in\{+,-,0\}^{2n}$. 
By Lemma~\ref{lemma:qq}, no two adjacent nonzero entries of $\qqq(\WS(A))$ have opposite signs. If $\qqq(\WS(A))$ contains two consecutive zeros then $\WS(A)$ is not complete, which by~(\ref{item:complete_deletion}) implies that $\WS$ is not complete. Let $\qb_1:=\q_1$, $\qb_{2n}:=\q_{2n}$, and for $1<k<2n$, let $\qb_k$ be defined as follows: if $\q_{k-1}=\q_{k+1}\neq0$ and $\q_k=0$ then we set $\qb_k:=\q_{k-1}$, otherwise we set $\qb_k:=\q_k$. Thus if $\qb_k=0$ for some $1<k<2n$ then $\q_{k-1},\q_{k+1}$ must be nonzero and have opposite signs. Since $n\leq 5$, it follows that $\qqb(\WS(A)):=(\qb_1,\dots,\qb_n)$ has the form $(+^{k-1},0,-^{n-k})$ or $(-^{k-1},0,+^{n-k})$ for some $k\in [n]$. Indeed, the smallest example when $\qqb(\WS(A))$ satisfies all the listed properties but does not have the desired form happens for $n=6$ where we can have $\qqb(\WS(A))=(+,0,-,0,+,0)$. Since $\qq(\WS(A))$ and $\qqb(\WS(A))$ give rise to the same orientation of the circuits of $\Mcal\mid_A$, we find that $\sigma_\WS$ is a colocalization in general position.
\end{proof}

We now formulate a basic fact on existence of colocalizations in general position:

\begin{lemma}[{\cite[Proposition~7.2.2]{Book}}]\label{lemma:coloc_exists}
	Let $\Mcal$ be an oriented matroid. 
	\begin{enumerate}[\normalfont (i)]
		\item For any colocalization $\sigma$ of $\Mcal$, there exists a colocalization $\sigma'$ of $\Mcal$ in general position satisfying $\sigma\leq \sigma'$.
		\item In particular, there exists at least one colocalization of $\Mcal$ in general position.\qed
	\end{enumerate}
\end{lemma}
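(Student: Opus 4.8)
The plan is to deduce this from the duality between one-element liftings of $\Mcal$ and one-element extensions of $\Mcal^\ast$, together with the theory of lexicographic extensions (\cite[\S7.1--7.2]{Book}). First I would pass to the dual picture: a colocalization $\sigma$ of $\Mcal$ is a map on $\Ccal(\Mcal)=\Ccal^\ast(\Mcal^\ast)$, and it records exactly the position of the new element $g$ of $\Ecal:=\Mcal^\ast\cup\{g\}$ relative to each cocircuit of $\Mcal^\ast$ (equivalently, relative to each hyperplane). In this language, ``$\sigma$ in general position'' says that $g$ lies on no hyperplane of $\Ecal$, and ``$\sigma\le\sigma'$'' says that $g'$ lies strictly on the same side as $g$ of every hyperplane that $g$ avoids (and is unconstrained relative to the hyperplanes through $g$). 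So the task becomes: given $\Ecal=\Mcal^\ast\cup\{g\}$, produce a general-position one-element extension $\Mcal^\ast\cup\{g'\}$ in which $g'$ is a \emph{generic perturbation} of $g$. Part (ii) is then the case $\sigma\equiv 0$ --- which is a colocalization because it restricts to Type~I on every corank~$2$ subset --- so it suffices to prove part (i).

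The construction I would use for $g'$ is a lexicographic extension. Working inside $\Ecal$, of rank $r=\rk(\Mcal^\ast)$, I would pick a basis $b_1=g,b_2,\dots,b_r$ of $\Ecal$ containing $g$ (this is possible unless $g$ is a loop of $\Ecal$, in which case $\sigma\equiv 0$ automatically and one instead extends $\Mcal^\ast$ by a lexicographic extension on a basis of $\Mcal^\ast$). Let $g':=[b_1^+,b_2^+,\dots,b_r^+]$ and $\Ecal':=\Ecal\cup\{g'\}$; since the list $b_1,\dots,b_r$ spans, $g'$ is in general position in $\Ecal'$. I would then delete $g$ and verify the two required properties of $\Mcal^\ast\cup\{g'\}=\Ecal'\setminus g$. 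First, that $g'$ stays generic: a short rank count shows that a hyperplane $H'$ of $\Ecal'\setminus g$ through $g'$ would have $\overline{H'}$ a hyperplane of $\Ecal'$ through $g'$ (its rank stays $r-1$ because $g\notin H'$), contradicting genericity of $g'$ in $\Ecal'$. Second, that the position data $\sigma'$ of $g'$ over $\Mcal^\ast$ satisfies $\sigma\le\sigma'$: on a cocircuit $(Y,\sigma(Y))$ of $\Ecal$ with $\sigma(Y)\ne 0$, the lexicographic rule reads the sign of $g'$ at the first nonzero coordinate of the list, namely at $b_1=g$, so $\sigma'(Y)=\sigma(Y)$; and on a cocircuit $Y$ with $\sigma(Y)=0$ we have just shown $g'$ is off the corresponding hyperplane, so $\sigma'(Y)\in\{+,-\}$. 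Hence $\sigma'$ is a colocalization in general position with $\sigma\le\sigma'$, which is (i); taking $\sigma\equiv 0$ gives (ii).

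The step I expect to be the main obstacle is the bookkeeping around the auxiliary deletion of $g$ --- making sure that deleting $g$ neither destroys the genericity of $g'$ nor spoils $\sigma\le\sigma'$ --- together with quoting the lexicographic-extension machinery of \cite{Book} in the correct dual form (liftings of $\Mcal$ versus extensions of $\Mcal^\ast$, circuits of $\Mcal$ versus cocircuits of $\Mcal^\ast$). As an alternative that avoids the dual picture, one could instead argue by induction using the Las Vergnas characterization in Theorem~\ref{thm:LasVergnas}: if $\sigma$ takes the value $0$ anywhere, raise it one circuit at a time while keeping the restriction of Type~I, II, or III on every corank~$2$ subset. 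The subtlety of that route is that flipping a single zero value can fail to preserve a Type~I restriction, so one is forced to flip $\sigma$ coherently along a whole cocircuit at once --- exactly the coherence that the lexicographic construction builds in for free, which is why I would prefer the first approach.
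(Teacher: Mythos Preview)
The paper does not supply its own proof of this lemma: it is stated with a terminal \texttt{\textbackslash qed} and attributed to \cite[Proposition~7.2.2]{Book}. Your approach via duality (liftings of $\Mcal$ $\leftrightarrow$ extensions of $\Mcal^\ast$) and lexicographic extensions is exactly the standard argument behind that cited result, and your sketch is correct --- including the two verification steps you flag as potential obstacles. Genericity of $g'$ survives the deletion of $g$ because a hyperplane of $\Ecal'\setminus g$ through $g'$ has rank $r-1$ in $\Ecal'$ as well, so its closure there is a hyperplane of $\Ecal'$ through $g'$; and $\sigma'(Y)=\sigma(Y)$ whenever $\sigma(Y)\ne 0$ because $b_1=g$ is then the first entry in the lexicographic list with nonzero sign on the cocircuit $(Y,\sigma(Y))$ of $\Ecal$. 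For $\sigma(Y)=0$ the basis property of $\{g,b_2,\dots,b_r\}$ forces some $b_i$ with $i\ge 2$ to be nonzero on $(Y,0)$, so $\sigma'(Y)\in\{+,-\}$. There is thus nothing in the paper to compare against; you have effectively reconstructed the proof from the reference.
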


We now give an alternative description of $\WS(\sigma)$ in terms of the one-element lifting defined by $\sigma$.
\begin{lemma}
Given a colocalization $\sigma:\Ccal\to\{+,-,0\}$ (not necessarily in general position) corresponding to a one-element lifting $\Mcaltilde$ of $\Mcal$, we have
\begin{equation}\label{eq:colocalization}
\WS(\sigma)=\{S\subset E\mid (S-L,(Eg-S)-L)\in\Tcal(\Mcaltilde)\},
\end{equation}
where $L\subset Eg$ denotes the set of loops of $\Mcaltilde$.
\end{lemma}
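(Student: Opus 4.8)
The plan is to prove the set equality~\eqref{eq:colocalization} by unwinding the definitions of $\WS(\sigma)$ and of the topes of the one-element lifting $\Mcaltilde$, being careful to track loops of $\Mcaltilde$ since $\sigma$ is not assumed to be in general position. Recall that $\WS(\sigma)$ consists of those $S\subset E$ such that for every circuit $Y\in\Ccal(\Mcal)$ oriented positively (resp.\ negatively) by $S$ we have $\sigma(Y)=+$ (resp.\ $\sigma(Y)=-$). Equivalently, $S\in\WS(\sigma)$ iff $S$ does not orient any $Y\in\Ccal(\Mcal)$ in the direction opposite to $\sigma(Y)$, where ``opposite'' is vacuous when $\sigma(Y)=0$. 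The target statement says this happens precisely when, writing $L$ for the loop set of $\Mcaltilde$, the sign vector $(S-L,(Eg-S)-L)$ is a tope of $\Mcaltilde$.

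The key translation is between ``$S$ does not orient circuits of $\Mcal$ badly'' and ``a certain sign vector on $Eg$ is orthogonal to all circuits of $\Mcaltilde$.'' First I would set $T_S:=(S,Eg-S)\in\{+,-\}^{Eg}$, the full-support sign vector on $Eg$ determined by viewing $S\subset Eg$; then $(S-L,(Eg-S)-L)$ is exactly the restriction-to-support obtained by zeroing out $L$, i.e.\ it is the unique minimal modification of $T_S$ that could lie in $\Lcal(\Mcaltilde)$. By definition (Section~\ref{sect:OM}), topes are the maximal covectors, and $\Lcal(\Mcaltilde)=\{Z\mid Z\perp U\ \forall U\in\Ccal(\Mcaltilde)\}$. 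Using Proposition~\ref{prop:circuits_coloc}, the circuits of $\Mcaltilde$ are the $(Y,\sigma(Y))$ for $Y\in\Ccal(\Mcal)$ together with the conformal compositions $Y^1\circ Y^2$ (which lie on $E$, with $g$ in their zero set). A sign vector $Z$ of the form $(S-L,(Eg-S)-L)$ has $Z_g\in\{+,-,0\}$ and $Z_f\ne 0$ for $f\in E\setminus L'$ where $L'=L\cap E$; I would check that orthogonality of $Z$ to all $(Y,\sigma(Y))$ already forces orthogonality to all $Y^1\circ Y^2$ (this is automatic since $Y^1\circ Y^2$ has support contained in $\underline{Y^1}\cup\underline{Y^2}$ and a conformal composition of two vectors each orthogonal to $Z$ need not be — so this needs a short argument, likely: if $Z$ zeroes out a loop-related coordinate, decompose accordingly), or alternatively use that $Z$ is a covector of $\Mcaltilde$ iff its restriction avoids all circuit supports minimally. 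The cleanest route: $Z=(S-L,(Eg-S)-L)$ is a tope iff $Z$ is a covector of full support on the non-loops iff $Z$ is orthogonal to every circuit of $\Mcaltilde$; and orthogonality of a near-full-support sign vector $Z$ to a circuit $U$ with $\underline U\cap L=\emptyset$ means $Z$ and $U$ are \emph{not} in ``opposite conformal position'' on $\underline U$, i.e.\ $U\not\le Z$ and $U\not\le -Z$ would be wrong — rather, $Z\perp U$ for a tope $Z$ and circuit $U$ precisely means $U$ restricted to $\underline U$ is not entirely $\le Z$ nor $\le -Z$... no: the right statement (standard OM fact, e.g.\ \cite[Prop.~3.7.1 etc.]{Book}) is that a tope $Z$ is orthogonal to circuit $U$ iff $U^+\cap Z^-\ne\emptyset$ or $U^-\cap Z^+\ne\emptyset$ or $\underline U\cap\underline Z^c\ne\emptyset$ suitably — so I will invoke the orthogonality characterization directly.

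Concretely, here is the chain I would write out. \textbf{Direction ($\subseteq$).} Let $S\in\WS(\sigma)$. I claim $Z:=(S-L,(Eg-S)-L)$ lies in $\Lcal(\Mcaltilde)$, i.e.\ $Z\perp U$ for every $U\in\Ccal(\Mcaltilde)$. If $U=(Y,\sigma(Y))$: suppose for contradiction $Z$ and $U$ are not orthogonal, meaning (since $Z$ has no sign-reversal against $U$ possible only through one coordinate) that $\underline U\subseteq \underline Z$ and on $\underline U$ we have $U\le Z$ or $U\le -Z$ — unpacking, $U\le Z$ says $Y^+\subseteq S$, $Y^-\subseteq Eg-S$, and if $\sigma(Y)=+$ then $g\in S$, if $\sigma(Y)=-$ then $g\notin S$, together with $\underline U\cap L=\emptyset$; restricting to $E$, this says $S$ orients $Y$ positively, so $\sigma(Y)=+$; but $\underline U\cap L=\emptyset$ forces $g\notin L$ and the $g$-coordinate of $U$ is $\sigma(Y)=+$, matching $Z_g=+$ iff $g\in S$ — consistent, no contradiction yet, so in fact $U\le Z$ is \emph{possible}, which would mean $Z$ is not orthogonal to $U$, contradicting $Z$ being a covector. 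This shows I have the orthogonality condition backwards and must use: $Z\perp U$ fails iff $U\le Z$ or $U\le -Z$ is FALSE, i.e.\ $Z\perp U$ holds iff $U\not\le Z$ and $U\not\le -Z$ — wait, orthogonality of a circuit and a cocircuit-like vector is the statement that they do NOT conform; the correct dictionary is: for a tope $Z$ and circuit $U$, $Z\perp U \iff$ $U$ has both a $+$ and a $-$ among coordinates where $Z$ is nonzero with the opposite sign, OR $\underline U\not\subseteq\underline Z$. I will state precisely: $Z\perp U$ iff \emph{not} ($U\le Z$ or $U\le -Z$) is FALSE — concretely $Z\perp U$ iff ($\exists e,f:\ Z_e=U_e\ne 0,\ Z_f=-U_f\ne 0$) or ($\underline Z\cap\underline U$ has ... ). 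The clean fact I will cite from \cite{Book}: topes are exactly the sign vectors $Z$ of full support on $E\setminus L(\Mcaltilde)$ such that for \emph{no} circuit $U$ do we have $U^+\subseteq Z^+$ and $U^-\subseteq Z^-$ (no circuit ``conforms'' to $Z$). Under this characterization: $Z$ is a tope of $\Mcaltilde$ iff no circuit $U$ of $\Mcaltilde$ satisfies $U^+\subseteq Z^+,\ U^-\subseteq Z^-$; for $U=(Y,\sigma(Y))$ with $g\notin L$ this says exactly that $S$ orienting $Y$ positively forces $\sigma(Y)=+$ (and its negative), i.e.\ precisely the defining condition of $\WS(\sigma)$; for $U=Y^1\circ Y^2$, having $U$ conform to $Z$ would force (restricting to $E$) that $S$ orients $Y^1\circ Y^2$, but then $S$ orients one of $Y^1,Y^2$ conformally as well (since $U^\pm\supseteq$ parts of $(Y^i)^\pm$), reducing to the previous case by $\sigma(Y^1)=-\sigma(Y^2)\ne 0$. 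The loop-handling when $g\in L$: then $\sigma\equiv 0$ on all circuits it lifts to loops — I would show $g\in L$ iff $\sigma\equiv 0$, in which case $\WS(\sigma)=2^E$ and every $(S-L,\cdot)$ is a tope, handled separately. \textbf{Direction ($\supseteq$)} is the same argument read in reverse. \emph{The main obstacle} I anticipate is getting the loop bookkeeping exactly right — elements of $L=L(\Mcaltilde)\subset Eg$ may include $g$ itself (precisely when $\sigma$ is identically $0$) or elements of $E$ that are loops already in $\Mcal$, and the formula $(S-L,(Eg-S)-L)$ must be shown to be the \emph{unique} candidate covector extending the ``$S$ vs.\ $Eg\setminus S$'' partition; I will handle this by invoking that a sign vector with a prescribed support-complement equal to $L$ and full support elsewhere is a tope iff it avoids all circuit supports, and that the loops are exactly the coordinates forced to be $0$ in every covector, so the set equality reduces to the conforming-circuit characterization of topes, which is the standard one from \cite[Chapter~3,4]{Book}.
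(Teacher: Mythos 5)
Your final framework — characterize topes as the full-support-outside-loops covectors, test orthogonality against the circuits of $\Mcaltilde$ supplied by Proposition~\ref{prop:circuits_coloc}, and match the two circuit types $(Y,\sigma(Y))$ and $Y^1\circ Y^2$ against the definition of $\WS(\sigma)$ — is the same route the paper takes, and the ``clean fact'' you eventually state (a sign vector $Z$ with support $Eg-L$ is a tope iff no circuit and no negative of a circuit conforms to $Z$) is correct. But there is a concrete wrong turn before you get there that you never cleanly retract. In the first pass at direction ($\subseteq$) you write ``$U\le Z$ says $Y^+\subseteq S$, $Y^-\subseteq Eg-S$, and if $\sigma(Y)=+$ then $g\in S$,'' and then ``so in fact $U\le Z$ is \emph{possible}, which would mean $Z$ is not orthogonal to $U$, contradicting $Z$ being a covector.'' The premise $g\in S$ can never hold, since $S\subseteq E$ and $g\notin E$, so $Z_g\in\{-,0\}$ always; thus $U=(Y,\sigma(Y))$ with $\sigma(Y)=+$ \emph{never} conforms to $Z$, and it is only $-U$ that can. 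Moreover the phrase ``contradicting $Z$ being a covector'' is circular in that direction — $Z$ being a covector is exactly what you are trying to establish. The correct dichotomy (which the paper exploits directly) is: $\pm(Y,\sigma(Y))$ conforms to $Z$ precisely when $S$ orients $Y$ in the sign opposite to $\sigma(Y)$ (or when $S$ orients $Y$ while $\sigma(Y)=0$), i.e. precisely when $S\notin\WS(\sigma)$.

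Beyond that, your sketch of the $Y^1\circ Y^2$ case is right in spirit — if $Y^1\circ Y^2$ conforms to $Z$ then both $Y^1$ and $Y^2$ conform (since $Y^1,Y^2\le Y^1\circ Y^2$), forcing $S$ to orient both positively and hence $\sigma(Y^1)=\sigma(Y^2)=+$, contradicting $\sigma(Y^1)=-\sigma(Y^2)\ne 0$ — but you should spell it out rather than gesture at it. And ``direction ($\supseteq$) is the same argument read in reverse'' undersells it: the paper's $(\supseteq)$ argument is actually shorter and does not need the $Y^1\circ Y^2$ circuits at all. Given that $T=(S-L,(Eg-S)-L)$ is a tope, for any $C\in\Ccal(\Mcal)$ that $S$ orients, say positively, orthogonality of $T$ to the single circuit $(C,\sigma(C))$ already forces $\sigma(C)=+$, because $T$ is nonzero with a definite sign on all of $\underline C\cup\{g\}$ and agrees with $(C,+)$ in sign there. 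Finally, your loop bookkeeping — ``I would show $g\in L$ iff $\sigma\equiv 0$'' — is a detour you do not need and do not complete; the paper handles loops simply by noting that any circuit whose support meets $L$ is itself a loop (by the incomparability axiom) and hence trivially orthogonal to $T$, so one may restrict attention to circuits with $\underline X\cap L=\emptyset$.
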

\begin{proof}
  We denote the right hand side of~\eqref{eq:colocalization} by $\WS'$. We first show $\WS(\sigma)\subset\WS'$. Let $S\in\WS(\sigma)$ be a set. We need to show that the signed vector $T=(S-L,(Eg-S)-L)$ is a tope of $\Mcaltilde$. Note that the support of $T$ is $Eg-L$ and thus it suffices to prove that $T$ is a covector of $\Mcaltilde$. This is equivalent to saying that $T$ is orthogonal to every circuit of $\Mcaltilde$, so let $X\in\Ccal(\Mcaltilde)$ be such a circuit. By Proposition~\ref{prop:circuits_coloc}, either $X$ has the form $(Y,\sigma(Y))$ for some $Y\in\Ccal(\Mcal)$ or we have $X=Y^1\circ Y^2$ for some $Y^1,Y^2\in\Ccal(\Mcal)$ such that $\sigma(Y^1)=-\sigma(Y^2)\neq0$. It is clear that $T\perp X$ if $X$ is a loop of $\Mcaltilde$ so suppose that $\Xu\cap L=\emptyset$. Assume that $X$ has the form $(Y,\sigma(Y))$ for some $Y\in\Ccal(\Mcal)$. If $S$ does not orient $Y$ then $(S,E-S)$ is orthogonal to $Y$ and thus $T$ is orthogonal to $(Y,\sigma(Y))$. Otherwise, if $S$ orients $Y$, say, positively, then $\sigma(Y)=+$ so $X=(Y^+g,Y^-)$ is easily checked to be orthogonal to $T$. The case $\sigma(Y)=-$ is completely analogous and we are left with the case $X=Y^1\circ Y^2$  so that $\sigma(Y^1)=-\sigma(Y^2)\neq0$. Recall from Proposition~\ref{prop:circuits_coloc} that we have $Y^1,Y^2<X$. If $S$ does not orient $Y^i$ for either $i=1$ or $i=2$ then clearly $T$ is orthogonal to $X$. On the other hand, if $S$ orients both of them then we may assume that it orients $Y^1$ positively and $Y^2$ negatively which also implies that $T$ is orthogonal to $X$. We have shown the inclusion $\WS(\sigma)\subset\WS'$.
	
	To show $\WS(\sigma)\supset\WS'$, suppose that $S\in \WS'$ and thus $T=(S-L,(Eg-S)-L)$ is a tope of $\Mcaltilde$. Let $C\in\Ccal(\Mcal)$ be a circuit that is oriented by $S$, say, positively. This means that $C^+\subset S$ and $C^-\subset E-S$ so the only way for $(C,\sigma(C))$ to be orthogonal to $T$ is if $\sigma(C)=+$. The case when $S$ orients $C$ negatively is again analogous so we are done with the proof.
\end{proof}

\begin{lemma}\label{lemma:extension}
 Let $\Mcal$ be an oriented matroid and suppose that $\sigma$ is a colocalization of $\Mcal$ in general position. Then
 \begin{enumerate}[\normalfont (1)]
  \item \label{item:extension_complete} the collection $\WS(\sigma)$ is a complete $\Mcal$-separated collection;
  \item \label{item:extension} if $\WS$ is any $\Mcal$-separated collection satisfying $\sigma_\WS\leq \sigma$ then $\WS\subset\WS(\sigma)$.
 \end{enumerate}
\end{lemma}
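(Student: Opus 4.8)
The plan is to prove the two parts separately. Part~(2) and the ``$\Mcal$-separated'' half of part~(1) are immediate from the definitions, so I would dispatch them first; the real content is the completeness assertion in part~(1), which I would obtain via the one-element lifting $\Mcaltilde$ of $\Mcal$ determined by $\sigma$, together with the description~\eqref{eq:colocalization} of $\WS(\sigma)$ in terms of the topes of $\Mcaltilde$.

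\emph{Part~(2) and separation.} Let $\WS$ be $\Mcal$-separated with $\sigma_\WS\le\sigma$, and let $S\in\WS$. If $S$ orients a circuit $C$ positively, then $\sigma_\WS(C)=+$ by definition; since $\sigma_\WS\le\sigma$ and $+$ is a maximal element of $\{+,-,0\}$, this forces $\sigma(C)=+$, and the case of a negatively oriented circuit is symmetric. Hence $S$ satisfies the defining condition of $\WS(\sigma)$, so $\WS\subseteq\WS(\sigma)$. For the separation half of part~(1): if $S,T\in\WS(\sigma)$ were not $\Mcal$-separated, there would be a circuit $C$ with $C^+\subseteq S-T$ and $C^-\subseteq T-S$; then $S$ orients $C$ positively and $T$ orients $C$ negatively, forcing $\sigma(C)=+$ and $\sigma(C)=-$, a contradiction.

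\emph{Completeness.} Let $\Mcaltilde$ be the one-element lifting of $\Mcal$ on the ground set $Eg$ determined by $\sigma$, and recall from~\eqref{eq:colocalization} that $\WS(\sigma)=\{S\subseteq E:(S-L,(Eg-S)-L)\in\Tcal(\Mcaltilde)\}$, where $L$ is the set of loops of $\Mcaltilde$. Since $\sigma$ is a colocalization in general position, Proposition~\ref{prop:circuits_coloc} describes every circuit of $\Mcaltilde$ as either $(Y,\sigma(Y))$ with $Y\in\Ccal(\Mcal)$ and $\sigma(Y)\in\{+,-\}$ (so $g$ lies in its support), or a composition $Y^1\circ Y^2$ whose support has corank $2$ in $\Mcal$; in either case the support has size at least $2$, so $\Mcaltilde$ has no loops and $L=\emptyset$. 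Now fix a circuit $C\in\Ccal(\Mcal)$. Its support $\Cu$ is a minimal dependent set, hence $\cork(\Cu)=1$ in $\Mcal$, so by the previous sentence and monotonicity of corank no circuit of $\Mcaltilde$ has support contained in $\Cu$. Therefore $\Mcaltilde\mid_{\Cu}$ is the free oriented matroid on $\Cu$, and $(C^+,C^-)$, a full-support sign vector on $\Cu$, is one of its topes. Since covectors of a restriction are exactly the restrictions of covectors and every covector lies below a tope (see~\cite{Book}), $(C^+,C^-)$ is the restriction to $\Cu$ of some tope $\tilde T$ of $\Mcaltilde$. After replacing $\tilde T$ by $-\tilde T$ if necessary I may assume $\tilde T_g=-$, at the cost of swapping the roles of $C^+$ and $C^-$ on $\Cu$. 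Put $S:=\tilde T^+$. As $\tilde T_g=-$ and $L=\emptyset$, we get $S\subseteq E$ and $(S-L,(Eg-S)-L)=(S,Eg-S)=\tilde T\in\Tcal(\Mcaltilde)$, so $S\in\WS(\sigma)$; moreover $S$ orients $C$, since $\tilde T$ restricts to $(C^+,C^-)$ or to $(C^-,C^+)$ on $\Cu$. Hence $\sigma_{\WS(\sigma)}(C)\neq 0$, and as $C$ was arbitrary the image of $\sigma_{\WS(\sigma)}$ lies in $\{+,-\}$, i.e.\ $\WS(\sigma)$ is complete.

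\emph{Main obstacle.} The conceptual input is slight: a generic one-element lift destroys every minimal dependence of $\Mcal$, so each circuit's support becomes independent in $\Mcaltilde$ and can be cut out with an arbitrary prescribed sign pattern by a tope. The part that needs care is the bookkeeping around the new element $g$ and around loops --- one must verify, via Proposition~\ref{prop:circuits_coloc} and $\cork(\Cu)=1$, that $\Mcaltilde$ is loopless and that $\Cu$ carries no circuit of $\Mcaltilde$, and then correctly normalize $\tilde T_g=-$ so that the resulting set $S$ lies in $E$ and is recognized by the tope description~\eqref{eq:colocalization}.
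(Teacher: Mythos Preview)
Your proof is correct and follows the same route as the paper's: part~(2) and the separation half of part~(1) are immediate from the definitions, and completeness is obtained via the tope description~\eqref{eq:colocalization} of $\WS(\sigma)$ in the lifting $\Mcaltilde$. The only difference is in presentation: the paper dispatches completeness in one line by citing \cite[Proposition~3.8.2]{Book} (``the topes orient all the circuits''), whereas you unpack this by showing directly, via Proposition~\ref{prop:circuits_coloc} and the observation $\cork(\Cu)=1$, that $\Cu$ is independent in $\Mcaltilde$, so that $(C^+,C^-)$ extends to a tope of $\Mcaltilde$.
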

\begin{proof}
 By definition, $\WS(\sigma)$ contains all sets $S\subset E$ that orient each circuit $C\in\Ccal$ in accordance with $\sigma$. Since $\sigma_\WS\leq \sigma$, this happens for all sets in $\WS$, and thus $\WS\subset\WS(\sigma)$, which proves~(\ref{item:extension}). Since the topes orient all the circuits by~\cite[Proposition~3.8.2]{Book}, we get that $\WS(\sigma)$ is complete which proves~\eqref{item:extension_complete}.
%
\end{proof}

\subsection{Proof of Theorem~\ref{thm:max_size_matroid}}\label{sec:proof_thm_max_sz_matroid}
Before we proceed to the proof, we recall some basic facts about Tutte polynomials of unoriented matroids, see, e.g.,~\cite{Crapo}.
\begin{proposition}\label{prop:Tutte}
 Let $\Mcal$ be an oriented matroid, and $\Mcalu$ its underlying matroid with Tutte polynomial $T_\Mcalu(x,y)$. Then we have $|\Ind(\Mcal)|=T_\Mcalu(2,1)$ and $|\Bcal(\Mcal)|=T_\Mcalu(1,1)$. Moreover, for every element $e\in E$ which is neither a loop nor a coloop, we have 
 \[T_\Mcalu(x,y)=T_{\Mcalu-e}(x,y)+T_{\Mcalu/e}(x,y).\]
 In particular, 
 \begin{equation*}
   \begin{split}
\pushQED{\qed} 
 |\Ind(\Mcal)|&=|\Ind(\Mcal-e)|+|\Ind(\Mcal/e)|,\text{ and}\\
   |\Bcal(\Mcal)|&=|\Bcal(\Mcal-e)|+|\Bcal(\Mcal/e)|.\qedhere
\popQED
   \end{split}
 \end{equation*}

\end{proposition}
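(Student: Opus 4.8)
The plan is to deduce everything from the standard corank--nullity (subset-expansion) formula for the Tutte polynomial and then to specialize. First I would recall that for a finite matroid $\Mcalu$ on ground set $E$ with rank function $r$ one has
\[T_{\Mcalu}(x,y)=\sum_{A\subseteq E}(x-1)^{r(E)-r(A)}(y-1)^{|A|-r(A)},\]
and that passing from the oriented matroid $\Mcal$ to its underlying matroid $\Mcalu$ changes neither the rank function, nor the collection of independent sets, nor the collection of bases; thus it suffices to work with $\Mcalu$, and the identities $\Ind(\Mcal)=\Ind(\Mcalu)$, $\Bcal(\Mcal)=\Bcal(\Mcalu)$ are immediate.

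Next I would specialize. Putting $(x,y)=(2,1)$ and using the convention $0^{0}=1$, every term with $|A|-r(A)>0$ vanishes, so the sum collapses to the number of $A\subseteq E$ with $|A|=r(A)$, i.e.\ the number of independent sets; hence $T_{\Mcalu}(2,1)=|\Ind(\Mcalu)|$. Putting $(x,y)=(1,1)$ kills in addition every term with $r(E)-r(A)>0$, leaving the subsets $A$ that are both independent and spanning, i.e.\ the bases; hence $T_{\Mcalu}(1,1)=|\Bcal(\Mcalu)|$.

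For the recurrence I would split the defining sum for $T_{\Mcalu}$ according to whether $e\in A$. When $e$ is not a coloop, $r(E)$ is computed the same in $\Mcalu$ and in $\Mcalu-e$, and for $A\subseteq E-e$ the rank of $A$ is the same in both; so the subsets avoiding $e$ contribute exactly $T_{\Mcalu-e}(x,y)$. When $e$ is not a loop, writing $A=A'\cup\{e\}$ with $A'\subseteq E-e$ one has $r_{\Mcalu}(A)=r_{\Mcalu/e}(A')+1$, $r_{\Mcalu}(E)=r_{\Mcalu/e}(E-e)+1$, and $|A|-r_{\Mcalu}(A)=|A'|-r_{\Mcalu/e}(A')$; so the two exponents are precisely those occurring in $T_{\Mcalu/e}$, and the subsets containing $e$ contribute exactly $T_{\Mcalu/e}(x,y)$. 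This yields $T_{\Mcalu}=T_{\Mcalu-e}+T_{\Mcalu/e}$. Since the underlying matroid of $\Mcal-e$ is $\Mcalu-e$ and that of $\Mcal/e$ is $\Mcalu/e$, evaluating this recurrence at $(2,1)$ and $(1,1)$ and invoking the first two identities gives the two ``in particular'' statements.

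This is classical and there is no real obstacle; the only places asking for a little care are the bookkeeping with $0^{0}=1$ in the two specializations and the routine verification that ``loop'', ``coloop'', rank, independence, and basis are all invariant under forgetting the orientation and transform correctly under $\Mcalu\mapsto\Mcalu-e$ and $\Mcalu\mapsto\Mcalu/e$. If one prefers to avoid the subset expansion, one can instead quote the recursive definition of the Tutte polynomial together with its two standard evaluations directly from~\cite{Crapo}, in which case only the last sentence needs to be checked.
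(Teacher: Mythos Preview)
Your argument is correct. The paper does not actually prove this proposition: it is stated as a recalled fact from~\cite{Crapo} and marked with a \qedsymbol{} in place of a proof. Your derivation via the corank--nullity expansion is the standard textbook verification of these evaluations and of the deletion--contraction recurrence, so there is nothing to compare beyond noting that you have supplied what the paper simply cites.
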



\begin{proof}[Proof of Theorem~\ref{thm:max_size_matroid}]
 We are going to show three statements that will together imply the theorem.
 \begin{enumerate}[(a)]
  \item\label{item:coloc_gp_collec_max_sz} If $\sigma:\Ccal\to\{+,-\}$ is a colocalization in general position then $|\WS(\sigma)|=|\Ind(\Mcal)|$.
  \item\label{item:collec_sz_bound} For any $\Mcal$-separated collection $\WS$, $|\WS|\leq |\Ind(\Mcal)|$.
  \item\label{item:collec_colloc} If $|\WS|=|\Ind(\Mcal)|$ then $\sigma_\WS$ is a colocalization in general position.
 \end{enumerate}

To show (\ref{item:coloc_gp_collec_max_sz}), note that $|\WS(\sigma)|=|\Tcal(\Mcaltilde)|/2$ by~\eqref{eq:colocalization} so we need to show $|\Tcal(\Mcaltilde)|/2=|\Ind(\Mcal)|$. As it follows from the discussion before \cite[Proposition~3.8.3]{Book}, if $e$ is neither a loop nor a coloop in some \emph{simple} oriented matroid $\Mcal'$ then 
\[|\Tcal(\Mcal')|=|\Tcal(\Mcal'-e)|+|\Tcal(\Mcal'/e)|.\]
Since this formula only works for simple oriented matroids $\Mcal'$, the number $|\Tcal(\Mcal')|$ is not an evaluation of the Tutte polynomial. However, note that $\Mcaltilde$ is a lifting of $\Mcal$ in general position so $\Mcaltilde$ is indeed simple. Therefore we can choose some element $e\in E$ that is neither a loop nor a coloop in $\Mcal$ and apply the deletion-contraction recurrence to $\Mcaltilde$. Clearly $\Mcaltilde/e$ (resp., $\Mcaltilde-e$) will be a lifting in general position of $\Mcal/e$ (resp., of $\Mcal-e$). Thus it suffices to check the equality $|\Tcal(\Mcaltilde)|/2=|\Ind(\Mcal)|$ only for oriented matroids $\Mcal$ that consist of loops and coloops. Suppose that there are $a$ loops and $b$ coloops in $\Mcal$. Then $|\Ind(\Mcal)|=2^b$. On the other hand, $\sigma$ maps every loop of $\Mcal$ to either a $+$ or a $-$, and $\WS(\sigma)$ consists of the $2^b$ sets whose restriction to the set of loops is fixed and determined by $\sigma$. We are done with the base case and therefore with~(\ref{item:coloc_gp_collec_max_sz}).
 
Now we prove part~(\ref{item:collec_sz_bound}) by induction on $|E|$. The base of induction is the case when every element of $\Mcal$ is either a loop or a coloop, and in this case the statement of the theorem holds by an argument similar to the one above. To do the induction step, take any $\Mcal$-separated collection $\WS$ and consider any element $e\in E$ that is neither a loop nor a coloop. By the induction hypothesis combined with Propositions~\ref{prop:ws_tutte_recurrence} and~\ref{prop:Tutte}, we have 
\begin{equation}\label{eq:deletion_contraction_both_parts}
  \begin{split}
|\WS-e|&\leq |\Ind(\Mcal-e)|;\\
|\WS/e|&\leq |\Ind(\Mcal/e)|;\\
|\WS| &= |\WS-e|+|\WS/e|;\\
|\Ind(\Mcal)| &= |\Ind(\Mcal-e)|+|\Ind(\Mcal/e)|.
  \end{split}
\end{equation}
The fact that $|\WS|\leq |\Ind(\Mcal)|$ follows.

Finally, we prove (\ref{item:collec_colloc}) by induction on $|E|$. The base of induction is the empty oriented matroid $\Mcal$ with $E=\emptyset$. It has exactly one maximal by size $\Mcal$-separated collection $\WS=\{\emptyset\}$ which corresponds to the ``empty'' colocalization of $\Mcal$ via $\WS\mapsto \sigma_\WS$ and vice versa. 

Now we do the induction step. Let $\Mcal$ be any oriented matroid and let $\WS$ be any maximal \emph{by size} $\Mcal$-separated collection. Suppose $e$ is a loop of $\Mcal$. Then either all elements of $\WS$ contain $e$ or all elements of $\WS$ do not contain $e$. In other words, maximal \emph{by size} $\Mcal$-separated collections are in two-to-one correspondence with maximal \emph{by size} $(\Mcal-e)$-separated collections. Similarly, each colocalization $\sigma$ of $\Mcal$ in general position either sends $e$ to $+$ or to $-$, and induces a colocalization of $\Mcal-e$. Thus in the case when $e$ is a loop, the induction step is clear.

If $e$ is a coloop of $\Mcal$ then it is not contained in any circuit so for all $S\in\WS$ we have $S-e,S\cup e\in\WS$. The induction step is clear here as well, and we are left with the case when $\Mcal$ has no loops and no coloops.

Let $\WS$ be an $\Mcal$-separated collection satisfying $|\WS|=|\Ind(\Mcal)|$, and let $e$ be any element of $\Mcal$ (it is neither a loop nor a coloop). It follows that $\WS$ (resp., $\WS-e$ and $\WS/e$) is a maximal \emph{by size} $\Mcal$-separated (resp., $(\Mcal-e)$-separated and $(\Mcal/e)$-separated) collection. 
 Therefore by the induction hypothesis, both $\WS-e$ and $\WS/e$ correspond to colocalizations in general position of the corresponding oriented matroids. 

We would like to show that every circuit $C=(C^+,C^-)$ of $\Mcal$ is oriented by $\WS$. Suppose first that there is an element $e\in E-\Cu$. Then we know that $\WS-e$ orients $C$ by part~\eqref{item:extension_complete} of Lemma~\ref{lemma:extension}. Thus the same is true for $\WS$. Otherwise we have $\Cu=E$ which by the incomparability axiom~\hyperref[item:C2]{(C2)} in Definition~\ref{dfn:OM} implies that $C$ and $-C$ are the only circuits of $\Mcal$. In this case obviously $\WS$ orients $C$ so we are done. 

Since $\WS$ orients every circuit of $\Mcal$, the image of $\sigma_\WS$ lies in $\{+,-\}$. In this case it is clear that the maps $\WS\mapsto\sigma_\WS$ and $\sigma\mapsto\WS(\sigma)$ are inverse to each other. Since the image of $\sigma_\WS$ lies in $\{+,-\}$, it is in general position, but we still need to show that it is a colocalization.

Assume that $\sigma_\WS$ is not a colocalization. It means that there is a subset $A\subset E$ of nullity $2$ for which the restriction of $\sigma_\WS$ is not of Type III. If $A\subsetneq E$ then we can delete any element in $E-A$ and get a contradiction with the induction hypothesis. Thus we may assume that $\Mcal$ has corank $2$, and hence is realizable by \cite[Corollary 8.2.3]{Book}. We contract some elements until $\Mcal^*$ has no pairs of parallel elements while preserving the fact that $\sigma_\WS$ is not a colocalization. After that, $\Mcal$ must be isomorphic $\altn$ (where $n=|E|$). By Theorem~\ref{thm:altn}, $\sigma_\WS$ is a colocalization of $\altn$ in general position because $\WS$ is a maximal \emph{by size} $\altn$-separated collection. This shows~\eqref{item:collec_colloc}.

Now we explain how Theorem~\ref{thm:max_size_matroid} follows from~\eqref{item:coloc_gp_collec_max_sz},~\eqref{item:collec_sz_bound}, and~\eqref{item:collec_colloc}. Take any oriented matroid $\Mcal$. By Lemma~\ref{lemma:coloc_exists}, there exists a colocalization $\sigma$ of $\Mcal$ in general position. By~\eqref{item:coloc_gp_collec_max_sz}, the size of $\WS(\sigma)$ is $|\Ind(\Mcal)|$. By~\eqref{item:collec_sz_bound}, we get that the maximal size of an $\Mcal$-separated collection is exactly equal to $|\Ind(\Mcal)|$. Finally, by~\eqref{item:collec_colloc}, any $\Mcal$-separated collection with $|\Ind(\Mcal)|$ elements corresponds to a colocalization of $\Mcal$ in general position. We are done with the proof of Theorem~\ref{thm:max_size_matroid}.
\end{proof}

As a corollary, we get that a maximal \emph{by size} $\Mcal$-separated collection orients every circuit of $\Mcal$:
\begin{corollary}\label{cor:complete}
	Let $\WS$ be a maximal \emph{by size} $\Mcal$-separated collection. Then $\WS$ is complete.
\end{corollary}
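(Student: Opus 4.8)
The plan is to deduce this immediately from Theorem~\ref{thm:max_size_matroid}, which has just been established. Recall that an $\Mcal$-separated collection $\WS$ is \emph{complete} precisely when the image of $\sigma_\WS$ is contained in $\{+,-\}$, i.e., $\sigma_\WS$ is a colocalization in general position (in the sense of Definition~\ref{dfn:lifting}). So the entire content of the corollary is that a maximal \emph{by size} $\Mcal$-separated collection has this property.

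First I would recall from Theorem~\ref{thm:max_size_matroid} that every $\Mcal$-separated collection $\WS$ satisfies $|\WS|\leq|\Ind(\Mcal)|$ (this is part~\eqref{item:collec_sz_bound} of its proof), and that there exists an $\Mcal$-separated collection attaining this bound, namely $\WS(\sigma)$ for any colocalization $\sigma$ of $\Mcal$ in general position, which exists by Lemma~\ref{lemma:coloc_exists} (this is part~\eqref{item:coloc_gp_collec_max_sz}). Hence a collection $\WS$ is maximal \emph{by size} if and only if $|\WS|=|\Ind(\Mcal)|$.

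Next I would invoke part~\eqref{item:collec_colloc} of the proof of Theorem~\ref{thm:max_size_matroid}: if $|\WS|=|\Ind(\Mcal)|$ then $\sigma_\WS$ is a colocalization of $\Mcal$ in general position. By definition this means the image of $\sigma_\WS$ lies in $\{+,-\}$, so every circuit $C\in\Ccal(\Mcal)$ is oriented (positively or negatively) by some set in $\WS$. Therefore $\WS$ is complete, which is exactly the assertion of the corollary.

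There is essentially no obstacle here; the work was already done inside the proof of Theorem~\ref{thm:max_size_matroid}. The only thing worth spelling out for the reader is the unwinding of the definition of ``complete'' into ``$\sigma_\WS$ has image in $\{+,-\}$'', and then matching this against the meaning of ``colocalization in general position''. One could alternatively phrase the argument via the equivalent statement in Lemma~\ref{lemma:extension}\eqref{item:extension_complete} applied to $\sigma=\sigma_\WS$, together with the observation $\WS\subseteq\WS(\sigma_\WS)$ and maximality forcing equality, but invoking Theorem~\ref{thm:max_size_matroid} directly is the cleanest route.
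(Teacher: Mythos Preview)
Your proof is correct and follows essentially the same route as the paper: the corollary is stated immediately after Theorem~\ref{thm:max_size_matroid} and is meant to be read off from part~\eqref{item:collec_colloc} of its proof, exactly as you do. One small wording issue: having $\mathrm{image}(\sigma_\WS)\subset\{+,-\}$ is the definition of \emph{complete}, but it is not literally the same as ``$\sigma_\WS$ is a colocalization in general position'' (the latter also requires the Type~III condition); your argument only uses the correct implication, so this does not affect validity.
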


\def\gin{{\ni g}}
\def\gnot{{\not\ni g}}
\def\ein{{\ni e}}
\def\enot{{\not\ni e}}
\def\fin{{\ni f}}
\def\fnot{{\not\ni f}}

\subsection{Zonotopal tilings for oriented matroids}\label{subsect:tilings}

The Bohne-Dress theorem (Theorem~\ref{thm:bohne}) provides a bijection between zonotopal tilings of a zonotope and one-element liftings of the corresponding realizable oriented matroid $\Mcal$. Unlike zonotopal tilings, the notion of one-element liftings easily generalizes to the non-realizable case. The goal of this section is to define and develop some basic properties of zonotopal tilings for arbitrary oriented matroids. Since these tilings turn out to be identical to the covectors of one-element liftings of $\Mcal$, we give only brief proofs, and view this section as the basement for the proof of Lemma~\ref{lemma:parallel_intro}.
 
\def\tile{\Face}
\def\tiling{\Tiling}
\def\sp{{ \operatorname{sp}}}

\begin{definition}
Any nonempty boolean interval $\tile\subset 2^E$ is called a \emph{tile}. Equivalently, given a signed vector $X\in\{+,-,0\}^E$, the corresponding \emph{tile} denoted $\tile_X\subset 2^E$ is defined by
 \[\tile_X:=\{S\subset E\mid X^+\subset S,\ X^-\cap S=\emptyset\}.\]
 
 The elements of $\tile$ are called its \emph{vertices}. The set $X^0$ is called the \emph{spanning set} of $\tile$ and denoted $\sp(\tile)$, and $X$ is called the \emph{signed vector} of $\tile$. The \emph{dimension} of $\tile$ is 
 \[\dim(\tile):=\rk(\sp(\tile)),\]
 and given an oriented matroid $\Mcal$, $\tile$ is called \emph{top-dimensional} if $\dim(\tile)=\rk(\Mcal)$. 
\end{definition}
For an element $e\in E$ and a tile $\tile\subset 2^E$, define another tile $\tile-e\subset 2^{E-e}$ by
\[\tile-e:=\{S-e\mid S\in\tile\}.\]

We define fine zonotopal tilings for an oriented matroid $\Mcal$ as collections of tiles coming from a maximal \emph{by size} $\Mcal$-separated collection. 
\begin{definition}
 Given a maximal \emph{by size} $\Mcal$-separated collection $\WS$, the corresponding \emph{fine zonotopal tiling} $\tiling(\WS)$ is defined to be the collection of all tiles $\tile$ all of whose vertices belong to $\WS$:
 \[\tiling(\WS)=\{\tile\subset\WS\mid \tile\text{ is a tile}\}.\]
\end{definition}

The following is a simple extension of~\cite[Proposition~2.2.11]{Book} to the non-realizable case.
\begin{proposition}\label{prop:tilings_covectors}
 Let $\WS$ be a maximal \emph{by size} $\Mcal$-separated collection, and let $\Mcaltilde$ be the one-element lifting of $\Mcal$ in general position corresponding to $\WS$ via Theorem~\ref{thm:max_size_matroid}. Then the map $\tile_X\mapsto X$ is a bijection from $\tiling(\WS)$ to $\Lcal_g^-(\Mcaltilde)$ defined by 
 \[\Lcal_g^-(\Mcaltilde):=\{X=(X^+,X^-)\in \{+,-,0\}^E\mid (X^+,X^-g)\in\Lcal(\Mcaltilde)\}.\]
\end{proposition}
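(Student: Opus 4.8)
The plan is to reduce this to the realizable statement \cite[Proposition~2.2.11]{Book} via a lifting argument, exactly as the authors hint. First I would recall what needs to be shown: given a maximal by size $\Mcal$-separated collection $\WS$ with corresponding one-element lifting $\Mcaltilde$ of $\Mcal$ in general position (via Theorem~\ref{thm:max_size_matroid}), the assignment $\tile_X\mapsto X$ is a bijection from $\tiling(\WS)$ onto $\Lcal_g^-(\Mcaltilde)$. Since a tile $\tile$ is uniquely determined by its signed vector $X$ with $\tile=\tile_X$, the map $\tile_X\mapsto X$ is automatically injective on any collection of tiles, so the real content is: a tile $\tile_X$ has all of its vertices in $\WS$ if and only if $(X^+,X^-g)\in\Lcal(\Mcaltilde)$.

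For the forward direction, suppose $\tile_X\in\tiling(\WS)$, i.e.\ every $S\subset E$ with $X^+\subset S$, $X^-\cap S=\emptyset$ lies in $\WS$. I want to show $(X^+,X^-g)$ is a covector of $\Mcaltilde$, which by definition means it is orthogonal to every circuit of $\Mcaltilde$. Using Proposition~\ref{prop:circuits_coloc}, the circuits of $\Mcaltilde$ are of two types: $(Y,\sigma(Y))$ for $Y\in\Ccal(\Mcal)$, and conformal compositions $Y^1\circ Y^2$ of circuits of $\Mcal$ with $\sigma(Y^1)=-\sigma(Y^2)\ne0$. For a circuit of the first type, I would pick a vertex $S$ of $\tile_X$ that "agrees" with a prospective orthogonality witness — concretely, if $\Yu\not\subset\Xu$ pick $S$ so that some element of $\Yu-\Xu$ lands appropriately, and if $\Yu\subset\Xu$ then $X$ itself already determines the sign of $S$ on $Y$, so one reads off the relation between $\sigma_\WS(Y)=\sigma(Y)$ and the containment, forcing orthogonality of $(X^+,X^-g)$ with $(Y,\sigma(Y))$. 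For circuits of the second type one argues similarly using that any vertex $S$ of $\tile_X$ orienting $Y^1\circ Y^2$ must orient one of $Y^1,Y^2$ opposite to its $\sigma$-value unless orthogonality already holds. This is essentially the same computation carried out in the proof of Lemma~\ref{lemma:extension} and in the proof of Theorem~\ref{thm:max_size_matroid} (the inclusion $\WS(\sigma)\subset\WS'$), so I would cite/adapt that rather than redo it.

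For the reverse direction, suppose $(X^+,X^-g)\in\Lcal(\Mcaltilde)$; I must show every vertex $S$ of $\tile_X$ lies in $\WS$. Since $\WS=\WS(\sigma)$ by Theorem~\ref{thm:max_size_matroid}, it suffices to check $S$ orients each circuit $C\in\Ccal(\Mcal)$ in accordance with $\sigma$. Given such a $C$ oriented by $S$, say positively, consider the circuit $(C,\sigma(C))$ or, if $\sigma(C)=0$, a circuit of $\Mcaltilde$ of the composite type with $C$ as its $E$-part; orthogonality of $(X^+,X^-g)$ with this circuit, together with $C^+\subset S$, $C^-\cap S=\emptyset$ (which comes from $X^+\subset S\subset E-X^-$), forces $\sigma(C)=+$, as needed. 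Again this mirrors the $\WS'\subset\WS(\sigma)$ direction in the proof of Theorem~\ref{thm:max_size_matroid}.

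The main obstacle, and the only place requiring genuine care, is bookkeeping the role of the loops and of the extra element $g$: one has to be consistent about whether $g\in S$ (the convention here puts $g$ on the negative side, which is why the target is $\Lcal_g^-$ rather than $\Lcal_g^+$), and one must make sure that the "choose a suitable vertex of $\tile_X$" step genuinely has a vertex available — this is where maximality by size of $\WS$ (equivalently, $\tiling(\WS)$ being a full tiling rather than a partial one) is used, via the already-established fact that $\tile_X\subset\WS$ is equivalent to all $2^{|X^0|}$ vertices being present. Since the realizable version is \cite[Proposition~2.2.11]{Book} and the oriented-matroid translation of its proof is routine, I would keep the write-up short: state the injectivity observation, then prove the two containments by the orthogonality computations above, pointing to Proposition~\ref{prop:circuits_coloc} and the analogous arguments already appearing in Section~\ref{sect:max_by_sz}.
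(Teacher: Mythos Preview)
Your approach is essentially the same as the paper's: both directions are proved by checking orthogonality of $(X^+,X^-g)$ against the two types of circuits of $\Mcaltilde$ coming from Proposition~\ref{prop:circuits_coloc}, and the reverse direction uses $\WS=\WS(\sigma)$ together with the fact that a set not in $\WS(\sigma)$ must misorient some circuit.

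Two small points where your sketch is looser than the paper's argument. First, since $\sigma$ is a colocalization \emph{in general position}, the case ``$\sigma(C)=0$'' in your reverse direction never occurs and should simply be dropped. Second, in the forward direction the paper explicitly singles out the borderline case $\underline{C}\subset X^0$: here both $X^+\cup C^+$ and $X^+\cup C^-$ are vertices of $\tile_X$, so $\WS$ would orient $C$ both ways --- impossible since $\WS$ is $\Mcal$-separated (equivalently, $X^0=\sp(\tile_X)$ is independent, cf.\ Theorem~\ref{thm:tiles}\,(\ref{item:tiles_sp_independent})). Your phrasing about ``making sure a suitable vertex is available'' misidentifies the issue: the vertex always exists; the point is ruling out that a \emph{second} vertex forces the opposite sign. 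Once you make that explicit, your write-up matches the paper's proof.
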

\begin{proof}

Fix a tile $\tile\in\tiling(\WS)$ and let $X\in\{+,-,0\}^E$ be such that $\tile=\tile_X$. We need to show that $(X^+,X^-g)\in\Lcal(\Mcaltilde)$. Suppose that this is not the case, and therefore there exists a circuit $Y$ of $\Mcaltilde$ not orthogonal to $(X^+,X^-g)$. 

Proposition~\ref{prop:circuits_coloc} suggests considering two cases for $Y$. Suppose that $Y=(C,\sigma(C))$ for some circuit $C=(C^+,C^-)\in\Ccal(\Mcal)$. In this case, we may assume that $\sigma(C)=-$ and thus $C^+\subset X^+\cup X^0$, $C^-\subset X^-\cup X^0$. It is easy to see that there is a set $S\in\tile$ such that $C^+\subset S$ and $C^-\cap S=\emptyset$. But then $\sigma(C)=+$, because it cannot happen that $\Cu\subset X^0$ by property~(\ref{item:tiles_sp_independent}) in Theorem~\ref{thm:tiles} below since $X^0$ is necessarily an independent set of $\Mcal$. This gives a contradiction. 

The second case is $Y=Y_1\circ Y_2$ where $Y_1$ and $Y_2$ satisfy the assumptions of Proposition~\ref{prop:circuits_coloc}. We get that $Y_1^+,Y_2^+\subset X^+\cup X^0, Y_1^-,Y_2^-\subset X^-\cup X^0$. Similarly to the above, this implies $\sigma(Y_1)=\sigma(Y_2)=+$ which contradicts the assumption $\sigma(Y_1)\neq \sigma(Y_2)$ from Proposition~\ref{prop:circuits_coloc}. Thus we have shown that if $\tile_X\in \tiling(\WS)$ then $(X^+,X^-g)\in\Lcal_g^-(\Mcaltilde)$. 

Conversely, suppose $(X^+,X^-g)\in\Lcal(\Mcaltilde)$ for some $X=(X^+,X^-)\in\{+,-,0\}^E$. We claim that for every $J\subset X^0$, $X^+\cup J\in\WS$. Suppose that this is not the case, and thus for some $J$, $X^+\cup J\not\in\WS$. Since $\WS$ is maximal \emph{by size}, there must be a circuit $C=(C^+,C^-)\in\Ccal(\Mcal)$ with $\sigma(C)=-$ and $C^+\subset X^+\cup J$, $C^-\subset X^-\cup X^0$. But then $(C,\sigma(C))=(C^+,C^-g)\in\Ccal(\Mcaltilde)$ is not orthogonal to $(X^+,X^-g)\in\Lcal(\Mcaltilde)$. 
\end{proof}

\def\tileGraph{\mathcal{G}}

It is easy to see that for realizable $\Mcal$, our notion of $\tiling(\WS)$ coincides with the notion of a fine zonotopal tiling from Definition~\ref{dfn:tilings_realizable}. 

Define the \emph{tile graph} $\tileGraph(\WS)$ to be an undirected graph with vertex set $\WS$ and two vertices $S$ and $T$ connected by an edge in $\tileGraph(\WS)$ if and only if $S=\reorient{e}{T}$ for some $e\in E$. 
%

The following theorem summarizes some basic properties of fine zonotopal tilings for oriented matroids:

\begin{theorem}\label{thm:tiles}
 Let $\Mcal$ be an oriented matroid and consider any maximal \emph{by size} $\Mcal$-separated collection $\WS$.
 \begin{enumerate}[{\normalfont (1)}]
  \item\label{item:tiles_sp_independent} For any tile $\tile\in\tiling(\WS)$, the set $\sp(\tile)$ is independent: \[\sp(\tile)\in\Ind(\Mcal).\]
  \item\label{item:tiles_independent_surjection} For any $I\in\Ind(\Mcal)$, there is a tile $\tile\in\tiling(\WS)$ with $\sp(\tile)=I$.
  \item\label{item:tiles_bases_bijection} For any basis $B\in\Bcal(\Mcal)$, there is a unique tile $\tile\in\tiling(\WS)$ with $\sp(\tile)=B$. Thus the map $\tile\mapsto\sp(\tile)$ provides a bijection between top-dimensional tiles of $\tiling(\WS)$ and bases of $\Mcal$.
  \item\label{item:tiles_purity} Every tile in $\tiling(\WS)$ is contained in a top-dimensional tile.
  \item\label{item:tiles_graph_distance} For any two sets $S,T\in\WS$, the graph distance between the corresponding vertices in $\tileGraph(\WS)$ equals the size of their symmetric difference, that is, $|S-T|+|T-S|$. Thus the graph $\tileGraph(\WS)$ is embedded isometrically into the cube $[0,1]^E=\cube_{|E|}$. 
  \item\label{item:tiles_contract} If $e\in E$ is not a loop then 
  \[\tiling(\WS/e)=\{\tile-e\mid \tile\in\tiling(\WS): e\in\sp(\tile)\}.\]
  \item\label{item:tiles_remove} If $e\in E$ is not a coloop then 
  \[\tiling(\WS-e)=\{\tile-e\mid \tile\in\tiling(\WS)\}.\]
 \end{enumerate}
\end{theorem}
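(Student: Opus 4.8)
\textbf{Setup and part~\eqref{item:tiles_sp_independent}.} The plan is to fix the one-element lifting $\Mcaltilde$ of $\Mcal$ in general position that corresponds to $\WS$ under Theorem~\ref{thm:max_size_matroid}, and to read off everything from the covectors of $\Mcaltilde$ via Proposition~\ref{prop:tilings_covectors}, which identifies $\tile_X\mapsto X$ as a bijection $\Tiling(\WS)\to\Lcal_g^-(\Mcaltilde)$. The one statement I would prove first and by hand is~\eqref{item:tiles_sp_independent}, both because it is elementary and because Proposition~\ref{prop:tilings_covectors} itself invokes it: if $\sp(\tile_X)=X^0$ contained the support of a circuit $C\in\Ccal$, then $X^+\cup C^+$ and $X^+\cup C^-$ are both vertices of $\tile_X$, hence both lie in $\WS$, yet $C$ witnesses that they are not $\Mcal$-separated (its parts $C^+,C^-$ are disjoint subsets of $X^0$, so disjoint from $X^+$), a contradiction.

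\textbf{Parts~\eqref{item:tiles_contract} and~\eqref{item:tiles_remove}.} Using Propositions~\ref{prop:ws_tutte_recurrence} and~\ref{prop:Tutte} together with Proposition~\ref{prop:recursion} and the bound $|\WS|\le|\Ind(\Mcal)|$ (established in the proof of Theorem~\ref{thm:max_size_matroid}), one sees that $|\WS-e|=|\Ind(\Mcal-e)|$ and $|\WS/e|=|\Ind(\Mcal/e)|$, so $\WS-e$ and $\WS/e$ are maximal \emph{by size} for $\Mcal-e$ and $\Mcal/e$; a routine check of colocalizations against Theorem~\ref{thm:max_size_matroid} then identifies the liftings in general position they correspond to as $\Mcaltilde-e$ and $\Mcaltilde/e$. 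Combining this with the standard description of covectors of a deletion (all covectors of $\Mcaltilde$, restricted to $Eg-e$) and of a contraction (those vanishing on $e$, restricted), with Proposition~\ref{prop:tilings_covectors} applied to $\Mcal$, $\Mcal-e$ and $\Mcal/e$, and with the trivial identity $\tile_{X|_{E-e}}=\tile_X-e$ (the extra condition $X_e=0$ being exactly ``$e\in\sp(\tile_X)$''), yields~\eqref{item:tiles_remove} and~\eqref{item:tiles_contract} simultaneously.

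\textbf{Parts~\eqref{item:tiles_independent_surjection}--\eqref{item:tiles_purity}.} The workhorse here is a consequence of Proposition~\ref{prop:circuits_coloc}: since $\sigma$ is in general position, every circuit of $\Mcaltilde$ that is contained in $E$ has support of corank $2$ in $\Mcal$, while every other circuit of $\Mcaltilde$ contains $g$ in its support. It follows that every set in $\Ind(\Mcal)$ stays independent in $\Mcaltilde$, that $A\cup g\in\Ind(\Mcaltilde)$ whenever $A\in\Ind(\Mcal)$, and that a basis $B$ of $\Mcal$ is a closed hyperplane of $\Mcaltilde$ not containing $g$ (as $B\cup x$ is independent in $\Mcaltilde$ for every $x\in Eg-B$, $B\cup x$ having corank $\le 1$ in $\Mcal$). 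Hence $\Mcaltilde/B$ has rank one, so there are exactly two covectors of $\Mcaltilde$ with zero set exactly $B$, namely the cocircuits $\pm Y^*$ of this hyperplane, of which exactly one has $g$-entry $-$; by Proposition~\ref{prop:tilings_covectors} this is the unique tile of $\Tiling(\WS)$ with spanning set $B$, which proves~\eqref{item:tiles_bases_bijection} (by~\eqref{item:tiles_sp_independent} and a rank count, $\tile\mapsto\sp(\tile)$ sends top-dimensional tiles into $\Bcal(\Mcal)$, and we have just shown it is a bijection onto $\Bcal(\Mcal)$). For~\eqref{item:tiles_independent_surjection}, extend $I\in\Ind(\Mcal)$ to a basis $B$, take the top-dimensional $\tile_X$ with $X^0=B$, and pass to the subtile $\tile_{X'}$ with $(X')^0=I$ obtained by turning the entries of $X$ on $B-I$ into $+$'s; its vertices lie among those of $\tile_X$, hence in $\WS$. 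For~\eqref{item:tiles_purity}, write the covector $(X^+,X^-g)$ of $\Mcaltilde$ as a conformal composition of cocircuits of $\Mcaltilde$; since $g$ lies in its support, one of these cocircuits has $g$-entry $-$, and by~\eqref{item:tiles_sp_independent} its zero set (which avoids $g$) is independent of size $\rk(\Mcal)$, i.e.\ a basis; the corresponding top-dimensional tile lies in $\Tiling(\WS)$ and contains $\tile_X$.

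\textbf{Part~\eqref{item:tiles_graph_distance}.} One inequality is automatic, $\tileGraph(\WS)$ being an induced subgraph of $\cube_{|E|}$. For the other, note that, $\Mcaltilde$ being simple, two topes of $\Mcaltilde$ are adjacent in its tope graph exactly when they differ in a single element; hence under $S\mapsto(S,(E-S)g)$ the graph $\tileGraph(\WS)$ is identified with the subgraph of the tope graph of $\Mcaltilde$ induced on the topes whose $g$-entry is $-$. That vertex set is a halfspace, hence convex in the tope graph, so — since the tope graph of an oriented matroid is a partial cube with graph distance equal to the size of the separation set \cite[\S4.2]{Book} — a geodesic between two such topes never flips $g$ and projects to a path of length $|S-T|+|T-S|$ in $\tileGraph(\WS)$. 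I expect the only genuinely non-routine points to be the naturality of $\Mcaltilde$ under deletion and contraction used in~\eqref{item:tiles_remove}--\eqref{item:tiles_contract} (one has to match colocalizations carefully) and the corank bookkeeping from Proposition~\ref{prop:circuits_coloc} underlying~\eqref{item:tiles_bases_bijection}; the rest is manipulation of Proposition~\ref{prop:tilings_covectors}.
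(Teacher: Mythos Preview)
Your proof is correct and follows the paper's overall strategy of translating everything to covectors of $\Mcaltilde$ via Proposition~\ref{prop:tilings_covectors}. Two local differences are worth noting. For~\eqref{item:tiles_purity} you argue via the conformal decomposition of the covector $(X^+,X^-g)$ into cocircuits, which is more hands-on than the paper's appeal to the PL-sphere structure of the big face lattice of $\Mcaltilde$; your claim that the chosen cocircuit has zero set of size exactly $\rk(\Mcal)$ is correct but implicitly uses your earlier observation $\Ind(\Mcal)\subset\Ind(\Mcaltilde)$ (so the hyperplane, independent in $\Mcal$ by~\eqref{item:tiles_sp_independent}, is independent in $\Mcaltilde$ and hence of size equal to its $\Mcaltilde$-rank). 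For~\eqref{item:tiles_contract}, the paper simply unfolds the definitions: a tile $\tile_Y\subset 2^{E-e}$ lies in $\Tiling(\WS/e)$ exactly when the tile $\tile_X$ with $X|_{E-e}=Y$ and $X_e=0$ lies in $\Tiling(\WS)$, so no lifting machinery (and no identification of the lifting for $\WS/e$ with $\Mcaltilde/e$) is needed there. Everything else matches the paper essentially line for line.
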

\begin{proof}
We deduce most of the properties from their known counterparts in the language of covectors of $\Mcaltilde$.

\emph{Proof of} (\ref{item:tiles_sp_independent}): This is obvious: after some reorientation we may assume that $\tile$ is just the family of all subsets of $\sp(\tile)$, and if $\sp(\tile)$ contains a circuit $C=(C^+,C^-)$ then $\WS$ is not $\Mcal$-separated because both $C^+$ and $C^-$ belong to it.
 
 \emph{Proof of} (\ref{item:tiles_independent_surjection}): This follows from part~(\ref{item:tiles_bases_bijection}) below.
 
 \emph{Proof of} (\ref{item:tiles_bases_bijection}): 
 For every basis $B$ of $\Mcal$, there is a unique pair of opposite cocircuits $\pm C=\pm(C^+,C^-)\in\Ccal^*(\Mcaltilde)$ such that $C^0=B$. Here $\Mcaltilde$ is the one-element lifting in general position corresponding to $\WS$. We are done by Proposition~\ref{prop:tilings_covectors}.

 \emph{Proof of} (\ref{item:tiles_purity}): This follows from the fact that the order complex of the \emph{big face lattice} $\Fcal_{big}(\Mcal)^{op}$ is isomorphic to the face lattice of a PL regular cell decomposition of the $(r-1)$-sphere, see~\cite[Corollary~4.3.4]{Book}. This implies that this complex is pure, and it is easy to see that the subcomplex consisting of $\Lcal^-_g(\Mcaltilde)$ is pure as well. 
 
 \emph{Proof of} (\ref{item:tiles_graph_distance}): By Proposition~\ref{prop:tilings_covectors}, $\tileGraph(\WS)$ is just a subgraph (in fact, a \emph{halfspace}) of the \emph{tope graph} of $\Mcaltilde$, see \cite[Definition~4.2.1]{Book}. Thus the result follows immediately from~\cite[Proposition~4.2.3]{Book}. Note that since we are working with colocalizations \emph{in general position} only, the corresponding one-element lifting $\Mcaltilde$ will be a \emph{simple} oriented matroid and thus \cite[Section~4.2]{Book} applies directly. 
 
 \emph{Proof of} (\ref{item:tiles_contract}): This is completely obvious from the definitions: $\tiling(\WS)$ contains a tile $\tile$ with $e\in\sp(\tile)$ if and only if $\tiling(\WS/e)$ contains a tile $\tile-e$.

 \emph{Proof of} (\ref{item:tiles_remove}): Follows from Proposition~\ref{prop:tilings_covectors} and the deletion formula for covectors~\cite[Proposition~3.7.11]{Book}.
 
%
\end{proof}

\section{Pure oriented matroids}\label{sect:pure_OM}

The rest of the paper will be concerned with the question of which oriented matroids $\Mcal$ are \emph{pure}, that is, have the property that every maximal \emph{by inclusion} $\Mcal$-separated collection is also maximal \emph{by size}.
%

Let us start by proving a slight strengthening of Lemma~\ref{lemma:adding_removing_zeros}:
\begin{lemma}\label{lemma:loop_coloop}
 Let $\Mcal$ be an oriented matroid and suppose that an element $e$ belongs to the ground set $E$ of $\Mcal$. 
 \begin{enumerate}[\normalfont (1)]
  \item If $e$ is a loop then $\Mcal$ is pure if and only if $\Mcal-e$ is pure.
  \item If $e$ is a coloop then $\Mcal$ is pure if and only if $\Mcal/e$ is pure.
 \end{enumerate}
\end{lemma}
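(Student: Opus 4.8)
The plan is to derive both parts from two facts: first, how $\Mcal$-separation interacts with a loop or coloop $e$; second, how the target cardinality $|\Ind(\Mcal)|$ behaves under the corresponding operation, so that Theorem~\ref{thm:max_size_matroid} pins down the size of a maximal \emph{by size} collection on each side. Recall that $\Mcal$ is pure exactly when \emph{every} maximal \emph{by inclusion} $\Mcal$-separated collection has size $|\Ind(\Mcal)|$, so once the arithmetic is set up it remains only to transport maximal-by-inclusion collections between $\Mcal$ and $\Mcal-e$ (resp.\ $\Mcal/e$) in both directions.

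For part~(1), let $e$ be a loop, so $(\{e\},\emptyset)\in\Ccal(\Mcal)$. First I would observe that every $\Mcal$-separated collection $\WS$ is either contained in $2^{E-e}$ or contained in $\{S\subset E\mid e\in S\}$, because two sets that disagree on $e$ violate the circuit $\pm(\{e\},\emptyset)$. In either case the map $S\mapsto S-e$ gives an inclusion- and cardinality-preserving bijection between $\Mcal$-separated collections (of that type) and $(\Mcal-e)$-separated collections in $2^{E-e}$: by~\eqref{eq:matroid_deletion} the circuits of $\Mcal-e$ are precisely the circuits of $\Mcal$ avoiding $e$, a set $S\in\WS$ and the set $S-e$ orient each such circuit identically, and the only ``extra'' circuits $\pm(\{e\},\emptyset)$ of $\Mcal$ are automatically respected since all sets of $\WS$ agree on $e$ (one direction is Proposition~\ref{prop:recursion}; the converse is this same computation read backwards). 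Since no independent set contains the loop $e$, we have $\Ind(\Mcal)=\Ind(\Mcal-e)$, hence $|\Ind(\Mcal)|=|\Ind(\Mcal-e)|$. Under the bijection, maximal-by-inclusion collections correspond to maximal-by-inclusion collections and maximal-by-size to maximal-by-size; combined with the equality of the two cardinalities this shows $\Mcal$ is pure iff $\Mcal-e$ is pure.

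For part~(2), let $e$ be a coloop, so $e$ lies in no circuit of $\Mcal$ and, by~\eqref{eq:matroid_contraction}, $\Mcal/e$ has the same circuits as $\Mcal$ but on ground set $E-e$. The key point is that $\Mcal$-separation is \emph{insensitive} to $e$: for any circuit $C$ and any $S$, $S$ orients $C$ iff $\reorient{e}{S}$ does, so $S$ and $S\cup e$ are always $\Mcal$-separated and, more generally, $\{S,\,S\cup e\}$ is $\Mcal$-separated from $T$ whenever $S-e$ is. Consequently, if $\WS$ is maximal \emph{by inclusion}, then $S\in\WS\iff S\cup e\in\WS$, so $\WS=\{S,\,S\cup e\mid S\in\WS/e\}$ with $\WS/e=\WS-e$ (notation of~\eqref{eq:ws_delete_contract}), $|\WS|=2\,|\WS-e|$, and $\WS-e$ is a maximal-by-inclusion $(\Mcal/e)$-separated collection — maximality because any enlargement of $\WS-e$ would lift (by adding $e$ or not) to an enlargement of $\WS$. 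Conversely, from a maximal-by-inclusion $(\Mcal/e)$-separated $\WS'$ the collection $\{S,\,S\cup e\mid S\in\WS'\}$ is $\Mcal$-separated and maximal by inclusion, again by insensitivity to $e$. Since a coloop lies in no circuit, $I\mapsto I-e$ is a $2$-to-$1$ map $\Ind(\Mcal)\to\Ind(\Mcal/e)$, so $|\Ind(\Mcal)|=2\,|\Ind(\Mcal/e)|$ (equivalently via the Tutte recurrence of Proposition~\ref{prop:Tutte}). Feeding these into Theorem~\ref{thm:max_size_matroid}, the doubling relations transport the property ``every maximal-by-inclusion collection has the maximal size'' in both directions, giving $\Mcal$ pure iff $\Mcal/e$ pure.

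The only real care needed — the ``main obstacle,'' though it is routine — is the bookkeeping that these natural correspondences are genuinely inclusion-preserving and that a set one could add to a collection on one side can always be matched by a set added on the other side without destroying separation; this reduces in each case to a direct check on the circuit-orientation condition, using that the single problematic circuit $\pm(\{e\},\emptyset)$ (loop case) or the total absence of $e$ from all circuits (coloop case) cannot be violated. I would also note at the outset that the hypotheses are consistent with the operations used: $\Mcal-e$ is defined because a loop is not a coloop, and $\Mcal/e$ is defined because a coloop is not a loop. This lemma then immediately yields Lemma~\ref{lemma:adding_removing_zeros} by iteration.
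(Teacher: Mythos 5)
Your argument is correct and takes essentially the same route as the paper's: the loop circuit forces all members of an $\Mcal$-separated collection to agree on $e$, and a coloop forces $S\in\WS\iff Se\in\WS$ for maximal-by-inclusion $\WS$, giving inclusion- and size-compatible bijections with $(\Mcal-e)$- and $(\Mcal/e)$-separated collections respectively. The paper's proof is terser — it does not spell out the equalities $|\Ind(\Mcal)|=|\Ind(\Mcal-e)|$ and $|\Ind(\Mcal)|=2\,|\Ind(\Mcal/e)|$ or invoke Theorem~\ref{thm:max_size_matroid} explicitly — but your added bookkeeping is just the standard verification the paper leaves implicit, not a different argument.
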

\begin{proof}
 Let $\WS\subset 2^E$ be a maximal \emph{by inclusion} $\Mcal$-separated collection. If $e$ is a loop then $(\{e\},\emptyset)\in\Ccal(\Mcal)$ so either all elements of $\WS$ contain $e$ or all elements of $\WS$ do not contain $e$. And then $\WS$ is a maximal \emph{by inclusion} $\Mcal$-separated collection if and only if $\WS-e$ is a maximal \emph{by inclusion} $(\Mcal-e)$-separated collection. Thus the first claim follows.
 
 Assume now that $e$ is a coloop of $\Mcal$, so none of the circuits of $\Mcal$ contain $e$. In this case, for every $S\subset E-e$, we have $S\in\WS$ if and only if $Se\in\WS$. And then $\WS$ is a maximal \emph{by inclusion} $\Mcal$-separated collection if and only if $\WS/e$ is a maximal \emph{by inclusion} $(\Mcal/e)$-separated collection. This finishes the proof of the lemma.
\end{proof}

In order to reduce to simple oriented matroids, we need to exclude parallel elements as well. We reformulate Lemma~\ref{lemma:parallel_intro} as follows.
\begin{lemma}\label{lemma:parallel}
 Let $\Mcal$ be an oriented matroid. Suppose that $e,f\in E$ are parallel to each other. Then 
 \[\text{$\Mcal$ is pure }\Leftrightarrow \text{ $\Mcal-e$ is pure }\Leftrightarrow\text{ $\Mcal-f$ is pure}.\]
\end{lemma}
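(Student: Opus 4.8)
The plan is to reduce the statement about purity for $\Mcal$ with parallel elements $e,f$ to a statement relating $\Mcal$-separated collections and $(\Mcal-e)$-separated collections directly, without going through zonotopal tilings. First I would record the combinatorial consequence of $e,f$ being parallel: the signed set $(\{e\},\{f\})$ (equivalently $(\{f\},\{e\})$ up to sign) is a circuit of $\Mcal$, so for any $\Mcal$-separated collection $\WS$ and any $S,T\in\WS$ we cannot have $e\in S-T$ and $f\in T-S$ nor vice versa; in particular, for each $S\in\WS$, the pair $S,S$ being trivially separated puts no constraint, but comparing $S$ with $\reorient{e}{S}$ or $\reorient{f}{S}$ shows these cannot both lie in $\WS$ unless... — more usefully, the circuit $(\{e\},\{f\})$ forces: for any two $S,T\in\WS$, the relative position of $e$ and $f$ ``agrees,'' i.e. $e\in S\iff f\in S$ is \emph{not} forced, but if $e\in S$, $f\notin S$ then no $T\in\WS$ can have $f\in T$, $e\notin T$. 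I would set up a careful case analysis of where $e$ and $f$ sit in the sets of $\WS$ and show that, after possibly reorienting (using Lemma~\ref{lem:signreversal}, which lets us assume $e,f$ antiparallel, i.e. $(\{e,f\},\emptyset)\in\Ccal(\Mcal)$, or keep them parallel — whichever is cleaner), every maximal \emph{by inclusion} $\Mcal$-separated collection $\WS$ is ``synchronized'' in $e$ and $f$: there is a fixed choice so that membership of $e$ determines membership of $f$ across all of $\WS$, or the two elements are ``free'' in a controlled way.

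Concretely, the key lemma I would prove is: \emph{if $e,f$ are parallel in $\Mcal$, then for every maximal by inclusion $\Mcal$-separated collection $\WS$, and every $S\in\WS$, at least one of $\reorient{e}{S}$ or $\reorient{f}{S}$ (the ``synchronized flip'') keeps us inside the closure, and moreover the map $S\mapsto S-e$ is a bijection from $\WS$ onto a maximal by inclusion $(\Mcal-e)$-separated collection, possibly after showing the fibers have a fixed size.} I expect the cleanest route is: (i) show $\WS-e$ is $(\Mcal-e)$-separated (this is immediate from the definition of $\Mcal-e$ via~\eqref{eq:matroid_deletion}, since deleting $e$ only removes circuits); (ii) show that if $\WS$ is maximal by inclusion then $\WS-e\to\WS$ has all fibers of the same size (either all singletons or all of the form $\{S,Sf\}$ depending on the reorientation class), using the circuit $(\{e\},\{f\})$ to control which of $S$, $Se$, $Sf$, $Sef$ can coexist; (iii) conclude $|\WS|=c\cdot|\WS-e|$ for a constant $c$ independent of $\WS$, and that $\WS-e$ is maximal by inclusion among $(\Mcal-e)$-separated collections; then (iv) invoke Theorem~\ref{thm:max_size_matroid}: $|\Ind(\Mcal)|$ and $|\Ind(\Mcal-e)|$ are related by the same constant $c$ (since adding a parallel element doubles certain independent sets in a predictable way — precisely, $|\Ind(\Mcal)| = |\Ind(\Mcal-e)| + |\Bcal'|$ where $\Bcal'$ counts independent sets of $\Mcal-e$ to which $e$ can be added, and for a parallel element this is exactly the independent sets not spanning the parallel class), so maximality by inclusion for $\WS$ is equivalent to maximality by inclusion for $\WS-e$, and via $|\WS|=|\Ind(\Mcal)|\iff |\WS-e|=|\Ind(\Mcal-e)|$ we get the purity equivalence. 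By symmetry in $e,f$ the three-way equivalence follows.

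The main obstacle will be step (ii)–(iii): proving that the fiber sizes of $S\mapsto S-e$ over a \emph{maximal by inclusion} (not necessarily by size) $\Mcal$-separated collection are all equal. This is exactly the place where the authors warn the lemma is ``surprisingly hard to prove.'' The subtlety is that, a priori, a maximal by inclusion $\WS$ might contain $S$ but neither $Se$; then $\WS-e$ contains $S$ with fiber $\{S\}$, whereas another part of $\WS$ might contain both $T$ and $Te$, giving fiber $\{T,Te\}$. One must rule out this inhomogeneity — presumably by showing that if $S\in\WS$ but $Se\notin\WS$, then $Se$ is $\Mcal$-separated from everything in $\WS$ (forcing $Se\in\WS$ by maximality), i.e. that no element of $\WS$ obstructs adding $Se$; this requires showing that any circuit $C$ with $C^+\subset Se-T$, $C^-\subset T-Se$ for some $T\in\WS$ would already give an obstruction between $S$ (or some other existing set) and $T$, using the circuit $(\{e\},\{f\})$ and the weak elimination axiom~\hyperref[item:C3]{(C3)} to eliminate $e$. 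I would carry this out by a circuit-elimination argument: given such a ``bad'' circuit $C$ through $e$, eliminate $e$ between $C$ and $(\{f\},\{e\})$ (or its negative) to get a circuit $C'$ through $f$ but not $e$, which should then obstruct some pair already in $\WS$ — contradicting $\Mcal$-separation of $\WS$. Getting this elimination bookkeeping exactly right (signs, which of $e$/$f$ lies in $T$, the several sub-cases) is the technical heart; everything else is routine once the fiber-size homogeneity is established.

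\begin{lemma}\label{lemma:parallel}
 Let $\Mcal$ be an oriented matroid. Suppose that $e,f\in E$ are parallel to each other. Then
 \[\text{$\Mcal$ is pure }\Leftrightarrow \text{ $\Mcal-e$ is pure }\Leftrightarrow\text{ $\Mcal-f$ is pure}.\]
\end{lemma}
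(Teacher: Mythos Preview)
Your approach has genuine gaps. First, the multiplicative relation $|\WS|=c\,|\WS-e|$ and $|\Ind(\Mcal)|=c\,|\Ind(\Mcal-e)|$ is wrong: these are related \emph{additively} via $|\WS|=|\WS-e|+|\WS/e|$ and $|\Ind(\Mcal)|=|\Ind(\Mcal-e)|+|\Ind(\Mcal/e)|$ (Propositions~\ref{prop:ws_tutte_recurrence} and~\ref{prop:Tutte}), and there is no reason for the ratios to match. More seriously, the fiber-homogeneity claim itself is false. With the normalisation that no $T\in\WS$ has $f\in T,\ e\notin T$, write $\WS=\WS_0\sqcup\WS_1e\sqcup\WS_2ef$; then for $Sef\in\WS_2ef$ the fiber of $S\mapsto S-e$ over $Sf$ is the singleton $\{Sef\}$ (since $Sf\notin\WS$), whereas any fiber over an element of $\WS_0\cap\WS_1$ has size $2$. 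So fibers cannot all have the same size unless $\WS_2=\emptyset$, which is not true in general. Finally, the circuit-elimination step does not close: eliminating $e$ between the obstructing circuit $C$ (with $e\in C^+$) and $(\{f\},\{e\})$ yields $Z$ with $f$ possibly in $Z^+$; the natural witness set carrying $f$ but not $e$ is exactly the type excluded from $\WS$, so $Z$ does not obstruct any pair known to lie in $\WS$.

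The paper's proof is structurally different and, notably, uses the hypothesis that $\Mcal-e$ is pure in an essential way (your sketch never invokes it for the backward direction). The paper shows that both $\WS-e$ and $\WS-f$, viewed in the common matroid $\Mcal'\cong\Mcal-e\cong\Mcal-f$, are themselves maximal \emph{by inclusion}; this step requires a technical lemma (Lemma~\ref{lemma:gnot}) whose proof goes through the tiling/covector description of maximal-by-size collections (Proposition~\ref{prop:local}) and already assumes $\Mcal'$ pure. Purity of $\Mcal'$ then makes $\WS-e$ and $\WS-f$ complete, from which one argues that $\sigma_\WS$ (after fixing the one undetermined value on the parallel circuit) is a colocalization in general position, so $\WS\subset\WS(\sigma)$ is maximal by size. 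In short: rather than controlling fiber sizes, the paper controls \emph{completeness} of the deletions and pushes the problem to colocalizations, and this is exactly where the ``surprisingly hard'' content lives.
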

We are not going to use this result in what follows and postpone its proof until Section~\ref{subsect:parallel_proof}.

It turns out that the property of being pure is preserved under taking minors:
\begin{proposition}\label{prop:if_pure_then_minors_are_pure}
 Suppose an oriented matroid $\Mcal$ is pure, and let $e\in E$ belong to its ground set. Then 
 \begin{itemize}
 	\item if $e$ is not a coloop then $\Mcal-e$ is pure;
 	\item if $e$ is not a loop then $\Mcal/e$ is pure.
 \end{itemize}
\end{proposition}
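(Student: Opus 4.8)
The plan is to handle the extreme cases directly via Lemma~\ref{lemma:loop_coloop} and otherwise to run a single lifting-plus-counting argument built on the deletion--contraction recurrences of Section~\ref{sect:max_by_sz}; throughout I use that, by Theorem~\ref{thm:max_size_matroid}, ``$\Mcal$ is pure'' means exactly that every maximal \emph{by inclusion} $\Mcal$-separated collection has size $|\Ind(\Mcal)|$, while every $\Mcal$-separated collection has size at most $|\Ind(\Mcal)|$. For the deletion statement, if $e$ is a loop then $\Mcal$ is pure iff $\Mcal-e$ is pure by Lemma~\ref{lemma:loop_coloop}, so assume $e$ is neither a loop nor a coloop, and let $\WS_0\subset 2^{E-e}$ be a maximal \emph{by inclusion} $(\Mcal-e)$-separated collection. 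Viewed inside $2^E$, the collection $\WS_0$ is already $\Mcal$-separated: a circuit $C$ of $\Mcal$ witnessing non-separation of two members of $\WS_0$ would have $e\notin\Cu$ (both members avoid $e$), hence be a circuit of $\Mcal-e$. Extend $\WS_0$ to a maximal \emph{by inclusion} $\Mcal$-separated collection $\WS$, so $|\WS|=|\Ind(\Mcal)|$ by purity. Since $\WS-e\supseteq\WS_0$ and $\WS-e$ is $(\Mcal-e)$-separated (Proposition~\ref{prop:recursion}), maximality of $\WS_0$ gives $\WS-e=\WS_0$. Now $|\WS|=|\WS-e|+|\WS/e|$ (Proposition~\ref{prop:ws_tutte_recurrence}) and $|\Ind(\Mcal)|=|\Ind(\Mcal-e)|+|\Ind(\Mcal/e)|$ (Proposition~\ref{prop:Tutte}), together with $|\WS-e|\le|\Ind(\Mcal-e)|$ and $|\WS/e|\le|\Ind(\Mcal/e)|$, force both bounds to be equalities; hence $|\WS_0|=|\WS-e|=|\Ind(\Mcal-e)|$, so $\Mcal-e$ is pure.

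For the contraction statement, if $e$ is a coloop apply Lemma~\ref{lemma:loop_coloop} again, so assume $e$ is neither a loop nor a coloop, and let $\WS_1\subset 2^{E-e}$ be a maximal \emph{by inclusion} $(\Mcal/e)$-separated collection. Here the lift I would use is $\widetilde\WS_1:=\WS_1\cup\{Se\mid S\in\WS_1\}\subset 2^E$. The crucial claim is that $\widetilde\WS_1$ is $\Mcal$-separated: given two of its members, write them in one of the shapes $(S,T)$, $(S,Te)$, $(Se,T)$, $(Se,Te)$ with $S,T\in\WS_1$ (so $e\notin S\cup T$); a circuit $C$ of $\Mcal$ witnessing non-separation of such a pair yields, via~\eqref{eq:matroid_contraction}, the signed set $(C^{+}-e,C^{-}-e)$, which is $\ge$ a circuit $D\in\Ccal(\Mcal/e)$, and chasing supports through the relevant symmetric difference (using $e\notin S\cup T$) shows $D^{+}\subseteq S-T$ and $D^{-}\subseteq T-S$, or the reverse, contradicting $(\Mcal/e)$-separation of $\WS_1$. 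Granting the claim, extend $\widetilde\WS_1$ to a maximal \emph{by inclusion} $\Mcal$-separated collection $\WS$, note $\WS/e\supseteq\WS_1$ and that $\WS/e$ is $(\Mcal/e)$-separated (Proposition~\ref{prop:recursion}), so $\WS/e=\WS_1$ by maximality, and finish with the same counting as in the deletion case to obtain $|\WS_1|=|\Ind(\Mcal/e)|$.

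The recurrence bookkeeping above is routine; the one step I expect to require genuine care is verifying that $\widetilde\WS_1$ is $\Mcal$-separated, precisely because a witnessing circuit of $\Mcal$ may contain $e$ and one must then descend to a $\le$-minimal circuit of $\Mcal/e$ lying below its restriction --- a short but not purely mechanical case analysis over the four pairs and over whether $e\in C^{+}$, $e\in C^{-}$, or $e\notin\Cu$.
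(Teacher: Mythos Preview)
Your proof is correct and follows essentially the same route as the paper: lift the minor-level collection into $2^E$ (by inclusion for deletion, by the doubling $\WS_1\cup\{Se\mid S\in\WS_1\}$ for contraction), extend using purity of $\Mcal$, and run the deletion--contraction count from~\eqref{eq:deletion_contraction_both_parts} to force the maximal size. Your anticipated ``short but not purely mechanical case analysis'' for the contraction lift is in fact unnecessary: since for any $A,B\in\widetilde\WS_1$ one has $A-e,B-e\in\WS_1$ and $(A-B)-e\subseteq (A-e)-(B-e)$ regardless of which of $A,B$ contain $e$, the paper dispatches all four pair-shapes in a single uniform sentence.
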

\begin{proof}
  Suppose that $e$ is not a coloop and let $\WS\subset 2^{E-e}$ be any maximal \emph{by inclusion} $(\Mcal-e)$-separated collection. Let us view $\WS$ as a collection of subsets of $E$ rather than $E-e$. Then $\WS$ is still clearly an $\Mcal$-separated collection. Let $\WS'$ be any maximal by inclusion $\Mcal$-separated collection that contains $\WS$. Since $\Mcal$ is pure, $\WS'$ is maximal \emph{by size} as well and thus by Theorem~\ref{thm:max_size_matroid} has size $|\Ind(\Mcal)|$. But then by~\eqref{eq:deletion_contraction_both_parts}, $\WS'-e$ must have size $|\Ind(\Mcal-e)|$. On the other hand, it contains $\WS$ so $\WS$ is contained in a maximal by size $(\Mcal-e)$-separated collection $\WS'-e$, so $\Mcal-e$ is pure.

 
 Now assume that $e$ is not a loop and let $\WS\subset 2^{E-e}$ be any maximal \emph{by inclusion} $(\Mcal/e)$-separated collection. Consider a collection $\WS'\subset 2^E$ defined by
 \[\WS':=\{S\mid S\in\WS\}\cup\{Se\mid S\in\WS\}.\]
 By the definition of $\Mcal/e$, it is clear that $\WS'$ is an $\Mcal$-separated collection: if $C\in\Ccal$ is a circuit of $\Mcal$ then $C-e$ contains a circuit of $\Mcal/e$ and thus if two sets $S,T$ are not $\Mcal$-separated then $S-e$ and $T-e$ are not $(\Mcal/e)$-separated. But now we can again extend $\WS'$ to a maximal \emph{by inclusion} $\Mcal$-separated collection $\WS''$ which is therefore maximal \emph{by size}. Again, by~\eqref{eq:deletion_contraction_both_parts}, the collection $\WS''/e$ has size $|\Ind(\Mcal/e)|$ and since it contains $\WS$, we get that $\Mcal/e$ is pure.
\end{proof}

\def\gin{{\ni g}}
\def\gnot{{\not\ni g}}
\def\ein{{\ni e}}
\def\enot{{\not\ni e}}
\def\fin{{\ni f}}
\def\fnot{{\not\ni f}}

\subsection{Proof of Lemma~\ref{lemma:parallel}}\label{subsect:parallel_proof}


\begin{proposition}\label{prop:local}
 Let $\WS$ be a maximal \emph{by size} $\Mcal$-separated collection, and let $S\in\WS$ and $e\in E$ be such that $\reorient{e}{S}\not\in\WS$. Then there exists a unique tile $\tile\in\tiling(\WS)$ such that 
 \begin{enumerate}[(a)]
  \item\label{item:tile1} $S\in\tile\subset\WS$, and
  \item\label{item:tile2} $(Ie\cap Se,I-S)$ is a circuit of $\Mcal$, where  $I:=\sp(\tile)$.
 \end{enumerate}
 In particular, the set $\reorient{I}{S}\in\WS$ is not $\Mcal$-separated from $Se$.
\end{proposition}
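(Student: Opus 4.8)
The plan is to reduce to $S=\emptyset$ by a reorientation and then to recognize $\tile$ as the unique tile of the \emph{star} of $\emptyset$ in $\tiling(\WS)$ into which the $e$-th edge direction points. Iterating Lemma~\ref{lem:signreversal}, the collection $\reorient{S}{\WS}$ is maximal \emph{by size} and $\reorient{S}{\Mcal}$-separated, while $\reorient{S}{S}=\emptyset$ and $\reorient{S}{(\reorient{e}{S})}=\{e\}$ (whether or not $e\in S$); moreover, for any tile $\tau$ with $\emptyset\in\tau$ and $I:=\sp(\tau)$ we automatically have $e\notin I$ (else $\{e\}\in\tau\subseteq\WS$), and the signed set $(Ie\cap Se,I-S)$ becomes exactly $(\{e\},I)$, the unique circuit supported on $Ie$ with $e$ positive and all of $I$ negative. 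So I may assume $S=\emptyset$, $\emptyset\in\WS$, $\{e\}\notin\WS$, and I must produce a unique independent $I\subseteq E-e$ with $(\{e\},I)\in\Ccal$ and $2^I\subseteq\WS$ (equivalently $\tile_{(\emptyset,E-I)}\in\tiling(\WS)$). Granting this, $\reorient{I}{\emptyset}=I\in\WS$, and the opposite circuit $(I,\{e\})$ witnesses that $I$ and $\{e\}$ are not $\Mcal$-separated, which is the ``in particular'' clause.

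Consider first the realizable case $\Mcal=\Mcal_\VC$. By Proposition~\ref{prop:tilings_covectors} and Theorem~\ref{thm:max_size_matroid}, the tiles $\tau$ of $\tiling(\WS)$ with $\emptyset\in\tau$ are exactly the parallelepipeds $\sum_{i\in\sp(\tau)}[0,\v_i]$ based at $0$; their tangent cones $\Conv(\R_{\ge 0}\v_i:i\in\sp(\tau))$ at $0$ form a polyhedral fan $\mathcal F$ whose support is the tangent cone $\Gamma$ of $\Zon_\VC$ at $0$ (this uses that $\tiling(\WS)$ is a genuine subdivision of $\Zon_\VC$ with $0$ a vertex). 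Since $[0,\v_e]\subseteq\Zon_\VC$, we have $\v_e\in\Gamma$, so $\v_e$ lies in the relative interior of a \emph{unique} cone $\sigma$ of $\mathcal F$; let $I$ be the spanning set of the corresponding tile, so $\v_e\in\operatorname{relint}\Conv(\R_{\ge 0}\v_i:i\in I)$. As $I$ is independent (Theorem~\ref{thm:tiles}(\ref{item:tiles_sp_independent})), this cone is simplicial, hence $\v_e=\sum_{i\in I}c_i\v_i$ with all $c_i>0$, i.e.\ $(\{e\},I)\in\Ccal$; note $\sigma$ cannot be the ray $\R_{\ge 0}\v_e$ itself, since then $[0,\v_e]$, hence $\{e\}$, would lie in $\WS$. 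This gives existence. For uniqueness, any $I'$ satisfying (a) and (b) has $\v_e\in\operatorname{relint}\Conv(\R_{\ge 0}\v_i:i\in I')$, and this cone is again a cone of $\mathcal F$, so it equals $\sigma$; since two linearly independent subsets of $\VC$ spanning the same simplicial cone must coincide, $I'=I$.

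To remove realizability I would run the same argument inside the simple oriented matroid $\Mcaltilde$ corresponding to $\WS$: the tiles containing $\emptyset$ are, by Proposition~\ref{prop:tilings_covectors}, the covectors $Z$ of $\Mcaltilde$ with $Z\le(\emptyset,Eg)$ and $Z_g=-$, i.e.\ a cone-like part of the face poset of the region of $\Mcaltilde$ indexed by the tope $(\emptyset,Eg)$, which by \cite[Section~4.3]{Book} is a regular PL ball; this plays the role of the fan $\mathcal F$, its maximal cells being the top-dimensional tiles through $\emptyset$ whose existence is Theorem~\ref{thm:tiles}(\ref{item:tiles_purity}). For \emph{existence} I would first note that $\{e\}\notin\WS=\WS(\sigma_\WS)$ forces, via Theorem~\ref{thm:max_size_matroid} and a short case analysis (using $\emptyset\in\WS$ and replacing a circuit by its negative), a circuit $(\{e\},D)\in\Ccal$ with $D\in\Ind(\Mcal)$, $e\notin D$, and $\sigma_\WS((\{e\},D))=-$, and then use the weak elimination axiom~\hyperref[item:C3]{(C3)} in $\Mcaltilde$ together with Proposition~\ref{prop:circuits_coloc} --- in the spirit of the elimination algorithm in the proof of Lemma~\ref{lemma:algorithm} --- to shrink $D$ to a subset $I$ with $2^I\subseteq\WS$ while keeping $(\{e\},I)\in\Ccal$. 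The main obstacle is exactly this combinatorial replacement of the convex-geometry step: making precise, purely in terms of covectors of $\Mcaltilde$, that the ``direction $e$'' singles out one tile of the star of $\emptyset$ and that its spanning set $I$ is independent with $(\{e\},I)\in\Ccal$; uniqueness should then follow because the lifted circuit $(\{e\},Ig)$ (recall $\sigma_\WS((\{e\},I))=-$ since $I\in\WS$) has its support $Ig$ pinned down inside $\Mcaltilde$, so two tiles satisfying (a) and (b) yield the same circuit and hence the same $I$.
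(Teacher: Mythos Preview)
Your reduction to $S=\emptyset$ is correct and matches the paper. From that point on, however, the paper's argument and yours diverge, and your general case is left incomplete.

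The paper's proof is purely combinatorial and works uniformly for all oriented matroids. \emph{Uniqueness} is a one-liner: if $2^I,2^J\subset\WS$ and both $(I,\{e\})$ and $(J,\{e\})$ are circuits with $I\neq J$, eliminate $e$ between $(\{e\},J)$ and $(I,\{e\})$ to obtain a circuit $(I',J')$ with $I'\subset I$ and $J'\subset J$; then $I'\in 2^I\subset\WS$ and $J'\in 2^J\subset\WS$ orient this circuit oppositely, contradicting that $\WS$ is $\Mcal$-separated. No fan, no PL structure, no lifting is needed. \emph{Existence} is by induction on $|E|$: pick some $k$ with $\{k\}\in\WS$ (Theorem~\ref{thm:tiles}(\ref{item:tiles_purity})), apply the induction hypothesis to $\WS/k$ to get $I'$ with $2^{I'k}\subset\WS$ and $(I',\{e\})\in\Ccal(\Mcal/k)$, so that one of $(I',\{e\})$, $(I'k,\{e\})$, $(I',\{e,k\})$ lies in $\Ccal(\Mcal)$. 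The first two finish the proof immediately; in the remaining case apply the induction hypothesis to $\WS-k$ to get $I''$ with $(I'',\{e\})\in\Ccal(\Mcal)$ and a tile $\tile$ with $\sp(\tile)=I''$. If $\emptyset\in\tile$ we are done; otherwise every set in $\tile$ contains $k$, and eliminating $e$ between $(I',\{e,k\})$ and $(I'',\{e\})$ produces a circuit oriented oppositely by two sets of $\WS$, a contradiction.

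Your approach tries to read off $I$ from the fan of tiles at $0$. In the realizable case this is a fine heuristic, but you yourself flag the gap for general oriented matroids: ``The main obstacle is exactly this combinatorial replacement of the convex-geometry step.'' The sketch you give there is not a proof. For existence, ``shrink $D$ in the spirit of Lemma~\ref{lemma:algorithm}'' does not explain why the process lands on an $I$ with $2^I\subset\WS$; the algorithm of Lemma~\ref{lemma:algorithm} controls $Y^+$ but says nothing about $2^{Y^-}\subset\WS$. For uniqueness, ``the lifted circuit $(\{e\},Ig)$ has its support $Ig$ pinned down inside $\Mcaltilde$'' is not justified and, in any case, is far more elaborate than the single application of (C3) that suffices.

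The takeaway: once you have reduced to $S=\emptyset$, both halves have short direct proofs (circuit elimination for uniqueness; deletion/contraction induction for existence), and neither requires realizability, Proposition~\ref{prop:tilings_covectors}, or the PL structure of the tope poset.
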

\begin{proof}
 After a suitable reorientation we may assume that $S=\emptyset$. In this case, we need to show that there exists a unique independent set $I$ such that all of its subsets belong to $\WS$ and $(I,\{e\})$ is a circuit of $\Mcal$.
 
 First we show uniqueness. This is clear: if $(I,\{e\})$ and $(J,\{e\})$ are both circuits of $\Mcal$ then by the weak elimination axiom \hyperref[item:C3]{(C3)}, there is a circuit $(I',J')$ of $\Mcal$ with $I'\subset I$ and $J'\subset J$. But this is impossible because both $I'$ and $J'$ belong to $\WS$. We have shown that if such $I$ exists, it is unique.
 
 Now we show existence by induction on $|E|$. If $|E|\leq 1$ then the statement holds trivially. Let $k\in E$ be any element such that $\{k\}\in\WS$ (so $k$ is not a loop). Such $k$ exists by Theorem~\ref{thm:tiles}, part~(\ref{item:tiles_purity}). We know that $\{e\}\not\in\WS$ and thus $\WS/k$ does not contain $\{e\}$ but it clearly contains $\emptyset$. By the induction hypothesis, there is a tile $\tile'\in\tiling(\WS/k)$ that consists of all subsets of some set $I'\in\Ind(\Mcal/k)$ such that $(I',\{e\})$ is a circuit of $\Mcal/k$. Therefore either one of $(I',\{e\})$, $(I'k,\{e\})$, $(I',\{e,k\})$ is a circuit of $\Mcal$. By Theorem~\ref{thm:tiles}, part~(\ref{item:tiles_contract}), $\WS$ contains all subsets of $I'k$. And thus if $(I',\{e\})$ or $(I'k,\{e\})$ is a circuit of $\Mcal$, we are done. The only case left is when $(I',\{e,k\})$ is a circuit of $\Mcal$.
 
 Consider now $\WS-k$ instead. By the induction hypothesis, there is a tile $\tile''\in\tiling(\WS-k)$ that consists of all subsets of some set $I''\in\Ind(\Mcal-k)$ such that $(I'',\{e\})$ is a circuit of $\Mcal-k$. But then $(I'',\{e\})$ is a circuit of $\Mcal$ as well. By Theorem~\ref{thm:tiles}, part~(\ref{item:tiles_remove}), there is a tile $\tile\in\tiling(\WS)$ such that $\tile-k=\tile''$. If $\emptyset\in\tile$ then we are done. Otherwise, all subsets in $\tile$ have to contain $k$, and so $\sp(\tile)=I''$. 
 
 So far we have the following information:
 \begin{itemize}
  \item $(I',\{e,k\})$ is a circuit of $\Mcal$;
  \item $(I'',\{e\})$ is a circuit of $\Mcal$;
  \item all subsets of $I'$ belong to $\WS$;
  \item all subsets of $I''k$ that contain $k$ belong to $\WS$.
 \end{itemize}
 These four pieces lead to a contradiction: apply the circuit elimination axiom to the two circuits to get a circuit $C$ of $\Mcal$ with $C^+\subset I'$ and $C^-\subset I''k$, and then we have $C^+\in\WS$ and  $C^-\cup k\in\WS$, but these two sets are not $\Mcal$-separated. This contradicts the assertion that all subsets in $\tile$ contain $k$ and thus we are done with the proof of the proposition.
\end{proof}

\begin{lemma}\label{lemma:gnot}
 Let $\Mcal$ be an oriented matroid on the ground set $Eg$ such that $\Mcal-g$ is pure, and let $\WS$ be a maximal \emph{by size} $\Mcal$-separated collection. Define 
 \[\WS^\gin:=\{T\in\WS:\ g\in T\};\quad \WS^\gnot:=\{T\in\WS:\ g\not\in T\}.\]
 Let $S\subset E$ be any subset.
\begin{itemize}
 \item Assume $S\not\in\WS^\gnot$. Then $S$ is $\Mcal$-separated from $\WS^\gnot$ if and only if $Sg$ is;
 \item Assume $Sg\not\in\WS^\gin$. Then $S$ is $\Mcal$-separated from $\WS^\gin$ if and only if $Sg$ is.
\end{itemize}
\end{lemma}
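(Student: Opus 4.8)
The plan is to prove the first bullet; the second then follows by applying the first to the reoriented matroid $\reorient{g}{\Mcal}$. Indeed $\reorient{g}{\Mcal}-g=\Mcal-g$ is again pure, $\reorient{g}{\WS}$ is a maximal \emph{by size} $\reorient{g}{\Mcal}$-separated collection by Lemma~\ref{lem:signreversal} (the underlying matroid, hence $|\Ind|$, being unchanged), and since $g\notin S$ the operation $\reorient{g}{\,\cdot\,}$ interchanges $S\leftrightarrow Sg$, $\WS^\gin\leftrightarrow\WS^\gnot$, and the conditions $g\in\cdot\leftrightarrow g\notin\cdot$; unwinding these identifications turns the first bullet for $(\reorient{g}{\Mcal},\reorient{g}{\WS},S)$ into the second bullet for $(\Mcal,\WS,S)$. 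If $g$ is a loop or a coloop of $\Mcal$ the first bullet is elementary (cf.\ the arguments in Lemma~\ref{lemma:loop_coloop}), so I assume it is neither. Since $S\subset E$ and $g\notin E$, the hypothesis $S\notin\WS^\gnot$ says $S\notin\WS$; and if $Sg\in\WS$ then $Sg$ is $\Mcal$-separated from $\WS^\gnot\subset\WS$, so the first bullet holds, its other half following from the easy implication below. Hence I may also assume $S,Sg\notin\WS$, so that $S\notin\WS-g$.

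The implication ``$Sg$ $\Mcal$-separated from $\WS^\gnot$ $\Rightarrow$ $S$ $\Mcal$-separated from $\WS^\gnot$'' needs nothing about purity: if a circuit $C$ witnesses that $S$ and some $T\in\WS^\gnot$ are not $\Mcal$-separated, then $C^+\subset S-T$ and $C^-\subset T-S$ are disjoint from $g$ (as $g\notin T$), and since $S-T\subset Sg-T$ and $T-S=T-Sg$, this same $C$ witnesses that $Sg$ and $T$ are not $\Mcal$-separated. For the converse, suppose $Sg$ is not $\Mcal$-separated from some $T\in\WS^\gnot$, via a circuit $C$ with $C^+\subset Sg-T=(S-T)\cup\{g\}$ and $C^-\subset T-Sg=T-S$. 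If $g\notin C^+$ then $C$ is $g$-free and witnesses that $S$ and $T$ are not $\Mcal$-separated, and we are done; so assume $g\in C^+$.

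Now purity of $\Mcal-g$ enters. By Theorem~\ref{thm:max_size_matroid} together with Propositions~\ref{prop:recursion},~\ref{prop:ws_tutte_recurrence} and~\ref{prop:Tutte}, $\WS-g$ is a maximal \emph{by size}, hence (by purity of $\Mcal-g$) also maximal \emph{by inclusion}, $(\Mcal-g)$-separated collection. Since $S\notin\WS-g$, there exist $R\in\WS-g$ and a $g$-free circuit $C'$ with $C'^+\subset S-R$, $C'^-\subset R-S$. If $R\in\WS$, then $R\in\WS^\gnot$ and $C'$ already shows that $S$ is not $\Mcal$-separated from $\WS^\gnot$; otherwise $Rg\in\WS^\gin$ and $C'$ shows that $S$ is not $\Mcal$-separated from $Rg$. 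It therefore only remains to rule out the configuration in which $S$ is $\Mcal$-separated from every member of $\WS^\gnot$ but not from some $Rg\in\WS^\gin$, while $Sg$ is not $\Mcal$-separated from some $T\in\WS^\gnot$ through a circuit $C$ with $g\in C^+$.

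The crux, and the step I expect to be the main obstacle, is eliminating the distinguished element $g$ from $C$. The plan is to pass to the one-element lifting $\Mcaltilde$ of $\Mcal$ corresponding to $\WS$ via Theorem~\ref{thm:max_size_matroid}, and to use that $T$ and $Rg$ both lie in $\WS$ --- so they are $\Mcal$-separated and orient every circuit in accordance with $\sigma_\WS$: choosing $C$ with $|C^+|$ minimal among all circuits witnessing the bad configuration, one feeds $C$, together with circuits oriented the opposite way by $T$ and by $Rg$, into repeated applications of the weak elimination axiom~\hyperref[item:C3]{(C3)}, driving $g$ out of the support while keeping the support inside a box $(S-T')\cup(T'-S)$ with $T'\in\WS^\gnot$, until one obtains a circuit contained in such a box that orients $\sigma_\WS$ incorrectly, contradicting the assumed $\Mcal$-separation of $S$ from $\WS^\gnot$. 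The delicate point is that $C$, $C'$ and the circuits directly supplied by $\WS$ are all conformal to $S$ (each oriented positively by $S$ or by $Sg$, each with $\sigma_\WS$-value $-$), so (C3) does not combine them directly; the termination and the support control of the elimination must be organized much as in the algorithm used in the proof of Lemma~\ref{lemma:algorithm}.
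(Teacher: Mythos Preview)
Your reductions are fine: deriving the second bullet from the first via reorientation at $g$ is correct (the paper uses complementation instead, but either symmetry works), the easy implication is handled correctly, and the edge cases $g$ loop/coloop and $Sg\in\WS$ are dealt with. The problem is the core of the hard direction.

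After finding $R\in\WS-g$ not $(\Mcal-g)$-separated from $S$, you split into $R\in\WS^{\gnot}$ (done) versus $Rg\in\WS^{\gin}$, and in the latter case you are stuck: knowing that $S$ is not $\Mcal$-separated from some $Rg\in\WS^{\gin}$ says nothing about $\WS^{\gnot}$. Your last paragraph is not a proof but a plan, and you yourself flag the obstruction: every circuit you have in hand ($C$, $C'$, and those supplied by $\sigma_\WS$) is oriented positively by $S$ or $Sg$ and has $\sigma_\WS$-value $-$, so they are mutually conformal and axiom~(C3) gives you nothing to eliminate $g$ with. The analogy with Lemma~\ref{lemma:algorithm} is superficial: there one has two circuits with \emph{opposite} signs at $g$ in the lifting (coming from a flip), which is exactly what drives the elimination; here no such opposite-sign circuit is available.

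The paper's argument is genuinely different and avoids circuit elimination altogether. From the witnessing circuit $C=(X^+g,X^-)$ it extracts the \emph{independent} set $\underline X$ and, by Theorem~\ref{thm:tiles}(\ref{item:tiles_independent_surjection}), finds $R\in\WS$ orienting $X$ positively; this $R$ is forced into $\WS^{\gnot}$ (else $Rg$ and $T$ contradict $\Mcal$-separation of $\WS$). It then extends $\WS^{\gnot}\cup\{S\}$ (not $\WS-g$) to a maximal-by-size $(\Mcal-g)$-separated collection $\WS_0$ and uses the \emph{tile graph} of $\WS_0$: by Theorem~\ref{thm:tiles}(\ref{item:tiles_graph_distance}) the sets in $\WS_0$ orienting $X$ positively form a connected subgraph, so there is a path $R=R_0,\dots,R_t=S$. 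The point is topological: at any vertex $U\in\WS^{\gnot}\setminus(\WS/g)$ the local tile structure in $\WS_0$ agrees with that in $\WS$ (via Theorem~\ref{thm:tiles}(\ref{item:tiles_remove}) and Proposition~\ref{prop:local}), so the path cannot escape $\WS^{\gnot}$ without crossing the boundary $\WS/g$. Any $R_i\in\WS/g$ then satisfies $R_ig\in\WS$ and orients $C$ positively, contradicting $T\in\WS$. This connectivity-plus-local-rigidity argument is the missing idea in your approach.
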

\begin{proof}
 Since replacing each set by its complement preserves the notion of $\Mcal$-separation, we only need to show the first claim. Moreover, if $Sg$ is $\Mcal$-separated from $\WS^\gnot$ then obviously the same is true for $S$, because if a circuit is oriented differently by $S$ and $\WS^\gnot$ then it is also oriented differently by $Sg$ and $\WS^\gnot$. So suppose that $S$ is $\Mcal$-separated from $\WS^\gnot$, but $Sg$ is not, and thus there is a  signed set $X=(X^+,X^-)$ such that the circuit $C=(X^+g,X^-)$ of $\Mcal$ is oriented negatively by $\WS^\gnot$ and positively by $Sg$. Let $T\in \WS^\gnot$ be any set that orients $C$ negatively. We have 
 \[X^-\subset T-Sg,\quad X^+g\subset Sg-T.\]
 By Theorem~\ref{thm:tiles}, part~(\ref{item:tiles_independent_surjection}), there exists a set $R\in\WS$ that orients $X$ positively, because $\Xu$ is an independent set of $\Mcal$. If $g\in R$ then $\WS$ would not be $\Mcal$-separated, because $R$ would orient $C$ positively, and we know that $T\in\WS$ orients $C$ negatively. Thus $g\not\in R$ and by definition, $R\in\WS^\gnot$. 
 
 Since $S$ is $\Mcal$-separated from $\WS^\gnot$, it is also $(\Mcal-g)$-separated from $\WS^\gnot$. And because of the assumption that $\Mcal-g$ is pure, $\WS^\gnot\cup\{S\}$ is contained in a maximal \emph{by size} $(\Mcal-g)$-separated collection, which we denote $\WS_0$. By Theorem~\ref{thm:tiles}, part~(\ref{item:tiles_graph_distance}), the subgraph of $\tileGraph(\WS_0)$ induced on all vertices that orient $X$ positively is connected. We want to show that the path 
 \[R=R_0,R_1,\dots,R_t=S\]
 in this subgraph that connects $R$ and $S$ passes through the \emph{boundary} of $\WS^\gnot$, that is, contains at least one set from $\WS/g\subset\WS^\gnot$. We know that $R_0=R\not\in\WS/g$ because otherwise $Rg$ would belong to $\WS$ so it would orient $C$ positively. We claim that for every set $U$ from $\WS^\gnot$ that does not belong $\WS/g$, the set of edges adjacent to $U$ in $\tileGraph(\WS_0)$ is the same as the set of edges adjacent to $U$ in $\tileGraph(\WS)$. If we manage to do so then the only way to get connected to $S\not\in\WS^\gnot$ would be to pass through the boundary $\WS/g$ of $\WS^\gnot$. And the way to show our claim is to consider the collection $\WS-g$. By Theorem~\ref{thm:tiles}, part~(\ref{item:tiles_remove}), there are no tiles containing $U$ in $\Tiling(\WS-g)$ other than the ones from $\Tiling(\WS)$. And then Proposition~\ref{prop:local} applied to $\WS-g$ and $U$ immediately shows that there can be no other edges from $U$ in any tiling that contains all tiles adjacent to $U$, in particular, in $\Tiling(\WS_0)$. Thus there will be some set $R_i$ that belongs to the boundary of $\WS^\gnot$, that is, to $\WS/g$, and thus $R_ig$ belongs to $\WS$ but it orients $C$ positively while we started with the set $T\in\WS$ that orients $C$ negatively. We are done with the proof of the lemma and we are now ready to apply this technical result to prove Lemma~\ref{lemma:parallel}. 
\end{proof}

\begin{proof}[Proof of Lemma~\ref{lemma:parallel}.]

 If $\Mcal$ is pure then by Proposition~\ref{prop:if_pure_then_minors_are_pure}, $\Mcal-f$ is pure.
 
 Suppose now $\Mcal-f\cong\Mcal-e$ is pure. Since $e$ and $f$ are parallel, $(\{e\},\{f\})\in\Ccal(\Mcal)$ and no other circuit contains both $e$ and $f$ in its support. Moreover, for any circuit $(C^+e,C^-)$, $(C^+f,C^-)$ is also a circuit and vice versa. Let $Eef$ be the ground set of $\Mcal$.
 
 Let $\WS\subset 2^{Eef}$ be a maximal \emph{by inclusion} $\Mcal$-separated collection. Then for any two sets $S,T\in\WS$, it is not the case that $e\in S-T, f\in T-S$. Thus without loss of generality we may assume that for all $S\in\WS$, if $f\in S$ then $e\in S$. 
 
%

We would like to show that $\WS$ is maximal \emph{by size}, and we are going to do so by considering collections $\WS-e$ and $\WS-f$. Since the oriented matroids $\Mcal-e$ and $\Mcal-f$ are isomorphic, we introduce another oriented matroid $\Mcal'$ on the ground set $Eg$ that is isomorphic to both of them. Define the maps $\phi:Ee\to Eg$ and $\psi: Ef\to Eg$ that send $e$ and $f$ to $g$ respectively and restrict to the identity map on $E$. Define collections $\WS_0,\WS_1,\WS_2\subset 2^E$ as follows:
\[\WS_0=\{S\in\WS\mid e,f\not\in S\};\quad \WS_1=\{S-e\mid S\in\WS: e\in S,\ f\not\in S\};\]
\[\WS_2=\{S-\{e,f\}\mid S\in\WS: e,f\in S\}.\]
Then
\[\WS=\WS_0\sqcup\WS_1e\sqcup\WS_2ef,\]
where by definition $\WS_1e:=\{Se\mid S\in\WS_1\}$ and $\WS_2ef:=\{Sef\mid S\in\WS_2\}$.

\def\mine{{\setminus e}}
\def\minf{{\setminus f}}

Denote $\WS^\minf:=\phi(\WS-f)$ and $\WS^\mine:=\psi(\WS-e)$. Then 
\[\WS^\minf=\WS_0\cup \WS_1g\cup \WS_2g;\quad \WS^\mine=\WS_0\cup \WS_1\cup \WS_2g.\]

Our temporary goal is to prove that both collections $\WS^\mine$ and $\WS^\minf$ are maximal \emph{by inclusion} $\Mcal'$-separated collections. 


 Suppose first that $\WS^\minf=\WS_0\cup \WS_1g\cup \WS_2g$ is not maximal \emph{by inclusion}, and consider a set $S\not\in\WS^\minf$ which is $\Mcal'$-separated from $\WS^\minf$. If $g\not\in S$ then we claim that $S$ is in fact $\Mcal$-separated from $\WS$. Indeed, clearly $S$ is $\Mcal$-separated from all subsets in $\WS_0$ and $\WS_1e$. Thus we only need to show that $S$ is $\Mcal$-separated from all subsets in $\WS_2ef$. Let $Tef\in\WS_2ef$ be such a subset, so $T\in\WS_2$. Since $Tg\in\WS_2g$ is $\Mcal'$-separated from $S$, we see that both $Te$ and $Tf$ are $\Mcal$-separated from $S$. But then $Tef$ must be $\Mcal$-separated from $S$ as well. We have shown that if $S\not\in\WS^\minf$ is $\Mcal'$-separated from $\WS^\minf$ then $g\in S$. 


Let $\WS'\supset\WS^\minf$ be a maximal \emph{by inclusion} $\Mcal'$-separated collection of subsets that contains $\WS^\minf$. Since $\Mcal'$ is assumed to be pure, it means that $\WS'$ is complete and maximal \emph{by size}. By the previous observation, $\WS'$ differs from $\WS^\minf$ only in subsets that contain $g$. In particular, $(\WS')^\gnot=(\WS^\minf)^\gnot=\WS_0$. Using Lemma~\ref{lemma:gnot}, we get the following:

\begin{claim} 
Assume $S\not\in\WS_0$. Then $S$ is $\Mcal'$-separated from $\WS_0$ if and only if $Sg$ is.
\end{claim}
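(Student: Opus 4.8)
The plan is to read off the claim directly from Lemma~\ref{lemma:gnot}, applied not to $\Mcal$ but to the auxiliary oriented matroid $\Mcal'$ on the ground set $Eg$, paired with the collection $\WS'$. Here $\Mcal'$ plays the role of the ambient matroid and $g$ the role of the distinguished element, while $\WS'$ plays the role of the maximal \emph{by size} collection in the lemma: indeed $\WS'$ was chosen to be a maximal \emph{by inclusion} $\Mcal'$-separated collection containing $\WS^\minf$, and since $\Mcal'\cong\Mcal-e$ is pure by our standing hypothesis, $\WS'$ is automatically maximal \emph{by size} as well.

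First I would verify the one hypothesis of Lemma~\ref{lemma:gnot} that is not yet on the table, namely that $\Mcal'-g$ is pure. Since $\Mcal'$ is pure and purity passes to minors --- by Proposition~\ref{prop:if_pure_then_minors_are_pure} when $g$ is not a coloop, and by Lemma~\ref{lemma:loop_coloop} together with the fact that $\Mcal'/g=\Mcal'-g$ for a coloop $g$ in the remaining case --- the matroid $\Mcal'-g$ is pure. Equivalently, $\Mcal'-g\cong\Mcal-\{e,f\}$ is a minor of the pure matroid $\Mcal-e$.

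Next I would pin down the ``$g\notin$'' part of $\WS'$. From $\WS^\minf=\WS_0\cup\WS_1g\cup\WS_2g$ one reads off $(\WS^\minf)^\gnot=\WS_0$, and we have already noted that $\WS'$ differs from $\WS^\minf$ only on subsets containing $g$; hence $(\WS')^\gnot=(\WS^\minf)^\gnot=\WS_0$. Feeding this into the first bullet of Lemma~\ref{lemma:gnot} with a subset $S\notin(\WS')^\gnot=\WS_0$ gives exactly the assertion that $S$ is $\Mcal'$-separated from $\WS_0$ if and only if $Sg$ is, which is the claim.

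I do not expect a genuine obstacle here: all of the substance --- the deletion-purity hypothesis propagating through the tile graph and the connectivity of the relevant half-space of the tope graph --- has already been packaged into Lemma~\ref{lemma:gnot}. The only care needed is to apply that lemma to the pair $(\Mcal',\WS')$ rather than to $(\Mcal,\WS)$, and to match the notation correctly via $(\WS')^\gnot=\WS_0$; so the proof is essentially a one-line invocation once these identifications are made explicit.
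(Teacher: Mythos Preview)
Your proposal is correct and matches the paper's approach exactly: the paper proves this claim in one line by invoking Lemma~\ref{lemma:gnot} for the pair $(\Mcal',\WS')$, having just observed that $(\WS')^\gnot=(\WS^\minf)^\gnot=\WS_0$. Your write-up is in fact slightly more careful, as you explicitly verify the hypothesis that $\Mcal'-g$ is pure (which the paper leaves implicit).
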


A completely analogous argument applied to $\WS^\mine$ shows the following:

\begin{claim} 
Assume $S\not\in\WS_2$. Then $S$ is $\Mcal'$-separated from $\WS_2g$ if and only if $Sg$ is.
\end{claim}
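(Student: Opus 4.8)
The last displayed Claim is the mirror image of the previously-established one: whereas the first claim handled the ``$g$-free'' part $\WS_0$ of the collection $\WS^{\minf}$, the new claim handles the ``fully-$g$'' part $\WS_2 g$ of the collection $\WS^{\mine}$. The plan is therefore to run exactly the same argument that produced the first claim, but with the roles of $e$ and $f$ (equivalently, of $\WS^{\minf}$ and $\WS^{\mine}$) interchanged, together with the complementation symmetry of $\Mcal'$-separation. Concretely, I would first record that, by the argument already given for $\WS^{\minf}$ (applied verbatim to $\WS^{\mine}=\WS_0\cup\WS_1\cup\WS_2 g$ instead), any subset $S\notin\WS^{\mine}$ that is $\Mcal'$-separated from $\WS^{\mine}$ must contain $g$; hence, taking a maximal \emph{by inclusion} (and, by purity of $\Mcal'$, complete and maximal \emph{by size}) collection $\WS''\supset\WS^{\mine}$, we have $(\WS'')^{\gin}=(\WS^{\mine})^{\gin}=\WS_2 g$.

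Next I would invoke Lemma~\ref{lemma:gnot}, but now its \emph{second} bullet, with $\Mcal$ replaced by $\Mcal'$, with the maximal \emph{by size} collection $\WS''$, and with $\WS_2 g=(\WS'')^{\gin}$: for any $S$ with $Sg\notin\WS_2 g$, the set $S$ is $\Mcal'$-separated from $\WS_2 g$ if and only if $Sg$ is. Since the hypothesis $S\notin\WS_2$ is precisely $Sg\notin\WS_2 g$, this is exactly the statement of the claim. (Here one must check the harmless point that $\Mcal'-g$ is pure, which holds by assumption since $\Mcal'\cong\Mcal-f\cong\Mcal-e$ and $\Mcal-g$ — as a deletion of these — is pure; this is already used implicitly for the first claim.)

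The only genuine work is verifying that the ``$g\in S$'' reduction really does go through with $e$ and $f$ swapped. For the first claim one used that for a circuit of the form $(C^+ f, C^-)$ there is a companion circuit $(C^+ e, C^-)$ and vice versa, and that sets in $\WS$ with $f\in S$ also have $e\in S$. In the present case I would instead apply complementation: replacing every set by its complement in $Eef$ interchanges ``sets containing both $e$ and $f$'' with ``sets containing neither,'' interchanges $\WS_0$ with $\WS_2$ (after this complementation, up to the shift by $ef$), and interchanges the two companion-circuit roles; thus the $\WS^{\mine}$-analogue of the first reduction step becomes literally the complemented first reduction step. Since $\Mcal'$-separation is invariant under complementation, the conclusion transfers.

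\textbf{Main obstacle.} There is no deep obstacle here — the statement is obtained from already-proven material by the $e\leftrightarrow f$ symmetry and the complementation symmetry of separation. The one place requiring care is bookkeeping: making sure that under complementation the collections $\WS_0,\WS_1,\WS_2$ and the decomposition $\WS=\WS_0\sqcup\WS_1 e\sqcup\WS_2 ef$ transform as claimed, and that the normalization ``$f\in S\Rightarrow e\in S$'' is compatible with (is in fact forced, up to relabeling $e,f$, by) the complemented picture. Once that is set up, both claims follow from a single application of each bullet of Lemma~\ref{lemma:gnot}.
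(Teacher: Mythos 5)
Your plan is the paper's own (implicit) plan: the paper proves the first Claim in full and dispatches this one with ``a completely analogous argument applied to $\WS^\mine$,'' and, like the paper, you extend $\WS^\mine$ to a maximal-by-size $\Mcal'$-separated $\WS''$, identify $(\WS'')^\gin=\WS_2 g$, and invoke the \emph{second} bullet of Lemma~\ref{lemma:gnot} (together with the observation that $\Mcal'-g\cong\Mcal-\{e,f\}$ is pure, a hypothesis the paper leaves implicit).

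The intermediate assertion you make, though, is reversed, and the step you call ``verbatim'' does not transfer verbatim. You claim that every $S\notin\WS^\mine$ that is $\Mcal'$-separated from $\WS^\mine$ must \emph{contain} $g$; the correct statement is that it must \emph{not} contain $g$. The collection $\WS^\minf=\WS_0\cup\WS_1 g\cup\WS_2 g$ has one $g$-free piece and two $g$-pieces, so the $\WS^\minf$-argument saturates the $g$-free side; $\WS^\mine=\WS_0\cup\WS_1\cup\WS_2 g$ has those roles flipped, and the saturated side is $\WS_2 g$. Concretely, running the $\WS^\minf$-argument on $S\subset E$ with $g\notin S$ fails against $\WS_1 e$: a circuit $(C^+,C_0^-e)$ of $\Mcal$ separating $S$ from $Te$ has parallel companion $(C^+,C_0^-f)$ separating $S$ from $Tf$, but $\psi(Tf)=Tg$ need not lie in $\WS^\mine$ when $T\in\WS_1\setminus\WS_2$, so no contradiction results. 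The working mirror argument instead starts from $g\in S=S'g$ and shows $S'ef$ is $\Mcal$-separated from all of $\WS$, hence $S'\in\WS_2$ and $S\in\WS_2 g$. Note also that, as written, your ``hence'' is a non sequitur: if the extra sets all \emph{contained} $g$ you would only obtain $(\WS'')^\gnot=\WS_0\cup\WS_1$, not $(\WS'')^\gin=\WS_2 g$. With the corrected direction, however, your conclusion $(\WS'')^\gin=\WS_2 g$ and the ensuing application of Lemma~\ref{lemma:gnot} do go through.
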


Let us now return to the collections $\WS'\supset\WS^\minf$. We know that $\WS'$ may contain some extra subsets, but all of them have to contain $g$. Let $Sg$ be such a subset, so $Sg\not\in\WS^\minf$ but $Sg$ is $\Mcal'$-separated from $\WS^\minf$. Recall that $\WS^\minf=\WS_0\cup \WS_1g\cup \WS_2g$. Therefore $S\not\in \WS_2$ and $Sg$ is $\Mcal'$-separated from $\WS_2g$. By the second claim, $S$ is $\Mcal'$-separated from $\WS_2g$. Also, since $Sg$ is $\Mcal'$-separated from $\WS_1g$, it must be true that $S$ is $\Mcal'$-separated from $\WS_1$. Finally, if $Sg$ is $\Mcal'$-separated from $\WS_0$ then of course so is $S$. To sum up, $S$ is $\Mcal'$-separated from 
$\WS_0\cup\WS_1\cup\WS_2g=\WS^\mine$. Then we claim that $Se$ is $\Mcal$-separated from our initial collection $\WS$. Indeed:
\begin{itemize}
 \item $Se$ is $\Mcal$-separated from $\WS_0$ because $Sg$ is $\Mcal'$-separated from $\WS_0\subset\WS^\minf$;
 \item $Se$ is $\Mcal$-separated from $\WS_1e$ because $S$ is $\Mcal'$-separated from $\WS_1\subset\WS^\mine$;
 \item $Se$ is $\Mcal$-separated from $\WS_2ef$ because $S$ is $\Mcal'$-separated from $\WS_2g\subset\WS^\mine$.
\end{itemize}
Thus $Se\in\WS$ which contradicts the fact that $Sg\not\in\WS^\minf$. It follows that $\WS^\minf$ is maximal \emph{by inclusion}. 
Since replacing each set in $\WS$ with its complement swaps the roles of $e$ and $f$, we find that $\WS^\mine$ is also maximal \emph{by inclusion}.
 More precisely, $\reorient{Eef}{\WS}$ is a maximal \emph{by inclusion} $\Mcal$-separated collection with the roles of $e$ and $f$ swapped. By what we have just shown, $(\reorient{Eef}{\WS})^\mine=\reorient{E}{\WS_0}g\sqcup \reorient{E}{\WS_1}g\sqcup \reorient{E}{\WS_2}$ is a maximal \emph{by inclusion} $\Mcal'$-separated collection. Applying the map $\reorient{Eg}{(\cdot)}$ to this collection, we obtain $\WS^\mine$.

We now have the following situation. The collections $\WS^\minf$ and $\WS^\mine$ are maximal \emph{by inclusion} $\Mcal'$-separated, and since $\Mcal'$ is assumed to be pure, both of them are maximal \emph{by size} and complete (see Corollary~\ref{cor:complete}). We would like to show that $\WS$ is a complete $\Mcal$-separated collection. It is clear that it orients all circuits of $\Mcal$ except for possibly $C=(\{e\},\{f\})$. If $\WS$ orients $C$ then it is complete, so assume $\sigma_\WS(C)=0$. We are going to extend the map $\sigma_\WS$ to a map $\sigma$ by setting $\sigma(C)=+$. To check that $\sigma$ is still a colocalization, we need to see why $\sigma$ has Type III for all possible subsets $A\subset Eef$ of nullity $2$. If $A$ does not contain either $e$ or $f$ then we are done because in this case on $A$, we have $\sigma=\sigma_\WS$. If $A$ contains both $e$ and $f$ then $C$ is a cocircuit of the oriented matroid $(\Mcal\mid_A)^*$ of rank $2$. A simple case analysis shows that $A$ can be partitioned into three subsets $A=S\sqcup T\sqcup \{e,f\}$ so that the cocircuits of $(\Mcal\mid_A)^*$ written in the cyclic order are 
\[(\{e\},\{f\}),(Se,T),(Sf,T),(\{f\},\{e\}),(T,Se),(T,Sf),\]
or
\[(\{e\},\{f\}),(S,T),(\{f\},\{e\}),(T,S).\]
In the second case, the Type III assumption holds regardless of the value $\sigma((S,T))$. In the first case however, we want to show that the corresponding values of $\sigma$ are not equal to $(+,-,+,-,+,-)$. In other words, we need to see why if $\sigma((Se,T))=-$ then $\sigma((Sf,T))=-$. But this is clear because if $R\in \WS$ orients $(Se,T)$ negatively then $T\subset R$ and $Se\cap R=\emptyset$, so, in particular, $e\not\in R$ and thus $f\not\in R$, so $R$ orients the circuit $(Sf,T)$ negatively as well. This shows that $\sigma$ is a colocalization, and thus $\WS$ is contained in a complete collection by Lemma~\ref{lemma:extension}, part~(\ref{item:extension}). This complete collection has to be maximal \emph{by size} by Theorem~\ref{thm:max_size_matroid}. We are done with the proof of Lemmas~\ref{lemma:parallel_intro} and~\ref{lemma:parallel}. 
\end{proof}

\section{The graphical case} 
\label{sect:graph}

\subsection{Pure graphs}
In this section, we prove Theorem~\ref{thm:outerplanar} by investigating which undirected graphs are pure.
\def\Cyc{{ \operatorname{Cyc}}}
\def\Conn{{ \operatorname{Conn}}}
\newcommand\GOR[1]{{\vec G_{O_{#1}}}}
\def\GORO{\GOR{}}


Let $G$ be an undirected graph (possibly with loops or parallel edges). Fix some total orientation $O$ of $G$. We let $\GORO$ be the directed graph where each edge of $G$ is oriented according to $O$. We say that $G$ is \emph{pure} if the oriented matroid $\Mcal_{\GORO}$ is pure, and since for different orientations $O$ of $G$, the oriented matroids $\Mcal_{\GORO}$ differ by a reorientation, the property of $G$ being pure does not depend on the choice of $O$. In Section~\ref{sect:main}, we explained how to translate the notion of $\Mcal_{\GORO}$-separation to the language of total orientations of $G$, and we start now by proving that this is indeed the case. 

The ground set of $\Mcal_{\GORO}$ is precisely the set $E$ of edges of $G$. There is a simple bijection $\alpha$ between total orientations $O'$ of $G$ and subsets of $E$, namely, for a total orientation $O'$ of $G$, $\alpha(O')$ is the set of edges of $G$ where $O$ and $O'$ disagree. 

We let $\Cyc(G)$ denote the set of cycles of $G$, and by a \emph{cycle} we always mean a non-self-intersecting undirected cycle of $G$ viewed as a subset of edges of $G$. For each cycle $C$ of $G$, there are two orientations of $C$ that make it into a directed cycle which we denote $O_+(C)$ and $O_-(C)$. For example, if $G$ is planar then $O_+(C)$ and $O_-(C)$ can denote the clockwise and counterclockwise orientations of $C$. For general graphs $G$, there is no canonical way to decide which of the two orientations is positive and which is negative so we just fix some arbitrary choice of $O_+(C)$ and $O_-(C)$ for all elements $C\in\Cyc(G)$.

Recall the definition of $G$-separation from Section~\ref{sect:main}:
\begin{definition}
 We say that two total orientations $O_1$ and $O_2$ of $G$ are \emph{$G$-separated} if there does not exist a cycle $C$ of $G$ such that the restrictions of $O_1$ and $O_2$ on $G$ are $O_+(C)$ and $O_-(C)$  or vice versa.
\end{definition}

\begin{proposition}
 Two total orientations $O_1$ and $O_2$ of $G$ are $G$-separated if and only if the sets $\alpha(O_1)$ and $\alpha(O_2)$ are $\Mcal_{\GORO}$-separated.
\end{proposition}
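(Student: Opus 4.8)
The plan is to prove the equivalence by directly unwinding both definitions against the bijection $\alpha$, using the fixed reference orientation $O$ as a pivot. I would first record the standard description of the circuits of $\Mcal_{\GORO}$: for a cycle $C\in\Cyc(G)$ (viewed as a set of edges), the telescoping relation $\sum_{a\in C}\pm\v_a=0$ among the vectors $\v_a=e_i-e_j$ (with $i\to j$ the $O$-orientation of $a$) shows that the two circuits with support $C$ are $\pm X_C$, where $X_C=(X_C^+,X_C^-)$ is given by $X_C^+=\{a\in C:\ O|_a=O_+(C)|_a\}$ and $X_C^-=\{a\in C:\ O|_a=O_-(C)|_a\}$; thus $X_C$ corresponds to the traversal $O_+(C)$ and $-X_C$ to $O_-(C)$. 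The key elementary observation, for $I:=\alpha(O_1)$ and $J:=\alpha(O_2)$, is the ``dictionary'': an edge $a$ lies in $I-J$ if and only if $O_1$ disagrees with $O$ on $a$ while $O_2$ agrees with $O$ on $a$ (so in particular $O_1,O_2$ disagree on $a$ and $O_2|_a=O|_a$), and symmetrically $a\in J-I$ iff $O_1,O_2$ disagree on $a$ and $O_1|_a=O|_a$. In particular the symmetric difference $\reorient{J}{I}$ is exactly the set of edges where $O_1$ and $O_2$ disagree.

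\emph{The forward direction.} Assume $O_1$ and $O_2$ are not $G$-separated, witnessed by a cycle $C$ with, after possibly interchanging $O_1$ and $O_2$, $O_1|_C=O_+(C)$ and $O_2|_C=O_-(C)$. I would then check that the circuit $-X_C$ certifies non-$\Mcal$-separation of $I$ and $J$: for $a\in X_C^-$ we have $O|_a=O_-(C)|_a=O_2|_a$, so $a\notin J$, whereas $O_1|_a=O_+(C)|_a\neq O|_a$, so $a\in I$, giving $X_C^-\subset I-J$; symmetrically $X_C^+\subset J-I$. Hence $(-X_C)^+=X_C^-\subset I-J$ and $(-X_C)^-=X_C^+\subset J-I$, and $-X_C$ is a circuit by axiom (C1), so $I$ and $J$ are not $\Mcal$-separated. (The interchange of $O_1,O_2$ is harmless since both ``$G$-separated'' and ``$\Mcal$-separated'' are symmetric relations.)

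\emph{The converse.} Assume $I$ and $J$ are not $\Mcal$-separated, witnessed by a circuit $X$ with $X^+\subset I-J$ and $X^-\subset J-I$. Its support $\underline X$ is the edge set of a cycle $C$, and renaming $O_+(C)\leftrightarrow O_-(C)$ if necessary we may take $X=X_C$. For $a\in X_C^+$ the membership $a\in I-J$ forces $O_1|_a\neq O|_a=O_+(C)|_a$, hence $O_1|_a=O_-(C)|_a$ since $a$ has only two orientations, and also $O_2|_a=O|_a=O_+(C)|_a$; the analogous computation for $a\in X_C^-$ gives $O_1|_a=O_-(C)|_a$ and $O_2|_a=O_+(C)|_a$ as well. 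Therefore $O_1|_C=O_-(C)$ and $O_2|_C=O_+(C)$, so $O_1$ and $O_2$ are not $G$-separated.

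\emph{The main point.} There is no genuine obstacle: the statement is a definitional translation, and the two-to-four–line computations above are the whole content. The only care needed is bookkeeping of the three mutually exclusive possibilities for an edge $a$ of $C$ — $O|_a=O_1|_a$, $O|_a=O_2|_a$, or $O_1|_a=O_2|_a\neq O|_a$ — together with the remark that on $C$ these are exhaustive in the non-separating case, and keeping straight which of $\pm X_C$ one is handed, handled by the symmetry $O_1\leftrightarrow O_2$, $X\leftrightarrow -X$. Loops of $G$ are vacuous on both sides (a loop never lies in any $\alpha(O')$, and its two orientations coincide), so they may be ignored.
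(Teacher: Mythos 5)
Your proof is correct and takes the same route as the paper's: identify the circuits of $\Mcal_{\GORO}$ with cycles of $G$ via the reference orientation $O$, translate ``$\alpha(O')$ orients a circuit'' into ``$O'$ agrees with a traversal direction on that cycle,'' and conclude. You have simply spelled out the sign bookkeeping (the $I-J$ versus $J-I$ dictionary, the $\pm X_C$ ambiguity, and the vacuity of loops) more carefully than the paper does.
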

\begin{proof}
 Note that the circuits of $\Mcal_\GORO$ correspond to the cycles of $G$. More precisely, let $C^+$ and $C^-$ be subsets of the edges of $C$ defined as follows: an edge $e$ of $C$ belongs to $C^\pm$ if and only if the reference orientation $O$ agrees with $O_\pm(C)$ on $e$. Then $(C^+,C^-)$ is a circuit of $\Mcal_\GORO$. Moreover, given a total orientation $O'$ of $G$, the set $\alpha(O')$ orients $(C^+,C^-)$ positively (resp., negatively) if and only if $O'$ agrees with $O_-(C)$ (resp., with $O_+(C)$) on $C$. This shows that $O_1$ and $O_2$ are $G$-separated if and only if there is no circuit of $\Mcal_\GORO$ that they orient in the opposite ways so the result follows.
\end{proof}

It also follows from the description of the circuits of $\Mcal_\GORO$ that the independent subsets of $\Mcal_\GORO$ correspond to \emph{forests} of $G$, that is, to subsets of the edges of $G$ without cycles.

\begin{proof}[Proof of Theorem~\ref{thm:outerplanar}]
 We first show that the graphs $K_4$ and $K_{2,3}$ are not pure. The graph $K_{2,3}$ contains $54$ forests but just $46$ acyclic total orientations. Thus every maximal \emph{by size} $K_{2,3}$-separated collection contains these $46$ acyclic orientations together with $8$ other orientations. Figure~\ref{fig:K_23} contains $6$ orientations of $K_{2,3}$ that are $K_{2,3}$-separated from each other but there is no other orientation $K_{2,3}$-separated from all of them that would not be acyclic. This shows that $K_{2,3}$ is not pure.

 Now let us concentrate on $K_4$. By Corollary~\ref{cor:complete}, in a pure oriented matroid $\Mcal$, any $\Mcal$-separated collection $\WS$ is contained in a complete $\Mcal$-separated collection. Equivalently, there is a colocalization $\sigma$ of $\Mcal$ in general position satisfying $\sigma\geq \sigma_\WS$. We will construct a collection $\WS$ for $K_4$  such that there does not exist a colocalization $\sigma$ satisfying $\sigma\geq\sigma_\WS$. 
 
 \def\coloc{{\gamma}}
 \def\coltree{{\tilde\coloc}}
Let us start with translating the notion of a colocalization to undirected outerplanar graphs. 
\begin{definition}\label{dfn:graph_coloc}
 Given an outerplanar graph $G$, a \emph{$G$-colocalization} $\coloc:\Cyc(G)\to\{+,-\}$ is a map that assigns an orientation to each cycle of $G$ so that an extra condition below is satisfied. Consider any cycle $C$ such that there is an edge $e$ of $G$ that is not in $C$ but connects two vertices of $C$. Then the union of $C$ and $e$ contains two more cycles which we denote $C_1$ and $C_2$. Let $O_+(C),O_+(C_1),$ and $O_+(C_2)$ be chosen so that they all agree on the edges of $C$. Then the condition on $\coloc$ is that either $\coloc(C)=\coloc(C_1)$ or $\coloc(C)=\coloc(C_2)$ (or both). 
\end{definition}
It is easy to see that for an outerplanar graph $G$, $\coloc$ is a $G$-colocalization if and only if the corresponding map  $\sigma:\Ccal(\Mcal_\GORO)\to \{+,-\}$ defined by $\sigma((C^+,C^-))=\coloc(C)$ is a colocalization of $\Mcal_\GORO$ in general position. Indeed, the non-trivial nullity $2$ subsets of the edges of $G$ correspond to connected subgraphs $G'$ of $G$ with $|E(G')|=|V(G')|+1$, where $E(G')$ and $V(G')$ are the sets of edges and vertices of $G'$, respectively. After removing leaves (which are coloops of the corresponding oriented matroid), we get that $G'$ is a union of three paths with the same start and the same end and non-overlapping interiors. If each path has at least two edges then $G$ is not outerplanar, so we may assume that one of the paths is a single edge $e$, and so $G'$ is a union of a cycle and $e$. And then the condition of $\sigma$ being of Type III is precisely the extra condition on $\coloc$ in Definition~\ref{dfn:graph_coloc}.

We are now ready to prove that the (outerplanar) graph $K_4$ is not pure. Consider the collection $\WS$ of four total orientations of $K_4$ in Figure~\ref{fig:K_4}.

We claim that $\WS$ is $K_4$-separated but does not belong to any maximal \emph{by size} $K_4$-separated collection. It is clear that $\WS$ is $K_4$-separated because for each cycle of $K_4$, there is at most one orientation in $\WS$ that orients it. 

Suppose that there exists a maximal \emph{by size} $K_4$-separated collection containing $\WS$. Thus there must be a $K_4$-colocalization $\coloc$ satisfying $\coloc\geq \sigma_\WS$. Let $C$ be the cycle with vertices $(1,2,3,4)$ listed in this order, and let $O_+(C)=(1\to2\to3\to4\to1)$ and $O_-(C)=(1\to4\to3\to2\to1)$ be its two possible orientations. There are two edges in $K_4$ that do not belong to $C$, namely, $(1,3)$ and $(2,4)$. Applying Definition~\ref{dfn:graph_coloc} to the union of $C$ and $(1,3)$ yields that $\coloc(C)$ has to be positive while applying Definition~\ref{dfn:graph_coloc} to the union of $C$ and $(2,4)$ yields that $\coloc(C)$ has to be negative. We get a contradiction and thus $K_4$ is not pure.

Recall from Section~\ref{sec:graph-orient-matr} that $G$ is assumed to be \emph{simple}, i.e., to have no loops or parallel edges. (Non-simple graphs may be easily treated using Lemmas~\ref{lemma:loop_coloop} and~\ref{lemma:parallel}.) By Theorem~\ref{thm:outerplanar_minors}, outerplanar graphs are exactly the graphs that do not contain $K_4$ and $K_{2,3}$ as minors. Therefore by Proposition~\ref{prop:if_pure_then_minors_are_pure}, if a graph $G$ is not outerplanar then it is not pure. It remains to show purity for outerplanar graphs. By Theorem~\ref{thm:outerplanar_minors} again, every such graph is a subgraph of a triangulation of an $n$-gon, so again by Proposition~\ref{prop:if_pure_then_minors_are_pure} it suffices to show purity for triangulations. 

\begin{definition}\label{dfn:tree}
Let $G$ be the $1$-skeleton of a triangulation of an $n$-gon. Construct a plane tree $T=T(G)$ as follows: put a vertex of $T$ inside each triangular face of $G$ and connect two of them by an edge in $T$ if and only if the corresponding triangular faces share a diagonal. Define $\Conn(T)$ to be the set of all connected induced subgraphs of $T$. There is an obvious bijective correspondence $\tau$ between  $\Conn(T)$ and $\Cyc(G)$, see Figure~\ref{fig:triangulation}. 
\end{definition}

Given two subgraphs $T_1,T_3\in\Conn(T)$, we call them \emph{vertex-disjoint} if their vertex sets $V(T_1),V(T_3)$ are disjoint, in which case their \emph{union} is the induced subgraph of $T$ on $V(T_1)\sqcup V(T_3)$.

\begin{definition}\label{dfn:bad_triple}
For two vertex-disjoint connected subgraphs $T_1,T_3\in\Conn(T)$ whose union $T_2$ is also connected, we say that $(T_1,T_2,T_3)$ is a \emph{Las Vergnas triple}. Given a map $\coltree:\Conn(T)\to\{+,-,0\}$ and a Las Vergnas triple $(T_1,T_2,T_3)$, we say that $(T_1,T_2,T_3)$ is a \emph{bad triple for $\coltree$} if \[\coltree(T_2)\neq 0,\quad \coltree(T_2)\neq\coltree(T_1),\quad \coltree(T_2)\neq\coltree(T_3).\]
\end{definition}

Let us explain the motivation for this terminology. For each cycle $C$ of $G$, we choose $O_+(C)$ and $O_-(C)$ so that $O_+(C)$ is oriented clockwise. Therefore a $G$-colocalization $\coloc$ becomes a function $\coltree:\Conn(T)\to\{+,-\}$ (defined by $\coltree=\coloc\circ\tau$) such that there are no bad triples for $\coltree$. This is true because every Las Vergnas triple comes precisely from a union of a cycle of $G$ and an edge connecting two of its vertices as in Definition~\ref{dfn:graph_coloc}. 

Now consider any $G$-separated collection $\WS$ of total orientations of $G$ and let $\coloc_\WS:\Cyc(G)\to\{+,-,0\}$ be the corresponding cycle signature. Let $\coltree_\WS=\coloc_\WS\circ\tau:\Conn(T)\to\{+,-,0\}$. We would like to show that there is a $G$-colocalization $\coloc\geq\coloc_\WS$, or equivalently, that there is a map $\coltree:\Conn(T)\to\{+,-\}$ that has no bad triples and satisfies $\coltree\geq\coltree_\WS$. Since $\WS$ is $G$-separated, there are no bad triples for $\coltree_\WS$. Indeed, suppose that $(T_1,T_2,T_3)$ is a bad triple for $\coltree_\WS$. Then $\coltree_\WS(T_2)\neq 0$ which means that there is a total orientation $O'\in\WS$ that orients $\tau(T_2)$. There is an edge $e$ of $G$ such that the other two cycles of $\tau(T_2)\cup e$ are precisely $\tau(T_1)$ and $\tau(T_3)$. Since $O'$ has to orient $e$, it follows that it orients either $\tau(T_1)$ or $\tau(T_3)$ with the same sign as $\tau(T_2)$. Thus $\coltree_\WS$ has no bad triples.

Let $\coltree:\Conn(T)\to\{+,-,0\}$ be the maximal with respect to the $\geq$ order map such that $\coltree\geq\coltree_\WS$ and such that there are no bad triples for $\coltree$. We claim that the image of  $\coltree$ lies in $\{+,-\}$, which is the same as saying that it is a $G$-colocalization in general position. Suppose that this is not the case and choose the minimal by inclusion connected subgraph $T'$ of $T$ such that $\coltree(T')=0$. We would like to show that there is $\e\in\{+,-\}$ such that setting $\coltree(T'):=\e$ introduces no bad triples. First of all, regardless of $\e$, we can get no bad triples of the form $(T',T_2,T_3)$. This is true because if $\coltree(T_2)\neq 0$ then we necessarily have $\coltree(T_2)=\coltree(T_3)$, otherwise $(T',T_2,T_3)$ would be a bad triple for $\coltree$. Thus we need to only make sure that we will get no bad triples of the form $(T_1,T',T_3)$ after setting $\coltree(T'):=\e$. Since $T'$ is minimal, we have $\coltree(T_1),\coltree(T_3)\neq 0$. If $\coltree(T_1)\neq\coltree(T_3)$ then $(T_1,T',T_3)$ cannot be a bad triple for $\coltree$ after we set $\coltree(T')=\e$. Thus the only possible restrictions on $\e$ come from triples $(T_1,T',T_3)$ such that $\coltree(T_1)=\coltree(T_3)\neq 0$. If for all such triples the sign of $\coltree(T_1)=\coltree(T_3)$ is the same then we can just choose $\e$ to be this sign. Otherwise there must exist two triples $(T_1,T',T_3)$ and $(T_5,T',T_7)$ such that 
\begin{equation}\label{eq:coltree}
\coltree(T_1)=\coltree(T_3)=+,\quad \coltree(T_5)=\coltree(T_7)=-.
\end{equation}

We need to show that this is impossible. Note that $T_1$ and $T_3$ are obtained from $T'$ by removing some edge $e$. Similarly, $T_5$ and $T_7$ are obtained from $T'$ by removing some other edge $f$. If we remove $e$ and $f$ from $T'$, it will split into three connected components, and after a possible switching of indices we may assume that one of them is $T_1$ and the other one is $T_7$. Let us denote the remaining connected component $T_4$. Thus $T_1\cup T_4=T_5$ and $T_4\cup T_7=T_3$ and $T_1,T_4,T_7$ are vertex-disjoint. By~\eqref{eq:coltree} and since $(T_1,T_5,T_4)$ and $(T_4,T_3,T_7)$ do not form bad triples for $\coltree$, we must have $\coltree(T_3)=\coltree(T_4)=\coltree(T_5)$ which yields a contradiction. We are done with the proof of Theorem~\ref{thm:outerplanar}.
\end{proof}

\subsection{Mutation-closed domains for graphical oriented matroids}
In this section, we investigate the mutation graph $\mutgraph(\Mcal_\GORO)$ and prove Conjecture~\ref{conj:domains} in the graphical case. We also point out the relationship with the results of Gioan~\cite{Gioan1,Gioan2} as well as recent developments of Backman-Baker-Yuen~\cite{BBY} on the Jacobian group of a graph. 

Recall that for an oriented matroid $\Mcal$, two sets $S,T\subset E$ are \emph{related by a mutation} if there exists a circuit $C$ of $\Mcal$ oriented by $S$ and such that $T=(S-C^+)\cup C^-$ in which case we write $T=\mut{C}{S}$ and have an edge between $S$ and $T$ in the \emph{mutation graph} $\mutgraph(\Mcal)$ of $\Mcal$. Translating this to the language of total orientations of an undirected graph $G$ yields the following notion which is due to Gioan~\cite{Gioan1}.
\begin{definition}
	We say that two orientations $O_1$ and $O_2$ of $G$ are related by a \emph{cycle reversal} if there is an undirected cycle $C$ of $G$ such that $O_1$ and $O_2$ orient it in the opposite ways and agree on all other edges of $G$. We let $\mutgraph(G)$ be a simple graph whose vertices are total orientations of $G$ and two such orientations are connected by an edge if and only if they are related by a cycle reversal.
\end{definition}

\def\indeg{ \operatorname{indeg}}
\begin{definition}
	Let $V$ be the vertex set of $G$ and consider a total orientation $O$ of $G$. We define the \emph{indegree sequence} of $O$ to be a map $\indeg_O:V\to \Z$ associating to each vertex $v\in V$ the number $\indeg_O(v)$ of edges pointing towards $v$ in $O$.
\end{definition}

We recall some of the results of Gioan:
\begin{proposition}[{\cite[Proposition~4.10]{Gioan1}}]
	Two orientations $O_1$ and $O_2$ of $G$ belong to the same connected component of $\mutgraph(G)$ if and only if $\indeg_{O_1}(v)=\indeg_{O_2}(v)$ for all $v\in V$.\qed
\end{proposition}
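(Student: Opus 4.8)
The plan is to prove both implications directly; the combinatorial core is a balancedness (Eulerian) argument for the \emph{disagreement digraph} of the two orientations. For the easy direction, I would first observe that a single cycle reversal does not change the indegree sequence: if $O_2$ is obtained from $O_1$ by reversing the orientation of a cycle $C$, then at each vertex $v$ of $C$ exactly two edges of $C$ are incident to $v$, one entering and one leaving $v$ in $O_1$, and after reversal the entering edge leaves and the leaving edge enters, so $\indeg$ at $v$ is unchanged; at vertices off $C$ nothing changes (a loop is reversed to itself, which changes nothing). Since the edges of $\mutgraph(G)$ are exactly the cycle reversals, induction on the length of a path from $O_1$ to $O_2$ in $\mutgraph(G)$ gives $\indeg_{O_1}=\indeg_{O_2}$.

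For the converse, suppose $\indeg_{O_1}=\indeg_{O_2}$, and let $D\subset E(G)$ be the set of edges on which $O_1$ and $O_2$ disagree. If a loop lies in $D$, reverse it first; so we may assume $D$ contains no loops. The key claim is that the directed graph $\vec D$ obtained by orienting every edge of $D$ according to $O_1$ is \emph{balanced}, i.e.\ every vertex has equal in- and out-degree in $\vec D$. To see this, fix $v\in V$ and split the edges of $G$ incident to $v$ into those on which $O_1,O_2$ agree and those on which they disagree (the latter are the edges of $D$ at $v$). The agreeing edges contribute equally to $\indeg_{O_1}(v)$ and $\indeg_{O_2}(v)$; since these totals coincide, the edges of $D$ at $v$ that enter $v$ in $O_1$ are exactly as numerous as those entering $v$ in $O_2$, and the latter are precisely the edges of $D$ at $v$ leaving $v$ in $O_1$. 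That is exactly balancedness of $\vec D$ at $v$.

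A nonempty balanced digraph contains a directed closed walk, and extracting a simple cycle from it yields a directed simple cycle $C$, which we view as a non-self-intersecting undirected cycle of $G$ carrying the orientation induced by $O_1$ (equivalently, a circuit of $\Mcal_{\GORO}$ oriented by $\alpha(O_1)$, in the notation of Section~\ref{sect:graph}). Since $C\subset D$, the orientation $O_2$ restricts to the opposite orientation of $C$, while $O_1$ and its reversal $O_1'$ along $C$ differ only on $C$; hence $O_1$ and $O_1'$ are joined by a cycle reversal, and $O_1'$ agrees with $O_2$ on strictly more edges. Iterating, $|D|$ strictly decreases, so after finitely many cycle reversals we reach $O_2$, which shows $O_1$ and $O_2$ lie in the same connected component of $\mutgraph(G)$.

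I expect the only real content to be the balancedness claim, which is a short degree count; everything else is routine. The minor nuisances are loops and parallel edges — absorbed as above, since a loop is its own cycle and a parallel pair forms a $2$-cycle, and neither affects the indegree computation — together with the (standard) point that a directed closed walk always contains a genuine simple directed cycle, so that the object we reverse is legitimately a cycle of $G$ in the paper's sense.
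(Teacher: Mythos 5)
The paper does not prove this proposition; it cites Gioan (the \verb|\qed| follows the citation directly), so there is no internal argument to compare against. Your proof is correct and is the standard one: the disagreement digraph is balanced precisely because the two orientations have the same indegree sequence, a nonempty balanced digraph contains a directed simple cycle, and reversing that cycle strictly decreases the number of disagreements, so iterating gives a path from $O_1$ to $O_2$ in $\mutgraph(G)$. This is also, as far as one can tell from the paper's later reference to Gioan's proof in Proposition~\ref{prop:graph_not_G_separated_indegree}, essentially Gioan's own route.
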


\begin{proposition}
	Let $G$ be an undirected graph. The following quantities are equal:
	\begin{itemize}
		\item the number of connected components of $\mutgraph(G)$;
		\item the number of forests of $G$;
		\item the number of independent sets of $\Mcal_\GORO$;
		\item the size of a maximal \emph{by size} $G$-separated collection;
		\item the value $T_G(2,1)$ where $T_G$ is the Tutte polynomial of $G$.
	\end{itemize}
\end{proposition}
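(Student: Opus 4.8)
The plan is to prove the chain of equalities by connecting each quantity to the next one through already-established results. The statement asserts that five numbers coincide: the number of connected components of $\mutgraph(G)$, the number of forests of $G$, the number of independent sets of $\Mcal_\GORO$, the size of a maximal \emph{by size} $G$-separated collection, and the Tutte evaluation $T_G(2,1)$.

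First I would observe that the equality between the number of forests of $G$, the number of independent sets of $\Mcal_\GORO$, and $T_G(2,1)$ is essentially classical and already recorded in the excerpt: it was noted right after the proof of Theorem~\ref{thm:outerplanar} that independent sets of $\Mcal_\GORO$ correspond exactly to forests of $G$, and Proposition~\ref{prop:Tutte} gives $|\Ind(\Mcal)| = T_{\Mcalu}(2,1)$ for the underlying matroid $\Mcalu$; since the underlying matroid of $\Mcal_\GORO$ is the graphic matroid of $G$, we have $T_{\Mcalu} = T_G$. Next, the equality between the number of independent sets of $\Mcal_\GORO$ and the size of a maximal \emph{by size} $G$-separated collection is immediate from Theorem~\ref{thm:max_size_matroid}, which states that every maximal \emph{by size} $\Mcal$-separated collection has size $|\Ind(\Mcal)|$, applied to $\Mcal = \Mcal_\GORO$ (together with the translation between $G$-separated collections of total orientations and $\Mcal_\GORO$-separated collections of subsets established earlier in Section~\ref{sect:graph}). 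So the only genuinely new ingredient needed here is the identification of the number of connected components of $\mutgraph(G)$ with one of these quantities.

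For that last link, I would invoke Gioan's Proposition~\cite[Proposition~4.10]{Gioan1}, quoted just above, which says that two total orientations lie in the same connected component of $\mutgraph(G)$ if and only if they have the same indegree sequence $\indeg_O : V \to \Z$. Thus the connected components of $\mutgraph(G)$ are in bijection with the set of realizable indegree sequences of $G$. The classical fact (a theorem of Stanley, or readily deducible from the matrix--tree / Tutte-polynomial machinery) is that the number of distinct indegree sequences arising from total orientations of $G$ equals $T_G(2,1)$; alternatively, one can count indegree sequences by a deletion--contraction argument matching the Tutte recursion $T_G = T_{G-e} + T_{G/e}$ for $e$ neither a loop nor a bridge, handling loops (which contribute a free choice not affecting the indegree sequence, so the count is unchanged) and bridges (which force the orientation across the bridge but double the count appropriately — actually a bridge multiplies $T_G(2,1)$ by $2$, matching the two choices for orienting it) as base cases, noting that $T$-value of a single loop at $(2,1)$ is $1$ and of a single bridge is $2$. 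This matches the recursion $|\Ind(\Mcal)| = |\Ind(\Mcal-e)| + |\Ind(\Mcal/e)|$ from Proposition~\ref{prop:Tutte}.

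The main obstacle, such as it is, is purely expository: assembling the Gioan result and the indegree-sequence count cleanly without reproving standard Tutte-polynomial identities. Concretely, I expect the write-up to be: (i) cite Theorem~\ref{thm:max_size_matroid} and the forest/independent-set correspondence for three of the equalities; (ii) cite Proposition~\ref{prop:Tutte} plus the fact that the underlying matroid of $\Mcal_\GORO$ is the cycle matroid of $G$ for the $T_G(2,1)$ equality; (iii) for the $\mutgraph(G)$-component count, combine the quoted Gioan proposition with the standard identification of the number of indegree sequences with $T_G(2,1)$, giving a one-line deletion--contraction verification if a reference is not deemed sufficient. No step is expected to present a real difficulty, since all the heavy lifting (Theorem~\ref{thm:max_size_matroid}, Proposition~\ref{prop:Tutte}, Gioan's component criterion) is already available.
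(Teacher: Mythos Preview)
Your proposal is correct and follows essentially the same route as the paper: both use Theorem~\ref{thm:max_size_matroid} and Proposition~\ref{prop:Tutte} for the equalities among forests, independent sets, $T_G(2,1)$, and the maximal collection size, and both invoke Gioan's Proposition~4.10 to identify components of $\mutgraph(G)$ with indegree sequences. The only minor difference is that for the count of indegree sequences the paper cites Gioan's Corollary~4.11 directly (which already states this number equals $T_G(2,1)$), whereas you propose to supply a deletion--contraction verification; either works.
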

\begin{proof}
	The fact that the number of indegree sequences equals $T_G(2,1)$ is given in~\cite[Corollary~4.11]{Gioan1} and the rest follows from various known interpretations of $T_G(2,1)$ combined with our results from Section~\ref{sect:max_by_sz}.
\end{proof}

Our next result follows from the proof of~\cite[Proposition~4.10]{Gioan1}:
\begin{proposition}\label{prop:graph_not_G_separated_indegree}
	If two orientations $O_1$ and $O_2$ of $G$ belong to the same connected component of $\mutgraph(G)$ then $O_1$ and $O_2$ are not $G$-separated.
\end{proposition}
\begin{proof}
	Indeed, as Gioan shows in the proof of~\cite[Proposition~4.10]{Gioan1}, if $O_1$ and $O_2$ have the same indegree sequence then there is a cycle in $G$ that they orient in the opposite ways. This finishes the proof.
\end{proof}

Thus we have the following situation: every maximal \emph{by size} $G$-separated collection $\WS$ of total orientations has $T_G(2,1)$ elements, there are $T_G(2,1)$ connected components in $\mutgraph(G)$ and each connected component contains at most one element of $\WS$. Thus each component contains exactly one element of $\WS$. 
\begin{corollary}
	Conjecture~\ref{conj:domains} is valid for graphical oriented matroids.
\end{corollary}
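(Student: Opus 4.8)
The plan is to combine the two facts assembled immediately before the corollary into a short counting argument over the connected components of the mutation graph.

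First I would recall the dictionary between the two incarnations of the mutation graph. Under the bijection $\alpha$ between total orientations of $G$ and subsets of the edge set $E$, a cycle reversal corresponds exactly to a mutation, so $\mutgraph(G)$ is identified with $\mutgraph(\Mcal_{\GORO})$; consequently (Definition~\ref{dfn:mutation_graph_intro} transported along $\alpha$) a mutation-closed domain $\dom\subset 2^E$ is precisely a union of connected components of $\mutgraph(G)$. Fix such a $\dom$ and let $m$ be the number of connected components of $\mutgraph(G)$ contained in $\dom$.

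Next I would invoke three ingredients, all available above: (i) every maximal \emph{by size} $\Mcal_{\GORO}$-separated collection $\WS\subset 2^E$ has exactly $T_G(2,1)$ elements (Theorem~\ref{thm:max_size_matroid} together with the Tutte-polynomial interpretation of $|\Ind(\Mcal_{\GORO})|$); (ii) $\mutgraph(G)$ has exactly $T_G(2,1)$ connected components; and (iii) by Proposition~\ref{prop:graph_not_G_separated_indegree}, any two orientations lying in the same connected component of $\mutgraph(G)$ fail to be $G$-separated, so any $\Mcal_{\GORO}$-separated collection meets each component in at most one element. From (i)--(iii) it follows that a maximal \emph{by size} collection $\WS$ meets each of the $T_G(2,1)$ components in \emph{exactly} one element; in particular $|\WS\cap\dom|=m$, and $\WS\cap\dom$ is $\Mcal$-separated, being a subcollection of $\WS$.

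Finally I would close with the matching upper bound: any $\Mcal_{\GORO}$-separated collection $\WS'\subset\dom$ meets each of the $m$ components contained in $\dom$ in at most one element by (iii), whence $|\WS'|\le m=|\WS\cap\dom|$. Therefore $\WS\cap\dom$ is a maximal \emph{by size} $\Mcal$-separated collection inside $\dom$, which is exactly the assertion of Conjecture~\ref{conj:domains} for $\Mcal=\Mcal_{\GORO}$. I do not expect a genuine obstacle here; the only points that need a line of justification are the equivalence ``mutation-closed domain $\Longleftrightarrow$ union of connected components of $\mutgraph(G)$'' and the observation that a maximal \emph{by size} $\WS$ hits every connected component exactly once, both of which are immediate from the preceding discussion.
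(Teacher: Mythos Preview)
Your proposal is correct and follows essentially the same counting argument as the paper: both use that a maximal \emph{by size} collection has $T_G(2,1)$ elements, that $\mutgraph(G)$ has $T_G(2,1)$ components, and that by Proposition~\ref{prop:graph_not_G_separated_indegree} any $G$-separated collection meets each component at most once, so a maximal one meets each component exactly once and hence its restriction to $\dom$ attains the upper bound $m$ for collections inside $\dom$. One tiny remark: the equivalence ``mutation-closed domain $\Longleftrightarrow$ union of connected components of $\mutgraph(\Mcal)$'' is the very definition (Definition~\ref{dfn:mutation_graph_intro}), so it requires no separate justification.
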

\begin{proof}
	Indeed, for any mutation-closed domain $\dom$ consisting of $k$ connected components of $\mutgraph(G)$, any $G$-separated collection inside $\dom$ contains at most $k$ elements, but the restrictions of maximal \emph{by size} $G$-separated collections inside $2^E$ to $\dom$ contain exactly $k$ elements. 
\end{proof}

\begin{corollary}
	An undirected graph $G$ is outerplanar  if and only if for any mutation-closed domain $\dom$, every maximal \emph{by inclusion} $G$-separated collection of total orientations is also maximal \emph{by size}.
\end{corollary}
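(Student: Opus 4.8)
The plan is to derive this corollary directly from Theorem~\ref{thm:outerplanar} together with the corollary just proved, which establishes that Conjecture~\ref{conj:domains} holds for graphical oriented matroids. Throughout I would identify total orientations of $G$ with subsets of $E$ via the bijection $\alpha$, so that mutation-closed domains for $G$ (unions of connected components of $\mutgraph(G)$) correspond to mutation-closed domains for $\Mcal_{\GORO}$, and a $G$-separated collection of total orientations inside a domain $\dom$ corresponds to an $\Mcal_{\GORO}$-separated collection inside $\dom$; in particular ``maximal by size'' is always understood relative to $\dom$.

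For the forward direction, suppose $G$ is outerplanar, fix an orientation $O$, and set $\Mcal := \Mcal_{\GORO}$, which is pure by Theorem~\ref{thm:outerplanar}. Given a mutation-closed domain $\dom$ and a maximal \emph{by inclusion} $\Mcal$-separated collection $\WS$ inside $\dom$, purity of $\Mcal$ lets me extend $\WS$ to a maximal \emph{by inclusion}, hence (by Theorem~\ref{thm:max_size_matroid}) maximal \emph{by size}, $\Mcal$-separated collection $\WS'$ inside $2^E$. The corollary above (Conjecture~\ref{conj:domains} for graphical oriented matroids) then gives that $\WS' \cap \dom$ is maximal \emph{by size} inside $\dom$; since $\WS \subseteq \WS' \cap \dom$ and $\WS$ was maximal \emph{by inclusion} inside $\dom$, I conclude $\WS = \WS' \cap \dom$, so $\WS$ is maximal \emph{by size} inside $\dom$. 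This is exactly the asserted property; it is the same argument already used to show that Conjecture~\ref{conj:domains} implies Conjecture~\ref{conj:mutation_closed_pure}.

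For the reverse direction, I would simply observe that $2^E$ itself is a mutation-closed domain. Specializing the hypothesis to $\dom = 2^E$ says precisely that every maximal \emph{by inclusion} $G$-separated collection of total orientations is maximal \emph{by size}, i.e., that $G$ is pure, and Theorem~\ref{thm:outerplanar} then forces $G$ to be outerplanar. Since both implications reduce to results already established, I do not anticipate a genuine obstacle; the one point requiring care is the bookkeeping in translating between the graph-theoretic language (total orientations, cycle reversals) and the oriented-matroid language (subsets of $E$, mutations) so that the domain $\dom$ is handled consistently on both sides and ``maximal by size inside $\dom$'' is not confused with ``maximal by size inside $2^E$''.
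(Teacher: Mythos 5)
Your proof is correct and follows exactly the chain of reasoning the paper intends: Theorem~\ref{thm:outerplanar} for the purity equivalence, the preceding corollary establishing Conjecture~\ref{conj:domains} for graphical matroids, and the general argument (already in the paper) that Conjecture~\ref{conj:domains} implies Conjecture~\ref{conj:mutation_closed_pure}; the reverse direction via $\dom = 2^E$ is also the standard specialization. One small nit: in the forward direction, the parenthetical citation of Theorem~\ref{thm:max_size_matroid} is misplaced — extending $\WS$ to a maximal by inclusion collection $\WS'$ needs no theorem, and the step ``$\WS'$ is maximal by size'' follows directly from the definition of purity, not from Theorem~\ref{thm:max_size_matroid}.
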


\begin{remark}
	Several of our results for the graphical case can be found in~\cite{BBY}. For example, Proposition~\ref{prop:graph_not_G_separated_indegree} is analogous to~\cite[Proposition~3.3.3]{BBY}. One major difference is that in~\cite{BBY}, the authors consider only \emph{coherent} colocalizations (see the next section for the definition) which correspond to \emph{regular} zonotopal tilings. Another difference is that the main focus of~\cite{BBY} as well as~\cite{Gioan1} are \emph{cycle-cocycle reversal} classes rather than cycle reversal classes that we consider. The latter correspond to the vertices of the zonotopal tiling while the former correspond to the top-dimensional tiles of the same tiling.
\end{remark}

\def\ptp{P}
\def\vect{x}
We now would like to show that the connected components of $\mutgraph(G)$ are actually $1$-skeleta of polytopes. Choose some reference orientation $O$ of $G$. To every total orientation $O'$ of $G$ we now associate a vector $\vect=\vect_{O'}\in\R^E$ as follows. For every edge $e\in E$, we put 
\begin{equation}\label{eq:vect_O}
\begin{cases}
	\vect(e)=+1,&\text{if $O$ and $O'$ agree on $e$};\\
	\vect(e)=-1,&\text{otherwise.}
\end{cases}
\end{equation}
 For a collection $\dom$ of total orientations of $G$, we define $\ptp(\dom)\subset\R^E$ to be the convex hull of $\vect_{O'}$ for all $O'\in \dom$. Since this polytope is contained in the cube $\Conv(\{+1,-1\}^E)\subset\R^E$ and for any total orientation $O'$, $\vect_{O'}$ is a vertex of this cube, it follows that for every orientation $O'\in \dom$, $\vect_{O'}$ is a vertex of $\ptp(\dom)$.

\def\vecty{y}
\begin{proposition}\label{prop:polytopal}
Let $\dom$ be a single connected component of $\mutgraph(G)$. Then two orientations $O_1,O_2\in\dom$ are connected by a cycle reversal if and only if $\vect_{O_1}$ and $\vect_{O_2}$ form an edge of $\ptp(\dom)$. Thus the restriction of $\mutgraph(G)$ to $\dom$ is the $1$-skeleton of $\ptp(\dom)$.
\end{proposition}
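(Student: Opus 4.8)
The plan is to recast the statement in terms of cycle reversals and then, in one direction, supply a supporting linear functional, and in the other, exhibit a point in the relative interior of $[\vect_{O_1},\vect_{O_2}]$ that also lies in the convex hull of two other vertices of $\ptp(\dom)$. To set up, recall from \cite[Proposition~4.10]{Gioan1} that all total orientations in $\dom$ have the same indegree sequence; hence for any $O_1,O_2\in\dom$, the set $D\subset E$ of edges on which they disagree, oriented according to $O_1$, is balanced (in-degree equals out-degree at every vertex), so $D$ decomposes as an edge-disjoint union $C_1\sqcup\dots\sqcup C_k$ of cycles of $G$, each coherently oriented by $O_1$ and, in the opposite direction, by $O_2$. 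In this language $O_1$ and $O_2$ differ by a cycle reversal exactly when $k=1$ (and $k=0$ means $O_1=O_2$). So it suffices to prove that $\vect_{O_1}$ and $\vect_{O_2}$ form an edge of $\ptp(\dom)$ if and only if $k=1$; assume $k\ge 1$ from now on.

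If $k\ge 2$, I would reverse $C_1$ in $O_1$ to get an orientation $O_3$, and reverse $C_2,\dots,C_k$ successively in $O_1$ (legitimate since these cycles are pairwise edge-disjoint) to get $O_4$; both belong to $\dom$ because $\dom$ is a full connected component of $\mutgraph(G)$. An edge-by-edge check using~\eqref{eq:vect_O} yields the identity $\vect_{O_1}+\vect_{O_2}=\vect_{O_3}+\vect_{O_4}$, and since $O'\mapsto\vect_{O'}$ is injective and $k\ge 2$, the four points are pairwise distinct (e.g.\ $\vect_{O_3}=\vect_{O_2}$ would force $C_1=D$). Hence the midpoint of $[\vect_{O_1},\vect_{O_2}]$ also lies on the segment $[\vect_{O_3},\vect_{O_4}]\subset\ptp(\dom)$, so if $[\vect_{O_1},\vect_{O_2}]$ were a face of $\ptp(\dom)$ it would have to contain $\vect_{O_3}$ and $\vect_{O_4}$ — which it does not. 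Thus $[\vect_{O_1},\vect_{O_2}]$ is not a face, hence not an edge.

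If $k=1$, write $D=C$ and use the linear functional $f(y):=\sum_{e\in E\setminus C}\vect_{O_1}(e)\,y(e)$ on $\R^E$. For any vertex $\vect_{O'}$ of $\ptp(\dom)$ one has $f(\vect_{O'})\le |E\setminus C|$, with equality iff $O'$ agrees with $O_1$ on every edge outside $C$; and an $O'\in\dom$ agreeing with $O_1$ outside $C$ differs from $O_1$ only on a subset of the simple cycle $C$, which by the balanced-subgraph observation above must be $\varnothing$ or $C$, so $O'\in\{O_1,O_2\}$ — and both of these attain the value $|E\setminus C|$. Therefore the face of $\ptp(\dom)$ where $f$ is maximized is $\Conv\{\vect_{O_1},\vect_{O_2}\}$, which is $1$-dimensional since $\vect_{O_1}\ne\vect_{O_2}$. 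This proves the proposition, and the statement about the $1$-skeleton of $\ptp(\dom)$ follows at once. The step I expect to require the most care is the fiddly-but-elementary bookkeeping common to both halves: that a balanced subgraph of a simple cycle is empty or the whole cycle, that $O_3,O_4$ genuinely lie in $\dom$ and give new vertices, and that the face-versus-midpoint argument leaves no gap.
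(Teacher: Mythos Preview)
Your proof is correct and follows essentially the same approach as the paper. Your linear functional $f$ coincides with the paper's functional $\lambda$ (since $\lambda(e)=\vect_{O_1}(e)$ off $C$ and $0$ on $C$), and for $k\ge 2$ your midpoint argument with $O_3,O_4$ is a concrete version of the paper's assertion that $[\vect_{O_1},\vect_{O_2}]$ lies in a $k$-dimensional face obtained by reversing the $C_i$ independently; if anything, your treatment of both directions is slightly more detailed than the paper's.
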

\begin{proof}
	Note that $\ptp(\dom)$ lies in the affine subspace $W$ of $\R^E$ consisting of total orientations with fixed indegree. More precisely, define the linear map $\phi:\R^E\to\R^V$ by $\phi(e)=\vecty_v$ where $e$ is directed in $O$ towards $v$ and $(\vecty_u)_{u\in V}$ is a basis of $\R^V$. Then fixing the indegree corresponds to taking the preimage of a single point under $\phi$ which yields an affine subspace of $\R^E$. Now, we claim that every edge of $\mutgraph(G)$ restricted to $\dom$ is a one-dimensional face of $\ptp(\dom)$ and vice versa. Take any edge of $\mutgraph(G)$ connecting two orientations $O_1$ and $O_2$. Then there is a unique cycle $C$ of $G$ where $O_1$ and $O_2$ disagree. One easily observes that the line segment 
	\[[\vect_{O_1},\vect_{O_2}]:=\{t\vect_{O_1}+(1-t)\vect_{O_2}\mid 0\leq t\leq 1\}\]
	maximizes the linear function $\l$ on $\R^E$ defined as follows: for $e\in E$, put $\l(e)\in\{+1,0,-1\}$ to be 
	\[\l(e)= \begin{cases}
	         	+1,&\text{if $\vect_{O_1}(e)=\vect_{O_2}(e)=+1$;}\\
	         	-1,&\text{if $\vect_{O_1}(e)=\vect_{O_2}(e)=-1$;}\\
	         	0,&\text{otherwise.}\\
	         \end{cases}\]
	This shows that every edge of $\mutgraph(G)$ is an edge of $\ptp(\dom)$. To show the converse, suppose that $O_1,O_2\in\dom$ are such that the line segment $[\vect_{O_1},\vect_{O_2}]$ is an edge of $\ptp(\dom)$. Since the indegrees of $O_1$ and $O_2$ are the same, the set of edges of $G$ where they disagree is a union of several cycles $C_1,C_2,\dots,C_k$. If it consists of just one cycle then we are done, and otherwise we get that $[\vect_{O_1},\vect_{O_2}]$ belongs to a $k$-dimensional face of $G$ spanned by the edges of $\ptp(\dom)$ corresponding to reversing just a single cycle $C_i$ for $1\leq i\leq k$. This finishes the proof.
\end{proof}

\subsection{Enumerating maximal $G$-separated collections}
In this section, we show that for some outerplanar graphs $G$, the total number of maximal (by size or by inclusion) $G$-separated collections is given by a simple multiplicative formula. 

Let $G$ be a triangulation of an $(n+2)$-gon. Recall from Definition~\ref{dfn:tree} that for each such graph $G$ there is an associated tree $T=T(G)$ on $n$ vertices such that the cycles of $G$ correspond to connected subgraphs of $T$. The former set is denoted $\Cyc(G)$ and the latter set is denoted $\Conn(T)$. 

\def\leaf{\ell}
For each $a,b\geq 0$, define the tree $T_{a,b}$ with $a+b+2$ vertices to be the path of length $a+b+1$ on the vertices $(-a,-a+1,\dots,-1,0,1,\dots,b-1,b)$ with an extra leaf $\leaf$ attached to $0$. An example of $T_{2,5}$ (also known as the Dynkin diagram of affine type $\hat E_8$) together with the corresponding outerplanar graph $G$ is shown in Figure~\ref{fig:triangulation}. When $a=0$ or $b=0$ then $T_{a,b}$ is just a path. Note that this family of trees includes all $ADE$ Dynkin diagrams of finite type as well as some other graphs.

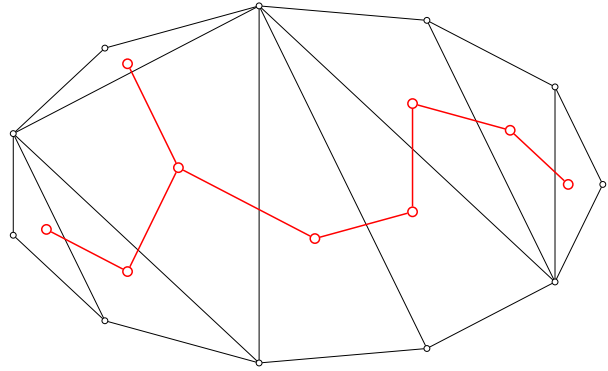
\begin{figure}
	
	\scalebox{0.4}{
	\begin{tikzpicture}[xscale=10,yscale=6]
		\node[draw,circle,scale=0.5] (A) at ({0*360/11}:1) {};
		\node[draw,circle,scale=0.5] (B) at ({1*360/11}:1) {};
		\node[draw,circle,scale=0.5] (C) at ({2*360/11}:1) {};
		\node[draw,circle,scale=0.5] (D) at ({3*360/11}:1) {};
		\node[draw,circle,scale=0.5] (E) at ({4*360/11}:1) {};
		\node[draw,circle,scale=0.5] (F) at ({5*360/11}:1) {};
		\node[draw,circle,scale=0.5] (G) at ({6*360/11}:1) {};
		\node[draw,circle,scale=0.5] (H) at ({7*360/11}:1) {};
		\node[draw,circle,scale=0.5] (I) at ({8*360/11}:1) {};
		\node[draw,circle,scale=0.5] (J) at ({9*360/11}:1) {};
		\node[draw,circle,scale=0.5] (K) at ({10*360/11}:1) {};
		\draw (A) -- (B) -- (C) -- (D) -- (E) -- (F) -- (G) -- (H) -- (I) -- (J) -- (K) -- (A) ;
		\draw (B)--(K)--(C);
		\draw (J)--(D)--(I)--(F)--(H);
		\draw (F)--(D);
		\draw (K) -- (D);
		\coordinate (ABK) at ($.33*(A)+.33*(B)+.33*(K)$);
		\coordinate (BKC) at ($.33*(B)+.33*(K)+.33*(C)$);
		\coordinate (KCJ) at ($.33*(K)+.33*(C)+.33*(D)$);
		\coordinate (CJD) at ($.33*(K)+.33*(J)+.33*(D)$);
		\coordinate (DJI) at ($.33*(D)+.33*(J)+.33*(I)$);
		\coordinate (DIF) at ($.33*(D)+.33*(I)+.33*(F)$);
		\coordinate (DFE) at ($.33*(D)+.33*(F)+.33*(E)$);
		\coordinate (FIH) at ($.33*(F)+.33*(I)+.33*(H)$);
		\coordinate (FHG) at ($.33*(F)+.33*(H)+.33*(G)$);
		\node[draw,circle,scale=0.8,red,line width=0.5mm] (u1) at (ABK) {};
		\node[draw,circle,scale=0.8,red,line width=0.5mm] (u2) at (BKC) {};
		\node[draw,circle,scale=0.8,red,line width=0.5mm] (u3) at (KCJ) {};
		\node[draw,circle,scale=0.8,red,line width=0.5mm] (u4) at (CJD) {};
		\node[draw,circle,scale=0.8,red,line width=0.5mm] (u5) at (DJI) {};
		\node[draw,circle,scale=0.8,red,line width=0.5mm] (u6) at (DIF) {};
		\node[draw,circle,scale=0.8,red,line width=0.5mm] (u7) at (DFE) {};
		\node[draw,circle,scale=0.8,red,line width=0.5mm] (u8) at (FIH) {};
		\node[draw,circle,scale=0.8,red,line width=0.5mm] (u9) at (FHG) {};
		\draw[red,line width=0.5mm] (u1) -- (u2) -- (u3) -- (u4) -- (u5) -- (u6) -- (u8) -- (u9);
		\draw[red,line width=0.5mm] (u6) -- (u7);
	\end{tikzpicture}}

\caption{\label{fig:triangulation}A triangulation $G$ of an $11$-gon is shown in black. The corresponding tree $T(G)=T_{2,5}=\hat E_8$ is shown in red.}

\end{figure}

The motivation for the following definition will be clear later, see Proposition~\ref{prop:coherent}.
\begin{definition}
	We say that $G$ is \emph{all-coherent} if there exist $a,b\geq 0$ such that $T(G)=T_{a,b}$.
\end{definition}

The main result of this section is the following enumeration of maximal $G$-separated collections:
\begin{theorem}\label{thm:coherent}
	Let $G$ be a triangulation of an $n+2$-gon and suppose that $G$ is all-coherent: $T(G)=T_{a,b}$ for some $a,b\geq 0$ with $n=a+b+2$. Then the number of maximal (by size $=$ by inclusion) $G$-separated collections equals 
	\begin{equation}\label{eq:coherent}
		2(n+1)\frac{n!}{(n-a)!}\frac{n!}{(n-b)!}.
	\end{equation}
      \end{theorem}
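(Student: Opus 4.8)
The plan is to translate maximal $G$-separated collections into chambers of an explicit real hyperplane arrangement whose characteristic polynomial factors completely. Since a triangulation of a polygon is outerplanar, $\Mcal_{\GORO}$ is pure by Theorem~\ref{thm:outerplanar}, so a $G$-separated collection of total orientations is maximal \emph{by inclusion} iff it is maximal \emph{by size}; by Theorem~\ref{thm:max_size_matroid} these are in bijection with colocalizations of $\Mcal_{\GORO}$ in general position, i.e.\ with $G$-colocalizations $\coloc\colon\Cyc(G)\to\{+,-\}$ (Definition~\ref{dfn:graph_coloc}). The hypothesis enters through Proposition~\ref{prop:coherent}: when $T(G)=T_{a,b}$, every $G$-colocalization is \emph{coherent}, hence of the form $\coloc(C)=\operatorname{sign}\langle\lambda^{C},\omega\rangle$ for a generic weight vector $\omega\in\R^{E(G)}$, where $\lambda^{C}\in\{0,\pm1\}^{E(G)}$ is the incidence vector of the cycle $C$ (with a fixed normalization). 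Therefore the number to be computed is the number of chambers of the arrangement $\mathcal A(G)=\{\ker\langle\lambda^{C},\,\cdot\,\rangle:C\in\Cyc(G)\}$, whose lineality space is the space of ``potential'' weights $\omega_{ij}=y_{j}-y_{i}$ and which is therefore essential in $\R^{n}$, $n=\cork(\Mcal_{\GORO})=a+b+2$.

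To make $\mathcal A(G)$ explicit I would use the bijection $\tau$ between $\Conn(T)$ and $\Cyc(G)$ (with $T:=T(G)$) and pass from $\omega$ to its deviation from being a potential along a convenient spanning choice of edges; one checks that this identifies $\mathcal A(G)$ with the ``tree arrangement''
\[
\mathcal B_{T}=\Big\{\ \textstyle\sum_{v\in W}\beta_{v}=0\ :\ W\in\Conn(T)\ \Big\}\subset\R^{V(T)}=\R^{n}.
\]
When $b=0$ (a fan triangulation) $T$ is a path, the subsets $W$ are its intervals, and passing to partial sums turns $\mathcal B_{T}$ into the braid arrangement on $n+1$ coordinates, already accounting for the $(n+1)!$ collections obtained in that case. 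For $a,b\ge 1$ the only hyperplanes of $\mathcal B_{T_{a,b}}$ beyond the ``interval'' ones of the path $[-a,b]$ are the ones $\sum_{v\in W}\beta_{v}=0$ with $\ell\in W$. (Because this description is purely combinatorial, one may equivalently argue throughout with the no-bad-triple condition of Definition~\ref{dfn:bad_triple} on maps $\coltree\colon\Conn(T_{a,b})\to\{+,-\}$.)

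Finally I would compute $\chi_{\mathcal B_{T_{a,b}}}$ by the finite field method: counting $\Fbb_{q}$-points off all hyperplanes, the interval constraints of the path $[-a,b]$ force the $n$ partial sums $\Gamma_{-a-1}=0,\Gamma_{-a},\dots,\Gamma_{b}$ to be pairwise distinct, after which $\beta_{\ell}$ must avoid $0$ together with the $(a+1)(b+1)$ differences $\Gamma_{i-1}-\Gamma_{j}$, $-a\le i\le 0\le j\le b$; tracking the coincidences among these (which split into a left-leg and a right-leg contribution) gives
\[
\chi_{\mathcal B_{T_{a,b}}}(t)=(t-1)(t-n)\prod_{i=n-a}^{n-1}(t-i)\prod_{j=n-b}^{n-1}(t-j),
\]
a product of $a+b+2=n$ linear factors that reduces to $(t-1)(t-2)\cdots(t-n)$ when $b=0$. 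By Zaslavsky's theorem the number of chambers equals
\[
|\chi_{\mathcal B_{T_{a,b}}}(-1)|=2(n+1)\prod_{i=n-a}^{n-1}(i+1)\prod_{j=n-b}^{n-1}(j+1)=2(n+1)\,\frac{n!}{(n-a)!}\,\frac{n!}{(n-b)!},
\]
which is~\eqref{eq:coherent}.

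The main obstacle is this last step: carrying out the point count (equivalently, a deletion--restriction that peels leg endpoints one triangle at a time) cleanly enough to exhibit the factorization, i.e.\ showing that $\mathcal B_{T_{a,b}}$ is supersolvable with exponent multiset $\{1,\,n,\,n-a,\dots,n-1,\,n-b,\dots,n-1\}$. The key structural point making this possible is that the pendant triangle $t_{\ell}$ attaches at the unique branch vertex of $T_{a,b}$, so the constraints involving $\ell$ decouple into independent contributions from the two legs, producing the two falling-factorial factors; verifying that every restriction in the recursion stays within the family $\{\mathcal B_{T_{a',b'}}\}$ is the technical heart of the argument.
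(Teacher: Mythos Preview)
Your approach is essentially the paper's: both reduce, via purity (Theorem~\ref{thm:outerplanar}), Theorem~\ref{thm:max_size_matroid}, and Proposition~\ref{prop:coherent}, to counting regions of the secondary arrangement of $\Mcal_{\GORO}$, and your tree arrangement $\mathcal{B}_{T}$ is exactly that arrangement written in vertex-of-$T$ coordinates $\beta_v=\lambda(v)$. The only divergence is in the final count. You propose computing $\chi_{\mathcal{B}_{T_{a,b}}}$ directly by the finite field method (or by showing supersolvability) and then applying Zaslavsky. The paper instead performs one more change of variables---to the partial sums $x_i=\Gamma_i$ along the path together with $z=\beta_\ell$---obtaining the explicit arrangement $\Acal_{a,b}$ of~\eqref{eq:Acal}, recognizes it as a cone over a \emph{graphical Shi arrangement}, and invokes Athanasiadis~\cite[Theorem~3.4]{Ath2} for the region count. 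Your claimed factorization of the characteristic polynomial is exactly what that theorem yields, so the ``main obstacle'' you flag (handling the coincidences among the $(a+1)(b+1)$ forbidden values for $\beta_\ell$) is precisely the content of the cited result; spotting the Shi structure lets the paper outsource that computation rather than redo it.
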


\begin{example}
Taking $G$ to be a triangulation of a square yields $T(G)=T_{a,b}$ for $a=b=0$ and $n=2$. Thus $T_{a,b}$ consists of two vertices and there are eight possible maps $\Conn(T)\to\{+,-\}$. Six of them give $G$-colocalizations while the other two have a bad triple. Each of the six $G$-colocalizations corresponds to a maximal $G$-separated collection, in agreement with~\eqref{eq:coherent}.
\end{example}

\begin{proof}
	We will show that if $G$ is all-coherent then the number of maximal $G$-separated collections equals to the number of regions of a certain \emph{hyperplane arrangement} (see~\cite{Stanley} for the background)  called the \emph{secondary arrangement}.
	
	\def\second{\Acal}
	\begin{definition}
		Let $\Mcal$ be an oriented matroid with ground set $E$ and let $\sigma:\Ccal(\Mcal)\to\{+,-,0\}^E$ be a colocalization in general position. We say that $\sigma$ is \emph{coherent} if there exists a function $\l:E\to \R$ such that for any circuit $C=(C^+,C^-)$ of $\Mcal$, we have $\l(C)\neq 0$ and $\sigma(C)=+$ if and only if $\l(C)>0$. Here 
		\[\l(C)=\sum_{e\in C^+} \l(e)-\sum_{e\in C^-} \l(e).\]
		Let  $\second(\Mcal)$ be the arrangement of hyperplanes in $\R^E=\{\l:E\to\R\}$ given by
		\[\sum_{e\in C^+} \l(e)=\sum_{e\in C^-} \l(e),\]
		where $C=(C^+,C^-)$ runs over the set of all circuits of $\Mcal$.
              \end{definition}
\def\third{\second'}
Thus coherent colocalizations of $\Mcal$ correspond to the regions of $\second(\Mcal)$. If $\Mcal$ is realized by some vector configuration $\VC$ then one can define the \emph{secondary arrangement} $\third(\VC)$ as follows: for any circuit $C=(C^+,C^-)$ of $\Mcal$, there are unique (up to a multiplication by a common positive scalar) positive real numbers $\alpha_e$, $e\in \Cu$ such that

\[\sum_{e\in C^+} \alpha_e \v_e=\sum_{e\in C^-} \alpha_e \v_e.\]
The arrangement $\third(\Mcal)$ in $\R^E=\{\l:E\to\R\}$ consists of the hyperplanes
\[\sum_{e\in C^+} \alpha_e  \l(e)=\sum_{e\in C^-}  \alpha_e \l(e).\]
The colocalizations of $\Mcal$ in general position correspond to zonotopal tilings of $\Zon_\VC$ by the Bohne-Dress theorem and the coherent colocalizations correspond to the regions of $\second(\Mcal)$. The regions of the secondary arrangement $\third(\Mcal)$ correspond to the \emph{coherent} (or \emph{regular}) zonotopal tilings of $\Zon_\VC$. Note that when $\Mcal$ is graphical, we have $\alpha_e=1$ for all $e$ so the arrangements $\second(\Mcal)$ and $\third(\Mcal)$ actually coincide. Secondary arrangements are closely related to \emph{secondary polytopes} of Gel'fand-Kapranov-Zelevinsky~\cite{GKZ}.
	
	\begin{proposition}\label{prop:coherent}
		Let $G$ be a triangulation of an $n+2$-gon. Then $G$ is all-coherent (i.e., $T(G)=T_{a,b}$) if and only if every $G$-colocalization is coherent.
	\end{proposition}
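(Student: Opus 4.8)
The plan is to translate coherence into a statement about the dual tree $T=T(G)$ and then prove the two implications separately; the backward one will be the hard part.

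First I would reformulate coherence using \emph{potentials}. Fix a reference orientation $O$ of $G$. A weight function $\l\colon E(G)\to\R$ pairs with a cycle $C$ to give $\l(C)=\sum_{e\in C^+}\l(e)-\sum_{e\in C^-}\l(e)$, and this pairing depends only on the class of $\l$ in $C^1(G;\R)/B^1(G;\R)$. For a triangulation of a polygon this quotient is canonically $\R^{V(T)}$: the boundary $3$-cycles $\partial t_v$ of the triangles $t_v$, $v\in V(T)$, oriented clockwise, form a basis of the cycle space of $G$, and a discrete Stokes computation together with the convention ``$O_+(C)$ is clockwise'' gives $\l(C)=\sum_{v\in\tau^{-1}(C)}y_v$ with $y_v:=\l(\partial t_v)$, where $\tau\colon\Conn(T)\to\Cyc(G)$ is the bijection of Definition~\ref{dfn:tree}. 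So, writing $\coltree_y(S):=\operatorname{sign}\bigl(\sum_{v\in S}y_v\bigr)$ for generic $y\in\R^{V(T)}$, a $G$-colocalization $\coloc$ --- equivalently the no-bad-triple map $\coltree=\coloc\circ\tau$ on $\Conn(T)$ (see Definition~\ref{dfn:bad_triple}) --- is coherent exactly when $\coltree=\coltree_y$ for some generic $y$. Note $\coltree_y$ automatically has no bad triples (the tree analogue of ``a regular subdivision is a subdivision'': if $\sum_{S_2}y>0$, then $\sum_{S_1}y$ and $\sum_{S_3}y$ cannot both be negative). Thus the proposition becomes: the map $y\mapsto\coltree_y$ surjects onto all no-bad-triple maps on $\Conn(T)$ if and only if $T=T_{a,b}$ for some $a,b\ge0$.

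For the forward implication, suppose $T=T_{a,b}$, realized as the path on $\{-a,\dots,0,\dots,b\}$ with a pendant leaf $\ell$ at $0$; its connected subtrees are the intervals $[i,j]\subseteq[-a,b]$, the starred intervals $[i,j]_\ell:=[i,j]\cup\{\ell\}$ with $i\le0\le j$, and $\{\ell\}$. Given a no-bad-triple map $\coltree$, I would first realize its restriction to the intervals $[i,j]$: this is a no-bad-triple map on a path, so by the corank-$2$ mechanism of Section~\ref{sect:altn} (bad-triple-freeness forces the comparisons $\coltree([i,j])$ to be transitive, and every linear order is realizable) there are reals $p_{-a-1}=0,p_{-a},\dots,p_b$ with $\coltree([i,j])=\operatorname{sign}(p_j-p_{i-1})$; set $y_k:=p_k-p_{k-1}$. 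It then remains to pick $y_\ell$ so that $\coltree([i,j]_\ell)=\operatorname{sign}(y_\ell+p_j-p_{i-1})$ and $\coltree(\{\ell\})=\operatorname{sign}(y_\ell)$, i.e.\ to separate by a single threshold the values $p_{i-1}-p_j$ whose starred interval is labelled $+$ from those labelled $-$. Applying the no-bad-triple condition to the Las Vergnas triples $([i,j']_\ell,[i,j]_\ell,[j'+1,j])$, $([i',j]_\ell,[i,j]_\ell,[i,i'-1])$ and $(\{0\},[0,0]_\ell,\{\ell\})$ shows that $\coltree([i,j]_\ell)$ is monotone in the partial order where $[i,j]_\ell$ lies below $[i',j']_\ell$ whenever $p_{i-1}\le p_{i'-1}$ and $p_j\ge p_{j'}$; so one can perturb the $p_k$ within their linear order and choose $y_\ell$ to make $\coltree=\coltree_y$ everywhere.

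For the backward implication, I would first note that the property ``$\Conn(T)$ carries a no-bad-triple map not of the form $\coltree_y$'' passes from a connected subtree $T'\subseteq T$ up to $T$: by the tree form of Lemma~\ref{lemma:coloc_exists} a no-bad-triple map on $\Conn(T')$ extends to one, $\coltree$, on $\Conn(T)$, and a potential realizing $\coltree$ would restrict to a potential realizing the map on $\Conn(T')$. Since the dual tree of a triangulation has maximum degree $\le3$, any tree not of the form $T_{a,b}$ contains, as a connected subtree, one of the minimal ``bad'' trees: the tripod $Y$ with three legs of length $2$, or, for some $m\ge1$, the tree $B_m$ with two degree-$3$ vertices at distance $m$, each carrying two pendant leaves. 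For each of these it is enough to exhibit a no-bad-triple map $\coltree$ together with two Las Vergnas triples $(S_1,S_2,S_3)$, $(S_1',S_2,S_3')$ with a common middle $S_2$ such that $\coltree(S_1)=\coltree(S_3)=+$ and $\coltree(S_1')=\coltree(S_3')=-$; a potential $y$ would then be forced to satisfy both $\sum_{S_2}y>0$ and $\sum_{S_2}y<0$. This is exactly the cyclic obstruction used for $K_4$ in the proof of Theorem~\ref{thm:outerplanar}, translated to $T$; equivalently, one is producing a non-regular fine zonotopal tiling of the graphical zonotope $\Zon_{\VC_{\vec G}}$.

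The hard part will be the backward implication for the infinite family $B_m$: one needs the obstructing colocalization for \emph{all} bridge lengths $m$ simultaneously, and must arrange that the two forced-sign chains do not meet at a small common subtree --- which would make $\coltree$ violate a bad triple rather than coherence --- so a naive choice such as $S_2=\{c,a_1,b_1\}$ inside $Y$ does not work directly. The clean way to handle this is to prove a \emph{series-reduction lemma}: subdividing an edge of $T$ (equivalently inserting an ear triangle along a diagonal of $G$) preserves the existence of a non-coherent $G$-colocalization; this would collapse the family $B_m$ to the single case $B_1$, leaving only $B_1$ and $Y$ to be checked by an explicit (and small) computation. Proving that such a subdivision cannot rigidify a non-regular tiling into a regular one is the crux; by comparison, the forward implication is routine once the monotonicity of $\coltree$ on starred intervals has been extracted from the Las Vergnas triples.
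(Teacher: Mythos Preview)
Your reformulation of coherence via potentials on $V(T)$ is correct and matches the paper's setup. Both directions of your argument, however, diverge from the paper's proof, and each has an issue worth flagging.

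\textbf{Backward implication.} You correctly identify the minimal obstructing trees as $Y$ (the paper's $\hat E_6$) and $B_m$ (the paper's $\hat D_{m+4}$), but you then misjudge where the difficulty lies. You propose a ``series-reduction lemma'' to collapse the family $B_m$ to $B_1$ and call this ``the crux''; in fact no such reduction is needed. The paper handles all $\hat D_n$ at once by a single uniform construction: assign weight $+1$ to each of the four leaves and generic weights summing to $-2$ to the interior path vertices $U$, define $\sigma$ by the sign of the weight-sum over each subtree, and then perturb the exactly four zero subtrees $aUc,\,aUd,\,bUc,\,bUd$ non-coherently via $\sigma(aUc)=\sigma(bUd)=+$, $\sigma(aUd)=\sigma(bUc)=-$. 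That is two short paragraphs, not a lemma; your reduction strategy would also work if proved, but it is harder than the problem it is meant to replace.

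\textbf{Forward implication.} Here your approach is genuinely different from the paper's. The paper does \emph{not} construct the potential directly; it invokes a Farkas-type criterion (from Backman--Baker--Yuen): $\sigma$ is non-coherent iff there exist multisets $\Trees^+,\Trees^-\subset\Conn(T)$ with $\sigma\equiv+$ on $\Trees^+$, $\sigma\equiv-$ on $\Trees^-$, and every vertex covered equally often by each. The paper then proves, by an intricate induction that builds an auxiliary bicoloured directed graph on $[-a,b]$, that no such pair exists for $T_{a,b}$. Your two-step construction (realize the path part via transitivity, then pick a threshold $y_\ell$) is more elementary in spirit, but the final step is not justified. Monotonicity of $\coltree$ on starred intervals in the product order on $(p_{i-1},p_j)$ is \emph{necessary} for a single threshold to separate them, but it is not obviously sufficient: the two families $\{p_{i-1}:i\le 0\}$ and $\{p_j:j\ge 0\}$ are interleaved in a total order already fixed by Step~1, so you cannot perturb them independently, and a down-set in a product of two chains need not be a linear half-space under an arbitrary interleaving. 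You also omit the Las Vergnas triples $([i,j],[i,j]_\ell,\{\ell\})$ for general $i\le 0\le j$ (you list only $i=j=0$); these force, e.g., $\coltree([i,j]_\ell)=-\Rightarrow p_{i-1}>p_j$ when $\coltree(\{\ell\})=+$, an extra constraint that any completion of your argument will need. As written, this direction has a real gap.
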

	
	Before we prove the proposition, let us show how it implies Theorem~\ref{thm:coherent}. Let $V:=\{-a,-a+1,\dots,b-1,b,\leaf\}$ be the vertex set of $T(G)$, and let $x_{-a},x_{-a+1},\dots,x_{b},z$ be real numbers. Define the following hyperplane arrangement $\Acal_{a,b}$ in $\R^V$:
	\begin{equation}\label{eq:Acal}
	\begin{split}
	\Acal_{a,b}=\{z=0\}&\cup\{x_i=0\mid -a\leq i\leq b\}\\
			   &\cup \{x_j-x_i=0\mid -a\leq i<j\leq b\}\\
			   &\cup \{x_j-x_i+z=0\mid -a\leq i\leq 0\leq j\leq b\}.
	\end{split}
	\end{equation}
	The arrangement $\Acal_{a,b}$ is related to $\second(\Mcal_\GORO)$ via a simple change of coordinates, as we now explain. Choose an orientation $\GORO$ of $G$ such that the boundary of the $n+2$-gon is oriented clockwise (and the remaining edges are oriented arbitrarily). We introduce a linear map $\phi:\R^E\to\R^V$, where $E$ is the ground set of $\Mcal_\GORO$. For each $v\in V$ and $e\in E$, the $(e,v)$-th entry of the matrix of $\phi$ is zero unless $e$ is an edge of the triangle of $G$ around $v$. If $e$ is oriented clockwise around the boundary of this triangle, the matrix entry is $+1$, otherwise it is $-1$. One easily checks that each hyperplane in $\second(\Mcal_\GORO)$ contains the kernel of $\phi$, thus $\phi(\second(\Mcal_\GORO))$ yields a hyperplane arrangement inside $\R^V$ with the same number of regions. It is also straightforward to see that the hyperplane arrangements $\Acal_{a,b}$ and $\phi(\second(\Mcal_\GORO))$ are related by another coordinate change $(x_{-a},\dots,x_b,z)\mapsto \l$, where $\l:V\to\R$ is given by
	\[\l(i)=x_{i}-x_{i-1},\quad -a\leq i\leq b,\quad\text{and}\quad \l(\leaf)=z.\]
	Here we put $x_{-a-1}:=0$. Thus the regions of $\Acal_{a,b}$ correspond precisely to coherent $G$-colocalizations $\sigma$. Since by the above proposition, every $G$-colocalization is coherent, the number of them (which is the same as the number of maximal $G$-separated collections) equals the number of regions of $\Acal_{a,b}$. The formula~\eqref{eq:coherent} then follows from~\cite[Theorem~3.4]{Ath2} since $\Acal_{a,b}$ is a \emph{cone} over a \emph{graphical Shi arrangement}. The only thing left to do is to prove Proposition~\ref{prop:coherent}.

\def\affD{{\hat D}}
\def\affE{{\hat E}}

\begin{figure}

\begin{tabular}{c|c}
\scalebox{0.7}{
\begin{tikzpicture}[scale=1.5]
		\node[draw,circle] (A) at (-3,-1) {$+1$};
		\node[draw,circle] (C) at (+3,-1) {$+1$};
		\node[draw,circle] (D) at (+3,+1) {$+1$};
		\node[draw,circle] (B) at (-3,+1) {$+1$};
\def\ndscl{0.8}
		\node[draw,circle,scale=\ndscl] (E1) at (-2,0) {$\l(1)$};
		\node[draw,circle,scale=\ndscl] (E2) at (-1,0) {$\l(2)$};
		\node[draw,circle,scale=\ndscl] (E3) at (-0,0) {$\l(3)$};
		\node[draw,circle,scale=\ndscl] (E4) at (1,0) {$\l(4)$};
		\node[draw,circle,scale=\ndscl] (E5) at (2,0) {$\l(5)$};
		\draw (A)--(E1)--(E2)--(E3)--(E4)--(E5)--(C);
		\draw (B)--(E1);
		\draw (D)--(E5);
		\node[anchor=north,scale=1] (X) at (0,-0.5) {$\l(1)+\l(2)+\dots+\l(5)=-2$};
\end{tikzpicture}}
&\scalebox{0.7}{
	\begin{tikzpicture}[scale=1.5]
		\node[draw,circle] (A) at (-2,0) {$+1$};
		\node[draw,circle] (B) at (-1,0) {$-2$};
		\node[draw,circle] (C) at (0,0) {$+3$};
		\node[draw,circle] (D) at (1,0) {$-2$};
		\node[draw,circle] (E) at (2,0) {$+1$};
		\node[draw,circle] (F) at (0,1) {$-2$};
		\node[draw,circle] (G) at (0,2) {$+1$};
		\draw (A) -- (B) --(C)--(D)--(E);
		\draw (C)--(F)--(G);
	\end{tikzpicture}}

\end{tabular}

	\caption{\label{fig:ADE} Dynkin diagrams of affine types $\affD_n$ (for $n=8$) and $\affE_6$ together with the choice of $\l$.}
\end{figure}
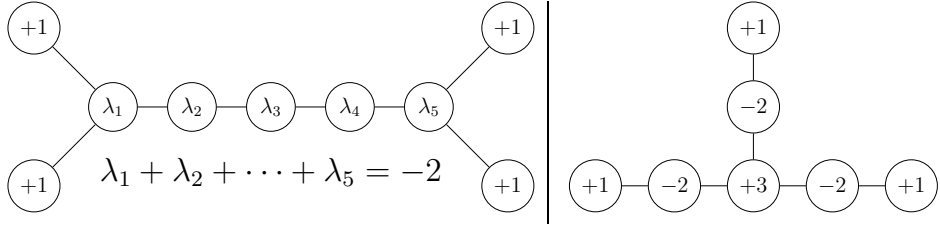

\begin{proof}[Proof of Proposition~\ref{prop:coherent}]
We start by showing that if $G$ is not all-coherent then there is a non-coherent $G$-colocalization. Since  a colocalization of a subgraph of $G$ can be extended to a $G$-colocalization (for example, by Theorem~\ref{thm:outerplanar}), it suffices to construct non-coherent $G$-colocalizations for the case when $T(G)$ is a Dynkin diagram of type either $\affD_n$, $n\geq 5$, or $\affE_6$, shown in Figure~\ref{fig:ADE}.\footnote{Since $G$ is a triangulation, we do not consider the affine Dynkin diagram $\affD_4$ since $T(G)$ can only have vertices of degree at most $3$.} The construction is similar to the one used in~\cite[Example~3.9]{GaPa}. 

Let $T(G)$ be either $\affD_n$ or $\affE_6$. To each vertex $v$ of $T(G)$ we will assign a real number $\l(v)$. For $\affE_6$, the values of $\l(v)$ are shown in Figure~\ref{fig:ADE} (thus $\l$ is obtained from Vinberg's \emph{additive function}~\cite{Vinberg} by changing signs of all vertices of the same color). For $\affD_n$, denote by $L$ the set of leaves of $\affD_n$ and by $U$ the set of all other vertices, so we have $|L|=4$ and $|U|=n-3$. We put $\l(v)=1$ for all $v\in L$ and for $v\in U$, we choose $\l(v)$ to be generic real numbers such that
\[\sum_{v\in U} \l(v)=-2.\]
For example, if $n=4$ then $U$ consists of just one vertex $v$ so we must have $\l(v)=-2$ which again recovers Vinberg's additive function.

Now that we have constructed $\l$, we will define $\sigma:\Conn(T(G))\to \{+,-,0\}$ as follows: for each subtree $T'\in \Conn(T(G))$, we put $\sigma(T')$ to be the sign of $\sum_{v\in T'} \l(v)$, so we put $\sigma(T')=0$ if this sum is zero, in which case we call $T'$ a \emph{zero subtree}. We give a complete description of zero subtrees for each of the cases. For $\affD_n$, denote $L=\{a,b,c,d\}$ so that $a,b$ have a common neighbor and $c,d$ have a common neighbor. For $i\neq j\in L$, denote by $iUj$ the subtree of $\affD_n$ with vertex set $\{i,j\}\cup U$.
\begin{itemize}
	\item For $T(G)=\affD_n,n\geq 5$, there are six zero subtrees: 
	\[aUc,aUd,bUc,bUd,aUb,cUd.\]
	\item For $T(G)=\affE_6$, there are seven zero subtrees: all the six paths of length $4$ together with $T(G)$ itself.
\end{itemize}

We are going to specify a \emph{non-coherent perturbation} of $\sigma$ by assigning some carefully chosen signs to the zero subtrees of $T(G)$ in such a way that $\sigma$ would be a colocalization but not a coherent one. First note that no matter how we assign the signs to these subtrees, the result will be a colocalization (i.e., have no \emph{bad triples} as in Definition~\ref{dfn:bad_triple}). Indeed, in any bad triple $(T_1,T_2,T_3)$, only one of the trees can satisfy $\sigma(T_i)=0$ as it follows from the description above. If $\sigma(T_2)=0$ then $\l(T_2)=0$ but since $\l(T_2)=\l(T_1)+\l(T_3)$, we have $\sigma(T_1)=-\sigma(T_3)\neq 0$. If instead $\sigma(T_1)=0$ then $\l(T_1)=0$ and thus $\l(T_2)=\l(T_3)$ so $\sigma(T_2)=\sigma(T_3)\neq 0$. This shows that after we choose the signs for the zero subtrees of $\sigma$, it becomes a colocalization for $G$ in general position. 

Suppose that $T(G)=\affD_n$. Choose the signs for the four zero subtrees as follows:
\[\sigma(aUc)=\sigma(bUd)=+,\quad \sigma(aUd)=\sigma(bUc)=-.\]
Choose the remaining two signs $\sigma(aUb),\sigma(cUd)$ arbitrarily. Then $\sigma$ cannot be coherent because if it was defined by some labeling $\m$ of the vertices of $T(G)$ by real numbers then we would have 
\[\m(aUc)+\m(bUd)=\m(a)+\m(b)+\m(c)+\m(d)+2\m(U)=\m(aUd)+\m(bUc),\]
but on the other hand, the left hand side has to be positive and the right hand side has to be negative. 

\def\sign{ \operatorname{sign}}
We deal with the case $T(G)=\affE_6$ in an analogous fashion. There are six paths $T_1,T_2,\dots,T_6$ of length $4$ in $\affE_6$, we order them so that for any $i=1,2,\dots,6$, $T_i$ and $T_{i+1}$ have $3$ vertices in common. Here the indices are taken modulo $6$. We put 
\begin{equation}\label{eq:sigmas}
\sigma(T_1)=\sigma(T_3)=\sigma(T_5)=+;\quad \sigma(T_2)=\sigma(T_4)=\sigma(T_6)=-,
\end{equation}
and for the remaining zero subtree which coincides with $T(G)$, we choose $\sigma(T(G))$ to be arbitrary, say, $\sigma(T(G))=+$. Again, summing up both sides of~\eqref{eq:sigmas} shows that $\sigma$ is non-coherent. Thus both $\affD_n$ and $\affE_6$ admit non-coherent colocalizations, and therefore if all $G$-colocalizations are coherent then $T(G)$ does not contain either of these two trees as a minor and so $G$ must be all-coherent. We have shown one direction of Proposition~\ref{prop:coherent}. Note that the same method of proof does not work for the affine Dynkin diagrams $\affE_7$ and $\affE_8$, in fact, these graphs are all-coherent.

So let $G$ be an all-coherent graph and thus $T(G)=T_{a,b}$ for some $a,b$ with $a+b+2=n$. We would like to show that every $G$-colocalization $\sigma:\Conn(T)\to\{+,-\}$ is coherent. Our first goal is to explain that the only possible counterexamples to this are the ones of the form above. More precisely, by~\cite[Lemma~2.3.1]{BBY}, $\sigma$ is coherent if and only if for every $C_1,C_2,\dots,C_k\in\Cyc(G)$, there is no $k$-tuple $(a_1,\dots,a_k)$ of integers such that $\sigma(C_i)=\sign(a_i)$ and $\sum_{i=1}^k a_i \vect_{C_i}=0$, where $\vect_{C_i}\in\R^E$ is defined  by linearity via~\eqref{eq:vect_O}. If we allow repetitions, we may assume that $a_i=\pm1$ for all $i=1,2,\dots,k$. Let $T_1,T_2,\dots,T_k\in\Conn(T(G))$ be the subtrees of $T(G)$ corresponding to $C_1,\dots,C_k$. Thus showing the result amounts to showing the following:

\def\Trees{\mathcal{T}}
\begin{lemma}
	Suppose that we are given two multisets
	\[\Trees^+,\Trees^-\subset\Conn(T_{a,b})\] 
	that satisfy $\sigma(T)=+$ for all $T\in\Trees^+$ and $\sigma(T)=-$ for all $T\in\Trees^-$. Suppose in addition that every vertex of $T_{a,b}$ appears in $\Trees^+$ and in $\Trees^-$ the same number of times. Then we have $\Trees^+=\emptyset$ and $\Trees^-=\emptyset$.
\end{lemma}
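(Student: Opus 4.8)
## Proof proposal for the Lemma

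The plan is to set up coordinates on $\Conn(T_{a,b})$ that make the constraints transparent, and then extract a contradiction from the sign conditions by a telescoping/averaging argument along the path. Recall that $T_{a,b}$ is the path on vertices $(-a,\dots,-1,0,1,\dots,b)$ with an extra leaf $\leaf$ attached at $0$. A connected subgraph $T$ of $T_{a,b}$ is therefore determined by an interval $[i,j]$ with $-a\le i\le j\le b$ of path-vertices, together with a Boolean flag recording whether $\leaf$ is included (which is only allowed when $0\in[i,j]$). Under the identification with $\Cyc(G)$ and the vector $\vect_C\in\R^E$ from~\eqref{eq:vect_O}, each such $T$ is sent to a $0/\pm1$ vector supported on the edges of $T$. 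First I would record the linear-algebraic content of the hypothesis ``every vertex appears equally often in $\Trees^+$ and $\Trees^-$'': this is precisely the statement that $\sum_{T\in\Trees^+}\vect_T=\sum_{T\in\Trees^-}\vect_T$ in $\R^E$, where we think of a subtree as a $\{0,1\}$-indicator on edges after the reorientation fixing $O_+$; equivalently $\sum_{T\in\Trees^+}\mathbbm 1_T=\sum_{T\in\Trees^-}\mathbbm 1_T$ as multisets-with-multiplicity of edges.

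The key step is to translate this edge-count identity into something controllable using the explicit description of $\sigma$ from the earlier part of the proof (the all-coherent case $T(G)=T_{a,b}$). Here I would use the change of coordinates already introduced just before the lemma: writing $\l(i)=x_i-x_{i-1}$ for $-a\le i\le b$ (with $x_{-a-1}:=0$) and $\l(\leaf)=z$, the weight $\l(T)$ of a subtree $T\leftrightarrow[i,j]$ is $x_j-x_{i-1}$ if $\leaf\notin T$, and $x_j-x_{i-1}+z$ if $\leaf\in T$. The hypothesis that each vertex of $T_{a,b}$ occurs the same number of times in $\Trees^+$ as in $\Trees^-$ says exactly that $\sum_{T\in\Trees^+}\l(T)=\sum_{T\in\Trees^-}\l(T)$ for \emph{every} choice of the free parameters $x_{-a},\dots,x_b,z$, i.e. as an identity of linear forms. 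On the other hand, since $\sigma$ is a $G$-colocalization on $T_{a,b}$ and $T_{a,b}=T(G)$ is all-coherent, Proposition~\ref{prop:coherent} guarantees that $\sigma$ is coherent: there is an actual function $\m:V(T_{a,b})\to\R$ with $\sigma(T)=\sign\big(\sum_{v\in T}\m(v)\big)$ and no subtree annihilated by $\m$. Specializing the linear identity $\sum_{\Trees^+}\l(T)=\sum_{\Trees^-}\l(T)$ to the parameter values coming from $\m$ yields $\sum_{T\in\Trees^+}\l_\m(T)=\sum_{T\in\Trees^-}\l_\m(T)$; but the left side is a sum of strictly positive reals (one per element of $\Trees^+$, counted with multiplicity) and the right side a sum of strictly negative reals, so both must be the empty sum. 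Hence $\Trees^+=\Trees^-=\emptyset$.

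I would then assemble the pieces exactly in that order: (1) reduce the lemma's hypothesis to the equality of linear forms $\sum_{\Trees^+}\l(T)=\sum_{\Trees^-}\l(T)$, checking carefully that the leaf $\leaf$ and the path-vertices are all accounted for by the $z$ and $x_i$ variables (this is the only place one must be slightly careful, since the correspondence between a subtree and its edge set versus its vertex set must be kept straight — but for a tree, edge-counts and vertex-counts at internal vertices determine each other, and the coordinate dictionary above handles it uniformly); (2) invoke Proposition~\ref{prop:coherent} to get a coherent representative $\m$; (3) specialize and conclude by the sign contradiction. Finally, I would remark how this lemma closes the proof of Theorem~\ref{thm:coherent}: it shows that for all-coherent $G$ there is no obstruction of the type in~\cite[Lemma~2.3.1]{BBY}, so every $G$-colocalization is coherent, whence maximal $G$-separated collections are in bijection with regions of $\Acal_{a,b}$, and the count~\eqref{eq:coherent} follows from~\cite[Theorem~3.4]{Ath2}.

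The main obstacle I anticipate is step (1): making the dictionary between ``vertex of $T_{a,b}$ appears equally often'' and ``$\l(T)$ sums agree as linear forms'' fully rigorous, since $\l$ was defined on edges of $G$ (equivalently, incrementally along $T(G)$) rather than directly on subtrees, and one must verify that no degenerate subtree (a single vertex, or a subtree not containing $0$ but with the leaf, which is disallowed) slips through. Everything after that is a one-line sign argument.
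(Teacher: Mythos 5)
Your step (2) is circular. The lemma you are proving sits inside the proof of Proposition~\ref{prop:coherent}, as the final step in establishing the forward direction of that proposition: that if $T(G)=T_{a,b}$ (i.e.\ $G$ is all-coherent) then every $G$-colocalization $\sigma$ is coherent. You invoke exactly that direction of Proposition~\ref{prop:coherent} to produce the coherent representative $\m$ with $\sigma(T)=\sign\bigl(\sum_{v\in T}\m(v)\bigr)$, and then use $\m$ to derive the contradiction. But until the lemma is proved, Proposition~\ref{prop:coherent} is not available; worse, by the criterion of~\cite[Lemma~2.3.1]{BBY} that the paper has just quoted, the conclusion of the lemma is \emph{equivalent} to the assertion that $\sigma$ is coherent, so postulating a coherent $\m$ is literally assuming the thing to be shown. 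There is no independent source for $\m$: the only route to its existence offered anywhere in this argument is via BBY plus the lemma itself. Steps (1) and (3) of your plan are sound in isolation (the vertex-multiplicity hypothesis does translate to the identity $\sum_{T\in\Trees^+}\l(T)=\sum_{T\in\Trees^-}\l(T)$ holding for every labeling $\l$ of the vertices, and specializing to a coherent $\m$ would give the sign contradiction), but the argument collapses because step (2) has no content.

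To prove the lemma one has to argue combinatorially from the colocalization axiom (no bad triples) without presupposing coherence. The paper does this by induction on $a$ and $b$: assume $\Trees^+,\Trees^-$ are nonempty and minimal, separate the subtrees according to whether they contain the leaf $\leaf$, encode the relevant subtrees as colored arcs in an auxiliary directed graph on the path vertices, show (using minimality and the absence of bad triples) that this graph must contain an alternating directed cycle, and then deleting $\leaf$ reduces to a smaller instance of $T_{a,b}$, contradicting the induction hypothesis. If you want a linear-algebraic flavor, the honest version would be to \emph{construct} a suitable $\m$ directly from $\sigma$ for $T_{a,b}$ by an explicit greedy/inductive assignment of weights along the path, and then your step (3) would close the argument; but that construction is precisely where the real work lies, and it cannot be sidestepped by citing the proposition under proof.
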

\begin{proof}
We assume that the result of the lemma is true for all smaller values of $a$ and $b$ by induction (the base case being trivial). We also assume that the collections $\Trees^+,\Trees^-$ are minimal by size satisfying the above conditions.
\def\A{A}
\def\B{B}
For $i\leq j$, denote by $[i,j]\subset\Z$ the set $[i,j]=\{i,i+1,\dots,j\}$. Recall that the vertices of $T_{a,b}$ are the numbers in $[-a,b]$ together with an extra vertex $\leaf$. For $-a\leq i\leq 0\leq j\leq b$, we denote by $[i,\leaf,j]:=[i,j]\cup\{\leaf\}$ the corresponding subtree of $T_{a,b}$. Thus the set $\Conn(T_{a,b})$ consists of all sets that are either of the form $[i,j]$ for $-a\leq i\leq j\leq b$ or of the form $[i,\leaf,j]$ for $-a\leq i\leq 0\leq j\leq b$, together with one extra subtree $\{\leaf\}$. We call the subtrees in $\Trees^+$ \emph{positive} and the subtrees in $\Trees^-$ \emph{negative}. We also assume that $\leaf$ appears in at least one subtree, otherwise we can remove it and designate some other vertex to be $\leaf$. We split $\Trees^+\cup\Trees^-$ into two multisets $\A,\B$ as follows:
\[
  \resizebox{\textwidth}{!}{$\A=\{\text{positive subtrees containing $\leaf$}\}\cup \{\text{negative subtrees not containing $\leaf$}\};$}\]
\[
  \resizebox{\textwidth}{!}{$\B=\{\text{positive subtrees not containing $\leaf$}\}\cup \{\text{negative subtrees containing $\leaf$}\}.$}\]
It is clear that both sets are nonempty because we assumed that $\leaf$ appears in at least one (positive or negative) subtree. We will show that if $\A$ is nonempty then we must have $\sigma(\{\leaf\})=+$ and if $\B$ is nonempty then we must have $\sigma(\{\leaf\})=-$. This will immediately lead to a contradiction. Moreover, by symmetry, we only need to prove the first of these two claims. 

\def\D{D}
Suppose that $\A$ is nonempty. We are going to construct a certain directed graph $\D(\A)$ with vertex set $[-a,b]$ and with arrows colored red and blue. For each positive subtree $[i,\leaf,j]$ (with necessarily $i\leq 0\leq j$), draw a red arrow $i\to j$ in $\D(\A)$. For each negative subtree $[i,j]$ with $i\leq 0\leq j$, draw a blue arrow $j\to i$ in $\D(\A)$. We claim that for every red arrow $i\to j$ in $\D(\A)$, there is also a blue arrow $j\to q$ in $\D(\A)$, and vice versa, for any blue arrow $j\to i$ in $\D(\A)$, there is a red arrow $i\to q$ in $\D(\A)$. 

Indeed, suppose that $i\to j$ is a red arrow in $\D(\A)$, that is, the subtree $[i,\leaf,j]$ appears in $\Trees^+$. We will consider two cases: $j=b$ and $j<b$. If $j=b$ then there must be some negative subtree $T^-\in\Trees^-$ containing $b$. It has the form either $[q,b]$ for some $q\in [-a,b]$ or $[q,\leaf,b]$ for some $q\in[-a,0]$. The latter case is impossible since then we will either have $[q,\leaf,b]\subset [i,\leaf,b]$ or $[q,\leaf,b]\supset [i,\leaf,b]$, and in any case we can replace these two subsets by their difference which also belongs to $\Conn(T_{a,b})$. This would contradict the minimality of $(\Trees^+,\Trees^-)$. The former case is impossible by the same reasoning unless $q\leq 0$.  Thus for the case $j=b$ we have established a blue arrow from $j$ to $q\leq 0$. Suppose now that $0\leq j<b$. Then the rest of $(\Trees^+,\Trees^-)$ contributes more to $j+1$ than to $j$. There cannot be any positive subtree of the form $[j+1,q]$ because otherwise $[i,\leaf,j]\cup[j+1,q]$ would be a connected subtree which again is a contradiction since $(\Trees^+,\Trees^-)$ is minimal. Thus there must be a negative subtree of the form either $[q,j]$ or $[q,\leaf, j]$. By the same reasoning, the latter gives a contradiction, and for the former, we must have $q\leq 0$, otherwise the difference $[i,\leaf,j]- [q,j]$ would be connected. We have found a blue arrow from $j$ in $\D(\A)$. 

\def\Cycle{C}
A similar argument shows that for every blue arrow $j\to i$, there exists a red arrow $i\to q$ in $\D(\A)$. Hence we can find a directed cycle $\Cycle$ in $\D(\A)$ that alternates between red and blue arrows. We want to show that $\sigma(\{\leaf\})=+$. Suppose that we have $\sigma(\{\leaf\})=-$ instead. Then for every positive subtree $[i,\leaf,j]$, we know that $\sigma([i,j])=+$ because otherwise $([i,j],[i,\leaf,j],\{\leaf\})$ would be a bad triple for $\sigma$. Now, let us remove $\leaf$ from all the subtrees appearing in $\Cycle$. This will give a linear combination of subtrees of $T_{a,b}-\{\leaf\}$ that sums up to zero at every vertex which cannot exist by the induction hypothesis. We get a contradiction thus finishing the proof of the lemma. As a consequence, this also finishes the proofs of Proposition~\ref{prop:coherent} and Theorem~\ref{thm:coherent}. 
\end{proof}
\let\qed\relax
\end{proof}	
\let\qed\relax
\end{proof}

\subsection{Regular matroids}
In this section, we briefly describe how to generalize our results to the class of \emph{regular} (unoriented) matroids. We refer the reader to~\cite{Oxley} for the background on (unoriented) matroids.

\begin{definition}
	We say that a matroid $M$ is \emph{regular} if it can be realized by a \emph{totally unimodular matrix}, that is, a matrix with all maximal minors equal to either $+1,0,$ or $-1$.
\end{definition}

We refer the reader to the discussion before~\cite[Proposition~7.9.3]{Book} for other equivalent definitions of regular matroids. Note that regular matroids are always orientable in an essentially unique realizable way:

\begin{proposition}[{\cite[Corollary~7.9.4]{Book}}]\label{prop:regular_orientation}
	If $M$ is a regular matroid then all orientations of $M$ are realizable. They differ only by reorientation.
\end{proposition}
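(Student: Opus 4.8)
The plan is to reduce everything to one assertion: a regular matroid is \emph{uniquely orientable}, i.e.\ any two orientations of it (on the same labelled ground set) differ by a reorientation in the sense of Section~\ref{sect:background}. Granting this, realizability of \emph{every} orientation is immediate. If $M$ is regular, fix a totally unimodular matrix $A$ representing it; its columns form a vector configuration $\VC$, and $\Mcal_\VC$ is an orientation of $M$. If $\Mcal$ is any orientation of $M$, then by unique orientability $\Mcal=\reorient{A'}{\Mcal_\VC}$ for some $A'\subset E$; but negating the columns of $A$ indexed by $A'$ negates each maximal minor that meets $A'$ (sending $\pm1\mapsto\mp1$ and $0\mapsto0$), so the new matrix is again totally unimodular and it realizes $\Mcal$. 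Thus only unique orientability must be proved, and by the same token the second sentence of the proposition is exactly unique orientability.

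To prove unique orientability, let $\Mcal_1,\Mcal_2$ be orientations of the regular (hence binary) matroid $M$ and fix a basis $B$ of $M$. For $f\in E-B$ let $C(f,B)$ be the fundamental circuit of $f$ and for $e\in B$ let $D(e,B)$ be the fundamental cocircuit of $e$; their \emph{supports} are determined by $M$ alone because $M$ is binary, and $e\in\underline{C(f,B)}$ iff $f\in\underline{D(e,B)}$. By axiom (C2) in Definition~\ref{dfn:OM}, in each $\Mcal_i$ the signed circuit on $\underline{C(f,B)}$ is determined up to a global sign, and likewise for the signed cocircuit on $\underline{D(e,B)}$. First reorient, independently in $\Mcal_1$ and in $\Mcal_2$, the elements of $E-B$ so that $f$ carries sign $+$ in the signed version of $C(f,B)$ for every $f\notin B$; then reorient the elements of $B$ so that $e$ carries sign $+$ in the signed version of $D(e,B)$ for every $e\in B$ — the second step only changes signs of basis elements and so does not disturb the first. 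After this normalization the sign of a basis element $e$ inside $C(f,B)$ is forced: orthogonality of $C(f,B)$ with $D(e,B)$, which for a two-element intersection is a consequence of the oriented matroid axioms, together with the two normalizations, pins it down, and it takes the \emph{same} value in $\Mcal_1$ and $\Mcal_2$. Hence after reorientation $\Mcal_1$ and $\Mcal_2$ have identical fundamental circuits with respect to $B$.

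It remains to recover the whole oriented matroid from its fundamental circuits at a single basis. Every circuit of a binary matroid is a symmetric difference of fundamental circuits, and in an oriented matroid the signs along such a decomposition propagate by iterated use of the weak elimination axiom (C3) together with circuit/cocircuit orthogonality; the point is that \emph{for a regular matroid no sign obstruction arises}, i.e.\ the $\{+,-\}$-valued $1$-cochain of relative signs attached to the fundamental circuits always extends consistently to all circuits. This is precisely the content of Tutte's chain-group description of regular matroids (a regular matroid is the matroid of a regular chain group over $\Z$, equivalently admits a totally unimodular representation), and it is the main obstacle in the argument: everything before it is bookkeeping, whereas this consistency statement is what genuinely distinguishes regular matroids from arbitrary binary ones. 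Once it is in place, the normalized $\Mcal_1$ and $\Mcal_2$ have the same circuits, so the original $\Mcal_1,\Mcal_2$ differ only by a reorientation, completing the proof. (In the paper this is simply quoted as \cite[Corollary~7.9.4]{Book}, where it appears as part of the equivalence ``regular $\Leftrightarrow$ binary and orientable'', with the orientation unique up to reorientation.)
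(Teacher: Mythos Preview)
The paper does not prove this proposition; it is quoted verbatim from \cite[Corollary~7.9.4]{Book}, as you yourself note at the end. Your first paragraph's reduction is correct: once unique orientability is established, realizability of every orientation follows by negating columns of a single totally unimodular representation.

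The genuine gap is in your second paragraph. Your ``two normalizations'' $C(f,B)_f=+$ and $D(e,B)_e=+$ are choices of representatives among $\pm C(f,B)$ and $\pm D(e,B)$, not reorientations of the ground set; they can always be made in any oriented matroid and carry no force across $\Mcal_1$ and $\Mcal_2$. With those choices, orthogonality on the two-element intersection $\{e,f\}$ yields only the relation $C(f,B)_e=-D(e,B)_f$: one equation in two unknown signs, which does not determine $C(f,B)_e$. Concretely, take $U_{2,3}$ with basis $B=\{1,2\}$. The orientation with circuit $(+,+,+)$ has normalized $D(2,B)=(0,+,-)$, while the orientation with circuit $(+,-,+)$ has normalized $D(2,B)=(0,+,+)$; both satisfy all your conventions and the orthogonality relation, yet $C(3,B)_2$ differs. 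So after your normalization the fundamental circuits need not coincide, and the argument does not show $\Mcal_1=\Mcal_2$. The substantive step --- that for a binary (hence regular) matroid the sign pattern on fundamental circuits is unique up to reorientation --- is precisely the Bland--Las~Vergnas/Tutte fact you defer to your third paragraph; it cannot be shortcut by the local circuit--cocircuit orthogonality of paragraph two. In effect your second paragraph is circular: it assumes what the third paragraph is meant to supply.
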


The class of regular matroids is closed under taking duals and contains the class of graphical (as well as \emph{cographical}) matroids. Moreover, graphical and regular matroids admit the following nice characterizations by forbidden minors. Let $F_7$ denote the famous \emph{Fano matroid} on $7$ elements, and for a graph $G$, denote by $M(G)$ the associated \emph{graphical matroid}. Let $U_{2,4}$ denote the uniform matroid of rank $2$ on $4$ elements and let $M^*$ denote the dual matroid of $M$.

\begin{proposition}[\cite{Tutte1} and~\cite{Tutte2}]\label{prop:regular_minors}
	\leavevmode
	\begin{enumerate}
		\item A matroid $M$ is regular if and only if it does not contain $U_{2,4}$, $F_7$, and $F_7^*$ as minors.
		\item A matroid $M$ is graphical if and only if it does not contain $U_{2,4}$, $F_7$, $F_7^*$, $M^*(K_{3,3})$, and $M^*(K_5)$ as minors.
	\end{enumerate}
\end{proposition}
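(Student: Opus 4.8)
The plan is to recognize Proposition~\ref{prop:regular_minors} as a repackaging of Tutte's fundamental excluded-minor theorems and to organize the argument around the standard "necessity is easy, sufficiency is deep" split. For the necessity direction of part~(1), I would first observe that the class of regular matroids is closed under minors and duality: a totally unimodular matrix stays totally unimodular under deletion, contraction, and the usual dual construction, so a regular matroid has no non-regular minor. It then suffices to check that $U_{2,4}$, $F_7$, and $F_7^*$ are themselves non-regular. Using the characterization that a matroid is regular precisely when it is representable over every field (equivalently, over both $GF(2)$ and $GF(3)$), this is immediate: $U_{2,4}$ is not $GF(2)$-representable; the Fano matroid $F_7$ is representable over a field only in characteristic $2$, hence not over $GF(3)$; and $F_7^*$ is its dual, hence also non-regular.

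For the sufficiency direction of part~(1), the plan is to feed the hypothesis into two classical theorems of Tutte. The first is that $U_{2,4}$ is the \emph{unique} excluded minor for binary matroids, so that a matroid with no $U_{2,4}$-minor is $GF(2)$-representable. The second is that a binary matroid is regular if and only if it contains no minor isomorphic to $F_7$ or $F_7^*$; chaining these two yields part~(1). The hard part will be exactly these two theorems: proving them from scratch requires Tutte's homotopy-theoretic machinery, or Seymour's shorter route via the splitter theorem together with the wheels-and-whirls theorem. In either case one argues by induction on $|E(M)|$, reducing to the $3$-connected case and ruling out a minimal counterexample by structural analysis. This connectivity induction is the genuine obstacle; the rest is bookkeeping about which excluded minors survive reduction.

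Part~(2) then follows the same template. For necessity, graphic matroids are minor-closed and are regular (the node-arc incidence matrix of an oriented graph is totally unimodular), so the three obstructions from part~(1) persist; in addition $M^*(K_5)$ and $M^*(K_{3,3})$ are regular, being duals of graphic matroids, but are not graphic, since $K_5$ and $K_{3,3}$ are non-planar and (by Whitney's planarity criterion) $M^*(G)$ is graphic exactly when $G$ is planar. For sufficiency, a matroid avoiding all five listed minors is in particular binary (indeed regular) by part~(1), after which I would apply Tutte's excluded-minor characterization of graphic matroids \emph{within the binary class}: a binary matroid is graphic iff it has no minor isomorphic to $F_7$, $F_7^*$, $M^*(K_5)$, or $M^*(K_{3,3})$. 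As with part~(1), the substance of this theorem is again a connectivity induction on a minimal counterexample, and that induction — rather than any of the surrounding reductions — is where all the difficulty lies. For this reason the body of the paper simply cites \cite{Tutte1,Tutte2} instead of reproducing the argument, and I would do the same, restricting any written proof to the easy necessity checks and the logical assembly above.
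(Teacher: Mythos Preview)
Your proposal is correct and matches the paper exactly: Proposition~\ref{prop:regular_minors} is stated with attribution to \cite{Tutte1,Tutte2} and given no proof in the paper, being invoked only as a black box in the proof of Theorem~\ref{thm:regular}. Your outline of the necessity/sufficiency split and the reduction to Tutte's deep theorems is accurate and more detailed than anything the paper provides, but since the paper's ``proof'' is simply the citation, there is nothing further to compare.
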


\begin{theorem}\label{thm:regular}
	  Let $\Mcal$ be an orientation of a regular matroid $M$. Then $\Mcal$ is pure if and only if $M=M(G)$ is a graphical matroid where $G$ is an outerplanar graph.
\end{theorem}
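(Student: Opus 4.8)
The plan is to combine the forbidden-minor characterizations of regular and graphical matroids (Proposition~\ref{prop:regular_minors}) with the minor-closedness of purity (Proposition~\ref{prop:if_pure_then_minors_are_pure}) and the graphical classification (Theorem~\ref{thm:outerplanar}). First, suppose $M = M(G)$ is graphical with $G$ outerplanar; then by Proposition~\ref{prop:regular_orientation} the orientation $\Mcal$ is reorientation equivalent to any realizable orientation, and by Theorem~\ref{thm:outerplanar} together with Lemma~\ref{lem:signreversal} it is pure. This is the easy direction.

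For the converse, suppose $\Mcal$ is pure. By Proposition~\ref{prop:if_pure_then_minors_are_pure}, every minor of $\Mcal$ is pure, hence every minor of $M$ that is orientable carries a pure orientation. The strategy is to rule out all the forbidden minors for the graphical class from Proposition~\ref{prop:regular_minors} other than those already excluded by regularity. Since $M$ is regular, it does not contain $U_{2,4}$, $F_7$, or $F_7^*$; so to conclude that $M$ is graphical it remains to show that $M$ contains neither $M^*(K_{3,3})$ nor $M^*(K_5)$ as a minor. Both of these are cographical matroids of small size coming from the non-planar graphs $K_{3,3}$ and $K_5$, and both are regular (cographical matroids are regular), hence orientable. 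The core of the argument is therefore: \emph{show that no orientation of $M^*(K_5)$ or $M^*(K_{3,3})$ is pure.} Given that, if $M$ contained one of them as a minor, then $\Mcal$ would have a non-pure minor, contradicting Proposition~\ref{prop:if_pure_then_minors_are_pure}; hence $M$ avoids $M^*(K_{3,3})$, $M^*(K_5)$, $U_{2,4}$, $F_7$, $F_7^*$ and is graphical by Proposition~\ref{prop:regular_minors}(2). Writing $M = M(G)$, purity of $\Mcal$ and Theorem~\ref{thm:outerplanar} then force $G$ to be outerplanar (and by relabeling we may take $G$ loopless and without parallel edges, or invoke Lemma~\ref{lemma:loop_coloop} and Lemma~\ref{lemma:parallel} to reduce to the simple case).

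The main obstacle is establishing the impurity of orientations of $M^*(K_5)$ and $M^*(K_{3,3})$. These matroids have corank equal to $|E(K_5)| - |V(K_5)| + 1 = 6$ and $|E(K_{3,3})| - |V(K_{3,3})| + 1 = 4$, and ranks $4$ and $5$ on ground sets of size $10$ and $9$ respectively; they are not uniform, so Theorem~\ref{thm:uniform_classification} does not apply directly, and they fall outside the rank-$3$ and graphical classes already handled. I would approach this in one of two ways: (a) exhibit explicitly, as in Figures~\ref{fig:K_23} and~\ref{fig:K_4}, a maximal-by-inclusion but not maximal-by-size $\Mcal$-separated collection — for the cographical matroid this amounts to finding an orientation of the dual graph whose cuts (cocircuits, which are the circuits of $M^*$) cannot all be coherently signed; or (b) use the colocalization obstruction à la the $K_4$ argument in the proof of Theorem~\ref{thm:outerplanar}: find a small corank-$2$ ``Las Vergnas'' configuration inside $M^*(K_5)$ and $M^*(K_{3,3})$ forcing a contradictory Type III condition. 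Since $K_5$ has $K_4$ as a minor and $K_{3,3}$ has $K_{2,3}$ as a minor, the dual matroids $M^*(K_5)$ and $M^*(K_{3,3})$ contain $M^*(K_4)$ and $M^*(K_{2,3})$ as \emph{contraction/deletion} minors; but note $M^*(K_4) \cong M(K_4)$ is planar-self-dual while $M^*(K_{2,3})$ is the cographical matroid of a series-parallel graph — so one cannot directly quote non-purity of $K_4$ or $K_{2,3}$ for the duals, and a genuinely new small computation (likely a computer check over the finitely many orientations, reducible to the essentially unique realizable one by Proposition~\ref{prop:regular_orientation}) is needed. I expect the cleanest route is to produce, once and for all, explicit non-pure $\Mcal$-separated collections inside the realizable orientation of each of $M^*(K_5)$ and $M^*(K_{3,3})$, analogous to the two figures in Section~\ref{sect:graph}, and this verification is the one substantive piece of work in the proof.
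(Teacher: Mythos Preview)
Your overall architecture is right, and you have all the ingredients in hand; the issue is that you stopped just short of combining them. You already observed that $M^*(K_4)\cong M(K_4)$ by self-duality, and that $K_5$ has $K_4$ as a minor, so $M^*(K_5)$ has $M^*(K_4)\cong M(K_4)$ as a minor and is therefore not pure. The only missing step is that \emph{the same argument works for $K_{3,3}$}: the graph $K_{3,3}$ also contains $K_4$ as a minor (for instance contract two disjoint edges; more conceptually, $K_4$-minor-free graphs are exactly series-parallel, hence planar, and $K_{3,3}$ is not). Thus $M^*(K_{3,3})$ also contains $M^*(K_4)\cong M(K_4)$ as a minor and is not pure. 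This is precisely what the paper does, in two lines.

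So there is no ``genuinely new small computation'' to perform, no need to manufacture explicit bad collections for $M^*(K_5)$ or $M^*(K_{3,3})$, and no computer check. Your detour through $K_{2,3}$ for the $K_{3,3}$ case is what leads you astray: $M^*(K_{2,3})$ has rank $2$ and is therefore pure by Proposition~\ref{prop:rk_2_cork_1}, so that minor gives no obstruction --- but the $K_4$ minor does. Once both $M^*(K_5)$ and $M^*(K_{3,3})$ are seen to be non-pure via $M(K_4)$, the remainder of your plan (Proposition~\ref{prop:if_pure_then_minors_are_pure}, Proposition~\ref{prop:regular_minors}(2), then Theorem~\ref{thm:outerplanar}) finishes the proof exactly as you outlined.
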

\begin{proof}
	Suppose that $M$ is regular but not graphical. Then by Proposition~\ref{prop:regular_minors}, it contains either $M^*(K_{3,3})$ or $M^*(K_5)$ as a minor. By Proposition~\ref{prop:regular_orientation}, $\Mcal$ contains $\Mcal^*_{\vec K_{3,3}}$ or $\Mcal^*_{\vec K_5}$ as a minor. One easily checks that both $K_{3,3}$ and $K_5$ contain $K_4$ as a minor and since $K_4$ is self-dual and not pure, we are done by Proposition~\ref{prop:if_pure_then_minors_are_pure}.
\end{proof}

\section{The rank $3$ case}\label{sect:rk_3}

The goal of this section is to prove one direction of Theorem~\ref{thm:purity_3d}, namely, that all positively oriented matroids of rank $3$ are pure. The case of the rank $3$ \emph{uniform} positively oriented matroid $C^{n,3}$ is done in Corollary~\ref{cor:c23narepure}.

The following lemma is well known, see, e.g.,~\cite{Postnikov} or~\cite[Example~3.3]{ARW}.
\begin{lemma}\label{lemma:positroids}
 Suppose that a simple oriented matroid $\Mcal$ of rank $3$ is isomorphic to a positively oriented matroid. Then $\Mcal$ is isomorphic to $\Mcal_\VC$ where the endpoints of vectors of $\VC$ all belong to the boundary of a convex $n$-gon lying in the plane $z=1$.  \qed
\end{lemma}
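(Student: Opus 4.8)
The plan is to use the known realizability of positively oriented matroids, then normalize the realization so that all points lie in the plane $z=1$, and finally verify convex position. Since $\Mcal$ is only assumed \emph{isomorphic} to a positively oriented matroid and the conclusion is invariant under isomorphism, it suffices to treat the case where $\Mcal$ is itself positively oriented, simple, of rank $3$. By the result cited in Remark~\ref{rmk:positroids}, $\Mcal$ is realizable; since its chirotope determines it, I may write $\Mcal=\Mcal_{\VC}$ for a configuration $\VC=(\v_1,\dots,\v_n)\subset\R^3$ whose chirotope satisfies $\chi(i,j,k)\in\{0,+\}$ whenever $i<j<k$.

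Next I would show that $\Mcal$ is acyclic. For a circuit $C$ of $\Mcal$, the restriction $\Mcal|_{\underline C}$ has rank $|\underline C|-1$ and is uniform (every proper subset of $\underline C$ is independent), and its chirotope is literally the restriction of $\chi$ to $(|\underline C|-1)$-tuples from $\underline C$; since these tuples are bases, all of these values equal $+$, so $\Mcal|_{\underline C}=C^{|\underline C|,\,|\underline C|-1}$. By Lemma~\ref{lem:alternating_matroid} its circuits alternate in sign along the order, and since $\Mcal$ is simple we have $|\underline C|\geq 3$, so $C$ has both a $+$ and a $-$ entry. Hence $\Mcal$ has no positive circuit, which for the realizable loopless oriented matroid $\Mcal_{\VC}$ means (by LP duality) that $\VC$ lies in an open half-space $\{\ell>0\}$. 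After a linear change of coordinates with positive determinant (which preserves $\Mcal_{\VC}$) I may take $\ell$ to be the $z$-coordinate; rescaling each $\v_i$ by the positive scalar $1/z(\v_i)$ leaves the associated oriented matroid unchanged, so after this rescaling every endpoint of $\VC$ lies in the plane $\{z=1\}$ and $\VC$ still has nonnegative chirotope.

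It remains to prove that every $\v_i$ lies on the boundary of $P:=\Conv(\VC)$, which is a convex polygon in $\{z=1\}$ since $\rk(\Mcal)=3$. Suppose $\v_p\in\operatorname{int}(P)$. As all points lie in $\{z=1\}$, affine and linear dependences coincide, and a minimal dependence is the support of a circuit. By a Carathéodory argument $\v_p$ lies in the interior of $\Conv\{\v_a,\v_b,\v_c\}$ for a non-collinear triple with $a,b,c\neq p$; then $\{a,b,c,p\}$ is a circuit whose sign pattern has three entries of one sign and one of the other, contradicting the alternating property proved above. In the degenerate cases — $\v_p$ on a chord $[\v_a,\v_b]$ of $P$, or at the crossing of two chords — one either reduces to a strictly interior triangle (again a $(3,1)$-circuit) or argues directly from nonnegativity of the chirotope: if $a<p<b$ with $\v_a,\v_p,\v_b$ collinear and $\v_p$ strictly between them and $\v_c$ is a point with $a<c<p$, then $\chi(a,c,p)=\chi(c,p,b)=+$ forces $\v_a$ and $\v_b$ to the same side of the line through $\v_c$ and $\v_p$, contradicting that $\v_p$ lies strictly between them on that line. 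Hence no interior point exists and $\Mcal=\Mcal_{\VC}$ with all endpoints on $\partial P$.

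The main obstacle is this last step: the convex-position statement is the real geometric content, and the Carathéodory bookkeeping together with the collinear/crossing degeneracies is where care is required (the acyclicity reduction and the rescaling are routine). Since the lemma is folklore, a legitimate shortcut is to invoke \cite[Example~3.3]{ARW} and \cite{Postnikov} directly for realizability-together-with-convex-position, spelling out only the half-space normalization above.
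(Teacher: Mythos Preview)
The paper gives no proof at all: it simply declares the lemma well known, cites \cite{Postnikov} and \cite[Example~3.3]{ARW}, and places a \qedsymbol. Your final paragraph anticipates exactly this, and indeed invoking those references is precisely the paper's ``proof''.

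Your actual argument is more than the paper provides and is essentially correct. The acyclicity step (restricting to a circuit support yields a uniform positively oriented matroid of corank $1$, hence the alternating matroid, hence no positive circuit) is clean, and the half-space/rescaling normalization to $z=1$ is standard. For the convex-position step your Carath\'eodory reasoning is right in spirit; the one place to be slightly more careful is the collinear degeneracy: if $\v_p$ lies strictly on a segment $[\v_a,\v_b]$ with $a<p<b$, the resulting $3$-element circuit $(\{a,b\},\{p\})$ \emph{is} alternating, so that circuit alone gives no contradiction. Your fix via a fourth point $c$ works, but you should note that such a $c$ (with $\chi(a,c,p)$ and $\chi(c,p,b)$ both nonzero, i.e.\ $c$ off the line $ab$) exists because $\rk(\Mcal)=3$, without insisting on the index condition $a<c<p$; alternatively, observe that any interior point of $P$ lies in the \emph{interior} of some triangle of vertices of $P$ (shoot a ray from a vertex through $\v_p$), which immediately yields a $(3,1)$-signed $4$-circuit and the desired contradiction.
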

 
We call such vector configurations $\VC$ \emph{totally nonnegative}. If all endpoints of $\VC$ are vertices of that convex $n$-gon then we say that $\VC$ is \emph{totally positive}. Thus, Corollary~\ref{cor:c23narepure} shows that all three-dimensional totally positive vector configurations are pure and our goal is to generalize this result to totally nonnegative vector configurations:

\begin{theorem}\label{thm:purity_vc}
 Let $\VC\subset\R^3$ be a totally nonnegative vector configuration. Then the map $\Tiling\mapsto \Vert(\Tiling)$ is a bijection between fine zonotopal tilings of $\Zon_\VC$ and maximal \emph{by inclusion} $\VC$-separated collections of subsets. 
\end{theorem}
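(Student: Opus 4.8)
The plan is to derive Theorem~\ref{thm:purity_vc} from Theorem~\ref{thm:max_size_vc} by establishing \emph{purity}. Theorem~\ref{thm:max_size_vc} already puts fine zonotopal tilings of $\Zon_\VC$ in bijection with maximal \emph{by size} $\VC$-separated collections via $\Tiling\mapsto\Vert(\Tiling)$; the only new content is that, for a totally nonnegative $\VC\subset\R^3$, every maximal \emph{by inclusion} $\VC$-separated collection is already maximal by size. Since $\rk(\Mcal_\VC)=3$, Proposition~\ref{prop:complete_properties}\eqref{item:complete_colocalization} shows that if a maximal \emph{by inclusion} $\VC$-separated collection $\WS$ is \emph{complete}, then $\sigma_\WS$ is a colocalization in general position, so $\WS\subseteq\WS(\sigma_\WS)$ by Lemma~\ref{lemma:extension}\eqref{item:extension} and hence $\WS=\WS(\sigma_\WS)$ is maximal by size by Theorem~\ref{thm:max_size_matroid}. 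Thus it suffices to prove: every maximal \emph{by inclusion} $\VC$-separated collection is complete (equivalently, every incomplete $\VC$-separated collection can be properly enlarged).

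First I would make the standard normalizations. By Lemma~\ref{lemma:adding_removing_zeros} and Lemma~\ref{lemma:parallel_intro}, adding or removing loops, coloops, and parallel elements does not affect purity, and deleting a loop or one of two parallel elements from a positroid again yields a positroid; so we may assume $\Mcal_\VC$ is \emph{simple}. Then Lemma~\ref{lemma:positroids} lets us take $\VC$ to have $n$ distinct endpoints lying on the boundary of a convex $n$-gon in the plane $\{z=1\}$, labeled in cyclic order. When all these endpoints are \emph{vertices} of the polygon, $\Mcal_\VC\cong C^{n,3}$ and we are done by Corollary~\ref{cor:c23narepure}; the substantive case is when some endpoints lie in the relative interiors of edges, so that $\Mcal_\VC$ is a genuinely non-uniform rank-$3$ positroid.

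The core of the argument is a slicing construction. Since every $\v_i$ lies in $\{z=1\}$, for each $k=0,1,\dots,n$ the horizontal section $\Zon_\VC\cap\{z=k\}$ is a convex polygon, and a fine zonotopal tiling $\Tiling$ of $\Zon_\VC$ restricts to a \emph{triangulation} $\Tiling^{(k)}$ of this section (each $3$-dimensional parallelepiped of $\Tiling$ meets $\{z=k\}$ in a triangle, a segment, a vertex, or not at all), with vertex labels forming $\Vert(\Tiling)\cap {[n]\choose k}$. I would show that each $\Tiling^{(k)}$ is a triangulated \emph{plabic tiling} in the sense of Oh--Postnikov--Speyer, now attached to the rank-$3$ positroid cut out by $\VC$ rather than only the cyclic one; that consecutive levels $\Tiling^{(k)}$ and $\Tiling^{(k+1)}$ determine one another and are \emph{compatible} (every edge of $\Tiling^{(k)}$ lies directly below a vertex of $\Tiling^{(k+1)}$ and above a vertex of $\Tiling^{(k-1)}$); and, conversely, that any compatible family $(\Tiling^{(0)},\dots,\Tiling^{(n)})$ of triangulated plabic tilings glues back to a fine zonotopal tiling. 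Starting instead from a maximal \emph{by inclusion} $\VC$-separated collection $\WS$, each slice $\WS\cap {[n]\choose k}$ is a weakly separated collection inside the positroid, and the positroid strengthening of the purity theorem of \cite{OPS,DKK10} (the version for positroids referenced after Theorem~\ref{thm:purity_ws}) shows it is maximal; assembling the corresponding triangulated plabic tilings and verifying compatibility reconstructs a fine zonotopal tiling $\Tiling$ with $\Vert(\Tiling)=\WS$, whence $\WS$ is complete. Combined with Theorem~\ref{thm:max_size_vc}, this yields the stated bijection.

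The main obstacle is proving that the level-$k$ slices of a maximal \emph{by inclusion} $\VC$-separated collection are themselves maximal \emph{by inclusion} within level $k$ and that consecutive slices are compatible, so that they do glue into a genuine fine zonotopal tiling --- this is the whole combinatorial content of triangulated plabic tilings, including the ``lifting'' count $(k+1)(n-k-1)+1$ relating the sizes of the level-$k$ and level-$(k+1)$ collections, and in the totally nonnegative setting it must be carried out for an arbitrary convex point configuration rather than the cyclic one. A secondary difficulty is that edge-interior points create colines, so one genuinely needs the positroid version of \cite{OPS} (rather than the cyclic statement used in \cite{Galashin} for Theorem~\ref{thm:purity_chord}) and must check that the Oh--Postnikov--Speyer machinery of white and black cliques and plabic tilings applies verbatim to the positroid cell labeled by $\Mcal_\VC$.
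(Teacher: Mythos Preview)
Your plan and the paper's proof take genuinely different routes. The paper does \emph{not} slice by level or invoke OPS positroid purity; instead it inducts on the number of points and, for fixed $n$, by reverse induction on the number of sides $k$ of the convex polygon bounding $\PC$. The base $k=n$ is the totally positive (cyclic) case, handled by Corollary~\ref{cor:c23narepure}. For the step from $k$ to $k-1$, pick a side containing points $\x_1,\dots,\x_r$ with $r\geq 3$ and perturb $\x_1$ slightly inward to obtain a configuration $\PC'$ whose polygon has $k$ sides, so $\PC'$ is pure by induction. There is a weak map $\Mcal_{\VC'}\leadsto\Mcal_\VC$, hence a maximal-by-inclusion $\VC$-separated collection $\WS$ is automatically $\VC'$-separated and extends to some maximal-by-size $\WS'\supset\WS$ for $\VC'$. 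One then defines $\sigma:\Ccal(\Mcal_\VC)\to\{+,-\}$ by using $\sigma_\WS$ where it is nonzero and borrowing $\sigma_{\WS'}$ on the common circuits $\Ccal\cap\Ccal'$; a short lemma (Lemma~\ref{lemma:removing_m}) shows that any circuit \emph{not} oriented by $\WS$ must contain all of $[n]\setminus[r]$, after which a finite case analysis on $n-r\in\{2,3,4\}$ verifies that $\sigma$ is a colocalization in general position, and Lemma~\ref{lemma:extension} gives $\WS=\WS(\sigma)$.

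Your outline, by contrast, leaves its central step open. You correctly observe that $\VC$-separation implies chord (hence weak) separation, so each slice $\WS\cap{[n]\choose k}$ is weakly separated; but the assertion that it is \emph{maximal} inside ${[n]\choose k}$, so that OPS applies, is exactly what must be proved, and you rightly flag it as the ``main obstacle.'' The difficulty is genuine: a $k$-set $S$ weakly separated from every element of $\WS\cap{[n]\choose k}$ can still fail to be $\VC$-separated from some set in $\WS$ at another level, or even from a set at level $k$ via one of the $3$-element circuits created by collinear points on a polygon edge. Moreover, the OPS positroid theorem concerns weak separation inside a fixed positroid $\mathcal{P}\subset{[n]\choose k}$, which is a different object from the rank-$3$ oriented matroid $\Mcal_\VC$; there is no evident $\mathcal{P}$ to which the theorem applies here. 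Carrying out your plan would require rebuilding the triangulated-plabic-tiling machinery for non-uniform rank-$3$ positroids, which is a substantial project in its own right; the paper's perturbation argument sidesteps all of this and is both shorter and self-contained given the tools already developed.
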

\begin{proof}

Let $[n]$ be the ground set of $\VC$. Since all endpoints of the vectors of $\VC$ belong to the plane $z=1$, we can restrict our attention to this plane. Moreover, by Lemma~\ref{lemma:parallel}, we can assume that the oriented matroid $\Mcal_\VC$ is simple. We let $\PC=(\x_1,\x_2,\dots,\x_n)\subset\R^2$ be the \emph{affine point configuration} corresponding to $\VC$, that is, the points of $\PC$ are the endpoints of the vectors in $\VC$. Two subsets $S,T\subset [n]$ are \emph{$\PC$-separated} if and only if 
\[\Conv(S-T)\cap\Conv(T-S)=\emptyset,\]
where $\Conv(R)$ denotes the convex hull of a set $\{\x_i\mid i\in R\}\subset \PC$. It is clear two subsets are $\PC$-separated if and only if they are $\VC$-separated.

As we have already mentioned, Corollary~\ref{cor:c23narepure} implies the result for the case when the points of $\PC$ are the vertices of a convex $n$-gon $P$, and we need to show purity for the case when the points of $\PC$ are either the vertices of $P$ or belong to the edges of $P$.

We proceed by induction on the number of points in $\PC$, and for each fixed number of points we proceed by ``reverse induction'' on the number of sides of $P$: we start with the polygon with $|\PC|$ sides, for which we already know the result, and then do the induction step from polygons with $p$ sides to polygons with $p-1$ sides. 

Assume that $\x_1,\x_2,\dots,\x_n$ are cyclically ordered on the boundary of $P$ which is a polygon with $p-1<n$ sides. Assume also that one of the sides contains the points $\x_1,\x_2,\dots,\x_r$ for some $r>2$ but does not contain any other points. This is always achievable by a cyclic shift of vertex indices. Let 
\[\x'_1:=\frac12(\x_1+\x_n),\]
thus we have a non-degenerate triangle with vertices $\x'_1,\x_2,\x_n$. Define $\PC':=\{\x'_1,\x_2,\dots,\x_n\}$. Then it is clear that $\conv(\PC')$ is a polygon with $p$ sides, so by the induction hypothesis, any maximal \emph{by inclusion} $\PC'$-separated collection corresponds to a zonotopal tiling of $\Zon_{\VC'}$, where $\VC'$ is the vector configuration corresponding to $\PC'$.

Let $\WS\subset 2^{[n]}$ be any maximal \emph{by inclusion} $\PC$-separated
collection. If $\WS$ is complete then
by~Proposition~\ref{prop:complete_properties},
part~(\ref{item:complete_colocalization}), $\sigma_\WS$ is a colocalization in
general position, and thus by Theorem~\ref{thm:max_size_matroid}, $\WS$ is maximal \emph{by size} so we are
done. Thus assume that $\WS$ is not complete, in which case $\sigma_\WS$ has
some zeroes in the image so we cannot conclude that it is a colocalization. Let
$\Mcal$ and $\Mcal'$ be the oriented matroids on the ground set $[n]$
corresponding to $\PC$ and $\PC'$ respectively with circuits $\Ccal$ and
$\Ccal'$, respectively. It is apparent from the definition that there is a weak
map $\Mcal'\leadsto\Mcal$, which implies the first part of the following lemma.

\begin{lemma}\label{lemma:circuit_diff}\leavevmode
\begin{enumerate}[\normalfont (1)]
 \item If any two sets are $\Mcal$-separated then they are $\Mcal'$-separated.
 \item If $S,T$ are $\Mcal'$-separated but not $\Mcal$-separated then there is a circuit $X\in\Ccal$ such that $1\in\Xu\subset [r]$, $X^+\subset T-S$ and $X^-\subset S-T$. 
 \item Conversely, all circuits that satisfy $X^+\subset T-S$ and $X^-\subset S-T$ must also satisfy $1\in\Xu\subset [r]$.
\end{enumerate}
\end{lemma}
\begin{proof}
 The first claim is obvious, the second and the third claims follow from the construction of $\PC'$, since $\Ccal-\Ccal'$ consists exactly of all circuits of $\Mcal$ containing $1$ whose support belongs to $[r]$.
\end{proof}

\begin{lemma}\label{lemma:flats_Cpm}
For any $C\in\Ccal$, we have
\begin{equation}\label{eq:flats_Cpm}
  C^+\subset[r] \Longleftrightarrow C^-\subset[r].
\end{equation}
\end{lemma}
\begin{proof}
Follows from the fact that $[r]$ is a flat of $\Mcal$.
\end{proof}

\begin{lemma}\label{lemma:sigma_orients_r}
The restriction $\WS([r]):=\{S\cap [r]\mid S\in\WS\}$ is a maximal \emph{by size} $\Mcal\mid_{[r]}$-separated collection. Moreover, $\WS$ contains 
\begin{equation*}
  \WS([r])\cup \{T\sqcup[r+1,n]\mid T\in \WS([r])\}.
\end{equation*}
\end{lemma}
\begin{proof}
Since the rank of $\Mcal\mid_{[r]}$ is $2$, it is pure by Corollary~\ref{cor:c23narepure}, so let $\WS_r$ be a maximal by inclusion (and by size) $\Mcal\mid_{[r]}$-separated collection. Let $T\in\WS_r$. We claim that both $T$ and $T':=T\sqcup[r+1,n]$ are $\Mcal$-separated from $\WS$. Indeed, suppose that $S\in\WS$ is not $\Mcal$-separated from $T$. Let $X\in\Ccal$ be such that $X^+\subset S-T$ and $X^-\subset T-S$. Then $X^-\subset[r]$, and therefore by~\eqref{eq:flats_Cpm} we must have $X^+\subset[r]$. Thus $T$ is not $\Mcal\mid_{[r]}$-separated from $S\cap [r]$, a contradiction. Similarly, suppose that $S\in\WS$ is not $\Mcal$-separated from $T'$. Let $X\in\Ccal$ be such that $X^+\subset S-T'$ and $X^-\subset T'-S$. Then $X^+\subset[r]$, and therefore by~\eqref{eq:flats_Cpm} we must have $X^-\subset[r]$, which again leads to a contradiction.
\end{proof}


 Let $\WS'$ be a maximal \emph{by inclusion} $\Mcal'$-separated collection that contains $\WS$. Then by the induction hypothesis we know that $\WS'$ is complete and that $\sigma_{\WS'}$ is a colocalization in general position. Our goal is to show that $\WS$ is also complete. Suppose otherwise that $\sigma_\WS(C)=0$ for some $C\in\Ccal$. We may assume that $\sigma_{\WS'}(C)=+$, thus let $S\in \WS'$ be a set orienting $C$ positively. By Lemma~\ref{lemma:circuit_diff}, we see that $\Cu\not\subset[r]$, thus by~\eqref{eq:flats_Cpm}, we get
\begin{equation}\label{eq:Cpm_not_subset_r}
  C^+\not\subset[r] \quad\text{and}\quad C^-\not\subset[r].
\end{equation}
Since $S\notin \WS'$, there exists a set $T\in\WS$ that is not $\Mcal$-separated from $S$. By Lemma~\ref{lemma:circuit_diff}, there exists a circuit $X\in\Ccal$ satisfying
\begin{equation*}
  1\in\Xu\subset[r],\quad X^+\subset T-S,\quad\text{and}\quad X^-\subset S-T.
\end{equation*}
By Lemma~\ref{lemma:sigma_orients_r}, we may replace $T$ with $T\cap [r]$. Thus we have $T\subset [r]$ and denote $T':=T\sqcup [r+1,n]$. 

Let $\Xu=\{1,i,j\}$ for some $1<i<j\leq r$. We consider two cases: $X=(\{1,j\},\{i\})$ and $X=(\{i\},\{1,j\})$. Assume first that $X=(\{1,j\},\{i\})$. Thus $1,j\in T-S$ and $i\in S-T$. By~\eqref{eq:Cpm_not_subset_r}, there exists $k\in C^+\setminus [r]$. Since $S$ orients $C$ positively and $T\subset[r]$, we get $k\in S-T$. Since $(\{1,j\},\{i,k\})\in\Ccal'$, we find that $S$ and $T$ are not $\Mcal'$-separated, which is a contradiction. Assume now that $X=(\{i\},\{1,j\})$. Thus $i\in T-S$ and $1,j\in S-T$. By~\eqref{eq:Cpm_not_subset_r}, there exists $k\in C^-\setminus [r]$. Since $S$ orients $C$ positively and $T'=T\sqcup [r+1,n]$, we get $k\in S-T'$. Since $(\{1,j\},\{i,k\})\in\Ccal'$, we find that $S$ and $T'$ are not $\Mcal'$-separated, which again yields a contradiction. 

We have shown that $\WS$ is complete. As explained above, this finishes the proof of Theorem~\ref{thm:purity_vc}.
\end{proof}

\section{Classification results}\label{sect:classif}

In this section, we prove the remaining parts of Theorems~\ref{thm:uniform_classification} and~\ref{thm:purity_3d}, as well as some other results that we announced in the earlier sections. We start by showing Proposition~\ref{prop:rk_2_cork_1}:
\begin{proposition}\label{prop:rank_2}
 All simple oriented matroids of rank at most $2$ or corank at most $1$ are pure. 
\end{proposition}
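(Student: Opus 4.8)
The plan is to reduce everything to the concrete structure we already understand and then just check the two cases separately. Since purity is invariant under adding/removing loops and coloops (Lemma~\ref{lemma:loop_coloop}) and under adding/removing parallel elements (Lemma~\ref{lemma:parallel}), it suffices to treat simple oriented matroids, so the reduction to simple $\Mcal$ in the statement is harmless. I would begin by disposing of the rank~$\le 2$ case. A simple oriented matroid of rank $\le 1$ has at most one element and is trivially pure. A simple oriented matroid $\Mcal$ of rank exactly $2$ is, after reorientation, acyclic, hence realizable by a configuration of $n$ pairwise non-parallel vectors in $\R^2$; ordering them cyclically by slope, $\Mcal$ becomes (isomorphic to) the alternating matroid $C^{n,2}$. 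By Corollary~\ref{cor:c23narepure}, $C^{n,2}$ is pure, and purity is preserved under isomorphism, so $\Mcal$ is pure. (Alternatively one can invoke the explicit description of $\altn$-separation in Lemma~\ref{lemma:altn_description} and Theorem~\ref{thm:altn} directly.)

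For the corank $\le 1$ case, the cleanest route is duality combined with the ``at most two circuits'' phenomenon. If $\cork(\Mcal)=0$ then $\Mcal$ has no circuits at all, every subset of $E$ is independent, any two subsets are $\Mcal$-separated, the unique maximal-by-inclusion $\Mcal$-separated collection is $2^E$ itself, which is also maximal by size, so $\Mcal$ is pure. Suppose $\cork(\Mcal)=1$; then $\Mcal$ is realizable (e.g.\ by \cite[Corollary~8.2.3]{Book}, just as in the proof of Theorem~\ref{thm:max_size_matroid}), and after deleting coloops (which preserves purity by Lemma~\ref{lemma:loop_coloop}) and removing parallel elements of the dual (which preserves purity by Lemma~\ref{lemma:parallel}) we may assume $\Mcal \cong \altn$ for $n = |E|$. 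Now I would simply quote Theorem~\ref{thm:altn} together with Theorem~\ref{thm:max_size_matroid}: any maximal-by-inclusion $\altn$-separated collection $\WS$, being contained in a maximal-by-size one, must itself be maximal by size once we show that maximality by inclusion forces $\qq(\WS)$ to be maximal in the sense of Lemma~\ref{lemma:qq}. This last point is the substance: I would check directly from Lemma~\ref{lemma:altn_description} that if $\qq(\WS)=(\q_1,\dots,\q_n)$ has two zeros (say $\q_k = \q_\ell = 0$ with $k < \ell$), then at least one of the sets $[1,k-1]$, $[k,n]$ — whichever is not already forbidden by $\altn$-separation with the existing members of $\WS$ — can be adjoined, contradicting maximality by inclusion; hence $\qq(\WS)$ has exactly one zero, $\WS$ is maximal by size, and $\Mcal$ is pure.

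The main obstacle, such as it is, is the bookkeeping in the corank~$1$ argument: one must verify that the process of deleting coloops and contracting to kill dual-parallel elements genuinely lands on $\altn$ and not on some degenerate relabeling, and that adjoining one of $[1,k-1]$ or $[k,n]$ to an inclusion-maximal collection is always possible (i.e.\ that it is $\altn$-separated from every existing member). Both follow from the explicit list of non-$\altn$-separated pairs in Lemma~\ref{lemma:altn_description} — the non-separated pairs are exactly $([1,l],[m+1,n])$ with $|l-m|\le 1$ — so a short case analysis on the positions of the two zeros in $\qq(\WS)$ closes it. I do not anticipate needing anything beyond the results already established in Sections~\ref{sect:max_by_sz} and \ref{sec:simple-oper-orient} and Corollary~\ref{cor:c23narepure}; the proof is essentially an assembly of those pieces.
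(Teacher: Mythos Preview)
Your treatment of rank $\le 2$ and of corank $0$ is correct and essentially identical to the paper's.

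The corank-$1$ argument, however, is confused. You propose to reduce a simple corank-$1$ matroid to $\altn$, but $\altn=(C^{n,2})^*$ has corank~$2$, not~$1$; the operations you list (deleting coloops, contracting to kill dual-parallel elements) both preserve corank, so you can never land on a corank-$2$ matroid. Moreover, Lemma~\ref{lemma:parallel} concerns parallel elements of $\Mcal$, not of $\Mcal^*$; ``removing parallel elements of the dual'' is contraction in $\Mcal$, and the lemma says nothing about that. The subsequent appeal to Lemma~\ref{lemma:altn_description}, Lemma~\ref{lemma:qq}, and Theorem~\ref{thm:altn} is therefore beside the point: those results describe the corank-$2$ situation and the Type~III condition on corank-$2$ subsets, which simply do not exist when $\cork(\Mcal)=1$.

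The paper's argument is both correct and much shorter: a simple oriented matroid of corank~$1$ (with coloops removed) has a single pair of opposite circuits $\pm C$ with $\underline{C}=E$, so the \emph{only} non-$\Mcal$-separated pair of subsets is $(C^+,C^-)$. Hence every maximal-by-inclusion $\Mcal$-separated collection is $2^E\setminus\{C^+\}$ or $2^E\setminus\{C^-\}$, both of size $2^{|E|}-1$, and purity is immediate. You should replace your corank-$1$ paragraph with this one-line observation.
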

\begin{proof}
 If $\Mcal$ has corank $1$ then it has just one pair of opposite circuits $\pm C=\pm (C^+,C^-)$ supported on the whole ground set. Thus any two subsets of $E$ are $\Mcal$-separated from each other except for $C^+$ and $C^-$. The purity of $\Mcal$ follows.
 
 If $\Mcal$ has corank $0$ then there are no circuits so any two subsets of $E$ are $\Mcal$-separated from each other.
 
 If $\Mcal$ (note that it is simple) has rank $2$ then it is isomorphic to the alternating matroid $C^{n,2}$ for some $n$. As we have noted before, in this case $\Mcal$-separation is the same thing as strong separation and thus the purity of $\Mcal$ is a special case of Theorem~\ref{thm:purity_ss}.
 
 If $\Mcal$ has rank $0$ or $1$ and is simple, it means that it has at most one element and the result is trivial. 
\end{proof}

Next, we analyze which of the six-element oriented matroids are pure. Note that for an oriented matroid $\Mcal$ with $|E|=6$ elements, if $\rk(\Mcal)=0,1,2,5,$ or $6$ then $\Mcal$ is pure by the above proposition. Also, for $\rk(\Mcal)\geq 4$ we only care about the case of $\Mcal$ being uniform. 
As we will see later in Lemma~\ref{lemma:minors_6_4}, all  uniform oriented matroids of rank $4$ and corank $2$ are isomorphic to $C^{6,4}$.
\begin{lemma}\label{lemma:six_elements}
\begin{enumerate}[\normalfont (1)]
 \item The alternating matroid $C^{6,4}$ is non-pure;
 \item There are $17$ isomorphism classes of simple oriented matroids of rank $3$ with $6$ elements. Eight of them (Figure~\ref{fig:positroids}) are positively oriented and therefore are pure. The other nine of them (Figure~\ref{fig:non_postiroids}) are not pure. 
\end{enumerate} 
\end{lemma}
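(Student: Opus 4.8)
Looking at Lemma~\ref{lemma:six_elements}, I need to prove two statements: (1) $C^{6,4}$ is non-pure, and (2) the classification of simple rank-3 oriented matroids on 6 elements into 8 pure (positively oriented) and 9 non-pure classes.

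\textbf{Plan for part (1): non-purity of $C^{6,4}$.}
The plan is to exhibit an explicit $C^{6,4}$-separated collection that is maximal by inclusion but not maximal by size. By Theorem~\ref{thm:max_size_matroid}, a maximal by size collection has size $|\Ind(C^{6,4})| = \sum_{i=0}^{4}\binom{6}{i} = 1+6+15+20+15 = 57$. First I would recall from Lemma~\ref{lem:alternating_matroid} that the circuits of $C^{6,4}$ are the signed sets $(I^{\odd},I^{\even})$ and $(I^{\even},I^{\odd})$ for 5-element subsets $I\subset[6]$. Equivalently, since $C^{6,4}\cong (C^{6,2})^\ast$ up to reorientation of $\{2,4,6\}$, I could instead work with $\altn$ for $n=6$ as in Section~\ref{sect:altn}, where the non-separated pairs are completely described by Lemma~\ref{lemma:altn_description}: they are $([1,l],[m+1,n])$ with $|l-m|\le 1$. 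Using this explicit description, I would construct an incomplete colocalization $\sigma$ (with image hitting $0$) whose corresponding collection $\WS(\sigma)$-style set is maximal by inclusion; concretely, following the remark after Proposition~\ref{prop:complete_properties}, the example $\qq(\WS(A)) = (+,0,-,0,+,0)$ in the $n=6$ case is precisely the obstruction — it satisfies all the local Type I/II/III-compatible conditions on every proper corank-2 minor but fails Type III globally, so the associated collection cannot be extended to a complete one. I would verify that this collection is maximal by inclusion (no set can be added without creating a non-separated pair) yet has size strictly less than 57, which establishes non-purity. The translation between the reorientation of $\altn$ and $C^{6,4}$ is routine via the cited lemma.

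\textbf{Plan for part (2): the rank-3 classification.}
The 17 isomorphism classes of simple rank-3 oriented matroids on 6 elements are classical and enumerated in the oriented matroid database~\cite{Finschi} (or in~\cite[Chapter~8]{Book}); I would cite this to pin down the list, and present the two families via their affine point configurations as in Figures~\ref{fig:positroids} and~\ref{fig:non_postiroids}. For the 8 positively-oriented ones: each is isomorphic to a positroid of rank 3, hence by the (already-proved) direction of Theorem~\ref{thm:purity_3d} that positively oriented rank-3 matroids are pure (Theorem~\ref{thm:purity_vc}), they are all pure. That half is immediate. For the 9 remaining classes, I need to show each is non-pure; the cleanest route is Proposition~\ref{prop:if_pure_then_minors_are_pure}: since purity is inherited by minors, it suffices to show each of these 9 oriented matroids has a minor isomorphic to a non-pure oriented matroid. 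The natural candidate is $\Mcal_{\vec K_4}$ (non-pure by the proof of Theorem~\ref{thm:outerplanar}), and I would check, configuration by configuration, that contracting/deleting an appropriate element in each of the 9 non-positively-oriented classes produces a rank-3 oriented matroid on fewer elements that is isomorphic to $\Mcal_{\vec K_4}$ (equivalently, the affine configuration has 4 points in "general position with a specified $K_4$-type circuit structure", i.e., is not positively orientable in a way detected by $K_4$). Since $\Mcal_{\vec K_4}$ is self-dual and has rank 3 on 6 elements, in several cases the non-pure matroid may already equal $\Mcal_{\vec K_4}$ itself.

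\textbf{Main obstacle.} The hard part will be the finite but somewhat tedious case analysis in part (2): verifying for each of the 9 non-positively-oriented isomorphism classes that it genuinely contains a $\Mcal_{\vec K_4}$-minor (and confirming the 8/9 split matches the database enumeration). I expect to organize this by listing the affine point configurations $\IC(6,3,k)$ from Figure~\ref{fig:non_postiroids}, and for each one picking an explicit element whose deletion or contraction yields (after relabeling) the $K_4$ configuration — a short computation per case, but one that must be done carefully and consistently. The $C^{6,4}$ part of the argument in (1), by contrast, is essentially a direct application of the machinery in Section~\ref{sect:altn} plus the explicit $(+,0,-,0,+,0)$ example, so I expect it to be short.
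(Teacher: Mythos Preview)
Your plan for part~(1) is on the right track and matches the paper's approach, but your description is garbled: you speak of an ``incomplete colocalization'' and a collection that ``cannot be extended to a complete one'', whereas the whole point of the $(+,0,-,0,+,0)$ example is the opposite. The $\qq$-vector has zeros, but the associated collection $\WS=\{\emptyset,[1,4],[3,6]\}$ \emph{is} complete (every circuit of $\altsixfour$ is oriented), and the resulting $\sigma_\WS$ fails to be of Type~III. Since $\sigma_\WS$ is already fully determined, there is no colocalization $\sigma\geq\sigma_\WS$, hence by Lemma~\ref{lemma:extension} and Theorem~\ref{thm:max_size_matroid} the collection is not contained in any maximal-by-size collection. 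That is the paper's argument; just straighten out the logic.

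Your plan for part~(2) has a genuine gap. You propose to show non-purity of the nine non-positroid classes by finding a $\Mcal_{\vec K_4}$-minor in each, invoking Proposition~\ref{prop:if_pure_then_minors_are_pure}. But $\Mcal_{\vec K_4}$ is itself a simple rank-$3$ oriented matroid on $6$ elements, so it is one of the seventeen classes (in fact $\Mcal_{\vec K_4}\cong\IC(6,3,13)$). Any \emph{proper} minor of a six-element matroid has at most five elements and hence cannot be isomorphic to $\Mcal_{\vec K_4}$; and every rank-$3$ oriented matroid on at most five elements is isomorphic to a positroid and therefore pure. So the minor argument works for exactly one of the nine classes and fails for the other eight. (Rank-preserving weak maps to $\IC(6,3,13)$ do exist from all nine, but purity under weak maps is only Conjecture~\ref{conj:weak_minors}, not a theorem you can invoke.)

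The paper instead proves non-purity of each of the nine directly: for each $\Mcal$ it exhibits a small explicit $\Mcal$-separated collection $\WS_0$ and a ``bad circuit'' $C$ such that every subset of $E$ orienting $C$ (in either direction) is non-$\Mcal$-separated from some member of $\WS_0$. Then any maximal-by-inclusion extension of $\WS_0$ fails to orient $C$, hence is not complete, hence by Corollary~\ref{cor:complete} is not maximal by size. This is a finite check per matroid (four sets in $\WS_0$, eight sets orienting $C$), and it is what you will have to do.
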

\begin{proof}
 The first claim has already been mentioned in the end of the proof of Proposition~\ref{prop:complete_properties}. Recall that $\altn$ defines an oriented matroid isomorphic to $C^{n,n-2}$.  Take $\WS=\{\emptyset,[4],[6]-[2]\}\subset 2^{[6]}$. It is complete but it does not define a colocalization of $\altsixfour$ because it does not have Type III so it is not contained in any maximal by size $\altsixfour$-separated collection.
 
 The second claim is a computational fact. It is easy to list all the totally nonnegative point configurations, see Figure~\ref{fig:positroids}, and then for each of the remaining nine oriented matroids one needs to construct an $\Mcal$-separated collection that is not contained in any complete $\Mcal$-separated collection. We list this data in Figure~\ref{fig:non_postiroids}. Namely, for each of the nine oriented matroids $\Mcal$ that are not positively oriented, we specify a \emph{bad collection} $\WS_0$ and a \emph{bad circuit} $C\in\Ccal(\Mcal)$ with the following property: any maximal \emph{by inclusion} $\Mcal$-separated collection containing the bad collection $\WS_0$ cannot orient the bad circuit $C$ either negatively or positively. Let us give an example. Take $\Mcal=\IC(6,3,13)$ from Figure~\ref{fig:non_postiroids}. Then the bad circuit is $C=(6,124)$ and the bad collection is 
 \[\WS_0=\{456,1356,2345,12346\}.\]
 First, note that $\WS_0$ is $\Mcal$-separated, e.g., the sets $1356$ and $2345$ are $\Mcal$-separated because the segments $16$ and $24$ do not intersect each other in Figure~\ref{fig:non_postiroids}. 
 
 Now, there are four subsets of $[6]$ that orient $C$ positively, namely, $6,36,56,356$. Similarly, there are four subsets of $[6]$ that orient $C$ negatively: $124,1234,1245,12345$. One easily checks that for each such set $S$, there is at least one subset $T\in\WS_0$ that is \emph{not} $\Mcal$-separated from~$S$:

 \begin{center}
 \begin{tabular}{|c|c|c|c|c|c|c|c|c|}\hline
  $S$ &$6      $&$ 36    $&$ 56    $&$ 356   $&$ 124   $&$1234   $&$1245   $&$12345  $ \\\hline
  $T$ &$2345   $&$2345   $&$12346  $&$12346  $&$1356   $&$456    $&$1356   $&$456    $ \\\hline           
 \end{tabular}
  \end{center}
 
This shows that the oriented matroid $\IC(6,3,13)$ is not pure. The same argument applied to the other eight oriented matroids in Figure~\ref{fig:non_postiroids} finishes the proof of Lemma~\ref{lemma:six_elements}. 
\end{proof}

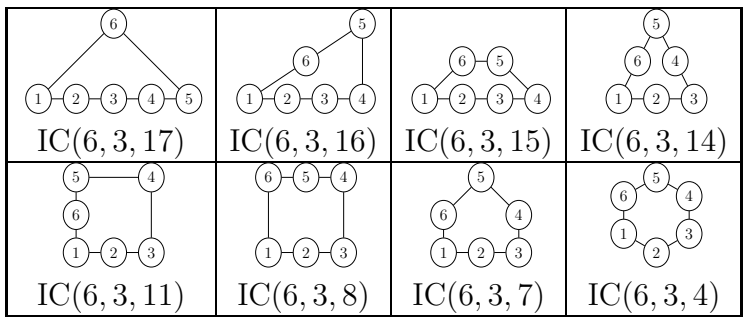
\begin{figure}
 \begin{tabular}{|c|c|c|c|}\hline
\scalebox{0.5}{
\begin{tikzpicture}
\node[draw,ellipse,black,fill=white] (node1) at (0.00,0.00) {$1$};
\node[draw,ellipse,black,fill=white] (node2) at (1.00,0.00) {$2$};
\node[draw,ellipse,black,fill=white] (node3) at (2.00,0.00) {$3$};
\node[draw,ellipse,black,fill=white] (node4) at (3.00,0.00) {$4$};
\node[draw,ellipse,black,fill=white] (node5) at (4.00,0.00) {$5$};
\node[draw,ellipse,black,fill=white] (node6) at (2.00,2.00) {$6$};
\draw[line width=0.25mm,black] (node1) -- (node2);
\draw[line width=0.25mm,black] (node2) -- (node3);
\draw[line width=0.25mm,black] (node3) -- (node4);
\draw[line width=0.25mm,black] (node4) -- (node5);
\draw[line width=0.25mm,black] (node5) -- (node6);
\draw[line width=0.25mm,black] (node6) -- (node1);
\end{tikzpicture}}
&
\scalebox{0.5}{
\begin{tikzpicture}
\node[draw,ellipse,black,fill=white] (node1) at (0.00,0.00) {$1$};
\node[draw,ellipse,black,fill=white] (node2) at (1.00,0.00) {$2$};
\node[draw,ellipse,black,fill=white] (node3) at (2.00,0.00) {$3$};
\node[draw,ellipse,black,fill=white] (node4) at (3.00,0.00) {$4$};
\node[draw,ellipse,black,fill=white] (node5) at (3.00,2.00) {$5$};
\node[draw,ellipse,black,fill=white] (node6) at (1.50,1.00) {$6$};
\draw[line width=0.25mm,black] (node1) -- (node2);
\draw[line width=0.25mm,black] (node2) -- (node3);
\draw[line width=0.25mm,black] (node3) -- (node4);
\draw[line width=0.25mm,black] (node4) -- (node5);
\draw[line width=0.25mm,black] (node5) -- (node6);
\draw[line width=0.25mm,black] (node6) -- (node1);
\end{tikzpicture}}
&
\scalebox{0.5}{
\begin{tikzpicture}
\node[draw,ellipse,black,fill=white] (node1) at (0.00,0.00) {$1$};
\node[draw,ellipse,black,fill=white] (node2) at (1.00,0.00) {$2$};
\node[draw,ellipse,black,fill=white] (node3) at (2.00,0.00) {$3$};
\node[draw,ellipse,black,fill=white] (node4) at (3.00,0.00) {$4$};
\node[draw,ellipse,black,fill=white] (node5) at (2.00,1.00) {$5$};
\node[draw,ellipse,black,fill=white] (node6) at (1.00,1.00) {$6$};
\draw[line width=0.25mm,black] (node1) -- (node2);
\draw[line width=0.25mm,black] (node2) -- (node3);
\draw[line width=0.25mm,black] (node3) -- (node4);
\draw[line width=0.25mm,black] (node4) -- (node5);
\draw[line width=0.25mm,black] (node5) -- (node6);
\draw[line width=0.25mm,black] (node6) -- (node1);
\end{tikzpicture}}
&
\scalebox{0.5}{
\begin{tikzpicture}
\node[draw,ellipse,black,fill=white] (node1) at (0.00,0.00) {$1$};
\node[draw,ellipse,black,fill=white] (node2) at (1.00,0.00) {$2$};
\node[draw,ellipse,black,fill=white] (node3) at (2.00,0.00) {$3$};
\node[draw,ellipse,black,fill=white] (node4) at (1.50,1.00) {$4$};
\node[draw,ellipse,black,fill=white] (node5) at (1.00,2.00) {$5$};
\node[draw,ellipse,black,fill=white] (node6) at (0.50,1.00) {$6$};
\draw[line width=0.25mm,black] (node1) -- (node2);
\draw[line width=0.25mm,black] (node2) -- (node3);
\draw[line width=0.25mm,black] (node3) -- (node4);
\draw[line width=0.25mm,black] (node4) -- (node5);
\draw[line width=0.25mm,black] (node5) -- (node6);
\draw[line width=0.25mm,black] (node6) -- (node1);
\end{tikzpicture}}\\
$\IC(6,3,17)$ &$\IC(6,3,16)$ &$\IC(6,3,15)$ &$\IC(6,3,14)$ \\\hline
\scalebox{0.5}{
\begin{tikzpicture}
\node[draw,ellipse,black,fill=white] (node1) at (0.00,0.00) {$1$};
\node[draw,ellipse,black,fill=white] (node2) at (1.00,0.00) {$2$};
\node[draw,ellipse,black,fill=white] (node3) at (2.00,0.00) {$3$};
\node[draw,ellipse,black,fill=white] (node4) at (2.00,2.00) {$4$};
\node[draw,ellipse,black,fill=white] (node5) at (0.00,2.00) {$5$};
\node[draw,ellipse,black,fill=white] (node6) at (0.00,1.00) {$6$};
\draw[line width=0.25mm,black] (node1) -- (node2);
\draw[line width=0.25mm,black] (node2) -- (node3);
\draw[line width=0.25mm,black] (node3) -- (node4);
\draw[line width=0.25mm,black] (node4) -- (node5);
\draw[line width=0.25mm,black] (node5) -- (node6);
\draw[line width=0.25mm,black] (node6) -- (node1);
\end{tikzpicture}}
&
\scalebox{0.5}{
\begin{tikzpicture}
\node[draw,ellipse,black,fill=white] (node1) at (0.00,0.00) {$1$};
\node[draw,ellipse,black,fill=white] (node2) at (1.00,0.00) {$2$};
\node[draw,ellipse,black,fill=white] (node3) at (2.00,0.00) {$3$};
\node[draw,ellipse,black,fill=white] (node4) at (2.00,2.00) {$4$};
\node[draw,ellipse,black,fill=white] (node5) at (1.00,2.00) {$5$};
\node[draw,ellipse,black,fill=white] (node6) at (0.00,2.00) {$6$};
\draw[line width=0.25mm,black] (node1) -- (node2);
\draw[line width=0.25mm,black] (node2) -- (node3);
\draw[line width=0.25mm,black] (node3) -- (node4);
\draw[line width=0.25mm,black] (node4) -- (node5);
\draw[line width=0.25mm,black] (node5) -- (node6);
\draw[line width=0.25mm,black] (node6) -- (node1);
\end{tikzpicture}}
&
\scalebox{0.5}{
\begin{tikzpicture}
\node[draw,ellipse,black,fill=white] (node1) at (0.00,0.00) {$1$};
\node[draw,ellipse,black,fill=white] (node2) at (1.00,0.00) {$2$};
\node[draw,ellipse,black,fill=white] (node3) at (2.00,0.00) {$3$};
\node[draw,ellipse,black,fill=white] (node4) at (2.00,1.00) {$4$};
\node[draw,ellipse,black,fill=white] (node5) at (1.00,2.00) {$5$};
\node[draw,ellipse,black,fill=white] (node6) at (0.00,1.00) {$6$};
\draw[line width=0.25mm,black] (node1) -- (node2);
\draw[line width=0.25mm,black] (node2) -- (node3);
\draw[line width=0.25mm,black] (node3) -- (node4);
\draw[line width=0.25mm,black] (node4) -- (node5);
\draw[line width=0.25mm,black] (node5) -- (node6);
\draw[line width=0.25mm,black] (node6) -- (node1);
\end{tikzpicture}}
&
\scalebox{0.5}{
\begin{tikzpicture}
\node[draw,ellipse,black,fill=white] (node1) at (-0.87,-0.50) {$1$};
\node[draw,ellipse,black,fill=white] (node2) at (-0.00,-1.00) {$2$};
\node[draw,ellipse,black,fill=white] (node3) at (0.87,-0.50) {$3$};
\node[draw,ellipse,black,fill=white] (node4) at (0.87,0.50) {$4$};
\node[draw,ellipse,black,fill=white] (node5) at (0.00,1.00) {$5$};
\node[draw,ellipse,black,fill=white] (node6) at (-0.87,0.50) {$6$};
\draw[line width=0.25mm,black] (node1) -- (node2);
\draw[line width=0.25mm,black] (node2) -- (node3);
\draw[line width=0.25mm,black] (node3) -- (node4);
\draw[line width=0.25mm,black] (node4) -- (node5);
\draw[line width=0.25mm,black] (node5) -- (node6);
\draw[line width=0.25mm,black] (node6) -- (node1);
\end{tikzpicture}}
\\
$\IC(6,3,11)$ &$\IC(6,3,8)$ &$\IC(6,3,7)$ &$\IC(6,3,4)$ \\\hline
 \end{tabular}
  \caption{\label{fig:positroids} Affine diagrams of the eight (isomorphism classes of) positively oriented matroids of rank $3$ with $6$ elements and their names from~\cite{Finschi}.}
\end{figure}

\begin{figure}
  \scalebox{0.8}{
 \begin{tabular}{|c|c|c|c|}\hline
  &
\scalebox{0.5}{
\begin{tikzpicture}
\node[draw,ellipse,blue,fill=white] (node1) at (0.00,0.00) {$1$};
\node[draw,ellipse,blue,fill=white] (node2) at (4.00,0.00) {$2$};
\node[draw,ellipse,black,fill=white] (node3) at (3.00,2.00) {$3$};
\node[draw,ellipse,blue,fill=white] (node4) at (2.00,4.00) {$4$};
\node[draw,ellipse,black,fill=white] (node5) at (1.00,2.00) {$5$};
\node[draw,ellipse,red,fill=white] (node6) at (2.00,1.33) {$6$};
\draw[line width=0.25mm,black] (node1) -- (node2);
\draw[line width=0.25mm,black] (node2) -- (node3);
\draw[line width=0.25mm,black] (node3) -- (node4);
\draw[line width=0.25mm,black] (node4) -- (node5);
\draw[line width=0.25mm,black] (node5) -- (node6);
\draw[line width=0.25mm,black] (node6) -- (node1);
\draw[line width=0.25mm,black] (node2) -- (node6);
\draw[line width=0.25mm,black] (node3) -- (node6);
\draw[line width=0.25mm,black] (node1) -- (node5);
\end{tikzpicture}}
&
\scalebox{0.5}{
\begin{tikzpicture}
\node[draw,ellipse,blue,fill=white] (node1) at (0.00,0.00) {$1$};
\node[draw,ellipse,black,fill=white] (node2) at (4.00,0.00) {$2$};
\node[draw,ellipse,blue,fill=white] (node3) at (4.00,2.00) {$3$};
\node[draw,ellipse,blue,fill=white] (node4) at (0.00,4.00) {$4$};
\node[draw,ellipse,black,fill=white] (node5) at (0.00,2.00) {$5$};
\node[draw,ellipse,red,fill=white] (node6) at (2.00,2.00) {$6$};
\draw[line width=0.25mm,black] (node1) -- (node2);
\draw[line width=0.25mm,black] (node2) -- (node3);
\draw[line width=0.25mm,black] (node3) -- (node4);
\draw[line width=0.25mm,black] (node4) -- (node5);
\draw[line width=0.25mm,black] (node5) -- (node6);
\draw[line width=0.25mm,black] (node2) -- (node6);
\draw[line width=0.25mm,black] (node3) -- (node6);
\draw[line width=0.25mm,black] (node5) -- (node1);
\draw[line width=0.25mm,black] (node4) -- (node6);
\end{tikzpicture}}
&
\scalebox{0.5}{
\begin{tikzpicture}
\node[draw,ellipse,blue,fill=white] (node1) at (0.00,0.00) {$1$};
\node[draw,ellipse,blue,fill=white] (node2) at (4.00,0.00) {$2$};
\node[draw,ellipse,black,fill=white] (node3) at (4.00,2.67) {$3$};
\node[draw,ellipse,blue,fill=white] (node4) at (2.00,4.00) {$4$};
\node[draw,ellipse,black,fill=white] (node5) at (0.00,2.67) {$5$};
\node[draw,ellipse,red,fill=white] (node6) at (2.00,1.33) {$6$};
\draw[line width=0.25mm,black] (node1) -- (node2);
\draw[line width=0.25mm,black] (node2) -- (node3);
\draw[line width=0.25mm,black] (node3) -- (node4);
\draw[line width=0.25mm,black] (node4) -- (node5);
\draw[line width=0.25mm,black] (node5) -- (node6);
\draw[line width=0.25mm,black] (node6) -- (node1);
\draw[line width=0.25mm,black] (node1) -- (node5);
\draw[line width=0.25mm,black] (node1) -- (node6);
\draw[line width=0.25mm,black] (node2) -- (node6);
\draw[line width=0.25mm,black] (node3) -- (node6);
\end{tikzpicture}}

\\
Name in~\cite{Finschi} & $\IC(6,3,13)$               &$\IC(6,3,12)$ &$\IC(6,3,10)$ \\
Bad circuit            & $(6,124)$                   & $(6,134)$   &  $(6,124)$  \\
Bad collection         & $456,1356,2345,12346$       & $13,126,1245,2356$            & $24,346,2356,1345$  \\\hline

  &
  \scalebox{0.5}{
\begin{tikzpicture}
\node[draw,ellipse,blue,fill=white] (node1) at (0.00,0.00) {$1$};
\node[draw,ellipse,black,fill=white] (node2) at (2.67,0.00) {$2$};
\node[draw,ellipse,blue,fill=white] (node3) at (4.00,0.00) {$3$};
\node[draw,ellipse,black,fill=white] (node4) at (2.67,4.00) {$4$};
\node[draw,ellipse,blue,fill=white] (node5) at (1.33,4.00) {$5$};
\node[draw,ellipse,red,fill=white] (node6) at (1.33,2.00) {$6$};
\draw[line width=0.25mm,black] (node1) -- (node2);
\draw[line width=0.25mm,black] (node2) -- (node3);
\draw[line width=0.25mm,black] (node3) -- (node4);
\draw[line width=0.25mm,black] (node4) -- (node5);
\draw[line width=0.25mm,black] (node6) -- (node1);
\draw[line width=0.25mm,black] (node4) -- (node6);
\draw[line width=0.25mm,black] (node2) -- (node5);
\draw[line width=0.25mm,black] (node1) -- (node5);
\end{tikzpicture}}
&
\scalebox{0.5}{
\begin{tikzpicture}
\node[draw,ellipse,blue,fill=white] (node1) at (0.00,0.00) {$1$};
\node[draw,ellipse,black,fill=white] (node2) at (2.00,0.00) {$2$};
\node[draw,ellipse,blue,fill=white] (node3) at (4.00,1.00) {$3$};
\node[draw,ellipse,blue,fill=white] (node4) at (2.00,4.00) {$4$};
\node[draw,ellipse,black,fill=white] (node5) at (0.00,1.00) {$5$};
\node[draw,ellipse,red,fill=white] (node6) at (2.00,2.00) {$6$};
\draw[line width=0.25mm,black] (node1) -- (node2);
\draw[line width=0.25mm,black] (node2) -- (node3);
\draw[line width=0.25mm,black] (node3) -- (node4);
\draw[line width=0.25mm,black] (node4) -- (node5);
\draw[line width=0.25mm,black] (node2) -- (node6);
\draw[line width=0.25mm,black] (node3) -- (node5);
\draw[line width=0.25mm,black] (node4) -- (node6);
\draw[line width=0.25mm,black] (node1) -- (node5);
\end{tikzpicture}}
&
\scalebox{0.5}{
\begin{tikzpicture}
\node[draw,ellipse,black,fill=white] (node1) at (0.00,0.00) {$1$};
\node[draw,ellipse,blue,fill=white] (node2) at (4.00,0.00) {$2$};
\node[draw,ellipse,black,fill=white] (node3) at (4.00,2.00) {$3$};
\node[draw,ellipse,blue,fill=white] (node4) at (2.00,4.00) {$4$};
\node[draw,ellipse,blue,fill=white] (node5) at (0.00,2.00) {$5$};
\node[draw,ellipse,red,fill=white] (node6) at (2.00,2.00) {$6$};
\draw[line width=0.25mm,black] (node1) -- (node2);
\draw[line width=0.25mm,black] (node2) -- (node3);
\draw[line width=0.25mm,black] (node3) -- (node4);
\draw[line width=0.25mm,black] (node4) -- (node5);
\draw[line width=0.25mm,black] (node5) -- (node6);
\draw[line width=0.25mm,black] (node3) -- (node6);
\draw[line width=0.25mm,black] (node1) -- (node5);
\draw[line width=0.25mm,black] (node1) -- (node4);
\draw[line width=0.25mm,black] (node2) -- (node4);
\end{tikzpicture}}
\\
Name in~\cite{Finschi} & $\IC(6,3,9)$               &$\IC(6,3,6)$ &$\IC(6,3,5)$ \\
Bad circuit            & $(6,135)$                   & $(6,134)$   &  $(6,245)$  \\
Bad collection         & $35,346,1234,2456$       & $13,126,2356,1245$            & $25,126,1356,1234$  \\\hline
 &
 \scalebox{0.5}{
\begin{tikzpicture}
\node[draw,ellipse,blue,fill=white] (node1) at (0.00,0.00) {$1$};
\node[draw,ellipse,blue,fill=white] (node2) at (4.00,0.00) {$2$};
\node[draw,ellipse,black,fill=white] (node3) at (4.00,3.00) {$3$};
\node[draw,ellipse,blue,fill=white] (node4) at (2.00,4.00) {$4$};
\node[draw,ellipse,black,fill=white] (node5) at (0.00,3.00) {$5$};
\node[draw,ellipse,red,fill=white] (node6) at (2.00,0.70) {$6$};
\draw[line width=0.25mm,black] (node1) -- (node2);
\draw[line width=0.25mm,black] (node2) -- (node3);
\draw[line width=0.25mm,black] (node3) -- (node4);
\draw[line width=0.25mm,black] (node4) -- (node5);
\draw[line width=0.25mm,black] (node1) -- (node3);
\draw[line width=0.25mm,black] (node2) -- (node5);
\draw[line width=0.25mm,black] (node1) -- (node5);
\end{tikzpicture}}
&
\scalebox{0.5}{
\begin{tikzpicture}
\node[draw,ellipse,black,fill=white] (node1) at (0.00,0.00) {$1$};
\node[draw,ellipse,blue,fill=white] (node2) at (4.00,0.00) {$2$};
\node[draw,ellipse,black,fill=white] (node3) at (4.00,1.33) {$3$};
\node[draw,ellipse,blue,fill=white] (node4) at (2.00,4.00) {$4$};
\node[draw,ellipse,blue,fill=white] (node5) at (0.00,1.33) {$5$};
\node[draw,ellipse,red,fill=white] (node6) at (2.00,2.67) {$6$};
\draw[line width=0.25mm,black] (node1) -- (node2);
\draw[line width=0.25mm,black] (node2) -- (node3);
\draw[line width=0.25mm,black] (node3) -- (node4);
\draw[line width=0.25mm,black] (node4) -- (node5);
\draw[line width=0.25mm,black] (node1) -- (node5);
\draw[line width=0.25mm,black] (node3) -- (node5);
\end{tikzpicture}}
&
\scalebox{0.5}{
\begin{tikzpicture}
\node[draw,ellipse,blue,fill=white] (node1) at (-1.90,-0.62) {$1$};
\node[draw,ellipse,blue,fill=white] (node2) at (-0.00,-2.00) {$2$};
\node[draw,ellipse,black,fill=white] (node3) at (1.90,-0.62) {$3$};
\node[draw,ellipse,blue,fill=white] (node4) at (1.18,1.62) {$4$};
\node[draw,ellipse,black,fill=white] (node5) at (-1.18,1.62) {$5$};
\node[draw,ellipse,red,fill=white] (node6) at (0.00,0.00) {$6$};
\draw[line width=0.25mm,black] (node1) -- (node2);
\draw[line width=0.25mm,black] (node2) -- (node3);
\draw[line width=0.25mm,black] (node3) -- (node4);
\draw[line width=0.25mm,black] (node4) -- (node5);
\draw[line width=0.25mm,black] (node1) -- (node3);
\draw[line width=0.25mm,black] (node1) -- (node4);
\draw[line width=0.25mm,black] (node2) -- (node5);
\draw[line width=0.25mm,black] (node2) -- (node4);
\draw[line width=0.25mm,black] (node3) -- (node5);
\draw[line width=0.25mm,black] (node1) -- (node5);
\end{tikzpicture}}

\\

Name in~\cite{Finschi} & $\IC(6,3,3)$               &$\IC(6,3,2)$ &$\IC(6,3,1)$ \\
Bad circuit            & $(6,124)$                   & $(6,245)$   &  $(6,124)$  \\
Bad collection         & $14,456,125,1356$       & $56,24,345,1346$            & $14,26,2345,1356$  \\\hline

 \end{tabular}
 }
  \caption{\label{fig:non_postiroids} Affine diagrams of the remaining nine (isomorphism classes of) oriented matroids of rank $3$ with $6$ elements. The red and blue vertices form a bad circuit.}
\end{figure}

\begin{lemma}\label{lemma:minors_6_4}
 If $\Mcal$ is a uniform oriented matroid with $\rk(\Mcal)\geq 4$ and $\cork(\Mcal)\geq 2$ then $\Mcal$ contains a minor isomorphic to $C^{6,4}$.
\end{lemma}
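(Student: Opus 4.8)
The plan is to use minor-closedness of the class of uniform oriented matroids under deletion and contraction, together with the fact — noted just before this lemma — that all uniform oriented matroids of rank $4$ and corank $2$ are isomorphic to $C^{6,4}$. So it suffices to show that any uniform $\Mcal$ with $\rk(\Mcal)\geq 4$ and $\cork(\Mcal)\geq 2$ admits a minor that is uniform of rank exactly $4$ and corank exactly $2$. I would obtain such a minor by repeatedly deleting and contracting single elements, checking at each stage that (i) uniformity is preserved and (ii) the rank and corank change in the expected way.

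**Key steps.** First I would recall that in a uniform oriented matroid every $\rk(\Mcal)$-element subset of $E$ is a basis; in particular $\Mcal$ has no loops and no coloops as long as $\rk(\Mcal)\geq 1$ and $\cork(\Mcal)\geq 1$. Hence for any element $e$, both $\Mcal-e$ and $\Mcal/e$ are defined. Second, I would verify the standard facts that $\Mcal-e$ is uniform with $\rk(\Mcal-e)=\rk(\Mcal)$ and $\cork(\Mcal-e)=\cork(\Mcal)-1$, while $\Mcal/e$ is uniform with $\rk(\Mcal/e)=\rk(\Mcal)-1$ and $\cork(\Mcal/e)=\cork(\Mcal)$. (Uniformity of a minor of a uniform matroid is a well-known statement about underlying matroids; the orientation is inherited.) Third, starting from $\Mcal$, I contract elements one at a time — each contraction drops the rank by $1$ and keeps the corank fixed — until the rank equals $4$; the resulting oriented matroid $\Mcal'$ is still uniform with $\rk(\Mcal')=4$ and $\cork(\Mcal')=\cork(\Mcal)\geq 2$. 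Then I delete elements one at a time — each deletion drops the corank by $1$ and keeps the rank fixed — until the corank equals $2$. The final oriented matroid is uniform of rank $4$ and corank $2$, hence isomorphic to $C^{6,4}$ by the remark preceding the lemma, and it is a minor of $\Mcal$ by construction.

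**The main obstacle.** The only real subtlety is making sure that all the intermediate oriented matroids to which I apply deletion or contraction actually have no loops and no coloops, so that the operations $\Mcal-e$ and $\Mcal/e$ behave as claimed and do not secretly leave the uniform class. This follows from keeping the invariants $\rk\geq 4$ (in fact $\geq 3$) and $\cork\geq 2$ (in fact $\geq 1$) throughout the process, which is guaranteed by the order in which I perform the operations (contract down to rank $4$ first, then delete down to corank $2$), but it should be stated carefully. Everything else is bookkeeping with ranks and coranks. I expect the whole argument to be about half a page.
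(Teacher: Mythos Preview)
Your approach is correct and is essentially the paper's argument reorganized: both reduce via minors to a uniform oriented matroid of rank exactly $4$ and corank exactly $2$, then use that this isomorphism class is unique. The paper phrases the reduction as induction on corank (deleting an element at each step), with the base case $\cork=2$, $\rk\geq 4$ handled by passing to the dual---a uniform rank~$2$ oriented matroid on at least six elements, which contains $C^{6,2}$ as a deletion-minor, so the original contains $C^{6,4}$---whereas you contract down to rank~$4$ first and then delete down to corank~$2$; the net effect is the same.

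One caution: the ``remark preceding the lemma'' that you invoke for the final step is, in the paper, explicitly a \emph{forward reference} to this very lemma (``As we will see later in Lemma~\ref{lemma:minors_6_4}\dots''), so citing it as an established fact would be circular. You should justify that step directly, for instance by the duality observation the paper uses: the dual of a uniform rank~$4$, corank~$2$ oriented matroid is uniform of rank~$2$ on six elements, and all simple rank~$2$ oriented matroids on a fixed ground set are isomorphic to the alternating one.
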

\begin{proof}
 We show this by induction on $\cork(\Mcal)$. Suppose $\cork(\Mcal)=2$. Since $\Mcal$ is uniform, its dual is a uniform oriented matroid of rank $2$ with at least $6$ elements. Any such oriented matroid (they are all isomorphic) contains $C^{6,2}$ as a minor, and therefore its dual contains a minor isomorphic to $C^{6,4}$. Now let $\cork(\Mcal)>2$. If every element $e\in E$ is a coloop then $\cork(\Mcal)$ would be zero, so suppose $e\in E$ is not a coloop. Then removing $e$ from $\Mcal$ preserves its rank but decreases its corank by $1$. Therefore $\Mcal-e$ contains $C^{6,4}$ as a minor by the induction hypothesis. We are done with the proof of the lemma.
\end{proof}

This lemma finishes the proof of parts~\eqref{item:uniform_classification:rk_2} and~\eqref{item:uniform_classification:cork_1} of  Theorem~\ref{thm:uniform_classification}. We already have shown one direction of Theorem~\ref{thm:purity_3d}, namely, that every positively oriented matroid of rank $3$ is pure (see Theorem~\ref{thm:purity_vc}). We also have shown the converse for $|E|= 6$ in Lemma~\ref{lemma:six_elements}. (For $|E|\leq 5$, each rank $3$ oriented matroid is already isomorphic to a positively oriented matroid, so there is nothing to prove.) It remains to show that if $|E|\geq 7$ then $\Mcal$ is either isomorphic to a positively oriented matroid or contains one of the six-element non-pure oriented matroids as a minor. In other words, \emph{we need to prove that a rank $3$ oriented matroid is isomorphic to a positively oriented matroid if and only if it does not contain an oriented matroid from Figure~\ref{fig:non_postiroids} as a minor}.

\begin{definition}
 An oriented matroid $\Mcal$ of rank $3$ is called \emph{almost positively oriented} if for every element $e\in E$, $\Mcal-e$ is isomorphic to a positively oriented matroid.
\end{definition}

The rest of the paper is devoted to the proof of the theorem below which clearly implies Theorem~\ref{thm:purity_3d} as well as Theorem~\ref{thm:uniform_classification}, part~\eqref{item:pure_3}.
\begin{theorem}\label{thm:minors}
 Suppose that $\Mcal$ is an almost positively oriented matroid with at least $7$ elements. Then $\Mcal$ is isomorphic to a positively oriented matroid.
\end{theorem}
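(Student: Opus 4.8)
\emph{Reductions.} First I would dispose of degenerate cases. If $\Mcal$ has a coloop $e$ then, as $n\ge 2$, every deletion $\Mcal-f$ with $f\ne e$ still has $e$ as a coloop and so cannot be isomorphic to a positively oriented matroid (which, being represented by points on the boundary of a convex polygon, has no coloop); thus the hypothesis already forbids coloops. If $\Mcal$ has a loop $e_0$, then in particular $\Mcal-e_0$ is isomorphic to a positively oriented matroid, hence so is $\Mcal$ (adjoin the zero vector back). Likewise, if $e,f$ are parallel then $\Mcal-f$ is isomorphic to a positively oriented matroid and adjoining a parallel copy of $e$ preserves this, so $\Mcal$ is isomorphic to a positively oriented matroid. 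Finally, if $\rk(\Mcal)\le 2$ then $\Mcal\cong C^{n,2}$, which is positively oriented. Hence we may assume $\Mcal$ is a simple oriented matroid of rank exactly $3$ on a ground set $E$ with $n:=|E|\ge 7$.

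\emph{The reconstruction.} For each $e\in E$, Lemma~\ref{lemma:positroids} produces a realization of $\Mcal-e$ whose points lie, in some cyclic order $\pi_e$ of $E-e$, on the boundary of a convex polygon; equivalently there is $A_e\subseteq E-e$ with $\reorient{A_e}{(\Mcal-e)}$ positively oriented with respect to $\pi_e$. Since $n-1\ge 6$, this convex order is unique up to the symmetry group of $\Mcal-e$ — the dihedral symmetries of its convex realization, together with the global flip $A_e\leftrightarrow (E-e)-A_e$; the only non-trivial symmetries that can occur are reflections, present exactly when the convex order of $\Mcal-e$ is reversal-symmetric (for instance when $\Mcal-e\cong C^{n-1,3}$). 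The plan is to choose the $\pi_e$ simultaneously so that $\pi_e$ and $\pi_f$ induce the same cyclic order on $E-\{e,f\}$ for every $e\ne f$. Granting this, I would define a cyclic order $\omega$ of $E$ by declaring the $\omega$-orientation of a triple $\{a,b,c\}$ to be its orientation in $\pi_h$ for any $h\notin\{a,b,c\}$ (well defined because $n\ge 4$ and by compatibility); since every $4$-element subset lies inside some $E-h$ ($n\ge 5$), these orientations are internally consistent, so $\omega$ is genuinely a cyclic order. Moreover the $\pi_h$-orientation of a subset $T\subseteq E-h$ is the chirotope of $\reorient{A_h}{\Mcal}$ on $T$, and for a simple connected rank-$3$ oriented matroid the reorientation witnessing a prescribed cyclic order is itself determined up to the global flip; so the $A_h$ patch to a single $A\subseteq E$, and $\reorient{A}{\Mcal}$ is positively oriented with respect to $\omega$. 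Thus $\Mcal\cong\reorient{A}{\Mcal}$ is positively oriented, as desired.

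\emph{Proving compatibility.} This is the core of the argument, and the mechanism is \emph{routing}: to compare two of the orders one moves from one deleted element to another one element at a time, at each step exploiting that $\pi_g$ and $\pi_{g'}$ overlap on $E-\{g,g'\}$, a set of $n-2\ge 5$ elements whose convex order is rigid up to the dihedral symmetries of $\Mcal-\{g,g'\}$. Using $n\ge 7$, one can always route so as to keep a prescribed small set inside every overlap — a reference $4$-subset $R$ when pinning down the orientation, or the triple whose orientation one is tracking when upgrading to full agreement — and the oriented matroid axioms of $\Mcal$ then force the restrictions to that set to agree, up to a possible reversal. After this, one normalizes each $\pi_e$ within its symmetry orbit to agree on $R$ and runs the routing argument once more to obtain agreement of $\pi_e$ and $\pi_f$ on all of $E-\{e,f\}$.

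\emph{Main obstacle.} The delicate point throughout is the reversal-symmetric case: when $\Mcal-e\cong C^{n-1,3}$ (or is otherwise reversal-symmetric) the order $\pi_e$ carries no intrinsic orientation, so an inconsistent choice is invisible locally and must be corrected using the other deletions; controlling this, together with the fact that positively oriented matroids are \emph{not} closed under reorientation — so the reorientations $A_e$ are genuine extra data that must be made coherent across all $e$ — is the heart of the proof, and I expect it to require a short but genuine case analysis. The six-element oriented matroids of Figure~\ref{fig:non_postiroids}, each of which is almost positively oriented yet not positively oriented, are the minimal obstructions and are exactly why the hypothesis $n\ge 7$ cannot be relaxed.
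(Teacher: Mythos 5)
Your overall strategy — extract for each $e$ a cyclic order $\pi_e$ and a reorientation $A_e$ from the positive realization of $\Mcal-e$, prove pairwise compatibility, and patch them into a global cyclic order $\omega$ and reorientation $A$ — is indeed the shape of the paper's reduction in Lemma~\ref{lemma:reduction_to_8}. But the proposal has a genuine gap: the rigidity you assume (that $\pi_e$ and $A_e$ are determined up to ``dihedral symmetry and a global flip'' once $n-1\ge 6$) does not hold in general. You locate the danger in the highly symmetric uniform case $\Mcal-e\cong C^{n-1,3}$, but the difficulty actually runs the other way: when $\Mcal-e$ is far from uniform — lying on the union of two lines, or with several collinear points along one side of its convex polygon — one can reorient a block of collinear points independently and still remain positively oriented, so $(\pi_e,A_e)$ is not pinned down by a global flip and the patching breaks. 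The paper's rigidity statement (Lemma~\ref{lemma:pentagon_rigid}) holds only under the hypothesis that $\Mcal$ contains a \emph{pentagon} — a $5$-element uniform subconfiguration — whose abundance of size-$4$ circuits forces the reorientation to be unique; the no-pentagon case has to be handled separately (Lemmas~\ref{lemma:pentagon_or_two_planes} and~\ref{lemma:two_planes}), by showing that such an $\Mcal$ is, up to one deletion, contained in two planes and then verifying directly that those are positroids.

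Even granting a pentagon and hence rigidity, the ``routing'' by which you propose to prove compatibility is only a sketch, and your expectation of ``a short but genuine case analysis'' undersells the work substantially. The paper reduces the problem to restrictions on at most $8$ elements (Lemma~\ref{lemma:reduction_to_8} — essentially your patching step, anchored on the rigid pentagon $\Pi$ together with the elements $e,f,g$ in question) and then handles the $8$-element base case (Lemma~\ref{lemma:base_8}) by a nontrivial geometric argument: choosing an affine realization in which the pentagon is ``good,'' constraining the remaining points to lie in a union of five triangles, and ruling out the bad configurations. Note also a small error in your reductions: a rank-$3$ positroid \emph{can} have a coloop (one point off a line of collinear points), so the hypothesis does not forbid coloops as you claim; the coloop case is nonetheless trivially true, but for a different reason.
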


We prove this theorem via a series of lemmas. Throughout the proof, all oriented matroids are assumed to be simple and to have rank $3$.

\begin{definition}
 Let $\Mcal$ be an oriented matroid of rank $3$. Then any maximal \emph{by inclusion} subset $P\subset E$ of rank $2$ is called a \emph{line}. A line $P$ is called \emph{non-trivial} if $|P|>2$.
\end{definition}
We will mostly work with affine diagrams of rank $3$ oriented matroids (such as the ones in Figures~\ref{fig:positroids} and~\ref{fig:non_postiroids}) where the above defined lines can be represented by lines in the affine diagram.

\def\vert{{ \operatorname{vert}}}


Recall from Lemma~\ref{lemma:positroids} that a positively oriented matroid $\Mcal$ of rank $3$ can be realized by an affine diagram where all the points belong to the boundary of a convex polygon. The number of vertices of this polygon is called \emph{the number of vertices of $\Mcal$} and denoted $\vert(\Mcal)$. The following corollary is immediate.

\begin{corollary}\label{cor:number_of_vertices}
 Suppose $\Mcal$ is isomorphic to a positively oriented matroid, then
 \[\vert(\Mcal)=|E|-\sum\limits_{P\text{ is a line of $\Mcal$}} (|P|-2).\]
 \qed
\end{corollary}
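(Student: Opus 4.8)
The plan is to fix a realization of $\Mcal$ as an affine point configuration $\PC=(\x_1,\dots,\x_n)\subset\R^2$ with all points on the boundary of the convex polygon $Q:=\Conv(\PC)$, as provided by Lemma~\ref{lemma:positroids}; by definition $\vert(\Mcal)$ is the number of vertices of $Q$, which I write as $V$. Recalling the standing assumption that $\Mcal$ is simple of rank $3$, the planes of $\Mcal$ are exactly the maximal‑by‑inclusion collinear subsets of $\PC$ of size at least $2$. The first step is the elementary convex‑geometry fact that, for any line $\ell$ in the plane, $\ell\cap\partial Q$ is either a (closed) edge of $Q$ — namely the edge whose supporting line is $\ell$ — or a set of at most two points. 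From this one reads off a dichotomy for a maximal collinear subset $S\subset\PC$: either (i) $S=P_e:=\{\x_i\in\PC\mid \x_i\in e\}$ for some edge $e$ of $Q$ (forced by maximality once $S\subseteq\ell\cap\PC=P_e$ when $\ell$ supports $e$), or (ii) $S=\{p,q\}$ with $p,q\in\PC$ not lying on a common edge of $Q$ — and in case (ii) $S$ is automatically maximal since then $\ell\cap\PC=\{p,q\}$. Distinct edges give distinct sets $P_e$, and no $P_e$ arises in case (ii), so the planes of $\Mcal$ are precisely the sets $P_e$, one per edge of $Q$, together with all ``non‑edge pairs''.

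Next I would count. Each point of $\PC$ is either one of the $V$ vertices of $Q$ or lies in the relative interior of a unique edge of $Q$; writing $m_e\geq 0$ for the number of points of $\PC$ in the relative interior of edge $e$, this gives $\sum_e m_e=|E|-V$, the sum over the $V$ edges of $Q$. Since the closed edge $e$ contains its two endpoints (which are vertices of $Q$, hence points of $\PC$) together with its $m_e$ interior points, $|P_e|=m_e+2$, so $|P_e|-2=m_e$; and every non‑edge pair has size $2$ and contributes $0$. Hence
\[
\sum_{P\text{ a plane of }\Mcal}(|P|-2)\;=\;\sum_{e\text{ an edge of }Q}(|P_e|-2)\;=\;\sum_{e}m_e\;=\;|E|-V\;=\;|E|-\vert(\Mcal),
\]
and rearranging yields the claimed formula. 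As a byproduct, since the right‑hand side depends only on the combinatorics of $\Mcal$, this also confirms that $\vert(\Mcal)$ is well defined independently of the chosen realization.

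I do not expect a genuine obstacle here; the only point needing a little care is the geometric classification in the first step — in particular verifying that a line through two boundary points of $Q$ not lying on a common edge contains no third boundary point, which is immediate from convexity of $Q$ but should be stated explicitly. The small degenerate cases (e.g.\ $|E|=3$, where $Q$ is a triangle with no non‑trivial planes and the identity reads $3=3-0$) are consistent and require no separate argument.
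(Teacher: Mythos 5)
Your argument is correct and is precisely the reasoning the paper treats as immediate: classify the rank-$2$ flats of the affine configuration into the edge-flats $P_e$ of the polygon (each contributing $|P_e|-2=m_e$, the number of non-vertex configuration points on that edge) and the non-edge pairs (each contributing $0$), then sum to get $\sum_e m_e=|E|-\vert(\Mcal)$. Nothing further is required.
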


We will repeatedly use another well known fact.
\begin{proposition}[{\cite[Theorem 8.2.4]{Book} or \cite{GP}}]\label{prop:realizable_8}
 Any oriented matroid of rank $3$ on at most $8$ elements is realizable.\qed
\end{proposition}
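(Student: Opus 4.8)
Realizability is insensitive to the two operations that separate an arbitrary rank $3$ oriented matroid from a simple one. Indeed, a loop of $\Mcal$ corresponds to a zero vector, and an element $e'$ parallel (resp.\ antiparallel) to $e$ corresponds to a vector $\v_{e'}=\v_e$ (resp.\ $\v_{e'}=-\v_e$); thus deleting loops and keeping one representative per parallel class does not change the rank and gives a simple oriented matroid $\Mcal_0$ with $|\Mcal_0|\le |\Mcal|\le 8$, and $\Mcal_0\cong\Mcal_\VC$ for some $\VC\subset\R^3$ implies $\Mcal\cong\Mcal_{\VC'}$ for the configuration $\VC'$ obtained from $\VC$ by re-inserting zero vectors and scalar copies. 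So it suffices to prove the proposition for a \emph{simple} oriented matroid $\Mcal$ of rank $3$ with at most $8$ elements.

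\textbf{Passage to pseudoline arrangements.} By the Folkman--Lawrence topological representation theorem (see~\cite[Chapter~5]{Book}), a simple rank $3$ oriented matroid $\Mcal$ on a ground set $E$ is encoded, up to isomorphism, by an arrangement $\Acal(\Mcal)$ of $|E|$ pseudolines in the projective plane $\RP^2$: the topes of $\Mcal$ are the regions of $\Acal(\Mcal)$, the cocircuits are its vertices and edges, and reorientation corresponds to relabeling co-orientations. Under this dictionary, $\Mcal$ is realizable in our sense (i.e.\ $\Mcal\cong\Mcal_\VC$ for some $\VC\subset\R^3$) if and only if $\Acal(\Mcal)$ is \emph{stretchable}, i.e.\ there is a self-homeomorphism of $\RP^2$ carrying every pseudoline of $\Acal(\Mcal)$ to a straight projective line. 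Hence the proposition is equivalent to the assertion: \emph{every arrangement of at most $8$ pseudolines in $\RP^2$ is stretchable.}

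\textbf{Stretchability for $\le 8$ pseudolines.} This is Gr\"unbaum's stretchability conjecture, proved by Goodman and Pollack~\cite{GP} (and recorded as~\cite[Theorem~8.2.4]{Book}). I would argue by induction on the number $n=|E|$ of pseudolines. For $n\le 5$ there are only a handful of combinatorial types, all visibly realizable (and near-pencils, with all but one element on a common line, are trivial in any rank). For the inductive step, encode $\Acal$ by an \emph{allowable sequence} / wiring diagram (the sequence of linear orders obtained by sweeping a pencil across $\Acal$), delete one pseudoline $\ell$ to obtain an arrangement $\Acal'$ of $n-1\le 7$ pseudolines, realize $\Acal'$ by a straight-line arrangement $L'$ using the induction hypothesis, and then show that $\ell$ can be re-inserted as a \emph{straight} line without changing the combinatorics. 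The orderings and incidences that the re-inserted line must respect with respect to the fixed lines $L'$ translate into a finite system of strict linear inequalities in the coefficients of the new line; Levi's enlargement lemma guarantees a pseudoline solution, and one checks that this linear system is consistent whenever $n\le 8$. The threshold is sharp: the first arrangement for which re-insertion provably fails is the non-Pappus arrangement of $9$ pseudolines, coming from the failure of Pappus's theorem.

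\textbf{Main obstacle.} The genuinely hard part is the last paragraph --- the consistency, for \emph{every} combinatorial type on $\le 8$ lines, of the linear system governing the straightening of the deleted line. This step cannot be made soft: it uses in an essential way that $8$ lies below the Pappus threshold, and in the literature it is carried out either through Goodman--Pollack's allowable-sequence case analysis or, equivalently, through a finite enumeration of the reorientation classes of rank $3$ oriented matroids on $\le 8$ elements (for instance from the oriented matroid database~\cite{Finschi}), exhibiting an explicit vector realization of each. For the purposes of this paper we simply invoke~\cite[Theorem~8.2.4]{Book} and~\cite{GP}.
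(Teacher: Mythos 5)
The paper does not actually prove this proposition --- it is stated as a citation to \cite[Theorem~8.2.4]{Book} and \cite{GP} and marked with $\square$. Since you ultimately do the same (``For the purposes of this paper we simply invoke\ldots''), your endpoint matches the paper's, and the surrounding context you supply (reduction to simple oriented matroids, and the Folkman--Lawrence dictionary equating realizability with stretchability of pseudoline arrangements in $\RP^2$) is correct and a useful gloss on why the cited result applies here.

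However, the inductive ``proof sketch'' you interpolate between the reduction and the citation is not a valid argument and does not reflect how Goodman--Pollack actually proceed, so be careful not to present it as a sketch of a proof. The gap is in the inductive step: the induction hypothesis hands you \emph{some} straight-line realization $L'$ of the deleted arrangement $\Acal'$, but there is no reason that \emph{this particular} $L'$ admits a straight line re-creating the combinatorics of $\ell$. Realizability is a property of the oriented matroid, not of every realization --- a realization of a minor need not extend to a realization of the whole, and exactly this failure is what the realization space / isotopy phenomena (and ultimately Mn\"ev universality) are about. Levi's enlargement lemma, which you invoke, produces a pseudoline extension, never a straight one, so it cannot close this gap. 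The actual proof in \cite{GP} works through a classification of allowable sequences and a careful case analysis establishing stretchability directly for $n\le 8$; it is not a delete-and-reinsert induction. Since you end by simply citing \cite[Theorem~8.2.4]{Book} and \cite{GP} (exactly as the paper does), the overall claim is fine, but the intermediate ``proof by induction'' paragraph should be removed or explicitly relabelled as heuristic motivation rather than an outline of the argument.
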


\begin{lemma}\label{lemma:line_intersection}
 Let $\Mcal$ be an oriented matroid, and let $P_1\neq P_2$ be any two lines in $\Mcal$. Then 
 \[|P_1\cap P_2|\leq 1.\]
\end{lemma}
\begin{proof}
Since the rank function of $\Mcal$ is submodular, we get $\rk(P_1\cap P_2)\leq 1$. Since $\Mcal$ is assumed to be simple, this gives the result.
\end{proof}

\def\Order{{\mathcal{O}}}

\begin{lemma}\label{lemma:any_element_belongs_to_2_NT_lines}
 Let $\Mcal$ be an almost positively oriented matroid. Then any element of $\Mcal$ belongs to at most two non-trivial lines in $\Mcal$.
\end{lemma}
\begin{proof}
Let $E=\{a_1,a_2,\dots,a_n\}$ and suppose that $P_1,P_2,P_3$ are three non-trivial lines in $\Mcal$ that all contain $a_1$. By Lemma~\ref{lemma:line_intersection}, the sets $P_i-a_1$ are disjoint for  $i=1,2,3$, and each of them contains at least two elements by non-triviality, so without loss of generality we have
\[a_2,a_3\in P_1,\quad a_4,a_5\in P_2,\quad a_6,a_7\in P_3.\]
Consider the restriction $\Mcal'$ of $\Mcal$ to $\{a_1,\dots,a_7\}$. By Proposition~\ref{prop:realizable_8}, $\Mcal'$ is a realizable almost positively oriented matroid. Thus $\Mcal'-a_1$ is isomorphic to a positively oriented matroid, so reorient the elements $a_2,a_3,\dots,a_7$ so that $\Mcal'-a_1$ would be just positively oriented. We know that the points of $\Mcal'-a_1$ belong to the boundary of some convex polygon, and then $a_1$ just belongs to the intersection of the three lines $(a_2a_3)$, $(a_4a_5)$, and $(a_6a_7)$, where $(a_ia_j)$ is the line passing through the points $a_i$ and $a_j$ in the affine diagram of $\Mcal'$. There is a natural cyclic order $\Order$ on the points $a_2,a_3,\dots,a_7$ since they belong to the boundary of a convex polygon. Draw these points on the circle according to $\Order$ and then draw the matching $\{(a_2,a_3),(a_4,a_5),(a_6,a_7)\}$. Up to rotation and reflection, we will get one of the five matchings in Figure~\ref{fig:matchings}. 

\newdimen\R
\R=0.7cm
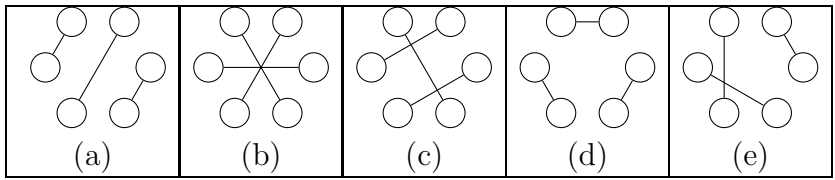
\begin{figure}
 
\begin{tabular}{|c|c|c|c|c|}\hline\label{item:matching:a}
\begin{tikzpicture}
 \draw (0:\R) node[circle,draw](a){$ $};
 \draw (60:\R) node[circle,draw](b){$ $};
 \draw (120:\R) node[circle,draw](c){$ $};
 \draw (180:\R) node[circle,draw](d){$ $};
 \draw (240:\R) node[circle,draw](e){$ $};
 \draw (300:\R) node[circle,draw](f){$ $};
 \draw (a)--(f);
 \draw (b)--(e);
 \draw (c)--(d); 
\end{tikzpicture}&
\begin{tikzpicture}
 \draw (0:\R) node[circle,draw](a){$ $};
 \draw (60:\R) node[circle,draw](b){$ $};
 \draw (120:\R) node[circle,draw](c){$ $};
 \draw (180:\R) node[circle,draw](d){$ $};
 \draw (240:\R) node[circle,draw](e){$ $};
 \draw (300:\R) node[circle,draw](f){$ $};
 \draw (a)--(d);
 \draw (b)--(e);
 \draw (c)--(f); 
\end{tikzpicture}&
\begin{tikzpicture}
 \draw (0:\R) node[circle,draw](a){$ $};
 \draw (60:\R) node[circle,draw](b){$ $};
 \draw (120:\R) node[circle,draw](c){$ $};
 \draw (180:\R) node[circle,draw](d){$ $};
 \draw (240:\R) node[circle,draw](e){$ $};
 \draw (300:\R) node[circle,draw](f){$ $};
 \draw (a)--(e);
 \draw (b)--(d);
 \draw (c)--(f); 
\end{tikzpicture}&
\begin{tikzpicture}
 \draw (0:\R) node[circle,draw](a){$ $};
 \draw (60:\R) node[circle,draw](b){$ $};
 \draw (120:\R) node[circle,draw](c){$ $};
 \draw (180:\R) node[circle,draw](d){$ $};
 \draw (240:\R) node[circle,draw](e){$ $};
 \draw (300:\R) node[circle,draw](f){$ $};
 \draw (a)--(f);
 \draw (b)--(c);
 \draw (e)--(d); 
\end{tikzpicture}&
\begin{tikzpicture}
 \draw (0:\R) node[circle,draw](a){$ $};
 \draw (60:\R) node[circle,draw](b){$ $};
 \draw (120:\R) node[circle,draw](c){$ $};
 \draw (180:\R) node[circle,draw](d){$ $};
 \draw (240:\R) node[circle,draw](e){$ $};
 \draw (300:\R) node[circle,draw](f){$ $};
 \draw (a)--(b);
 \draw (c)--(e);
 \draw (f)--(d); 
\end{tikzpicture}\\
(a)& \label{item:matching:b}(b)& \label{item:matching:c}(c)& \label{item:matching:d}(d)& \label{item:matching:e}(e)\\\hline
\end{tabular}
\caption{\label{fig:matchings} The five possible combinatorial types of matchings of six points on a circle.}
\end{figure}

\R=3.6cm
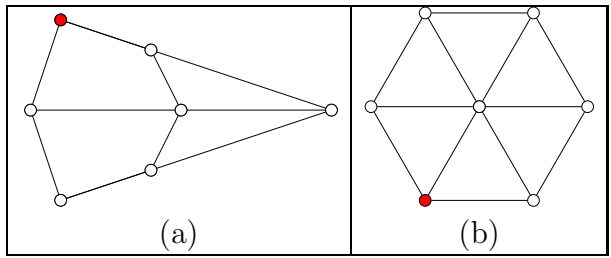
\begin{figure}
 
\begin{tabular}{|c|c|}\hline
\scalebox{0.4}{
\begin{tikzpicture}
 \draw (0,0) node[circle,draw](a){$ $} --
       (1,3) node[circle,draw](b){$ $} --
       (4,2) node[circle,draw](c){$ $} --
       (5,0) node[circle,draw](d){$ $} --
       (4,-2) node[circle,draw](e){$ $} --
       (1,-3) node[circle,draw](f){$ $} --cycle;
 \draw (10,0) node[circle,draw](g){$ $};
 \draw (b)--(g);
 \draw (a)--(g);
 \draw (f)--(g); 
  \draw (0,0) node[circle,draw,fill=white](a){$ $};
\draw  (1,3) node[circle,draw,fill=red](b){$ $};
\draw  (4,2) node[circle,draw,fill=white](c){$ $};
\draw  (5,0) node[circle,draw,fill=white](d){$ $};
\draw  (4,-2) node[circle,draw,fill=white](e){$ $};
\draw  (1,-3) node[circle,draw,fill=white](f){$ $};
 \draw (10,0) node[circle,draw,fill=white](g){$ $};
\end{tikzpicture}}&
\scalebox{0.4}{\begin{tikzpicture}
 \draw (0:\R) node[circle,draw](a){$ $};
 \draw (60:\R) node[circle,draw](b){$ $};
 \draw (120:\R) node[circle,draw](c){$ $};
 \draw (180:\R) node[circle,draw](d){$ $};
 \draw (240:\R) node[circle,draw,fill=red](e){$ $};
 \draw (300:\R) node[circle,draw](f){$ $};
 \draw (a)--(b)--(c)--(d)--(e)--(f)--(a);
 \draw (0:0)  node[circle,draw](g){$ $};
 \draw (a)--(d);
 \draw (b)--(e);
 \draw (c)--(f); 
 \draw (0:0)  node[circle,draw,fill=white](g){$ $};
\end{tikzpicture}}\\
\label{item:red_a}(a)&(b)\label{item:red_b}\\\hline
\end{tabular}
\caption{\label{fig:hexagons} Two possibilities for $\Mcal'$. Removing the red point produces a forbidden minor.}
\end{figure}

It is clear that for the matchings \hyperref[fig:matchings]{(c)}, \hyperref[fig:matchings]{(d)}, or~\hyperref[fig:matchings]{(e)} in Figure~\ref{fig:matchings}, the three lines will not intersect at the same point. For~\hyperref[fig:matchings]{(a)} or~\hyperref[fig:matchings]{(b)} this is possible and we clearly get only two possibilities for $\Mcal$, shown in Figure~\ref{fig:hexagons}. If we remove the red point in~\hyperref[fig:hexagons]{(a)}, we will get the oriented matroid $\IC(6,3,9)$ from Figure~\ref{fig:non_postiroids}. If we remove the red point in~\hyperref[fig:hexagons]{(b)}, we will get $\IC(6,3,10)$ from Figure~\ref{fig:non_postiroids}. Both of them are not isomorphic to a positively oriented matroid, and thus $\Mcal$ is not almost positively oriented, and we are done with the proof of the lemma.
\end{proof}


\begin{definition}
 We say that an oriented matroid $\Mcal$ \emph{contains a pentagon} if there is a $5$-element subset $\Pi\subset E$ such that the restriction of $\Mcal$ to $\Pi$ is a uniform oriented matroid of rank $3$. 
\end{definition}
Recall that all uniform oriented matroids on five elements are isomorphic, because their duals are uniform oriented matroids of rank $2$ with five elements.

\begin{definition}
 We say that an oriented matroid $\Mcal$ \emph{is contained in two lines} if there are two lines $P_1$ and $P_2$ of $\Mcal$ whose union is $E$.
\end{definition}

\begin{lemma}\label{lemma:pentagon_or_two_lines}
 Let $\Mcal$ be an almost positively oriented matroid with at least $7$ elements. Then either $\Mcal$ contains a pentagon or $\Mcal-e$ is contained in two lines for some $e\in E$.
\end{lemma}
\begin{proof}
Assume that $\Mcal-e$ is not contained in two lines for any $e\in E$. Fix some element $e\in E$ and consider the positively oriented matroid $\Mcal-e$. If $\vert(\Mcal-e)\geq 5$ then we are done because the vertices of course form a uniform oriented matroid. Suppose now that $\vert(\Mcal-e)=4$, that is, the points of $\Mcal-e$ belong to the boundary of some quadrilateral. Since $\Mcal-e$ is not contained in two lines, we get that there are two sides of this quadrilateral such that they share a vertex and both of them contain points of $\Mcal-e$ in their interior. But then removing their shared vertex increases the number of vertices, so we are done with this case as well. 

The only case left is when $\vert(\Mcal-e)=3$ for all $e\in E$. Thus $\Mcal-e$ is a triangle with at least one point in the interior of each side (otherwise $\Mcal-e$ is contained in two lines). If the interior of one of the sides (say, connecting the vertices $f$ and $g$) contains at least two points then $\vert(\Mcal-\{e,f,g\})=5$ so $\Mcal-e$ contains a pentagon. Thus each side of the triangle contains exactly one point in its interior, and therefore $|E|=7$. 
We will show that this is impossible.

We see that $\Mcal-e$ is isomorphic to $\IC(6,3,14)$ from Figure~\ref{fig:positroids} for every $e$. Let $f\in E$ be a vertex of this triangle. It belongs to at least $2$ non-trivial lines in $\Mcal$ and by Lemma~\ref{lemma:any_element_belongs_to_2_NT_lines} $f$ belongs to exactly $2$ non-trivial lines in $\Mcal$, denoted $P_1$ and $P_2$. 
 For $i=1,2$, $P_i$ has to contain exactly three points. Indeed, by non-triviality it contains at least three points, and if it contained at least four points then we would remove some other element $g\not\in P_i$ and get an oriented matroid $\Mcal-g$ with a line containing at least $4$ elements, but $\Mcal-g$ has to be isomorphic to $\IC(6,3,14)$ which does not have such a line. 

Observe that each $P_i$ contains at least two elements of $\Mcal-f$. Now, $P_i$ cannot contain a vertex of $\Mcal-f$ because otherwise this vertex would be contained in three non-trivial lines, contradicting Lemma~\ref{lemma:any_element_belongs_to_2_NT_lines}. We find that each of $P_1$ and $P_2$ contains at least two points which are not among the vertices of $\Mcal-f$. Since $\Mcal-f$ contains three such points in total, one of them has to be common to $P_1$ and $P_2$. On the other hand, the only point in $\Mcal$ belonging to both $P_1$ and $P_2$ is $f$, a contradiction.
 \end{proof}
\begin{remark}
 Note that both the property of containing a pentagon and of being contained in two lines can be stated for unoriented matroids, and then an example of a matroid that has neither of these two properties is the Fano plane. Moreover, removing any point from the Fano plane gives an unoriented matroid isomorphic to $\underline{\IC(6,3,14)}$, the underlying matroid of $\IC(6,3,14)$. In particular, this implies that the Fano  matroid is non-orientable.
\end{remark}

 We would like to restrict our attention to only almost positively oriented matroids that contain a pentagon. In order to do so, we need to eliminate the other option from Lemma~\ref{lemma:pentagon_or_two_lines}.
 
 \begin{lemma}\label{lemma:two_lines}
  Let $\Mcal$ be an almost positively oriented matroid with $|E|\geq 7$ that does not contain a pentagon. Then $\Mcal$ is isomorphic to a positively oriented matroid.
 \end{lemma}
 \begin{proof}
 We will split the proof into three cases: 
 \begin{enumerate}
  \item\label{item:two_lines} $\Mcal$ is contained in two lines;
  \item\label{item:big_line} there is a line $P$ of $\Mcal$ with $|P|\geq |E|-3$;
  \item\label{item:two_lines_and_e} for some $e\in E$, there are two lines $P_1$ and $P_2$ of $\Mcal$ whose union is $E-e$.
 \end{enumerate}
The proof in Case~\eqref{item:big_line} will depend on Case~\eqref{item:two_lines} while the proof in Case~\eqref{item:two_lines_and_e} will depend on Cases~\eqref{item:two_lines} and~\eqref{item:big_line}.

Consider the first case. If $E=P_1\cup P_2$ is a union of two lines then $\Mcal$ has no other non-trivial lines, because any such line would intersect either $P_1$ or $P_2$ by at least two elements (cf. Lemma~\ref{lemma:line_intersection}). Consider the cocircuit $C^{(1)}$ of $\Mcal$ with zero set $P_1$ and reorient all elements of $P_2$ so that $C^{(1)}$ would be a positive cocircuit. Now consider the cocircuit $C^{(2)}$ of $\Mcal$ with zero set $P_2$ and reorient all elements of $P_1$ so that $C^{(2)}$ would be a positive cocircuit. Since the restriction of $\Mcal$ to $P_i$ is a simple rank $2$ oriented matroid, we can assume that $P_1=\{a_1,a_2,\dots,a_n\}$ and $P_2=\{b_1,b_2,\dots,b_m\}$ are ordered in such a way that the circuits of $\Mcal\mid_{P_1}$ are $(\{a_i,a_k\},\{a_j\})$ for all $1\leq i<j<k\leq n$ and the circuits of $\Mcal\mid_{P_2}$ are $(\{b_i,b_k\},\{b_j\})$ for all $1\leq i<j<k\leq m$. 

Assume first that $P_1\cap P_2=\emptyset$. Consider the circuit $X$ with $\Xu=\{a_1,a_n,b_1,b_m\}$. Since $X$ has to be  orthogonal to $C^{(1)}$ and $C^{(2)}$, we have $X_{a_1}\neq X_{a_n}$ and $X_{b_1}\neq X_{b_m}$. Thus after a possible reversal of the order of $a_i$'s, we have 
\[X_{a_1}=+;\quad X_{a_n}=-;\quad X_{b_1}=-;\quad X_{b_m}=+.\]
Now choose any $1\leq i\leq n$ and $1\leq j\leq m$. Let $C^{ij}$ be the cocircuit with zero set $\{i,j\}$. Then for any $k<i$, the sign of $C^{ij}_{a_k}$ has to be the same, and for any $k>i$, $C^{ij}_{a_k}$ has to have the opposite sign, and similarly for $b_k$'s. Since $C^{ij}$ is orthogonal to $X$, we must have 
\[C^{ij}_{a_k}=\begin{cases}
                +,&\text{ if $k<i$;}\\
                -,&\text{ if $k>i$;}
               \end{cases};\quad
C^{ij}_{b_k}=\begin{cases}
                +,&\text{ if $k<j$;}\\
                -,&\text{ if $k>j$;}
               \end{cases}.\]

Consider a four-gon with vertices $a_1,a_n,b_m,b_1$ in clockwise order and points $a_1,\dots,a_n$ on one side and points $b_m,b_{m-1},\dots,b_1$ on the opposite side. We claim that the associated oriented matroid $\Mcal'$ coincides with $\Mcal$. It is clear that the underlying matroids are the same. Clearly they also have the same oriented cocircuits so we have finished dealing with the case $P_1\cap P_2=\emptyset$. The case $|P_1\cap P_2|=1$ is handled similarly, so we are done with Case~(\ref{item:two_lines}).
 
  Now consider Case~(\ref{item:big_line}): there is a line $P$ of $\Mcal$ with $|P|\geq |E|-3$, and in particular $|P|\geq 4$. If $|P|>|E|-3$ then we are done by Case~(\ref{item:two_lines}), thus $|P|=|E|-3$, so let $E-P=\{e,f,g\}$. Since every other line can intersect $P$ by at most one element, there are at most $3$ other non-trivial lines of $\Mcal$, and thus there is an element $h\in P$ not belonging to any of them. If there is another such element $h'$ then $h,h',e,f,g$ form a pentagon and we are done, so assume that $h$ is the only such element. In order to have $|P|\geq 4$ while only one element in $P$ not in any other line, we must have $|E|=7$ and for any two elements from $\{e,f,g\}$ there is a non-trivial line containing them and one other element in $P$. But then we have ``too many lines'': consider $\Mcal-h$, which is isomorphic to a positively oriented matroid on $6$ elements. Using Corollary~\ref{cor:number_of_vertices}, we can count that 
  \[\vert(\Mcal-h)=6-1-1-1-1=2,\]
  which means that $\Mcal-h$ is contained in a line even though we know it is not. This finishes Case~(\ref{item:big_line}).
  
  Consider Case~(\ref{item:two_lines_and_e}). We must have that $|P_1-P_2|\geq 3$ and $|P_2-P_1|\geq 3$, otherwise we would arrive at Case~\eqref{item:big_line}. Moreover, we may assume that $e$ does not belong to either $P_1$ or $P_2$, otherwise we would arrive at Case~\eqref{item:two_lines}. 
There are at most two non-trivial lines through $e$, each of them intersects $P_i$ in at most one element for $i=1,2$, and there are no other non-trivial lines in $\Mcal$. Thus there is at least one element $e_i$ in each of $P_i$, $i=1,2$, that is not contained in any other non-trivial line of $\Mcal$. Let $f_i\neq e_i$ be an element of $P_i-P_{3-i}$ ($i=1,2$) such that $e,f_1,f_2$ do not lie on the same line. We get that $e_1,e_2,f_1,f_2,e$ form a pentagon so we are done with the proof of the lemma.
\end{proof}

There is a reason why we care about $\Mcal$ containing a pentagon.

\begin{lemma}\label{lemma:pentagon_rigid}
 Suppose that a positively oriented matroid $\Mcal$ contains a pentagon. Then the only reorientation of $\Mcal$ that is also positively oriented is $\reorient{E}{\Mcal}$, that is, there are no non-trivial positively oriented reorientations of $\Mcal$.
\end{lemma}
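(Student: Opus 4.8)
The plan is to reduce the statement to a finite check about the pentagon and then propagate it to the whole ground set. The first step is a normalization: since $-X\in\Ccal(\Mcal)$ for every circuit $X$ of $\Mcal$ (axiom (C1)), the matroids $\reorient{E}{\Mcal}$ and $\Mcal$ have the same circuits, so $\reorient{E}{\Mcal}=\Mcal$ and $\reorient{A}{\Mcal}=\reorient{E\setminus A}{\Mcal}$ for all $A\subseteq E$. Hence the lemma is equivalent to the assertion that \emph{if $\reorient{A}{\Mcal}$ is positively oriented then $A\in\{\emptyset,E\}$}. Let $\Pi$ be a pentagon of $\Mcal$, so that $\Mcal\mid_\Pi$ is a uniform oriented matroid of rank $3$ on $5$ elements and is therefore isomorphic to $C^{5,3}$. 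Since the restriction of a positively oriented matroid to any subset is again positively oriented (restrict the witnessing total order), both $\Mcal\mid_\Pi$ and $\reorient{A}{(\Mcal\mid_\Pi)}=\reorient{A\cap\Pi}{(\Mcal\mid_\Pi)}$ are positively oriented.

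The heart of the argument is the following claim: \emph{if $\Mcal'$ is a positively oriented matroid isomorphic to $C^{5,3}$, then $\reorient{B}{\Mcal'}$ is positively oriented only when $B$ is $\emptyset$ or the whole ground set of $\Mcal'$.} Since relabeling preserves both positive orientability and the operation of reorientation, I may assume $\Mcal'=C^{5,3}$; and by the normalization it suffices to rule out $B$ with $1\le|B|\le 2$, which, using that $C^{5,3}$ is invariant under cyclic relabeling of its ground set, reduces to the three cases $B\in\{\{1\},\{1,2\},\{1,3\}\}$. A positively oriented uniform oriented matroid of rank $3$ is an alternating matroid (its chirotope is $+$ on every triple increasing in the witnessing order), so by Lemma~\ref{lem:alternating_matroid} the circuit supported on any $4$-element subset is \emph{alternating}: along the positroid order its sign vector has exactly two $+$'s and two $-$'s. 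On the other hand, from the description of the circuits of $C^{5,3}$ in Lemma~\ref{lem:alternating_matroid} together with the effect of reorientation on signs, one checks directly that in each of the three cases $\reorient{B}{C^{5,3}}$ has a circuit whose support is a $4$-set but whose four signs are \emph{not} two-and-two: the circuit on $\{1,2,3,4\}$ has sign vector $(-,-,+,-)$ when $B=\{1\}$; the circuit on $\{1,3,4,5\}$ has sign vector $(-,-,+,-)$ when $B=\{1,2\}$; and the circuit on $\{1,2,3,4\}$ has all four signs equal when $B=\{1,3\}$. No reordering turns such a pattern into an alternating one, so $\reorient{B}{C^{5,3}}$ is not positively oriented, which proves the claim. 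Applying it to $\Mcal\mid_\Pi$ gives $A\cap\Pi\in\{\emptyset,\Pi\}$, and, replacing $A$ by $E\setminus A$ if necessary, I may assume $A\cap\Pi=\emptyset$.

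It remains to show $A=\emptyset$. Fix $e\in E\setminus\Pi$ and realize $\Mcal$ by points on the boundary of a convex polygon (Lemma~\ref{lemma:positroids}). A collinear subset of these points lies on a single edge, and since no three points of $\Pi$ are collinear, $e$ is collinear with at most two points of $\Pi$, and those are two cyclically consecutive points $p,p'$ of $\Pi$. In either case one finds a $5$-element subset $\Pi'\subseteq\Pi\cup\{e\}$ with $e\in\Pi'$ and no three of its points collinear (take $\Pi'=(\Pi\setminus\{p\})\cup\{e\}$ in the collinear case, and $\Pi'=(\Pi\setminus\{q\})\cup\{e\}$ for an arbitrary $q\in\Pi$ otherwise); then $\Mcal\mid_{\Pi'}$ is a positively oriented matroid isomorphic to $C^{5,3}$, and $\reorient{A\cap\Pi'}{(\Mcal\mid_{\Pi'})}=\reorient{A\cap\{e\}}{(\Mcal\mid_{\Pi'})}$ is positively oriented, being a restriction of $\reorient{A}{\Mcal}$. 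By the claim this forces $A\cap\{e\}=\emptyset$, i.e. $e\notin A$; since $e$ was arbitrary, $A=\emptyset$, completing the proof. I expect the finite verification inside the claim to be the main technical point: one must exhibit, for each representative $B$, a genuinely non‑alternating $4$-element circuit, and be careful that "positively oriented" is invariant under relabeling (in particular under the cyclic symmetry of $C^{5,3}$) but \emph{not} under reorientation.
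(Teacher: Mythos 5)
Your strategy — first rule out nontrivial reorientations of the $5$-element pentagon, then propagate to the rest of $E$ one element at a time — has the same two-stage shape as the paper's induction, and your explicit case check for $C^{5,3}$ is a correct (if more computational) replacement for the paper's parity observation that every $4$-element circuit $C$ of a positively oriented matroid has the alternating sign pattern $(+,-,+,-)$, so a reorientation set must meet $\Cu$ in an even number of elements. The trouble is in the propagation step.

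You claim that $e\in E\setminus\Pi$ is collinear with at most two points of $\Pi$, and that these are cyclically consecutive. This fails when $e$ is a vertex of the realizing convex polygon, because $e$ then lies on \emph{two} edges, each of which may carry two points of $\Pi$. For instance take $e=(0,0)$, $p_1=(0.25,0.5)$, $p_2=(0.5,1)$, $q_1=(0.25,-0.5)$, $q_2=(0.5,-1)$, $r=(1,0)$; here $E=\{e,p_1,p_2,q_1,q_2,r\}$ lies on the boundary of its convex hull, $\Pi=\{p_1,p_2,q_1,q_2,r\}$ is a pentagon, and both $\{e,p_1,p_2\}$ and $\{e,q_1,q_2\}$ are collinear. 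In this configuration there is \emph{no} five-element $\Pi'\subseteq\Pi\cup\{e\}$ containing $e$ with no three collinear: removing any single element of $\Pi$ leaves one of the two collinear triples intact. So your construction of $\Pi'$ does not always produce a pentagon, and you cannot conclude $e\notin A$ by restricting to $\Pi'$. The fix is to descend to four-element circuits rather than five-element pentagons: first record the parity constraint that $|A\cap\Cu|$ is even for every circuit $C$ of $\Mcal$ with $|\Cu|=4$, then for each $e\in E\setminus\Pi$ pick three points $a,b,c\in\Pi$ so that $\{e,a,b,c\}$ has no three collinear (always possible, since in the bad case above one may take one point from each of the two dependent pairs together with $r$); the circuit supported on $\{e,a,b,c\}$ then forces $e\notin A$, since $A\cap\Pi=\emptyset$. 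This is essentially what the paper's inductive step does.
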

\begin{proof}
 We prove this by induction on $|E|$. Let $|E|=5$. Then $\Mcal$ is itself a pentagon, and now suppose that $\reorient{A}{\Mcal}$ is also positively oriented. We may assume $1\leq |A|\leq 2$. Note that for every circuit $C$ of $\Mcal$, $A$ has to contain an even number of elements from $\Cu$. It is clear that for every one- or two-element subset of $E$, there is a circuit of $\Mcal$ that contains exactly one element of $A$, thus we are done with the base case. To show the induction step, consider any positively oriented matroid $\Mcal$ that contains a pentagon $\Pi$ and suppose that $\reorient{A}{\Mcal}$ is also positively oriented for some proper subset $A$ of $E$. Choose an element $e\in E-\Pi$. By the induction hypothesis, the intersection of $A$ with $E-e$ has to be either empty or equal to $E-e$. In any of these cases, one can easily find a circuit $C$ that contains $e$ and three other elements from $E-e$. For this circuit, the intersection $A\cap \Cu$ would have an odd number of elements, so we get a contradiction which finishes the proof of the lemma.
\end{proof}

Recall that the map $\chi_\Mcal:E^r\to\{+,-,0\}$ is the \emph{chirotope of $\Mcal$} defined in Section~\ref{sect:OM}.

\begin{lemma}\label{lemma:unique_order_reorient}
 Let $\Mcal$ be an almost positively oriented matroid with $|E|\geq 7$ that contains a pentagon. Then $\Mcal$ is positively oriented if and only if there is a cyclic order $\Order^*$ on $E$ such that for any $a,b,c\in E$ ordered in accordance with $\Order^*$, we have $\chi_\Mcal(a,b,c)\geq 0$. Moreover, if such $\Order^*$ exists, it is unique.
\end{lemma}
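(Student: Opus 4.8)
The statement to prove is Lemma~\ref{lemma:unique_order_reorient}: for an almost positively oriented matroid $\Mcal$ with $|E|\geq 7$ containing a pentagon, $\Mcal$ is positively oriented if and only if there is a cyclic order $\Order^*$ on $E$ with $\chi_\Mcal(a,b,c)\geq 0$ whenever $a,b,c$ are in $\Order^*$-cyclic order, and such $\Order^*$ is unique. The ``only if'' direction and the existence part are essentially the definition of positively oriented matroid combined with Lemma~\ref{lemma:positroids}: if $\Mcal$ is positively oriented then by definition there is a (linear, hence cyclic) order witnessing $\chi\geq 0$, and Lemma~\ref{lemma:positroids} realizes $\Mcal$ by points on a convex polygon whose cyclic order around the boundary is exactly such an $\Order^*$. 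So the two substantive points are (i) the converse — the existence of such a cyclic order already forces $\Mcal$ to be positively oriented — and (ii) uniqueness of $\Order^*$.

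\textbf{Uniqueness.} For uniqueness, the plan is to use the pentagon together with the large number of elements. Suppose $\Order^*_1$ and $\Order^*_2$ both witness the sign condition. First I would observe that the chirotope $\chi_\Mcal$ determines, for each pair $a,b\in E$, which of the remaining elements lie ``on the positive side'' versus ``on the negative side'' of the line $ab$; in a rank~$3$ oriented matroid containing a pentagon the restriction is rich enough that $\chi$ has no zeros among the pentagon triples, so the cyclic order of any five-element uniform subset is rigidly determined by $\chi$ (up to global reversal, but the sign condition $\chi\geq 0$ fixes the orientation). Then I would show that a cyclic order on a set is determined by its restriction to all $5$-element subsets once $|E|\geq 5$: two cyclic orders agreeing on every $5$-subset agree globally (indeed agreeing on every $4$-subset suffices, since a cyclic order is reconstructed from the ternary ``betweenness'' relation, which is visible in any $4$-subset containing the relevant triple plus one witness). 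Since $\Order^*_i$ must restrict on the pentagon $\Pi$ to the unique order compatible with $\chi|_\Pi\geq 0$, and more generally the sign condition pins down $\Order^*_i$ on every $5$-subset that meets $\Pi$ in $\geq 3$ elements, one propagates: any element $e\in E$ together with three pentagon elements forms (after possibly reorienting, but here we work directly with $\chi$) a configuration whose cyclic position relative to $\Pi$ is forced, so the location of $e$ in $\Order^*_i$ is determined. Hence $\Order^*_1=\Order^*_2$. The only care needed is handling triples of elements all outside $\Pi$, which is resolved by picking two pentagon elements as ``reference points'' and reading off betweenness.

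\textbf{The converse.} For (i), suppose $\Order^*$ exists; I want to conclude $\Mcal$ is positively oriented, i.e.\ that $\Order^*$ (as a linear order, cutting the cycle anywhere) witnesses $\chi\geq 0$ for all ordered triples, not merely $\Order^*$-cyclically-ordered ones — but this is automatic: the positroid definition in the paper asks for a \emph{total} order $\prec$ with $\chi(i_1,\dots,i_r)\in\{0,+\}$ for $i_1\prec\cdots\prec i_r$, and for $r=3$ a cyclic order with $\chi\geq 0$ on cyclically-ordered triples is equivalent to this after cutting (because $\chi(a,b,c)=\chi(b,c,a)=\chi(c,a,b)$). So in fact (i) is also immediate from definitions once one unwinds the $r=3$ case. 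Thus the lemma reduces to: \emph{the sign condition is equivalent to being positively oriented} (pure bookkeeping with cyclic symmetry of the chirotope in rank $3$), \emph{plus uniqueness}.

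\textbf{Main obstacle.} The genuinely nontrivial content is the uniqueness argument, specifically verifying that $\chi_\Mcal$ has no zeros on the relevant small subsets so that cyclic positions are actually \emph{forced} rather than merely \emph{consistent}. The pentagon hypothesis is exactly what buys this: a pentagon is a uniform rank-$3$ restriction, so $\chi$ is nonzero on all its triples, and ``almost positively oriented'' plus $|E|\geq 7$ (via Proposition~\ref{prop:realizable_8} for the realizable sub-configurations and Lemma~\ref{lemma:pentagon_rigid} for rigidity of reorientations) prevents a collinear element from destroying the propagation. I expect the write-up to proceed: (1) record that the pentagon forces $\chi|_\Pi$ nonzero and hence a unique compatible cyclic order on $\Pi$; (2) show each $e\in E$ has a forced position relative to $\Pi$ using a $4$- or $5$-element sub-configuration (realizable by Proposition~\ref{prop:realizable_8}, positively oriented by the almost-positive hypothesis applied to $\Mcal-g$ for suitable $g$); (3) invoke Lemma~\ref{lemma:pentagon_rigid} to rule out the alternative reorientation; (4) conclude the cyclic order, and hence positive orientation, is unique. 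Step (2) — coherently threading all elements outside the pentagon onto one cyclic order — is where the care lies, and it is handled by always testing against pairs of pentagon elements.
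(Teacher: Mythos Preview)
Your proposal is correct. On the equivalence you match the paper exactly: both observe that in rank~$3$ the chirotope is invariant under cyclic permutations, so a total order with $\chi\geq 0$ on ordered triples is the same data as a cyclic order with that property, making the biconditional definitional. The paper's entire proof is two sentences.

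Where you diverge is on uniqueness. The paper simply writes ``by Lemma~\ref{lemma:pentagon_rigid}, $[\Order^*]$ is unique'' and moves on. You instead give an explicit propagation: $\chi$ is nowhere zero on the pentagon $\Pi$, so $\Order^*|_\Pi$ is forced; each $e\notin\Pi$ then has its position relative to $\Pi$ pinned down by the signs $\chi(p_i,p_j,e)$; pairs $e,f\notin\Pi$ are resolved against pentagon witnesses. This is a valid, self-contained route and arguably more transparent than the bare citation (Lemma~\ref{lemma:pentagon_rigid} is literally about reorientations $\reorient{A}{\Mcal}$, not about two cyclic orders for a fixed chirotope). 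But you make it heavier than necessary: once $\Order^*$ exists, $\Mcal$ is already positively oriented by the equivalence you just established, so the ``almost positively oriented'' hypothesis, Proposition~\ref{prop:realizable_8}, and the bound $|E|\geq 7$ play no role in the uniqueness step --- the pentagon alone suffices, since any triple on which two candidate orders disagree must have $\chi=0$, and pushing this through pentagon points forces three pentagon elements to be collinear. Your step~(3) is misplaced for the same reason: there is no reorientation in play (the chirotope is fixed throughout; only the candidate order varies), so invoking Lemma~\ref{lemma:pentagon_rigid} at that point is a non sequitur. Drop step~(3) and the extraneous appeals, and your propagation becomes a clean direct proof of uniqueness.
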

\begin{proof}
 Obviously, if $\Mcal$ is positively oriented then $\Order^*$ exists and, by Lemma~\ref{lemma:pentagon_rigid}, is unique. Conversely, if $\Mcal$ admits such an order then it is positively oriented by definition (see Section~\ref{sect:OM}), and thus by the previous claim such $\Order^*$ is unique.
\end{proof}

 Our second to last step in proving Theorem~\ref{thm:minors} is to reduce it to oriented matroids with at most $8$ elements.
 
 \begin{lemma}\label{lemma:reduction_to_8}
  Suppose that any almost positively oriented matroid with at most $8$ elements is isomorphic to a positively oriented matroid. Then the same statement holds for almost positively oriented matroids with arbitrary number of elements.
 \end{lemma}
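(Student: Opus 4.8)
The plan is to prove Lemma~\ref{lemma:reduction_to_8} by induction on $|E|$, the base cases $|E|\leq 8$ being the hypothesis. So suppose $\Mcal$ is an almost positively oriented matroid with $|E|\geq 9$ and that the statement holds for all smaller ground sets. By Lemma~\ref{lemma:two_planes} we may assume $\Mcal$ contains a pentagon (otherwise we are already done), and by Lemma~\ref{lemma:unique_order_reorient} it suffices to produce a cyclic order $\Order^*$ on $E$ compatible with the chirotope $\chi_\Mcal$. The idea is to build $\Order^*$ by gluing together the canonical cyclic orders coming from deletions. Concretely, for each $e\in E$ the minor $\Mcal-e$ is almost positively oriented with $|E|-1\geq 8$ elements, hence by the induction hypothesis it is isomorphic to a positively oriented matroid, and since $\Mcal-e$ still contains a pentagon (there are at least $9-1=8$ elements and at most a bounded number of them can be destroyed — more carefully, a pentagon of $\Mcal$ survives deletion of any of its non-points, and one can always find such a pentagon), Lemma~\ref{lemma:unique_order_reorient} gives a \emph{unique} cyclic order $\Order^*_e$ on $E-e$ witnessing positive orientation of $\Mcal-e$ (after the unique reorientation).

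The key step is a compatibility/coherence argument: for $e\neq f$, the orders $\Order^*_e$ and $\Order^*_f$ must agree on $E-\{e,f\}$. This is because $\Mcal-\{e,f\}$ has at least $7$ elements and still contains a pentagon, so by Lemma~\ref{lemma:unique_order_reorient} applied to $(\Mcal-e)-f = (\Mcal-f)-e$, the induced cyclic order on $E-\{e,f\}$ is unique; both $\Order^*_e$ and $\Order^*_f$ restrict to it (after matching up reorientations, which are themselves pinned down by the pentagon via Lemma~\ref{lemma:pentagon_rigid}). Once we know all the $\Order^*_e$ fit together coherently on overlaps, a standard patching argument recovers a single cyclic order $\Order^*$ on $E$: fix three elements $a,b,c$ lying in a pentagon, use the order $\Order^*_e$ for any $e\notin\{a,b,c\}$ to place every other element relative to the triple, and coherence guarantees these placements do not conflict. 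Then for any triple $x,y,z\in E$ one picks $e\notin\{x,y,z\}$ (possible since $|E|\geq 4$) and reads off $\chi_\Mcal(x,y,z)\geq 0$ from $\Order^*_e$; since $\Order^*$ restricts to $\Order^*_e$ on $E-e$, this is exactly the condition of Lemma~\ref{lemma:unique_order_reorient}, so $\Mcal$ is positively oriented.

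The main obstacle I expect is the bookkeeping around reorientations: ``positively oriented'' is not reorientation-invariant, so each $\Order^*_e$ only witnesses positive orientation of a \emph{particular} reorientation $\reorient{A_e}{(\Mcal-e)}$, and before we can compare $\Order^*_e$ with $\Order^*_f$ on the common ground set we must check that the reorientations $A_e$ and $A_f$ are compatible there. Lemma~\ref{lemma:pentagon_rigid} is the tool that makes this work — it says that once a pentagon is present, the positively-oriented reorientation is unique up to the global sign flip $\reorient{E}{(\cdot)}$ — but one has to apply it carefully to the restrictions $(\Mcal-e)\mid_{E-\{e,f\}}$ and track the sign ambiguity, fixing it once and for all by demanding that $\Order^*$ orient a chosen pentagon triple positively. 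A secondary technical point is verifying that a pentagon of $\Mcal$ persists in all the minors $\Mcal-e$ and $\Mcal-\{e,f\}$ we invoke; since $|E|\geq 9$ this is easy by counting, as deleting one or two elements from a ground set with a pentagon still leaves a $5$-element uniform-rank-$3$ restriction available — indeed any pentagon $\Pi$ with $e,f\notin\Pi$ works, and such $\Pi$ exists because $|E\setminus\{e,f\}|\geq 7>5$ and, by Lemma~\ref{lemma:pentagon_or_two_planes} together with Lemma~\ref{lemma:two_planes}, $\Mcal$ (hence every large minor) contains a pentagon.
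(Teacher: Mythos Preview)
Your approach is essentially the paper's---patch the cyclic orders coming from single-element deletions, using Lemma~\ref{lemma:pentagon_rigid} to align reorientations---but there is a genuine gap in the ``standard patching argument.'' Pairwise coherence of the $\Order^*_e$'s on overlaps $E-\{e,f\}$ is \emph{not} sufficient to guarantee a single cyclic order $\Order^*$ on $E$ with $\Order^*|_{E-e}=\Order^*_e$ for all $e$: two cyclic orders on a set $S$ that agree on $S-\{e\}$ and on $S-\{f\}$ can still differ by a swap of $e$ and $f$ when these happen to be adjacent. Your verification step (``for any triple $x,y,z$ pick $e\notin\{x,y,z\}$ and read off $\chi\geq0$ from $\Order^*_e$'') presupposes exactly the conclusion $\Order^*|_{E-e}=\Order^*_e$ that was never established. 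This is where the paper actually uses the $\leq 8$ hypothesis: it fixes \emph{two} elements $e,f\in E-\Pi$, builds $\Order^*$ from just $\Order'$ and $\Order''$, and then for every $g$ checks $\chi(e,f,g)\geq0$ by restricting to $Q=\Pi\cup\{e,f,g\}$ (at most $8$ elements), invoking the hypothesis on $\Mcal|_Q$, and arguing that the unique cyclic order on $Q$ must agree with $\Order^*|_Q$. Your proof never does anything analogous; indeed, as written it makes no essential use of the $8$-element hypothesis (your appeal to the induction hypothesis for ``$\Mcal-e$ is positively oriented'' is redundant---that holds by the definition of \emph{almost} positively oriented), which should be a warning sign for a conditional lemma.

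A smaller issue: your justification that $\Mcal-\{e,f\}$ contains a pentagon via Lemmas~\ref{lemma:pentagon_or_two_planes} and~\ref{lemma:two_planes} is misapplied. Those lemmas do not assert that every large minor contains a pentagon. The paper avoids this entirely by fixing one pentagon $\Pi\subset E$ and only ever deleting elements of $E-\Pi$, so $\Pi$ survives in every minor under consideration.
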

 \begin{proof}
  Suppose $\Mcal$ is an almost positively oriented matroid with $|E|>8$. By the above discussion, we are assuming that $\Mcal$ contains a pentagon $\Pi$. Choose any two distinct elements $e,f\in E-\Pi$. By Lemma~\ref{lemma:pentagon_rigid}, there is an essentially unique reorientation of $\Mcal-\{e,f\}$ that makes it positively oriented. We claim that there is a unique reorientation of $\Mcal$ such that every element from $E-e$ is oriented the same way in $\Mcal$ and in the positive reorientation of $\Mcal-e$, and every element from $E-f$ is oriented the same way in $\Mcal$ and in the positive reorientation of $\Mcal-f$. Such an orientation is clearly unique, but the fact that it exists is a consequence of the fact that there is only one reorientation of $\Mcal-\{e,f\}$ that makes it positively oriented, and this orientation has to agree with the corresponding unique orientations that make $\Mcal-e$ and $\Mcal-f$ into positively oriented matroids. Let us explain this in more detail.
  
Let $\Order$ be the cyclic order on the elements of $\Mcal-\{e,f\}$ that comes from the boundary of the convex polygon that realizes $\Mcal-\{e,f\}$. In other words, $\Order$ is the unique cyclic order on $E-\{e,f\}$ such that for any three elements $a,b,c$ ordered in accordance with $\Order$, we have $\chi_\Mcal(a,b,c)\geq 0$ (see Section~\ref{sect:OM}). The order $\Order$ can be extended to a cyclic order $\Order'$ on $E-e$ (resp., $\Order''$ on $E-f$) such that for any $a,b,c\in E-e$ (resp., $a,b,c\in E-f$) ordered in accordance with $\Order'$ (resp., with $\Order''$), we have $\chi_\Mcal(a,b,c)\geq 0$. Since the cyclic orders $\Order'$ and $\Order''$ agree on $E-\{e,f\}$, there is a cyclic order $\Order^*$ on $E$ such that removing $e$ from $\Order^*$ results in $\Order'$ and removing $f$ from $\Order^*$ results in $\Order''$. If $e$ and $f$ are not adjacent in $\Order^*$ then such order is unique, otherwise it is unique up to a transposition of $e$ and $f$. What we would like to show is that for any $a,b,c\in E$ ordered in accordance with $\Order^*$, we have $\chi(a,b,c)\geq 0$. Clearly, this holds for any $a,b,c$ such that $\{e,f\}\not\subset \{a,b,c\}$. But note that if $\Order^*$ is not unique (i.e., if $e$ and $f$ are adjacent in $\Order^*$) then we can choose an element $g\in \Pi$ such that $\{e,f,g\}$ is a basis, because $\rk(\Pi ef)>2$, and then the sign of $\chi(e,f,g)$ will determine $\Order^*$ uniquely. Thus we have found a unique possible candidate for $\Order^*$ from Lemma~\ref{lemma:unique_order_reorient}.

Assume that we have some element $g\in E$ such that $e,f,g$ is ordered in accordance with $\Order^*$ and we are trying to show that $\chi(e,f,g)\geq 0$. Let $\Mcal'$ be the restriction of $\Mcal$ to $Q:=\Pi\cup\{e,f,g\}$. Our goal is to show that $\chi_{\Mcal'}(e,f,g)\geq 0$. Since $|Q|\leq 8$ and $\Mcal'$ is still almost positively oriented, by the assumption of the lemma we know that $\Mcal'$ is isomorphic to a positively oriented matroid. By the above discussion, the unique cyclic order of $\Mcal'$ from Lemma~\ref{lemma:unique_order_reorient} has to coincide with the restriction of $\Order^*$ to $Q$, in which case we clearly get $\chi_{\Mcal'}(e,f,g)\geq 0$. This finishes the proof of the lemma.
 \end{proof}

\def\QQQ#1{{\mathfrak{#1}}}
\def\A{\QQQ a}
\def\B{\QQQ b}
\def\C{\QQQ c}
\def\D{\QQQ d}
\def\E{\QQQ e}

 \def\R{\mathbb{R}}
The following lemma combined with Lemmas~\ref{lemma:reduction_to_8} and~\ref{lemma:two_lines} completes the proof of Theorem~\ref{thm:minors}.
\begin{lemma}\label{lemma:base_8}
 If $\Mcal$ is an almost positively oriented matroid on at most $8$ elements that contains a pentagon $\Pi$ then $\Mcal$ is isomorphic to a positively oriented matroid.
\end{lemma}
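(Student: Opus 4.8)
\textbf{Proof plan for Lemma~\ref{lemma:base_8}.}

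The plan is to reduce to a finite case analysis of almost positively oriented matroids $\Mcal$ with $5\leq |E|\leq 8$ containing a pentagon, using the structural restrictions established in the preceding lemmas. First I would observe that by Proposition~\ref{prop:realizable_8} every such $\Mcal$ is realizable, so we may work with a genuine affine point configuration in $\R^2$; and by Lemma~\ref{lemma:unique_order_reorient} it suffices to exhibit a single cyclic order $\Order^*$ on $E$ witnessing $\chi_\Mcal(a,b,c)\geq 0$ for all triples $a\prec b\prec c$ in $\Order^*$. The pentagon $\Pi=\{p_1,\dots,p_5\}$ already carries a canonical cyclic order (its convex position), and Lemma~\ref{lemma:pentagon_rigid} tells us this order is rigid: any positively oriented reorientation of a submatroid containing $\Pi$ must restrict to this fixed order on $\Pi$ (up to the global flip $\reorient{E}{\cdot}$). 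So the entire problem becomes: given $\Pi$ in convex position, show that the $|E|-5\leq 3$ extra elements $e$ (and possibly $f,g$) can be inserted into the cyclic order of $\Pi$ consistently.

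Next I would handle the cases $|E|=6,7,8$ by inserting the extra points one at a time. For a single extra element $e$: since $\Mcal-e$ contains $\Pi$ and is positively oriented (it is an almost positively oriented matroid on $\leq 7$ elements, hence by the inductive structure — or directly for small sizes — positively oriented), $\Pi\cup\{e\}$ is positively oriented, so $e$ sits in some ``gap'' between two cyclically consecutive points of $\Pi$, i.e.\ $e$ together with $\Pi$ has a well-defined convex-position cyclic order $\Order_e$ extending that of $\Pi$. When $|E|=7$ with extra elements $e,f$, the orders $\Order_e$ (on $E-f$) and $\Order_f$ (on $E-e$) both extend the rigid order on $\Pi$, so they glue to a cyclic order $\Order^*$ on $E$ that is unique unless $e,f$ fall in the same gap of $\Pi$ and are cyclically adjacent; in that case the tie is broken by choosing $g\in\Pi$ with $\{e,f,g\}$ a basis (possible since $\rk(\Pi\cup\{e,f\})=3$) and reading off $\chi_\Mcal(e,f,g)$. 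One must then check $\chi_\Mcal(a,b,c)\geq 0$ for the triples involving both $e$ and $f$; this is the content that needs verification, and it follows because the restriction of $\Mcal$ to $\Pi\cup\{e,f\}$ (on $7$ elements) is itself almost positively oriented hence positively oriented by the $|E|\leq 7$ case, and its canonical order must agree with $\Order^*$ by Lemma~\ref{lemma:pentagon_rigid}. For $|E|=8$ with extra elements $e,f,g$: each three-element subset of $\{e,f,g\}$ together with $\Pi$ spans a positively oriented matroid on $\leq 8$ elements (one fewer extra point, so covered by the previous step or by the hypothesis applied to a proper restriction), giving pairwise-consistent insertions of $e,f,g$ into the cyclic order of $\Pi$; the consistency of these pairwise insertions forces a global cyclic order $\Order^*$ on all of $E$, exactly as in the proof of Lemma~\ref{lemma:reduction_to_8}. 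Then $\chi_\Mcal(a,b,c)\geq 0$ for every triple ordered by $\Order^*$: triples missing one of $e,f,g$ are handled by the corresponding $7$-element restriction, and the unique remaining triple $\{e,f,g\}$ is pinned down by observing that $\Mcal\mid_{\{e,f,g,p\}}$ for any $p\in\Pi$ forming a basis with two of them already constrains the sign.

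The main obstacle I expect is the genuinely combinatorial bookkeeping in the $|E|=8$ case: one must rule out the possibility that the three extra points $e,f,g$ are placed by the three ``sub-pentagon'' restrictions in a mutually incompatible way — e.g.\ that $\Order_{ef}$, $\Order_{eg}$, $\Order_{fg}$ could each be individually realizable yet not glue. This is where the rigidity Lemma~\ref{lemma:pentagon_rigid} does the real work: because the common refinement over $\Pi$ is unique, any two of the partial orders already determine each other on their overlap, and a standard argument (the same one used in Lemma~\ref{lemma:reduction_to_8}) shows three pairwise-consistent cyclic orders on a common sub-configuration always amalgamate. A secondary subtlety is the degenerate situation where two or all three extra points land in a single gap of $\Pi$ and become cyclically adjacent, so that the gluing order is a priori ambiguous up to transpositions; this is resolved, as above, by finding a basis triple whose chirotope value breaks the tie, and one must check such a basis always exists, which holds because $\rk(\Pi\cup\{e,f,g\})=3>2$ so not all of $\Pi,e,f,g$ can be collinear. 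Once these finitely many configurations are dispatched, Lemma~\ref{lemma:unique_order_reorient} immediately yields that $\Mcal$ is positively oriented, completing the proof.
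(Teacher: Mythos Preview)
Your plan is essentially the argument of Lemma~\ref{lemma:reduction_to_8} run again, and that is precisely why it does not close: Lemma~\ref{lemma:reduction_to_8} \emph{assumes} the $|E|\leq 8$ case as its hypothesis, so Lemma~\ref{lemma:base_8} must be proved by a genuinely different mechanism. The circularity is most visible in your $|E|=7$ step. After gluing $\Order_e$ and $\Order_f$ you still have to verify $\chi_\Mcal(e,f,g)\geq 0$ for $g\in\Pi$, and you justify this by saying ``the restriction of $\Mcal$ to $\Pi\cup\{e,f\}$ \dots\ is positively oriented by the $|E|\leq 7$ case''. But $\Pi\cup\{e,f\}=E$ here; you are invoking the very statement you are proving. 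The fallback of looking at $\Mcal-p$ for $p\in\Pi$ does not help either, because $\Mcal-p$ need not contain a pentagon, so Lemma~\ref{lemma:pentagon_rigid} does not force its positively oriented reorientation to agree with your $\Order^*$ on the overlap. The same issue recurs in your $|E|=8$ treatment of the triple $\{e,f,g\}$: saying a four-element restriction ``constrains the sign'' does not explain why it gives the sign demanded by $\Order^*$.

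The paper's proof takes a different route that supplies exactly the missing geometric input. It first realizes $\Mcal$ (Proposition~\ref{prop:realizable_8}) and then chooses the affine plane so that $\Pi$ is a \emph{good pentagon} (every pair of adjacent angles sums to more than $180^\circ$). This choice forces a circuit of the form $(\{f,a\},\{b,c\})$ with $a,b,c\in\Pi$ for each extra point $f$, which pins down the reorientation and shows that $\Mcal-e$ is already positively oriented (not merely isomorphic to one) for every $e\in E-\Pi$. From there one proves that each extra point must lie in one of five explicit ``shaded'' triangles attached to the sides of $\Pi$, and that any two extra points satisfy a visibility dichotomy (same triangle: their line misses the pentagon's interior; different triangles: their segment meets its closure). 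This already settles $|E|\leq 7$; for $|E|=8$ a short case analysis on the three shaded triangles remains, and the only potentially bad configuration (all three extra points in one triangle in a non-convex pattern) is excluded because deleting two pentagon vertices produces $\IC(6,3,12)$, contradicting almost positive orientation. None of this content is recoverable from the gluing-of-cyclic-orders argument you propose.
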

\begin{proof}
 By Proposition~\ref{prop:realizable_8}, $\Mcal$ is realizable, so it comes from some vector configuration $\VC\subset\R^3$. Reorient $\Pi$ in the unique way such that $\Mcal\mid_\Pi$ is positively oriented. We know that $\Mcal\mid_\Pi$ is then acyclic, and let $H\subset \R^3$ be any (affine) plane such that each vector from $\Pi$ belongs to $H$ after some rescaling by a positive real number. The endpoints of vectors from $\Pi$ form a convex pentagon. Let us say that a convex pentagon is \emph{good} if the sum of any two adjacent angles is greater than $180^\circ$. Otherwise, call such a pentagon \emph{bad}. See Figure~\ref{fig:pentagons} for an illustration.

 \definecolor{qqqqff}{rgb}{0.,0.,1.}
 
 \begin{figure}
  \begin{tabular}{cc}
   
 \scalebox{0.6}{
\begin{tikzpicture}[scale=0.8]
\clip(4.,-4.) rectangle (12.,3.);
\draw [domain=4.:12.] plot(\x,{(-8.911159073833751-0.04958677685950397*\x)/3.0909090909090917});
\draw [domain=4.:12.] plot(\x,{(--13.982126903900001-2.644628099173553*\x)/1.1900826446280988});
\draw [domain=4.:12.] plot(\x,{(--9.184680401202069-1.5364057851239623*\x)/-2.387898347107435});
\draw [domain=4.:12.] plot(\x,{(--16.26675284160913-1.66863719008264*\x)/2.6864818181818206});
\draw [domain=4.:12.] plot(\x,{(--27.32163923229286-2.561983471074379*\x)/-0.7933884297520652});
\begin{scriptsize}
\node[draw,ellipse,fill=white] at (6.6322314049586755,-2.989421487603303) {$\A$};
\node[draw,ellipse,fill=white] at (5.442148760330577,-0.34479338842975027) {$\B$};
\node[draw,ellipse,fill=white] at (7.830047107438012,1.191612396694212) {$\C$};
\node[draw,ellipse,fill=white] at (10.516528925619832,-0.4770247933884279) {$\D$};
\node[draw,ellipse,fill=white] at (9.723140495867767,-3.039008264462807) {$\E$};
\end{scriptsize}
\end{tikzpicture}}
&
\scalebox{0.6}{
\begin{tikzpicture}[scale=0.8]
\clip(6.,-6.) rectangle (19.,4.);
\draw [domain=6.:19.] plot(\x,{(-43.9064-0.24*\x)/8.68});
\draw [domain=6.:19.] plot(\x,{(-49.2732--3.56*\x)/-2.3});
\draw [domain=6.:19.] plot(\x,{(-8.0676--0.84*\x)/-2.34});
\draw [domain=6.:19.] plot(\x,{(--14.4204-0.9*\x)/-2.56});
\draw [domain=6.:19.] plot(\x,{(--36.3364-3.26*\x)/-1.48});
\begin{scriptsize}
\node[draw,ellipse,fill=white] at (8.74,-5.3) {$\A$};
\node[draw,ellipse,fill=white] at (10.22,-2.04) {$\B$};
\node[draw,ellipse,fill=white] at (12.78,-1.14) {$\C$};
\node[draw,ellipse,fill=white] at (15.12,-1.98) {$\D$};
\node[draw,ellipse,fill=white] at (17.42,-5.54) {$\E$};
\end{scriptsize}
\end{tikzpicture}
}\\
A good pentagon & A bad pentagon: $\hat \A+\hat \E<180^\circ$\\
  \end{tabular}
  \caption{\label{fig:pentagons} Examples of good and bad pentagons. Here $\hat \A$ denotes the angle at vertex $\A$ and so on.}
 \end{figure}
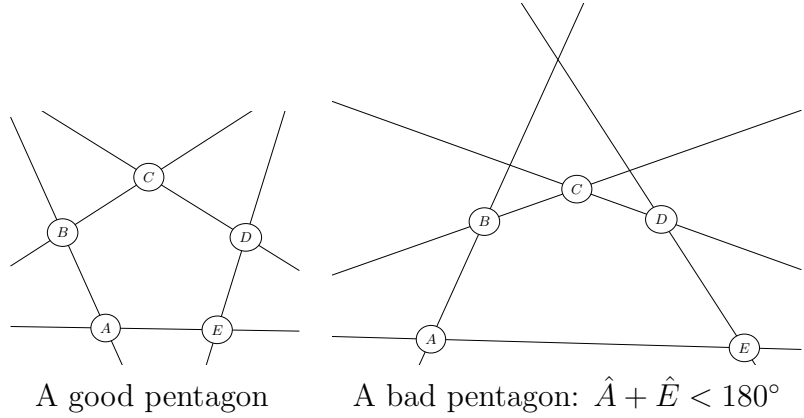

 We would like to choose an affine plane $H$ so that the vectors from $\Pi$ would form a good convex pentagon. Our first goal is to show that it is always possible.
 \begin{claim}
  For every $5$ vectors $\Pi$ in $\R^3$ that form a positively oriented uniform matroid, there exists an affine plane $H$ such that each vector of $\Pi$ belongs to $H$ after some rescaling by a positive real number, and moreover, the endpoints of rescaled vectors form a good pentagon in $H$.
 \end{claim}
 \begin{claimproof}
  Let $\vec a,\vec b,\vec c,\vec d,\vec e$ be the elements of $\Pi$ listed in cyclic order, and let $a,b,c,d,e$ be the rays spanned by the corresponding vectors. By $P(x,y)$ denote the plane through the origin spanned by the vectors $\vec x$ and $\vec y$. Choose any point $\A$ on $a$. Consider the lines $\ell_1=P(a,b)\cap P(d,e)$ and $\ell_2=P(a,e)\cap P(b,c)$. Both of these lines pass through the origin and it is clear that the cone $W$ spanned by $\vec a,\vec b,\vec c,\vec d,\vec e$ intersects the plane $P(\ell_1,\ell_2)$ spanned by $\ell_1$ and $\ell_2$ only in the origin. Let $H'$ be the affine plane through $\A$ parallel to $P(\ell_1,\ell_2)$. Then $H'$ intersects $W$ by a convex pentagon, so let $\A,\B,\C,\D,\E$ be the intersection points of $a,b,c,d,e$ with $H'$ respectively. By the choice of $H'$, the lines $\A\B$ and $\D\E$ are parallel. Moreover, the lines $\A\E$ and $\B\C$ are parallel. Let $\alpha:=\hat \A$ be the angle of $\A\B\C\D\E$ at vertex $\A$, so we have $0^\circ<\alpha<180^\circ$. Thus we have
  \[\hat \B=\hat \E=180^\circ-\alpha,\quad\hat \C+\hat \D=180^\circ+\alpha.\]
  Since $\hat \C,\hat \D<180^\circ$, we get that $\alpha<\hat \C,\hat \D$. Therefore
  \[\hat \A+\hat \B=\hat \A+\hat \E=180^\circ;\quad \hat \B+\hat \C,\hat \C+\hat \D,\hat \D+\hat \E>180^\circ.\]
  So the pentagon $\A\B\C\D\E$ is almost good. We would like to find a slight perturbation $H$ of $H'$ around $\A$ so that we would get $\hat \A+\hat \B,\hat \A+\hat \E>180^\circ $ without violating the remaining three inequalities. To do so, observe that the plane $H'$ is parallel to the lines $\ell_1$ and $\ell_2$ by construction. Orient $\ell_1$ (resp., $\ell_2$) in the direction of the vector $\A-\B$ (resp., $\A-\E$). Choose a point $X_i$ on $\ell_i$ far away in the positive direction for $i=1,2$. Let $H$ be the plane through $\A,X_1,X_2$. Then $H$ is indeed a slight perturbation of $H'$ and therefore the strict inequalities are still satisfied, provided that $X_1$ and $X_2$ are far enough. Letting $\A\B\C\D\E$ denote the corresponding pentagon inside $H$, we see that the points $X_1,\A,\B$ lie on a common line (in this order), and the same is true for $X_1,\D,\E$, as well as for $X_2,\A,\E$ and $X_2,\B,\C$. Thus the new pentagon $\A\B\C\D\E$ is good.
 \end{claimproof}

So now we are assuming that the intersection of the cone spanned by $\Pi$ with $H$ forms a good pentagon. Reorient $E-\Pi$ in a unique way so that all vectors in $\VC$ would belong to $H$ after a  positive rescaling. We claim that now $\Mcal$ is oriented ``the correct way'' meaning that for any element $e\in E-\Pi$, $\Mcal-e$ is already a positively oriented matroid. Suppose this is not the case, that is, for some $e\in E-\Pi$, $\Mcal':=\Mcal-e$ is not positively oriented but $\reorient{A}{\Mcal'}$ is positively oriented for some proper subset $A$ of $E-e$. By Lemma~\ref{lemma:pentagon_rigid}, we may assume that $A\cap \Pi=\emptyset$. Let $f\in A$ be any element. Since $f$ is represented by a point in $H$, one easily checks (using the fact that $\Pi$ is a good pentagon) that there is a circuit $C$ of $\Mcal'$ that involves $f$ and three elements from $\Pi$, denote them $a,b,c$, so that $C^+=\{f,a\}$ and $C^-=\{b,c\}$. Thus after reorienting by $A$, our circuit $C$ will contain an odd number of plus signs which contradicts the fact that $\reorient{A}{\Mcal'}$ is positively oriented. This shows that $\Mcal-e$ is positively oriented for any $e\in E-\Pi$. From this we can deduce some information about possible locations of points from $E-\Pi$ in $H$.

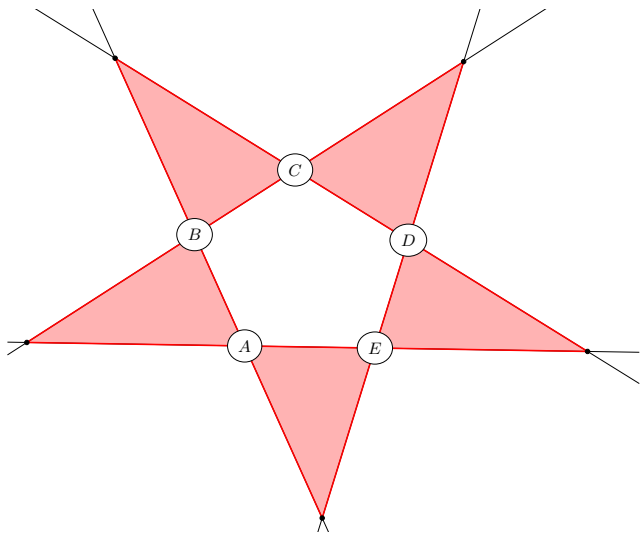
\begin{figure}
 \scalebox{0.7}{
 
\definecolor{zzttqq}{rgb}{1,0,0}
\definecolor{uuuuuu}{rgb}{0,0,0}
\definecolor{qqqqff}{rgb}{0.,0.,1.}
\begin{tikzpicture}[scale=0.8]
\clip(1.,-7.4) rectangle (16.,5.);
\fill[color=zzttqq,fill=zzttqq,fill opacity=0.3] (1.460783574454355,-2.9064570839053734) -- (5.442148760330577,-0.34479338842975027) -- (6.6322314049586755,-2.989421487603303) -- cycle;
\fill[color=zzttqq,fill=zzttqq,fill opacity=0.3] (3.556202665213042,3.846197934053662) -- (5.442148760330577,-0.34479338842975027) -- (7.830047107438012,1.191612396694212) -- cycle;
\fill[color=zzttqq,fill=zzttqq,fill opacity=0.3] (10.516528925619832,-0.4770247933884279) -- (7.830047107438012,1.191612396694212) -- (11.830326192770796,3.765445548453232) -- cycle;
\fill[color=zzttqq,fill=zzttqq,fill opacity=0.3] (10.516528925619832,-0.4770247933884279) -- (9.723140495867767,-3.039008264462807) -- (14.771682375779076,-3.120000914942668) -- cycle;
\fill[color=zzttqq,fill=zzttqq,fill opacity=0.3] (9.723140495867767,-3.039008264462807) -- (8.472248249723641,-7.0783478093032235) -- (6.6322314049586755,-2.989421487603303) -- cycle;
\draw [domain=1.:16.] plot(\x,{(-8.911159073833751-0.04958677685950397*\x)/3.0909090909090917});
\draw [domain=1.:16.] plot(\x,{(--13.982126903900001-2.644628099173553*\x)/1.1900826446280988});
\draw [domain=1.:16.] plot(\x,{(--9.184680401202069-1.5364057851239623*\x)/-2.387898347107435});
\draw [domain=1.:16.] plot(\x,{(--16.26675284160913-1.66863719008264*\x)/2.6864818181818206});
\draw [domain=1.:16.] plot(\x,{(--27.32163923229286-2.561983471074379*\x)/-0.7933884297520652});
\draw [color=red,line width=0.30mm] (1.460783574454355,-2.9064570839053734)-- (5.442148760330577,-0.34479338842975027);
\draw [color=red,line width=0.30mm] (5.442148760330577,-0.34479338842975027)-- (6.6322314049586755,-2.989421487603303);
\draw [color=red,line width=0.30mm] (6.6322314049586755,-2.989421487603303)-- (1.460783574454355,-2.9064570839053734);
\draw [color=red,line width=0.30mm] (3.556202665213042,3.846197934053662)-- (5.442148760330577,-0.34479338842975027);
\draw [color=red,line width=0.30mm] (5.442148760330577,-0.34479338842975027)-- (7.830047107438012,1.191612396694212);
\draw [color=red,line width=0.30mm] (7.830047107438012,1.191612396694212)-- (3.556202665213042,3.846197934053662);
\draw [color=red,line width=0.30mm] (10.516528925619832,-0.4770247933884279)-- (7.830047107438012,1.191612396694212);
\draw [color=red,line width=0.30mm] (7.830047107438012,1.191612396694212)-- (11.830326192770796,3.765445548453232);
\draw [color=red,line width=0.30mm] (11.830326192770796,3.765445548453232)-- (10.516528925619832,-0.4770247933884279);
\draw [color=red,line width=0.30mm] (10.516528925619832,-0.4770247933884279)-- (9.723140495867767,-3.039008264462807);
\draw [color=red,line width=0.30mm] (9.723140495867767,-3.039008264462807)-- (14.771682375779076,-3.120000914942668);
\draw [color=red,line width=0.30mm] (14.771682375779076,-3.120000914942668)-- (10.516528925619832,-0.4770247933884279);
\draw [color=red,line width=0.30mm] (9.723140495867767,-3.039008264462807)-- (8.472248249723641,-7.0783478093032235);
\draw [color=red,line width=0.30mm] (8.472248249723641,-7.0783478093032235)-- (6.6322314049586755,-2.989421487603303);
\draw [color=red,line width=0.30mm] (6.6322314049586755,-2.989421487603303)-- (9.723140495867767,-3.039008264462807);
\begin{scriptsize}
\node[draw,ellipse,fill=white] at (6.6322314049586755,-2.989421487603303) {$\A$};
\node[draw,ellipse,fill=white] at (5.442148760330577,-0.34479338842975027) {$\B$};
\node[draw,ellipse,fill=white] at (7.830047107438012,1.191612396694212)  {$\C$};
\node[draw,ellipse,fill=white] at (10.516528925619832,-0.4770247933884279)  {$\D$};
\node[draw,ellipse,fill=white] at (9.723140495867767,-3.039008264462807)  {$\E$};
\draw [fill=uuuuuu] (3.556202665213042,3.846197934053662) circle (1.5pt);
\draw [fill=uuuuuu] (11.830326192770796,3.765445548453232) circle (1.5pt);
\draw [fill=uuuuuu] (14.771682375779076,-3.120000914942668) circle (1.5pt);
\draw [fill=uuuuuu] (8.472248249723641,-7.0783478093032235) circle (1.5pt);
\draw [fill=uuuuuu] (1.460783574454355,-2.9064570839053734) circle (1.5pt);
\end{scriptsize}
\end{tikzpicture}}
\caption{\label{fig:good_pentagon} The areas of $H$ that are allowed to contain elements from $E-\Pi$ are the five (closed) triangles shaded in red.}
\end{figure}

\begin{claim}
 Suppose that the good pentagon $\Pi$ has vertices $\A,\B,\C,\D,\E$ as in Figure~\ref{fig:good_pentagon}. Then every point from $E-\Pi$ belongs to one of the shaded areas (including their boundary) in Figure~\ref{fig:good_pentagon}.
\end{claim}
\begin{claimproof}
 It is easy to see that if $e\in E-\Pi$ belongs to any other area then removing some other element $f\in E-\Pi$ (which exists due to $|E|\geq 7$) from $\Mcal$ would result in a non-positively oriented matroid which contradicts the above discussion.
\end{claimproof}

Before we continue the proof, let us revisit the oriented matroids on six elements (of rank $3$). By Lemma~\ref{lemma:six_elements}, we know exactly which of them are pure: the ones in Figure~\ref{fig:positroids} are pure and the ones in Figure~\ref{fig:non_postiroids} are not. An important observation is that for every oriented matroid $\Mcal$ in Figure~\ref{fig:non_postiroids}, there is a rank-preserving weak map $\Mcal\weakMap \IC(6,3,13)$. The conclusion is that \emph{an oriented matroid $\Mcal$ of rank $3$ with six elements is pure if and only if there is no rank-preserving weak map $\Mcal\weakMap\IC(6,3,13)$}. This illustrates the power of Conjecture~\ref{conj:weak_minors} in the rank $3$ case. 

\begin{claim}
 Let $e,f\in E-\Pi$ be any two distinct points. 
 \begin{enumerate}[(a)]
  \item\label{item:same_shaded_area} If $e$ and $f$ belong to the same shaded area in Figure~\ref{fig:good_pentagon} then the line through them does not intersect the interior of the pentagon $\A\B\C\D\E$.
  \item \label{item:different_shaded_areas} If they belong to different shaded areas then the segment $[e,f]$ intersects the closure of the pentagon $\A\B\C\D\E$. 
 \end{enumerate}  
\end{claim}
\begin{claimproof}
 For two points $X$ and $Y$, by $(XY)$ we denote the line that passes through them and by $[X,Y]$ denote the line segment that connects them. 
 
 First, if one of~(\ref{item:same_shaded_area}) or~(\ref{item:different_shaded_areas}) is violated then one can easily check that the restriction of $\Mcal$ to $\Pi e f$ is not isomorphic to a positively oriented matroid. Thus we may assume that $E=\Pi e f$.
 
 Let $[\A,\B]$ be the side of the pentagon adjacent to the shaded area that contains $e$. We are first going to prove both~(\ref{item:same_shaded_area}) and~(\ref{item:different_shaded_areas}) when $e$ is not the intersection point of lines $(\B\C)$ and $(\A\E)$ (we call this intersection point the \emph{outside vertex} of the shaded area containing $e$). For example, assume that $e\not\in (\B\C)$. Then $\Mcal-\E$ contains a pentagon. Clearly, if one of~(\ref{item:same_shaded_area}) or~(\ref{item:different_shaded_areas}) is violated then $\Mcal-\E$ is not positively oriented. Moreover, the same is true for $\reorient{f}{\Mcal-\E}$, so $\Mcal-\E$ cannot be isomorphic to a positively oriented matroid either because any reorientation would be trivial on the pentagon $\Pi e-\E$, and thus no matter how we orient $f$, we do not get a convex polygon. This leads to a contradiction.
 
 Similarly we deal with the case when $e$ is arbitrary and $f$ is not the outside vertex of the shaded area to which it belongs. The only case left is when $e$ and $f$ are both outside vertices of the corresponding shaded areas. Since $\Mcal$ is simple, these shaded areas cannot be the same, so we are done with the proof of~(\ref{item:same_shaded_area}). If the two shaded areas are  not adjacent (i.e., their closures do not intersect) then the segment $[e,f]$ intersects the closure of the pentagon by one of its sides, so~(\ref{item:different_shaded_areas}) holds. If the two shaded areas are adjacent, say, one of them is adjacent to $[\A,\B]$ and another one is adjacent to $[\B,\C]$ then $\Mcal-\D$ is isomorphic to $\IC(6,3,12)$ and thus cannot be isomorphic to a positively oriented matroid. This contradiction finishes the proof of~(\ref{item:different_shaded_areas}). 
\end{claimproof}

\begin{figure}
\definecolor{xdxdff}{rgb}{0.49019607843137253,0.49019607843137253,1.}
\definecolor{qqqqff}{rgb}{0.,0.,1.}

\definecolor{zzttqq}{rgb}{1,0,0}
\definecolor{uuuuuu}{rgb}{0,0,0}
\definecolor{qqqqff}{rgb}{0.,0.,1.}
\scalebox{0.7}{
\begin{tikzpicture}[scale=0.9]
\clip(1.,-7.4) rectangle (16.,5.);
\fill[color=zzttqq,fill=zzttqq,fill opacity=0.3] (1.460783574454355,-2.9064570839053734) -- (5.442148760330577,-0.34479338842975027) -- (6.6322314049586755,-2.989421487603303) -- cycle;
\fill[color=zzttqq,fill=zzttqq,fill opacity=0.3] (3.556202665213042,3.846197934053662) -- (5.442148760330577,-0.34479338842975027) -- (7.830047107438012,1.191612396694212) -- cycle;
\fill[color=zzttqq,fill=zzttqq,fill opacity=0.3] (10.516528925619832,-0.4770247933884279) -- (7.830047107438012,1.191612396694212) -- (11.830326192770796,3.765445548453232) -- cycle;
\fill[color=zzttqq,fill=zzttqq,fill opacity=0.3] (10.516528925619832,-0.4770247933884279) -- (9.723140495867767,-3.039008264462807) -- (14.771682375779076,-3.120000914942668) -- cycle;
\fill[color=zzttqq,fill=zzttqq,fill opacity=0.3] (9.723140495867767,-3.039008264462807) -- (8.472248249723641,-7.0783478093032235) -- (6.6322314049586755,-2.989421487603303) -- cycle;
\draw [domain=1.:16.] plot(\x,{(-8.911159073833751-0.04958677685950397*\x)/3.0909090909090917});
\draw [domain=1.:16.] plot(\x,{(--13.982126903900001-2.644628099173553*\x)/1.1900826446280988});
\draw [domain=1.:16.] plot(\x,{(--9.184680401202069-1.5364057851239623*\x)/-2.387898347107435});
\draw [domain=1.:16.] plot(\x,{(--16.26675284160913-1.66863719008264*\x)/2.6864818181818206});
\draw [domain=1.:16.] plot(\x,{(--27.32163923229286-2.561983471074379*\x)/-0.7933884297520652});
\draw [color=red, line width=0.30mm] (1.460783574454355,-2.9064570839053734)-- (5.442148760330577,-0.34479338842975027);
\draw [color=red, line width=0.30mm] (5.442148760330577,-0.34479338842975027)-- (6.6322314049586755,-2.989421487603303);
\draw [color=red, line width=0.30mm] (6.6322314049586755,-2.989421487603303)-- (1.460783574454355,-2.9064570839053734);
\draw [color=red, line width=0.30mm] (3.556202665213042,3.846197934053662)-- (5.442148760330577,-0.34479338842975027);
\draw [color=red, line width=0.30mm] (5.442148760330577,-0.34479338842975027)-- (7.830047107438012,1.191612396694212);
\draw [color=red, line width=0.30mm] (7.830047107438012,1.191612396694212)-- (3.556202665213042,3.846197934053662);
\draw [color=red, line width=0.30mm] (10.516528925619832,-0.4770247933884279)-- (7.830047107438012,1.191612396694212);
\draw [color=red, line width=0.30mm] (7.830047107438012,1.191612396694212)-- (11.830326192770796,3.765445548453232);
\draw [color=red, line width=0.30mm] (11.830326192770796,3.765445548453232)-- (10.516528925619832,-0.4770247933884279);
\draw [color=red, line width=0.30mm] (10.516528925619832,-0.4770247933884279)-- (9.723140495867767,-3.039008264462807);
\draw [color=red, line width=0.30mm] (9.723140495867767,-3.039008264462807)-- (14.771682375779076,-3.120000914942668);
\draw [color=red, line width=0.30mm] (14.771682375779076,-3.120000914942668)-- (10.516528925619832,-0.4770247933884279);
\draw [color=red, line width=0.30mm] (9.723140495867767,-3.039008264462807)-- (8.472248249723641,-7.0783478093032235);
\draw [color=red, line width=0.30mm] (8.472248249723641,-7.0783478093032235)-- (6.6322314049586755,-2.989421487603303);
\draw [color=red, line width=0.30mm] (6.6322314049586755,-2.989421487603303)-- (9.723140495867767,-3.039008264462807);
\draw [domain=1.:16.] plot(\x,{(--14.581147043076905-2.607305908062249*\x)/-1.1363339346432202});
\draw [domain=1.:16.] plot(\x,{(--1.8022198264905693-1.564698060035737*\x)/2.8685214922407405});
\draw [domain=1.:16.] plot(\x,{(--4.9762501286100465-1.5273758689244332*\x)/0.5421049129694215});
\begin{scriptsize}
\node[draw,circle,fill=white] at (6.6322314049586755,-2.989421487603303) {$\A\strut$};
\node[draw,circle,fill=white] at (5.442148760330577,-0.34479338842975027) {$\B\strut$};
\node[draw,circle,fill=white] at (7.830047107438012,1.191612396694212) {$\C\strut$};
\node[draw,circle,fill=white] at (10.516528925619832,-0.4770247933884279) {$\D\strut$};
\node[draw,circle,fill=white] at (9.723140495867767,-3.039008264462807) {$\E\strut$};
\draw [fill=uuuuuu] (3.556202665213042,3.846197934053662) circle (1.5pt);
\draw [fill=uuuuuu] (11.830326192770796,3.765445548453232) circle (1.5pt);
\draw [fill=uuuuuu] (14.771682375779076,-3.120000914942668) circle (1.5pt);
\draw [fill=uuuuuu] (8.472248249723641,-7.0783478093032235) circle (1.5pt);
\draw [fill=uuuuuu] (1.460783574454355,-2.9064570839053734) circle (1.5pt);
\node[draw,circle,fill=white] at (3.763709912717935,-1.4247234275675662) {$e\strut$};
\node[draw,circle,fill=white] at (4.3058148256873565,-2.9520992964919994) {$g\strut$};
\node[draw,circle,fill=white] at (4.73950837851255,-1.9569941358556888) {$f\strut$};
\end{scriptsize}
\end{tikzpicture}
}
 \caption{\label{fig:last_bad_case} The essentially unique oriented matroid $\Mcal_0$ on $8$ points that is not positively oriented but such that $\Mcal_0-e,\Mcal_0-f,$ and $\Mcal_0-g$ are.}
\end{figure}
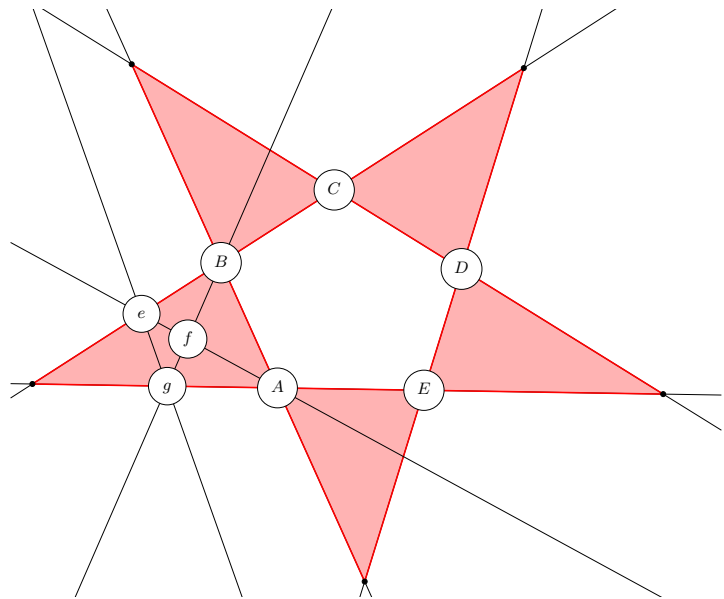

Notice that the above claim actually proves Lemma~\ref{lemma:base_8} when $|E|\leq 7$. Thus the only case left is when $|E|=8$, so we have $\Pi$ together with three other points $e,f,g$, each of them is located in one of the shaded areas in Figure~\ref{fig:good_pentagon} and any two of them satisfy the assertions of the above claim. A quick simple case analysis shows that if $e,f,g$ do not all belong to the same shaded area then $\Mcal$ is a positively oriented matroid. Moreover, if $e,f,g$ belong to the same shaded area which is adjacent, say, to $[\A,\B]$, in such a way that $\Mcal$ is not positively oriented but the lines $(ef),(fg),$ and $(eg)$ do not intersect the interior of the pentagon, then it is easy to see that there is a rank-preserving weak map from $\Mcal$ to the oriented matroid $\Mcal_0$ shown if Figure~\ref{fig:last_bad_case}. But removing $\C$ and $\D$ from $\Mcal_0$ makes it isomorphic to $\IC(6,3,12)$ from Figure~\ref{fig:non_postiroids}. Thus there is a rank-preserving weak map $(\Mcal-\{C,D\})\weakMap \IC(6,3,12)$ and, in turn, there is a rank-preserving weak map $\IC(6,3,12)\weakMap \IC(6,3,13)$ so it follows that in this case $\Mcal$ cannot be pure. We are done with the proof of Lemma~\ref{lemma:base_8} which, as we showed earlier, implies Theorem~\ref{thm:minors}.
\end{proof}

\subsection{Oriented matroids of rank $4$ and corank $2$}

\begin{figure}\scalebox{0.8}{
\begin{tikzpicture}[xscale=2,yscale=1.2]
 \node[red] (node111111) at (1,5) {$(1,1,1,1,1,1)$};
 \node[red] (node21111) at (1,4) {$(2,1,1,1,1)$};
 \node (node3111) at (0,3) {$(3,1,1,1)$};
 \node[red] (node2211) at (1,3) {$(2,2,1,1)$};
 \node (node2121) at (2,3) {$(2,1,2,1)$};
 \node (node321) at (1,2) {$(3,2,1)$};
 \node[red] (node222) at (2,2) {$(2,2,2)$};
 \node (node33) at (2,1) {$(3,3)$}; 
 \draw (node111111) -- (node21111);
 \draw (node3111) -- (node21111);
 \draw (node2211) -- (node21111);
 \draw (node2121) -- (node21111);
 \draw (node3111) -- (node321);
 \draw (node2211) -- (node321);
 \draw (node2121) -- (node321);
 \draw (node2211) -- (node222);
 \draw (node33) -- (node321);
\end{tikzpicture}}
\caption{\label{fig:rank_4_corank_2} The $8$ simple oriented matroids of rank $4$ and corank $2$ ordered by weak maps. Four of them (colored red) are not pure, the other four (colored black) are pure.}
\end{figure}
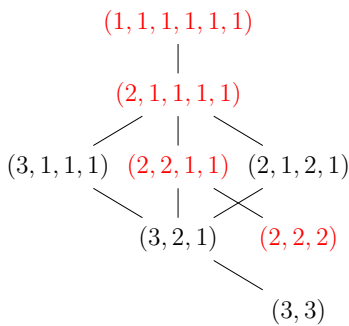

The smallest case not covered by Theorem~\ref{thm:purity_3d} and Proposition~\ref{prop:rk_2_cork_1} is when $\Mcal$ is an oriented matroid with $\rk(\Mcal)=4$ and $\cork(\Mcal)=2$. In this case, $\Mcal^*$ is of rank $2$. Moreover, $\Mcal$ is simple if and only if $\Mcal^*$ has no coloops and no \emph{coparallel} elements. Each oriented matroid of rank $2$ can be reoriented in an acyclic way, and then we can just record it in a composition $\alpha=(\alpha_1,\dots,\alpha_k)$ of $6$, where $\alpha_i$ is the size of the $i$-th parallelism class, and they are ordered according to the way they appear on an affine line. For example, the alternating matroid $C^{6,4}$ corresponds to the composition $(1,1,1,1,1,1)$, while the oriented matroid $\vec K_{2,3}$ corresponds to the composition $(2,2,2)$. There is a rank-preserving weak map between two oriented matroids of rank $2$ if and only if their corresponding compositions are refinements of each other, up to a cyclic shift. Having no coloops and coparallel elements translates into $\alpha_i\leq 3$ for all $i$. The $8$ possible compositions that we can get from $\Mcal^*$ are depicted in Figure~\ref{fig:rank_4_corank_2}, ordered by rank-preserving weak maps. According to our computations, the four oriented matroids labeled by $(1,1,1,1,1,1)$, $(2,1,1,1,1)$, $(2,2,1,1)$, and $(2,2,2)$ are not pure, and the other four labeled by $(3,1,1,1)$, $(2,1,2,1)$, $(3,2,1)$, and $(3,3)$ are pure. This further supports Conjecture~\ref{conj:weak_minors}.

\bibliographystyle{alpha}
\bibliography{separation}

\end{document}